\renewcommand{\O}{\Omega}
\newcommand{\normeq}{\trianglelefteqslant}
\newcommand{\la}{\langle}
\newcommand{\ra}{\rangle}
\renewcommand{\to}{\rightarrow}
\newcommand{\leqs}{\leqslant}
\newcommand{\geqs}{\geqslant}
\newcommand{\vs}{\vspace{2mm}}
\newcommand{\fpr}{\mbox{{\rm fpr}}}
\newcommand{\Aut}{\mathrm{Aut}}
\newcommand{\Out}{\mathrm{Out}}
\newcommand{\soc}{\mathrm{soc}}
\newcommand{\PSL}{\mathrm{PSL}}
\newcommand{\GL}{\mathrm{GL}}
\newcommand{\PGL}{\mathrm{PGL}}
\newcommand{\SL}{\mathrm{SL}}
\newcommand{\PGammaL}{\mathrm{P\Gamma L}}
\newcommand{\PSigmaL}{\mathrm{P\Sigma L}}
\newcommand{\PSU}{\mathrm{PSU}}
\newcommand{\POmega}{\mathrm{P\Omega}}
\newcommand{\fix}{\mathrm{fix}}
\newcommand{\Spec}{\mathrm{spec}}
\newcommand{\Tr}{\mathrm{Tr}}
\def\Ma{{\rm M}}
\def\Sym{\mathrm{Sym}}
\newcommand{\imod}[1]{\allowbreak\mkern4mu({\operator@font mod}\,\,#1)}
\newtheorem{theorem}{Theorem}
\newtheorem*{conj*}{Conjecture}
\newtheorem{conj}[theorem]{Conjecture}
\newtheorem{corol}[theorem]{Corollary}
\newtheorem{thm}{Theorem}[section]
\newtheorem{prop}[thm]{Proposition}
\newtheorem{lem}[thm]{Lemma}
\newtheorem{cor}[thm]{Corollary}
\newtheorem*{prob*}{Problem}
\theoremstyle{definition}
\newtheorem{rem}[thm]{Remark}
\newtheorem*{definition}{Definition}
\newtheorem{ex}[thm]{Example}
\begin{document}
	
	\title[Fixers and derangements of finite permutation groups]{Fixers and derangements of finite permutation groups}
	\author{Hong Yi Huang}
	\address{H.Y. Huang, School of Mathematics, University of Bristol, Bristol BS8 1UG, UK}
    \curraddr{{\sc School of Mathematics and Statistics, University of St Andrews, St Andrews KY16 9SS, UK}}
	\email{hy.huang@bristol.ac.uk}
	\author{Cai Heng Li}
	\address{C.H. Li, SUSTech International Center for Mathematics, and Department of Mathematics, Southern University of Science and Technology, Shenzhen 518055, Guangdong, P. R. China}
	\email{lich@sustech.edu.cn}
	\author{Yi Lin Xie}
	\address{Y.L. Xie, Department of Mathematics, Southern University of Science and Technology, Shenzhen 518055, Guangdong, P. R. China}
	\email{11930521@mail.sustech.edu.cn}
	\date{\today}

	\begin{abstract}
		Let $G\leqs\Sym(\Omega)$ be a finite transitive permutation group with point stabiliser $H$. We say that a subgroup $K$ of $G$ is a fixer if every element of $K$ has fixed points, and we say that $K$ is large if $|K| \geqs |H|$. There is a special interest in studying large fixers due to connections with Erdős-Ko-Rado type problems. 
		In this paper, we classify up to conjugacy the large fixers of the almost simple primitive groups with socle $\PSL_2(q)$, and we use this result to verify a special case of a conjecture of Spiga on permutation characters. We also present some results on large fixers of almost simple primitive groups with socle an alternating or sporadic group.
	\end{abstract}
	
	\maketitle
	
	\section{Introduction}
	
	\label{s:intro}

	Let $G\leqs\mathrm{Sym}(\Omega)$ be a non-trivial finite transitive permutation group with point stabiliser $H$. Recall that a \textit{derangement} for $G$ is a fixed-point-free permutation. This concept has been widely studied for many decades (for example, see the books \cite{BG_classical,GM_EKR}). A classical theorem of Jordan \cite{J_der} shows that $G$ contains a derangement. Of course, subgroups of $G$ may not contain such elements and this leads us naturally to the following definition.
	
	\begin{definition}
		\label{def:fixer}
		A subgroup $K$ of $G$ is a \textit{fixer} if every element in $K$ has fixed points on $\Omega$.
	\end{definition}
	
	\def\Spec{{\sf Spec}}
	
	There are two main sources of motivation for studying fixers, which arise in finite group theory on the one hand and extremal combinatorics on the other. Let us briefly explain these connections. 
	
	As defined in \cite{LPSX_intersecting}, a subset $R$ of $G$ is called {\em semiregular} if $xy^{-1}$ is a derangement for any $x,y\in R$ (in addition, if $R$ is transitive, then we say that it is {\em regular}). The concept of a fixer is complementary to the definition of a semiregular subset. More precisely, \cite[Proposition 1.1]{LPSX_intersecting} states that if $R$ is a semiregular subset and $K$ is a fixer, then $|R||K|\leqs |G|$, with equality if and only if $G = RK$. From this factorisation, one can obtain a lower bound on the proportion of derangements in $G$ by determining the orders of fixers. This is an intensively studied problem, and we refer the reader to \cite{CC_der,FG_der_18,GIP_der} for various results on the proportion of derangements.
	
	In a different direction, recall that $S\subseteq G$ is an {\it intersecting subset} if for any pair of elements $g,h\in S$, the product $gh^{-1}$ is not a derangement. Note that every fixer of $G$ is an intersecting subset. A well-known problem is to determine whether or not the size of an intersecting subset of $G$ is bounded above by the order of $H$, which is motivated by the famous Erdős-Ko-Rado (EKR) theorem \cite{EKR_EKR} in extremal set theory. More specifically, there has been an intense focus on studying the finite transitive permutation groups for which every intersecting subset has size at most $|H|$ (a group with this property is said to have the \textit{EKR property}). This problem dates back to work of Frankl and Deza \cite{FD_EKR} in 1977, and there have been several major advances in this direction in recent years, see \cite{AM_EKR,EFP_EKR,LPSX_EKR,MR_EKR,MST_2-trans} for more details.
	
	Given this connection, we focus our attention on the fixers of large order.
	
	\begin{definition}\label{def:max-fixer}
		Let $K \leqs G$ be a fixer.
		\begin{itemize}\addtolength{\itemsep}{0.2\baselineskip}
			\item[{\rm (i)}] $K$ is \textit{stable} if $K \leqs H^g$ for some $g \in G$, otherwise $K$ is \textit{non-stable};
			\item[{\rm (ii)}] $K$ is {\em large} if it is non-stable and $|K| \geqs |H|$.
		\end{itemize}
	\end{definition}
	
	We say that $G$ has the \textit{weak-EKR property} if every fixer has order at most $|H|$, and $G$ has the \textit{strict-weak-EKR property} if it has no large fixer. As noted in \cite{LPSX_intersecting}, most permutation groups do not satisfy the weak-EKR property; indeed, the largest order of a fixer is often significantly larger than $|H|$.
	To measure the deficit,
	%
	%
	%
	%
	the following invariant
	\begin{equation*}
	\rho_0(G) := \max\left\{\frac{|K|}{|H|\sqrt{|\Omega|}}:\mbox{$K \leqs G$ is a fixer}\right\}.
	\end{equation*}
	is defined in \cite{LPSX_intersecting} (where it is denoted $\rho_0(G/\Omega)$). Note that $\rho_0(G)\geqs 1/\sqrt{|\Omega|}$, with equality if and only if $G$ has the weak-EKR property. 
	
	At the other end of this spectrum, we make the following conjecture (recall that a transitive permutation group is \textit{primitive} if a point stabiliser is a maximal subgroup).
	
	\begin{conj}
		\label{conj:rho_0}
		There exists an absolute constant $c$ such that $\rho_0(G) < c$ for all finite primitive permutation groups $G$.
	\end{conj}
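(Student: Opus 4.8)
The plan is to argue type by type through the O'Nan--Scott theorem, the bulk of the difficulty being concentrated in the almost simple case. The starting remark is that if a primitive group $G$ has a regular subgroup $N$ (not necessarily normal), then it has no large fixer at all: every non-identity element of $N$ is a derangement, so $K\cap N=1$ for any fixer $K$, whence $|K|\,|N| = |KN|\leqs|G|$ and therefore $|K|\leqs|G|/|N| = |H|$ as $|N|=|\O|$. Thus $G$ has the weak-EKR property and $\rho_0(G)=1/\sqrt{|\O|}<1$. This settles the affine (HA), holomorph (HS and HC) and twisted wreath (TW) types outright, and, reading off the standard permutation models, also the simple and compound diagonal types (SD and CD), since in each case a suitable ``translation'' subgroup inside the socle acts regularly. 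So it remains to handle the almost simple type (AS) and the product action type (PA).

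For the product action type, $G\leqs H_0\wr S_k$ acting on $\O=\Delta^k$ with $(H_0,\Delta)$ primitive of type AS or SD. If the base group has type SD then the socle of $G$ again contains a regular subgroup and we are done by the previous paragraph, so assume $(H_0,\Delta)$ is almost simple. Setting $K_0=K\cap H_0^k$ and projecting onto each coordinate, one sees that a fixer $K$ of $(G,\O)$ induces a fixer of $(H_0,\Delta)$ in each coordinate; feeding in the almost simple bound for these and controlling the image $\pi(K)\leqs S_k$ of $K$ in the top group then reduces the PA case to the AS case. The subtlety is the dependence on $k$: the crude coordinatewise estimate loses a factor growing with $k!$, and to absorb it one must exploit both that $|\O|=|\Delta|^k$ grows exponentially in $k$ and that a fixer of a product action is far from a full direct product of fixers -- it must avoid every element whose ``cycle-products'' on the cycles of the top-group part are derangements of $(H_0,\Delta)$. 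I expect a careful version of this to yield an absolute constant.

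This leaves the almost simple type, where CFSG is needed. If $|\O|$ is bounded by an absolute constant then there are only finitely many such groups and $\rho_0(G)\leqs\sqrt{|\O|}$ is bounded, so assume $|\O|$ is large. We must show $[G:K]\geqs\sqrt{|\O|}/c$ for every fixer $K$, equivalently $|K|^2\leqs c^2|H|\,|G|$. If this fails then $|K|^2>c^2|H|\,|G|\geqs|\soc(G)|$ for $c\geqs 1$, so $K$ lies in the ``large subgroup'' regime and is contained in a maximal subgroup $M$ of $G$ from a short, explicitly classified list -- parabolic and geometric subgroups in the Lie type case, intransitive, imprimitive and small-index primitive subgroups in the alternating case, and a finite list in the sporadic case. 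For each pair $(M,\O)$ one then checks, using conjugacy class data together with the fixed-point-ratio estimates of Liebeck--Shalev, Burness and others (every prime-order element of $K$ must be $G$-conjugate into $H$, and these estimates bound how many such elements a large subgroup can contain), that either $M$ contains too many derangements on $\O$ for any large $K\leqs M$ to be a fixer, or $(G,\O)$ is one of a short list of genuine exceptions -- which includes $\PSL_2(q)$ on the projective line, the case treated in this paper -- where $\rho_0(G)$ is then bounded explicitly.

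The main obstacle is exactly the last step carried out in full: running the case analysis over all families of simple groups of Lie type and all of their primitive actions (not only $\PSL_2(q)$), and pinning down the finite list of configurations admitting genuine large fixers together with the constant they force. This needs the classification of large maximal subgroups and sufficiently uniform fixed-point-ratio bounds to be available in the precise form required for each family, supplemented by computer-assisted work for the groups of small rank and for the sporadic groups; the alternating and sporadic cases should be comparatively routine, but the Lie type analysis is a substantial undertaking in its own right, of which the $\PSL_2(q)$ classification in this paper is the first instalment.
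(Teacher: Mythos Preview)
The statement you are addressing is Conjecture~\ref{conj:rho_0}, which the paper does \emph{not} prove: it is stated as an open conjecture. The paper establishes only special cases --- socle $\PSL_2(q)$ (Corollary~\ref{cor:rho_0_PSL2}) and sporadic socle (Theorem~\ref{thm:spo_rho_0}) --- together with the reduction to the almost simple case (Conjecture~\ref{conj:rho_0_as}).

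Your proposal is not a proof either, and you acknowledge this yourself: the PA reduction is left at ``I expect a careful version of this to yield an absolute constant,'' and the AS case is described as ``a substantial undertaking in its own right.'' As a research outline your plan is sound and aligns with the paper's strategy. The regular-subgroup argument you give for the non-AS, non-PA O'Nan--Scott types is exactly the content of Theorem~\ref{t:pre_prim_large} (via Corollary~\ref{c:pre_strict-ekr}): these types satisfy $\rho_0(G)=1/\sqrt{|\O|}<1$. The reduction from PA to AS is asserted in the introduction to follow from \cite[Construction~3.2]{LPSX_intersecting}, which the paper cites for the equivalence of Conjectures~\ref{conj:rho_0} and~\ref{conj:rho_0_as}; your coordinatewise sketch is in the same spirit, but the concern you flag about the $k!$ factor is real, and you would need that construction rather than the crude projection bound. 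For the almost simple case, your plan --- pass to a large maximal overgroup and exploit spectrum and fixed-point-ratio constraints --- is precisely how the paper treats the cases it does handle (see for instance the proof of Theorem~\ref{thm:spo_rho_0}, where one bounds $|K|$ by $|\widehat{K}|_{\pi(H)}$), but carrying this through uniformly for all families of Lie type remains open.
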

	
	It is shown in \cite[Theorem 1.6]{LPSX_intersecting} that for any number $M$, there exist infinitely many imprimitive permutation groups $G\leqs\mathrm{Sym}(\Omega)$ with $\rho_0(G) > M$, so the primitivity condition is necessary. 
	
	In Theorem \ref{t:pre_prim_large}, we will show that if $G$ is primitive, then $G$ does not have the weak-EKR property (and hence $\rho_0(G) > 1/\sqrt{|\Omega|}$) only if it is almost simple or of product action type (recall that a group $G$ is  \textit{almost simple} if $G_0\normeq G\leqs \Aut(G_0)$ for some non-abelian simple group $G_0$; here $G_0 = \soc(G)$ is the \textit{socle} of $G$). In fact, by constructing a suitable product action as in \cite[Construction 3.2]{LPSX_intersecting}, one can see that Conjecture \ref{conj:rho_0} is equivalent to the following conjecture.
	
	\begin{conj}
		\label{conj:rho_0_as}
		We have $\rho_0(G) \leqs 1$ for all almost simple primitive groups $G$.
	\end{conj}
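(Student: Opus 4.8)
Conjecture \ref{conj:rho_0_as} is equivalent to Conjecture \ref{conj:rho_0}, and by Theorem \ref{t:pre_prim_large} together with \cite[Construction 3.2]{LPSX_intersecting} it suffices to bound $\rho_0(G)$ for almost simple primitive $G$; so the plan is to attack this directly via the Classification of Finite Simple Groups, splitting into cases according to $T = \soc(G)$. Note first that $\rho_0(G) \leqs 1$ is equivalent to the assertion that every fixer $K$ satisfies $|K|^2 \leqs |G|\,|H|$, i.e. $|K| \leqs |H|\sqrt{|\Omega|}$. One elementary constraint holds at the outset: a transitive subgroup contains a derangement by Jordan's theorem, so a fixer $K$ is intransitive on $\Omega$, hence has orbits $\Omega_1, \dots, \Omega_r$ with $r \geqs 2$, and each point stabiliser $K_{\omega_i} = K \cap G_{\omega_i}$ lies in a conjugate of $H$; thus $|\Omega| = \sum_{i} |K : K_{\omega_i}| \geqs r|K|/|H|$, giving the crude bound $\rho_0(G) \leqs \tfrac12\sqrt{|\Omega|}$. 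The real content of the conjecture is the much stronger --- and genuinely action-dependent --- improvement of this to $\rho_0(G)\leqs 1$.

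For $T$ alternating or symmetric acting on $k$-element subsets, I would invoke the Erdős--Ko--Rado theorems of Frankl--Deza, Cameron--Ku, Godsil--Meagher and Ahlswede--Khachatrian: every intersecting subset, hence every fixer, has order at most $|H|$, so $\rho_0(G) = 1/\sqrt{|\Omega|} < 1$. For the remaining non-standard primitive actions one combines Maróti-type upper bounds on $|H|$ with an explicit bound on $|K|$ extracted from the subgroup structure of $S_n$, and verifies $|K| \leqs |H|\sqrt{|\Omega|}$ by a direct computation. For $T$ sporadic there are finitely many pairs $(G,H)$, and for each the largest fixer can be determined, or bounded, using the character table, the list of maximal subgroups, and the permutation character $1_H^G$; the present paper already treats socle $\PSL_2(q)$ in full and makes a start on the sporadic and alternating socles.

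The substantial case is $T$ of Lie type, where I would follow the template of the $\PSL_2(q)$ analysis in this paper. For classical groups one works through the Aschbacher classes $\mathcal{C}_1, \dots, \mathcal{C}_8$ and $\mathcal{S}$ for both $H$ and a candidate fixer $K$, and for each configuration one must (i) decide which subgroups $K$ are fixers --- the condition that every element of $K$ is $G$-conjugate into $H$, a covering property governed by the fusion of unipotent and semisimple classes --- and (ii) bound $|K|$, via the structural inequality above (sharpened using the geometry of the natural module) or via fixed-point-ratio estimates in the style of Liebeck--Shalev and Burness. The exceptional groups of Lie type form a further finite list of families, handled by the same techniques using the classification of their maximal subgroups. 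In every case one expects a dichotomy: either $G$ has the strict-weak-EKR property, so that $\rho_0(G) = 1/\sqrt{|\Omega|}$, or the large fixers form a short explicit list whose members are checked individually to satisfy $\rho_0(G) \leqs 1$.

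The main obstacle is step (i) in the Lie type case. Unlike rank one, where Dickson's theorem supplies a transparent description of the subgroup lattice of $\PSL_2(q)$, there is no such clean description of which subgroups of a higher-rank classical group are fixers in a given primitive action, and the interaction between the Aschbacher class of $K$, that of $H$, and the conjugacy classes they contain is delicate --- one needs precise control over when all unipotent and semisimple elements of a candidate $K$ fix a point of $\Omega$. A plausible route is to first prove a reduction showing that a counterexample of largest order forces $H$ into a short list of classes (plausibly $\mathcal{C}_1$ and $\mathcal{C}_2$), and then to eliminate those configurations with fixed-point-ratio bounds and explicit knowledge of the corresponding actions.
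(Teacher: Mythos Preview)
The statement is a \emph{conjecture}, not a theorem: the paper does not prove it, and indeed presents it as open. What the paper does is verify the stronger bound $\rho_0(G)<1/\sqrt{2}$ in two special cases --- socle $\PSL_2(q)$ (Corollary~\ref{cor:rho_0_PSL2}) and sporadic socle (Theorem~\ref{thm:spo_rho_0}) --- and establish the weaker invariant bound $\rho_1(G)\leqs\sqrt{2/5}$ for alternating and sporadic socles (Corollary~\ref{cor:rho_1_An}). There is no proof in the paper for you to compare against.

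Your proposal is, as you essentially concede in the final paragraph, a strategy outline rather than a proof. A few specific points where the outline is weaker than it may appear:
\begin{itemize}
\item For alternating and symmetric socles you appeal to EKR-type theorems for the action on $k$-subsets, but the results you cite (Cameron--Ku, etc.) concern $S_n$ in its natural action on $[n]$, not on $k$-subsets; the EKR property for $S_n$ or $A_n$ acting on $k$-subsets is not settled by those references. Moreover, the ``remaining non-standard primitive actions'' of $A_n$ include all actions with primitive point stabiliser, and even bounding $\rho_1$ there (let alone $\rho_0$) is nontrivial --- the paper leaves this largely open (see the remarks after Theorem~\ref{thm:alt}).
\item For sporadic groups the paper's own proof of Theorem~\ref{thm:spo_rho_0} does \emph{not} determine the largest fixer in every case; it bypasses that by bounding $|K|$ via $|\widehat{K}|_{\pi(H)}$ for a maximal overgroup $\widehat{K}$. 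So ``the largest fixer can be determined'' is too optimistic.
\item Your acknowledged obstacle --- deciding which subgroups of a higher-rank group of Lie type are fixers in a given action --- is the genuine difficulty, and nothing in the proposal indicates how to overcome it. The $\PSL_2(q)$ argument in the paper relies heavily on Dickson's list and on explicit conjugacy computations in $\mathrm{A\Gamma L}_1(q)$; there is no analogue of this in higher rank.
\end{itemize}
In short: the proposal correctly identifies the natural CFSG case division and the shape of the argument in the cases the paper treats, but it is a programme, not a proof, and the paper itself does not claim otherwise.
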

	
	Our main theorem takes the first step towards a proof of Conjecture \ref{conj:rho_0_as} by determining the large fixers when $G$ is an almost simple primitive group with socle $\PSL_2(q)$. The relevant table is presented at the end of the paper in Section \ref{s:tab} (in the table, $q = p^f$ for some prime $p$ and $L_0 = L\cap \soc(G)$). See Remark \ref{r:tab:PSL2} for additional comments on Table \ref{tab:PSL2}.
	
	\begin{theorem}
		\label{thm:PSL2}
		Suppose $G$ is an almost simple primitive group with point stabiliser $H$ and socle $\PSL_2(q)$. Let $K$ be a subgroup of $G$ with $|K|\geqs |H|$. Then $K$ is a non-stable fixer if and only if $K$ is $G$-conjugate to a subgroup of $L$ described in Table \ref{tab:PSL2}.
	\end{theorem}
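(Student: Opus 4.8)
The plan is to organise the argument around the type of the point stabiliser $H$, combining the classical description of the maximal subgroups of an almost simple group with socle $\PSL_2(q)$ (due to Dickson, in the extended form recorded in the literature on low-dimensional classical groups) with an elementary reformulation of the fixer condition. The starting observation is that, identifying $\Omega$ with the coset space of $H$ in $G$, an element $x\in G$ fixes a point of $\Omega$ if and only if $x$ is $G$-conjugate into $H$, equivalently $\pi(x)>0$ where $\pi=1_H^G$ is the permutation character; hence a subgroup $K\leqs G$ is a fixer precisely when $K$ meets no $G$-class of derangements. For each maximal subgroup $H$ one writes down the set $\mathcal{D}(G,H)$ of derangements explicitly, from the known conjugacy classes of $\PGammaL_2(q)$ together with the value of $\pi$ on each class. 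When $G\leqs\PGL_2(q)$ this set is typically a union of classes determined by element order; for instance, for the action on unordered pairs of points (the case $H$ dihedral of order $2(q-1)/d$) the derangements are, roughly, the elements whose order is a divisor of $q+1$ exceeding $2$, together with the unipotent elements when $q$ is odd. The two exceptions to this ``order determines everything'' principle are the dihedral actions, where the two classes of involutions must be distinguished, and the actions involving field or graph-field automorphisms, which require separate bookkeeping.

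The next step is to cut down the candidates for $K$. Since $H$ is maximal and $|K|\geqs|H|$, either $K$ is maximal in $G$, or $K$ lies in a maximal subgroup $M$ of $G$ with $|M|>|H|$ and $|M:K|\leqs|M|/|H|$. The key simplification is that, once $q$ is not too small, the parabolic subgroup is the unique largest maximal subgroup of $G$; in particular, if $H$ is itself parabolic then $K$ is forced to be $G$-conjugate to $H$, hence stable, so there are no large non-stable fixers in that case. In the remaining cases — $H$ dihedral, a subfield subgroup, or one of the subgroups of type $A_4$, $S_4$ or $A_5$ — the overgroup $M$ ranges over a short explicit list, and for each $M$ the subgroups $K\leqs M$ of permitted index form a short list that can be read off from the subgroup structure of $M$.

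It then remains, for each surviving configuration $(G,H,M,K)$, to decide whether $K\cap\mathcal{D}(G,H)=\emptyset$ (so that $K$ is a fixer), whether $K$ is non-stable, and to compare $|K|$ with $|H|$; and, conversely, for each subgroup $L$ appearing in Table~\ref{tab:PSL2} to check that $L$ is itself a non-stable fixer, so that every subgroup of a conjugate of $L$ of order at least $|H|$ is again a non-stable fixer. Typical outcomes are that the parabolic subgroup is a large fixer for the pair action when $q$ is even (its elements being unipotent or split-semisimple, of order coprime to $q+1$), that a subfield subgroup $\PSL_2(q_0)\leqs\PSL_2(q)$ is a large fixer exactly when the element orders occurring in $\PSL_2(q_0)$ avoid the relevant divisors of $q\pm1$, and so on. Finally one treats the cases left over by the generic arguments — small $q$, the exceptional isomorphisms $\PSL_2(4)\cong\PSL_2(5)\cong A_5$, $\PSL_2(7)\cong\PSL_3(2)$ and $\PSL_2(9)\cong A_6$, and the configurations in which $A_4$, $S_4$ or $A_5$ occurs as a maximal subgroup — by direct computation in \textsc{Magma}.

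I expect the main obstacle to be the careful handling of $\mathcal{D}(G,H)$ and of the candidate large fixers when $G$ contains field or graph-field automorphisms: there the correspondence between element order, conjugacy class and number of fixed points is most delicate, and subfield-type subgroups, whose normalisers themselves involve such automorphisms, are precisely the hardest candidates both to rule in and to rule out. A secondary technical point is ensuring, in the converse direction, that passing to subgroups of $L$ of order at least $|H|$ preserves non-stability; this uses the fact that such subgroups are too large to embed in any conjugate of $H$.
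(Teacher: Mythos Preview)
Your architecture matches the paper's: dispose of small $q$ by machine, case-split on the type of $H$, bound the candidate overgroups of $K$ by order, and test each surviving $K$ against the derangement classes. The reductions in Section~\ref{s:PSL2} of the paper follow exactly this pattern and, as you anticipate, are routine once $G\leqs\PGL_2(q)$.

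The genuine gap is at the point you yourself flag as the main obstacle, and it is more than ``separate bookkeeping''. When $H$ is a subfield subgroup of type $\GL_2(q^{1/r})$ with $r$ odd and $G$ contains field automorphisms, the candidate $K$ sits inside the Borel $N_G(Q)\cong\mathrm{A\Gamma L}_1(q)$, and one must decide for each $x=(a,1)\phi^i$ whether $x^G\cap H\ne\emptyset$. Element order does not separate these classes: writing $|\phi^i|=s$ and $q=q_1^s$, the answer is governed by whether $\Tr_{\mathbb{F}_q/\mathbb{F}_{q_1}}(a)$ vanishes, and this in turn (Lemma~\ref{l:pre_AGammaL_trace}) depends on whether $r=p$ and whether $r\mid s$. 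The paper develops this in Section~\ref{s:AGammaL}, using both the additive and multiplicative forms of Hilbert~90 to pin down the $N_G(Q)$-conjugacy classes; the resulting trichotomy forces three genuinely different maximal fixers $L^{\rm I},L^{\rm II},L^{\rm III}$ (see \eqref{e:LI}--\eqref{e:LIII}), and the proof that a given fixer $K$ is actually \emph{$G$-conjugate} into one of them---as opposed to merely having the right isomorphism type, cf.\ Remark~\ref{r:LI_LII}---again runs through trace computations and Hilbert~90 (Lemmas~\ref{l:PSL2_d_I}--\ref{l:PSL2_d_III_rem}). Your plan offers no mechanism for any of this, and I do not see how to carry the subfield case through without something of equivalent strength.
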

	
	Let us briefly outline  the main techniques we use to establish Theorem \ref{thm:PSL2}. First, it is not difficult to handle the case where $G = \PSL_2(q)$ is simple, and this reduces the proof to four special cases listed in Proposition \ref{p:PSL2_summarise}. For each of these cases, we need to study the conjugacy classes of subgroups and elements of $\PGammaL_2(q)$. A key ingredient here is to work with the conjugacy classes of $\mathrm{A\Gamma L}_1(q)$, which is isomorphic to a maximal parabolic subgroup of $\PGammaL_2(q)$. This requires some number theoretic techniques and we refer the reader to Section \ref{s:AGammaL} for more details.
	
	In view of Theorem \ref{thm:PSL2}, we are able to classify all the primitive permutation groups with socle $\PSL_2(q)$ and the (strict-)weak-EKR property. The groups with $q\leqs 61$ can be handled easily by inspecting Table \ref{tab:PSL2}, so we only record the result for $q > 61$. As in \cite{KL_classical}, the \textit{type} of $H$ gives an approximate description of the structure of $H$.
	
	\begin{corol}
		\label{cor:PSL2_weak}
		Suppose $G$ is a primitive permutation group with point stabiliser $H$ and socle $G_0 = \PSL_2(q)$, where $q > 61$. Then $G$ has the weak-EKR property if and only if one of the following holds:
		\begin{itemize}\addtolength{\itemsep}{0.2\baselineskip}
			\item[{\rm (i)}] $H$ is of type $P_1$, $\GL_1(q^2)$, $2^{1+2}_-.\mathrm{O}_2^-(2)$ or $A_5$;
			\item[{\rm (ii)}] $q$ is odd and $H$ is of type $\GL_1(q)\wr S_2$;
			\item[{\rm (iii)}] $f$ is even, $q = 2^f$, $H$ is a subfield subgroup of type $\GL_2(2^{f/2})$, and $|G:G_0|$ is even.
		\end{itemize}
		Moreover, $G$ has the weak-EKR property, but not the strict-weak-EKR property, if and only if $G = G_0$ is one of the final three cases presented in Table \ref{tab:PSL2}.
	\end{corol}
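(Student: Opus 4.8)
The plan is to deduce Corollary~\ref{cor:PSL2_weak} directly from Theorem~\ref{thm:PSL2} by a careful inspection of Table~\ref{tab:PSL2}, keeping track of the genuinely subtle point: the distinction between \emph{non-stable} fixers and arbitrary fixers. Recall that $G$ has the weak-EKR property precisely when no fixer has order exceeding $|H|$. Since every fixer $K$ with $|K| \geqs |H|$ is either stable (i.e.\ $K \leqs H^g$, forcing $|K| = |H|$ and $K = H^g$ because $H$ is maximal and $K$ is a fixer so $K \neq G$) or non-stable, the failure of the weak-EKR property is equivalent to the existence of a non-stable fixer of order at least $|H|$. So the first step is: $G$ has the weak-EKR property if and only if Table~\ref{tab:PSL2} contains no row whose listed subgroup $K$ is contained in $G$ (up to conjugacy) with $|K| \geqs |H|$ and $K$ non-stable, equivalently, $G$ fails to appear (with the relevant $H$) in the table.

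Next I would go through the table systematically, organised by the type of the point stabiliser $H$. For each maximal subgroup type arising from the subgroup structure of almost simple groups with socle $\PSL_2(q)$ (as catalogued in the standard references, e.g.\ the lists used in \cite{KL_classical}), I would check whether any row of Table~\ref{tab:PSL2} records a non-stable fixer $K$ with $|K| \geqs |H|$. The claim is that for $q > 61$ this happens for every type \emph{except} the ones listed in (i)--(iii): type $P_1$ (the parabolic), type $\GL_1(q^2)$ (a dihedral/torus normaliser of the non-split type), type $2^{1+2}_-.\mathrm{O}_2^-(2)$, type $A_5$, the type $\GL_1(q)\wr S_2$ case when $q$ is odd, and the subfield subgroup of type $\GL_2(2^{f/2})$ with $f$ even and $|G:G_0|$ even. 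For the "only if" direction I would exhibit, for each remaining type of $H$, the explicit non-stable fixer from the table that is large; for the "if" direction I would argue that in cases (i)--(iii) the table contains no large non-stable fixer for that $H$ (the threshold $q > 61$ is exactly what removes the sporadic small-$q$ rows of the table from consideration). The condition "$|G:G_0|$ even" in (iii) and the parity of $q$ in (ii) will emerge from reading off which rows of the table actually sit inside $G$ as opposed to inside some larger almost simple overgroup.

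For the final sentence of the corollary, the point is to identify precisely when $G$ has the weak-EKR property but \emph{not} the strict-weak-EKR property, i.e.\ when there exists a large fixer (non-stable, order $\geqs |H|$) but no fixer of order strictly greater than $|H|$. Given the first part, $G$ has the weak-EKR property exactly in cases (i)--(iii); among these, I would check the table to see for which $G$ a non-stable fixer $K$ of order \emph{exactly} $|H|$ is recorded. The corollary asserts this is exactly "the final three cases presented in Table~\ref{tab:PSL2}" with $G = G_0$ simple, so the task reduces to verifying: (a) in those three rows one indeed has $|K| = |H|$ with $K$ non-stable; and (b) in all other instances falling under (i)--(iii), every fixer has order strictly less than $|H|$ unless it is stable. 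Part (b) requires knowing that Theorem~\ref{thm:PSL2} captures \emph{all} subgroups of order at least $|H|$, so any $K$ not in the table with $|K|\geqs|H|$ must be stable, hence equal to some $H^g$ with $|K|=|H|$ --- but a point stabiliser is not a fixer (it is the stabiliser of one point, but $H$ itself acts with a derangement on the remaining points by Jordan's theorem applied to the transitive action, unless... ) --- here I would need to be slightly careful and instead simply note that a stable fixer has order \emph{at most} $|H|$ and only contributes to failure of strict-weak-EKR if it is non-stable, which by definition it is not; so strict-weak-EKR fails only through the table's rows, and for $q > 61$ those, intersected with cases (i)--(iii), are exactly the three cited rows.

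The main obstacle I anticipate is bookkeeping rather than conceptual: correctly matching each row of Table~\ref{tab:PSL2} to the type of $H$ and the precise almost simple group $G$ containing it, and in particular getting the arithmetic conditions (parity of $q$, parity of $f$, parity of $|G:G_0|$, and the cutoff $q > 61$) to align exactly with the table entries. A secondary subtlety is ensuring the clean dichotomy "large fixer exists $\iff$ table row exists" is applied with the correct reading of "non-stable" throughout, since a stable subgroup of order $\geqs |H|$ is forced to be a conjugate of $H$ and these are deliberately excluded from both the definition of large fixer and from Table~\ref{tab:PSL2}.
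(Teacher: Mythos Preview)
Your approach is essentially the same as the paper's: deduce the corollary by inspecting Table~\ref{tab:PSL2} via Theorem~\ref{thm:PSL2}, going type by type for $H$. The paper does exactly this for the non-subfield types, and for $H$ of type $\GL_2(q^{1/r})$ it invokes Lemmas~\ref{l:PSL2_(c)}, \ref{l:PSL2_d_I}, \ref{l:PSL2_d_II} (and Remarks~\ref{r:LII_order}, \ref{r:LIII_order}) to compare $|L^{\rm I}|, |L^{\rm II}|, |L^{\rm III}|$ against $|H|$ --- which is just the order comparison your plan requires anyway.

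Two slips to fix. First, your opening equivalence is off by one: failure of the \emph{weak}-EKR property means there is a fixer of order \emph{strictly} greater than $|H|$ (automatically non-stable), whereas the existence of a non-stable fixer of order $\geqs |H|$ is failure of the \emph{strict}-weak-EKR property. So the correct test against Table~\ref{tab:PSL2} is: weak-EKR holds iff every relevant $L$ in the table has $|L| = |H|$ (not merely ``no row appears''). You implicitly use this correctly in your third paragraph, but the first paragraph should be rewritten. Second, the aside ``a point stabiliser is not a fixer'' is simply wrong --- $H$ is trivially a fixer since every element of $H$ fixes the point it stabilises. Your recovery (``a stable fixer has order at most $|H|$ and contributes only if non-stable'') is the right line, so just drop the false detour. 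With these corrections your argument goes through and matches the paper.
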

	
	As another application of Theorem \ref{thm:PSL2}, we establish the best possible upper bound on $\rho_0(G)$ for primitive groups $G$ with socle $\PSL_2(q)$, verifying Conjecture \ref{conj:rho_0_as} in this setting.
	
	\begin{corol}
		\label{cor:rho_0_PSL2}
		Suppose $G$ is a primitive permutation group with socle $\PSL_2(q)$. Then $\rho_0(G) < 1/\sqrt{2}$.
	\end{corol}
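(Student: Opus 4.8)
The plan is to deduce Corollary \ref{cor:rho_0_PSL2} directly from the classification in Theorem \ref{thm:PSL2} together with Table \ref{tab:PSL2}. Recall that $\rho_0(G) = \max\{|K|/(|H|\sqrt{|\Omega|}) : K \leqs G \text{ a fixer}\}$, and that a fixer $K$ either is stable (so $K \leqs H^g$ and $|K| \leqs |H|$, contributing at most $1/\sqrt{|\Omega|}$ to the ratio) or is non-stable. Hence the supremum defining $\rho_0(G)$ is attained either by a stable fixer or by one of the large/non-stable fixers classified in Theorem \ref{thm:PSL2}. Since $|\Omega| = |G:H| \geqs 2$, a stable fixer already gives a ratio at most $1/\sqrt{2}$, matching the claimed bound, so it suffices to bound $|K|/(|H|\sqrt{|\Omega|})$ over all non-stable fixers $K$ appearing in the table.

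First I would set up the basic inequality: for any fixer $K$ listed in Table \ref{tab:PSL2}, with $K \leqs L$ and $L_0 = L \cap G_0$, we have $|K| \leqs |L|$, so
\[
\frac{|K|}{|H|\sqrt{|\Omega|}} \;\leqs\; \frac{|L|}{|H|\sqrt{|\Omega|}} \;=\; \frac{|L|}{\sqrt{|H||G|}}\,.
\]
Thus for every row of the table I need to check that $|L|^2 < |H|\,|G|/2$, equivalently $|L|/|H| < \sqrt{|G:H|}/\sqrt{2} = \sqrt{|\Omega|}/\sqrt{2}$. Because all the relevant data — the type (hence order) of $H$, the structure of $L$, and the index $|G:G_0|$ — is recorded explicitly in Table \ref{tab:PSL2}, this reduces to a finite list of elementary inequalities in $q$ (and in the divisor $f$ of the field-extension degree, and in $|G:G_0|$). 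I would organise the verification by the type of $H$: the parabolic case $H$ of type $P_1$ (where $|\Omega| = q+1$), the dihedral/torus normaliser cases of type $\GL_1(q)\wr S_2$ and $\GL_1(q^2)$, the subfield subgroup cases of type $\GL_2(q_0)$, and the remaining small cases ($A_5$, $S_4$, $A_4$, $2^{1+2}_-.\mathrm{O}_2^-(2)$), treating the finitely many exceptional small-$q$ rows of the table by direct computation.

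Second, I would pin down where the bound is sharp, since the statement asserts the strict inequality $\rho_0(G) < 1/\sqrt{2}$ rather than $\leqs 1/\sqrt{2}$. The extreme behaviour should come from the cases where $L$ is as large as possible relative to $H$ — typically a subfield subgroup or a parabolic-related subgroup — and I expect that in each such row one gets a ratio strictly below $1/\sqrt{2}$, with the gap closing as $q \to \infty$ only in the "most balanced" family. Establishing strictness amounts to checking that none of the inequalities $|L|^2 < |H||G|/2$ degenerates to equality; this is automatic for the infinite families (the two sides grow at different polynomial rates in $q$) and is a finite check for the sporadic small-$q$ rows.

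The main obstacle is not conceptual but bookkeeping: Table \ref{tab:PSL2} contains a number of distinct configurations (depending on the parity of $q$, on $q \bmod 4$, on whether $f$ is even, on the index $|G:G_0|$, and on a handful of exceptional small values of $q$), and one must be careful that the inequality $|L|/|H| < \sqrt{|\Omega|}/\sqrt{2}$ is verified in every case, including the borderline small-$q$ entries where the asymptotic argument is too crude. I would therefore lean on the explicit orders from Table \ref{tab:PSL2} and a short case analysis rather than a uniform estimate, and isolate the one or two families that come closest to the bound to confirm that the constant $1/\sqrt{2}$ is best possible (as claimed in the corollary's phrasing "best possible").
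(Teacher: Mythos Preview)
Your proposal is correct and follows essentially the same approach as the paper: reduce to checking $2|L|^2 < |G|\,|H|$ for each maximal non-stable fixer $L$ classified by Theorem \ref{thm:PSL2}, handle small $q$ computationally, and note that the stable (or more generally the $|K|\leqs|H|$) case is immediate since $|\Omega|>2$. The paper organises the verification via the four cases of Proposition \ref{p:PSL2_summarise} rather than directly via the rows of Table \ref{tab:PSL2}, and it exhibits case (a) explicitly (the $q$ even, $H$ of type $\GL_1(q)\wr S_2$, $\widehat{K}$ of type $P_1$ case), obtaining the ratio $\sqrt{q/(q+1)}/\sqrt{2}$ to confirm that $1/\sqrt{2}$ is the sharp asymptotic constant; the remaining cases are dismissed as routine, just as in your outline.
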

	
	It can be deduced from Theorem \ref{thm:PSL2} (also remarked in \cite[Lemma 3.1]{LPSX_intersecting}) that if $q$ is even, $G = \SL_2(q)$ and $H = D_{2(q-1)}$, then $\rho_0(G)\to 1/\sqrt{2}$ as $q\to\infty$. We can also prove that the same bound $\rho_0(G) < 1/\sqrt{2}$ holds for every almost simple group with sporadic socle, even though we cannot determine all the large fixers in every case.
	
	\begin{theorem}
		\label{thm:spo_rho_0}
		Suppose $G$ is a primitive permutation group with socle a sporadic simple group. Then $\rho_0(G) < 1/\sqrt{2}$.
	\end{theorem}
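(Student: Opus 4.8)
The plan is to argue by a case-by-case analysis over the finitely many sporadic simple groups $G_0$ and, for each, over the (finitely many up to conjugacy) maximal subgroups $H$ that can serve as a point stabiliser, using the classification of maximal subgroups of the sporadic almost simple groups together with the relevant data in the $\textsf{ATLAS}$ and computational tools such as \textsf{GAP} or \textsf{Magma}. For a primitive group $G$ with $\soc(G) = G_0$ and point stabiliser $H$, we have $|\Omega| = |G:H|$, so the claim $\rho_0(G) < 1/\sqrt 2$ is equivalent to showing that every fixer $K \leqslant G$ satisfies $|K| < |H|\sqrt{|\Omega|}/\sqrt 2 = \sqrt{|G||H|/2}$. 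First I would record the trivial bound: a fixer $K$ is contained in no single conjugate of $H$ in general, but every element of $K$ lies in some conjugate of $H$, i.e. $K \subseteq \bigcup_{g\in G} H^g$. A crude but often sufficient bound is $|K| \leqslant |\bigcup_g H^g|$; more usefully, since $K$ is an intersecting set one may instead bound $|K|$ via the structure of $K$ itself — a fixer is in particular a subgroup all of whose elements are conjugate into $H$, so $K$ has no element of order equal to that of a derangement, which forces $K$ to avoid whole conjugacy classes of $G$.

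The key reduction I would use is as follows. Since $K$ is a \emph{subgroup} (not merely a subset), the spectrum of $K$ — the set of element orders occurring in $K$ — must be contained in the set of element orders of $G$ that are \emph{not} derangement orders, equivalently orders realised inside some conjugate of $H$. For each pair $(G_0,H)$ one computes the set $\Spec_{\rm fix}(G,H)$ of such orders from the permutation character or from the fusion of classes, and then bounds the order of a subgroup $K$ of $G$ whose element orders all lie in $\Spec_{\rm fix}(G,H)$; typically this already forces $K$ to be small (e.g. a $\{2,3\}$-group, or contained in a known small maximal subgroup), and in particular $|K|^2 < |G||H|/2$. When this spectral obstruction is not by itself decisive — for the sporadic groups of large order with many primitive actions this will be rare — one falls back on the explicit list of maximal overgroups of $K$: since $K$ is non-stable or not, in either case $K$ lies in a maximal subgroup $M$ of $G$ (possibly $M$ a conjugate of $H$), and one checks directly that $|M| < \sqrt{|G||H|/2}$ unless $M$ is $G$-conjugate to $H$, in which case $K \leqslant H^g$ gives $|K| \leqslant |H| < |H|\sqrt{|\Omega|}/\sqrt 2$ as $|\Omega| \geqslant 3$.

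The one genuinely delicate point, and the main obstacle, is the small number of $(G_0,H)$ for which $H$ is large relative to $G$ and there is a maximal subgroup $M$ with $|M|$ comparable to $|H|$ and with $M$ a fixer (or containing a large fixer). This happens, for instance, when $G$ has a $2$-transitive or near-$2$-transitive action, or when $H$ and a second class of subgroups have nearly equal order; there one must verify that $M$ genuinely contains a derangement of the $G$-action on $\Omega$, which amounts to exhibiting an element of $M$ whose $G$-class meets no conjugate of $H$ — a finite check via the permutation character $\pi = 1_H^G$, using that $g$ is a derangement iff $\pi(g) = 0$. I would handle these finitely many residual cases individually with \textsf{GAP}'s character-table library, computing $1_H^G$ and reading off the derangement classes, and then bounding $|K|$ by the order of the largest fixer-subgroup inside each relevant $M$. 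Since the constant $1/\sqrt 2$ is strict and the sporadic groups are a finite list, no uniform estimate is needed; the proof is a finite verification, and the essential content is organising the maximal-subgroup and permutation-character data so that the inequality $|K| < \sqrt{|G||H|/2}$ is transparent in every case.
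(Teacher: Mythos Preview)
Your proposal is correct and follows essentially the same route as the paper: rewrite $\rho_0(G)<1/\sqrt{2}$ as $2|K|^2<|G|\cdot|H|$, pass to a maximal overgroup $\widehat{K}$ of $K$, use the constraint on element orders (in the paper, the coarser $\pi(K)\subseteq\pi(H)$) to bound $|K|$, and clean up a short finite list of residual triples $(G,H,\widehat{K})$ by ad hoc arguments with character tables. The one sharpening in the paper that your sketch lacks is the explicit inequality $|K|\leqslant |\widehat{K}|_{\pi(H)}$ (the $\pi(H)$-part of $|\widehat{K}|$), which turns your somewhat open-ended ``bound the largest fixer-subgroup inside $M$'' step into a trivially computable number and reduces the residual list to just ten cases.
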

	
	Let $G$ be a finite group, let $H$ be a core-free subgroup and let $D(G,H)$ be the set of derangements in $G$ with respect to its action on the set of cosets $[G:H]$. Note that $K\leqs G$ is a fixer on $[G:H]$ if and only if $D(G,H)\subseteq D(G,K)$. This leads us to the more general problem of determining when there is a containment of derangement sets with respect to two different faithful primitive actions of a finite group, which may be of independent interest. See Section \ref{s:tab} for Table \ref{tab:sporadic}.
	
	\begin{theorem}
		\label{thm:spo}
		Let $G$ be an almost simple sporadic group, and let $H,K$ be core-free maximal subgroups of $G$ such that $|K|\geqs |H|$. Suppose $H^G\ne K^G$. Then $D(G,H)\subseteq D(G,K)$ if and only if $(G,H,K)$ is one of the cases listed in Table \ref{tab:sporadic}.
	\end{theorem}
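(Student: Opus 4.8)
The plan is to build on the reformulation recorded just above the statement. Since $H$ and $K$ are maximal and $H^G\ne K^G$, neither can be contained in a conjugate of the other, so $K$ is automatically non-stable; hence $D(G,H)\subseteq D(G,K)$ holds if and only if $K$ is a (large) fixer on $[G:H]$, i.e.\ if and only if every element of $K$ is $G$-conjugate into $H$. Writing $\mathcal{C}(M)$ for the set of $G$-conjugacy classes meeting a subgroup $M$ --- equivalently, the support of the permutation character $1_M^G$ --- the theorem reduces to the following purely group-theoretic task: for each almost simple sporadic $G$, list all pairs of core-free maximal subgroups $H,K$ with $H^G\ne K^G$, $|K|\geqs|H|$ and $\mathcal{C}(K)\subseteq\mathcal{C}(H)$.

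First I would record some cheap necessary conditions in order to prune the search drastically. If $\mathcal{C}(K)\subseteq\mathcal{C}(H)$ then every element order occurring in $K$ also occurs in $H$; in particular every prime dividing $|K|$ divides $|H|$. Together with $|K|\geqs|H|$ (so that $[G:K]\leqs[G:H]$), this already rules out the great majority of pairs, because a maximal subgroup $K$ of comparatively large order typically has a rich set of element orders which cannot be accommodated inside a smaller maximal subgroup $H$. Two bookkeeping points special to the almost simple setting should be kept in mind: when $G\ne\soc(G)$ one must work with $G$-classes rather than $\soc(G)$-classes, the subgroup $\soc(G)$ is excluded since it is not core-free, and any novelty maximal subgroups of $G$ must be included among the candidates for both $H$ and $K$.

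The main body of the proof is then a case-by-case analysis over the almost simple sporadic groups. For each $G$ I would take the classification of maximal subgroups from the \textsc{Atlas} and the subsequent literature, together with the character tables of the maximal subgroups and their class fusion maps into $G$ as provided by the \textsf{GAP} character table library, to compute $\mathcal{C}(M)$ for every maximal subgroup $M$; it then remains to test the handful of pairs $(H,K)$ that survive the pruning above. For the pairs that pass, one checks $\mathcal{C}(K)\subseteq\mathcal{C}(H)$ directly from the fusion data; this produces exactly the entries of Table \ref{tab:sporadic} and simultaneously settles the sufficiency direction.

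The principal obstacle is the two largest cases $\soc(G)\in\{\mathbb{B},\mathbb{M}\}$, where the character tables and class fusions of the maximal subgroups are not all available. Here I would argue by order considerations: if $|H|$ is small then $|K|\geqs|H|$ forces $K$ to lie among the few largest maximal subgroups, whose fusion patterns are known; if $|H|$ is large then $[G:H]$ is small enough that the relevant permutation characters, hence $\mathcal{C}(H)$, have been determined; and for the remaining intermediate range the element-order filter from the second paragraph is sharp enough to eliminate every surviving pair. A final caveat applying throughout: a $G$-class may fail to meet $K$ even when its element order divides $|K|$, so divisibility of orders is only a necessary test, and the ultimate verification of each entry of Table \ref{tab:sporadic} must rest on the explicit fusion maps rather than on numerology alone.
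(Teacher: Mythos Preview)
Your proposal is correct and follows essentially the same approach as the paper: reformulate the condition as $\mathcal{C}(K)\subseteq\mathcal{C}(H)$, compute these sets via the \textsf{GAP} Character Table Library and the stored fusion maps, and fall back on element-order and prime-divisor filters (your second paragraph, the paper's Lemma~\ref{l:pre_fix}) where fusion data is incomplete. One refinement worth noting: the paper finds that for the Baby Monster the general method handles everything except the single case $(2^2\times F_4(2)){:}2\in\{H,K\}$, which is dispatched by $\Spec(K)\not\subseteq\Spec(H)$; only the Monster requires the more elaborate stratification you sketch, and there the paper partitions the maximal subgroups into three explicit sets $\mathcal{A}_1,\mathcal{A}_2,\mathcal{A}_3$ (those with accessible character tables, those with accessible permutation representations or almost simple structure, and three residual $2$-local subgroups) and disposes of each combination by the spectrum and prime-divisor tests.
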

	
	\begin{theorem}
		\label{thm:alt}
		Let $G$ be an almost simple group with alternating socle, and let $H$ be a maximal subgroup of $G$ that acts intransitively or imprimitively on $\{1,\dots,n\}$. Suppose $K$ is a core-free maximal subgroup of $G$ with $|K|\geqs |H|$ and $K\not\cong H$. Then $D(G,H)\subseteq D(G,K)$ if and only if $(G,H,K) = (A_5,S_3,A_4)$.
	\end{theorem}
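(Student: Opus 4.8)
The plan is to work throughout with $G\in\{S_n,A_n\}$, since an almost simple group with socle $A_n$ acting on $\{1,\dots,n\}$ -- as the hypothesis on $H$ forces -- must be one of these (for $n=6$ the remaining almost simple groups with socle $A_6$ act on $10$ points, so the hypothesis excludes them), and the relevant $H$ and $K$ are then of type $S_k\times S_{n-k}$ or $S_a\wr S_b$, or their index-two intersections with $A_n$. The key reformulation is that $D(G,H)\subseteq D(G,K)$ holds if and only if $\bigcup_{x\in G}K^x\subseteq\bigcup_{x\in G}H^x$, i.e. if and only if every $G$-conjugacy class meeting $K$ also meets $H$; so it suffices to decide, for each relevant triple, whether $K$ contains an element whose $S_n$-cycle type does not occur in $H$. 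For the claimed exception one checks directly that with $G=A_5$, $H=(S_3\times S_2)\cap A_5\cong S_3$ and $K=(S_4\times S_1)\cap A_5=A_4$, both $\bigcup_xH^x$ and $\bigcup_xK^x$ consist precisely of the identity, the $3$-cycles and the $(2,2)$-elements, whence $D(A_5,S_3)=D(A_5,A_4)$ is the set of $5$-cycles; this gives one direction. For the converse I would first dispose of the groups of small degree -- a bound such as $n\leqs 12$ suffices, and this also covers the $n=6$ groups and the exotic maximal subgroups of $S_6$ -- by direct computation. For $n$ large, $|H|$ grows super-exponentially in $n$, whereas a primitive maximal subgroup of $S_n$ not containing $A_n$ has order below $4^n$ by the Praeger--Saxl bound on orders of primitive groups; since $A_n\normeq S_n$ is not core-free, $|K|\geqs|H|$ then forces $K$ to be intransitive or imprimitive too.

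The construction of the distinguishing element uses three elementary membership facts for $S_n$: (a) an $\ell$-cycle with $\ell<n$ lies in some conjugate of $S_a\wr S_b$ if and only if $\ell\leqs a$ or $a\mid\ell$; (b) every $S_a\wr S_b$ contains an $n$-cycle; and (c) an element lies in some conjugate of $S_k\times S_{n-k}$ if and only if some sub-multiset of its cycle lengths sums to $k$. There are three cases. If $H$ and $K$ are both intransitive, of types $S_k\times S_{n-k}$ and $S_m\times S_{n-m}$ with $m,k<n/2$, then $|K|\geqs|H|$ forces $m\leqs k$ and $K\not\cong H$ forces $m<k$; I would take $g$ of cycle type $[n-m,m]$, which lies in $K$ but, since its sub-multiset sums $0,m,n-m,n$ miss $k$ (as $0<m<k<n-m<n$), lies in no conjugate of $H$. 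If $H$ and $K$ are both imprimitive, of types $S_a\wr S_b$ and $S_c\wr S_d$, then either $a\nmid c$, in which case one picks $j\geqs 1$ with $a<jc<n$ and $a\nmid jc$ (taking $j=1$ when $c>a$) and lets $g$ be a $jc$-cycle, or else $a\mid c$, so that $c\geqs 2a$ and Bertrand's postulate supplies a prime $p$ with $a<p\leqs 2a\leqs c$, and one lets $g$ be a $p$-cycle; by fact (a), $g$ lies in $K$ but in no conjugate of $H$. In the mixed case with $H$ intransitive and $K$ imprimitive, take $g$ an $n$-cycle when $n$ is odd and of cycle type $[(n/2)^2]$ when $n$ is even: each lies in $K$ by fact (b) (respectively by a short direct construction inside $S_c\wr S_d$), and neither lies in a conjugate of $H$, the first being transitive and the second having all cycle lengths larger than $k$. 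Finally, in the mixed case with $H=S_a\wr S_b$ and $K=S_m\times S_{n-m}$, take $g$ of cycle type $[a+1,1^{n-a-1}]$: it lies in $K$ because $a+1\leqs n-m$, and in no conjugate of $H$ because $a+1>a$ and $a\nmid a+1$.

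When $G=A_n$, the element $g$ produced above is frequently an odd permutation; in that case I would replace it by an even perturbation, adjoining a disjoint transposition -- or, in the few configurations where a transposition would create an unwanted block decomposition inside $H$ (for instance comparing $S_a\wr S_b$ with $S_{2a-2}\wr S_d$ for small $a$), a disjoint $3$-cycle or longer cycle -- and then re-verify that the perturbed element still lies in $K$ and in no conjugate of $H$. I expect this parity bookkeeping to be the principal obstacle, since it must be carried out uniformly while avoiding the creation of spurious placements inside $H$. It is also where the exception comes from: for $G=A_n$ with $K$ a point stabiliser $A_{n-1}$ and $H$ of type $S_2\times S_{n-2}$, an element of $K$ lying in no conjugate of $H$ must be even with exactly one fixed point and no transposition, and the least degree admitting such an element is $n=7$, so the degree $n=5$ leaks through as $(A_5,S_3,A_4)$. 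The remaining work is to pin down exactly which small-degree configurations require separate treatment -- equivalently, to fix a correct lower bound on $n$ above which the uniform construction applies -- and to settle those, together with all the $n=6$ groups, by direct computation.
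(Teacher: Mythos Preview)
Your overall strategy matches the paper's: reduce to $G\in\{A_n,S_n\}$, handle small $n$ by machine, use an order bound on primitive groups to force $K$ intransitive or imprimitive, and then in each of the four (intransitive/imprimitive)$\times$(intransitive/imprimitive) configurations exhibit an element of $K$ whose cycle type does not occur in $H$. Your specific witnesses differ from the paper's and are in places cleaner --- for $H$ intransitive and $K$ imprimitive you treat \emph{every} imprimitive $K$ at once via an $n$-cycle or its square $[(n/2)^2]$ (both automatically even), whereas the paper first uses order bounds to cut down to $K=(S_{n/2}\wr S_2)\cap G$; and for both $H,K$ imprimitive you use single cycles plus Bertrand, whereas the paper runs a five-case split on the parities and relative sizes of the block lengths.

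Two gaps to flag. First, the numbers do not close: with the $4^n$ Praeger--Saxl bound and a computation only to $n\leqs 12$, the smallest imprimitive $H$ (type $S_2\wr S_{n/2}$, of order $2^{n/2}(n/2)!$) fails $|H|>4^n$ until $n$ is in the forties. The paper computes to $n\leqs 24$ and uses Mar\'oti's sharper $2^n$ bound together with a direct estimate $|S_a\wr S_b|\geqs 2^n$ for $n\geqs 25$. Second, you are right that parity is the crux, and several of your witnesses are odd (the $c$- or $jc$-cycle when $c$ is even, $[n-m,m]$ when $n$ is odd, the $(a+1)$-cycle when $a$ is even); adjoining a transposition can manufacture an unwanted $k$-subset or a size-$a$ block. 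The paper largely avoids this by building odd cycle length into the construction --- for instance, in your ``$H$ imprimitive, $K$ intransitive'' case it takes an $m$-cycle with $m\in\{n-k,n-k-1\}$ chosen odd rather than your $(a+1)$-cycle, and then argues by divisibility. You should expect to need a comparable device, not merely a disjoint transposition, to push the $A_n$ case through uniformly.
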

	
	We refer the reader to Lemmas \ref{l:max_H_prim_K_imprim}, \ref{l:max_K=HA} and \ref{l:max_K=PA} for partial results in the case where $\soc(G) = A_n$ and $H$ acts primitively on $\{1,\dots,n\}$.
	
	As before, let $G \leqs {\rm Sym}(\O)$ be a finite transitive permutation group with point stabiliser $H$ and define
	\begin{equation*}
	\rho_1(G) := \max\left\{\frac{|K|}{|H|\sqrt{|\Omega|}}:\mbox{$K \leqs G$ is a fixer and $K$ is a maximal subgroup of $G$}\right\}.
	\end{equation*}
	Clearly, we have $\rho_1(G)\leqs \rho_0(G)$, so 
	Conjecture \ref{conj:rho_0} asserts that $\rho_1(G)$ is bounded above by an absolute constant if $G$ is primitive. As noted above, if $q$ is even, $G = \SL_2(q)$ and $H = D_{2(q-1)}$, then $\rho_1(G) = \rho_0(G)\to 1/\sqrt{2}$ as $q\to \infty$. In addition, we anticipate that $\rho_1(G) < 1/\sqrt{2}$ for all almost simple primitive groups. As a corollary of Theorems \ref{thm:spo} and \ref{thm:alt}, we establish the best possible upper bound on $\rho_1(G)$ for primitive groups $G$ with alternating or sporadic socle.
	
	\begin{corol}
		\label{cor:rho_1_An}
		Suppose $G$ is an almost simple primitive group with socle an alternating or sporadic group. Then $\rho_1(G) \leqs \sqrt{2/5}$, with equality if and only if $(G,H) = (A_5,S_3)$.
	\end{corol}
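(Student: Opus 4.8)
The plan is to read off the bound from the classification results established above. Fix $G$ with $\soc(G)$ alternating or sporadic, point stabiliser $H$, and write $\Omega=[G:H]$; for a maximal subgroup $K$ of $G$ that is a fixer on $\Omega$ set $r(K)=|K|/(|H|\sqrt{|\Omega|})$, so that $\rho_1(G)=\max_K r(K)$. Since $\soc(G)$ is non-abelian simple it has no proper subgroup of index less than $5$, so $|\Omega|=|G:H|\geqs 5$; hence every fixer with $|K|\leqs|H|$ contributes $r(K)\leqs 1/\sqrt{|\Omega|}\leqs 1/\sqrt5<\sqrt{2/5}$, which in particular disposes of the trivial fixers (those $K$ with $K^G=H^G$). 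From now on I may therefore assume $|K|>|H|$; then $K\not\cong H$ and $K^G\ne H^G$ automatically, and $K$ is core-free, since a fixer cannot contain $\soc(G)$ (the latter is transitive on $\Omega$ and so contains a derangement by \cite{J_der}). Thus the hypotheses of Theorems \ref{thm:spo} and \ref{thm:alt} are satisfied.

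Suppose first that $\soc(G)$ is sporadic. Then Theorem \ref{thm:spo} places $(G,H,K)$ in Table \ref{tab:sporadic}, and a direct evaluation of $|K|/(|H|\sqrt{|\Omega|})$ on each line of that table gives $r(K)<\sqrt{2/5}$ in every case. Next suppose $\soc(G)=A_n$ with $H$ intransitive or imprimitive on $\{1,\dots,n\}$. Then Theorem \ref{thm:alt} forces $(G,H,K)=(A_5,S_3,A_4)$: here $\Omega$ may be identified with the set of $2$-subsets of $\{1,\dots,5\}$, so $|\Omega|=10$ and $r(A_4)=12/(6\sqrt{10})=\sqrt{2/5}$. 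Since no element of order $5$ fixes a $2$-subset, $D_{10}\leqs A_5$ is not a fixer on $\Omega$, so among the maximal subgroups $A_4,D_{10},S_3$ of $A_5$ the fixer of largest order is $A_4$, whence $\rho_1(A_5)=\sqrt{2/5}$ in this case. (Equivalently, $A_5=C_5A_4$ with $C_5$ semiregular on $\Omega$, so $|K|\leqs 12$ for every fixer $K$ by \cite[Proposition~1.1]{LPSX_intersecting}.)

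The remaining case, $\soc(G)=A_n$ with $H$ primitive on $\{1,\dots,n\}$, carries the substantive work, and I would split on the type of the maximal fixer $K$. If $K$ is intransitive or imprimitive, or a primitive group of one of the types handled in Lemmas \ref{l:max_H_prim_K_imprim}, \ref{l:max_K=HA} and \ref{l:max_K=PA} (including the product-action type), then those lemmas apply and leave only a short explicit list of candidate triples $(G,H,K)$, each checked by hand to satisfy $r(K)<\sqrt{2/5}$. The one possibility not covered is that $K$ is an almost simple primitive group of degree $n$; in that event $|H|$ and $|K|$ are both very small compared with $n!$, so $|\Omega|=|G:H|$ is correspondingly large, while the fixer condition forces every cycle type occurring in $K$ to occur already in $H$. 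Combining this restriction with the standard bounds on the order of a primitive subgroup of $S_n$ not containing $A_n$ should rule out $|K|>|H|$ apart from a handful of small pairs, all with $r(K)<\sqrt{2/5}$. I expect this primitive-versus-primitive analysis to be the main obstacle (it is exactly the configuration in which the containment problem is only partially resolved for $H$ primitive), but the crude numerical estimates are more than enough for the present inequality.

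Assembling the cases, for every primitive point stabiliser $H$ every maximal fixer $K$ satisfies $r(K)\leqs\sqrt{2/5}$, with equality only for $(G,H,K)=(A_5,S_3,A_4)$. Therefore $\rho_1(G)\leqs\sqrt{2/5}$, with equality if and only if $(G,H)=(A_5,S_3)$.
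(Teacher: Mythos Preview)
Your overall structure is right, and the sporadic case and the intransitive/imprimitive-$H$ case match the paper. The gap is in the case where $H$ is primitive on $[n]$: you split on the O'Nan--Scott type of $K$, invoke Lemmas \ref{l:max_K=HA} and \ref{l:max_K=PA} for types HA and PA, and then admit that the almost simple type for $K$ is the ``main obstacle'' that you only expect to work out (you also omit the diagonal type SD). That is not a proof; the inequality in the remaining case is never actually established.

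The paper's argument avoids this case analysis entirely and is much shorter. Once Lemma \ref{l:max_H_prim_K_imprim} forces $K$ to be primitive on $[n]$, Lemma \ref{l:max_size_prim} (Mar\'oti's bound) gives $|K|<2^n$ for every such $K$ when $n\geqs 25$, irrespective of its O'Nan--Scott type. Then $|K|^2<4^n<n!/2\leqs |G|$ and $|H|\geqs n>5/2$, so
\[
r(K)^2=\frac{|K|^2}{|G|\,|H|}<\frac{1}{|H|}<\frac{2}{5},
\]
and the cases $n\leqs 24$ are done by direct computation (Proposition \ref{p:max_An_24}). In particular, Lemmas \ref{l:max_K=HA} and \ref{l:max_K=PA} are not used for this corollary at all; they are there to push towards the finer derangement-containment classification, not the numerical bound on $\rho_1$.
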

	
	As a special case of the derangement containment problem highlighted above, there is a particular interest in studying the finite groups $G$ with core-free maximal subgroups $H$ and $K$ such that $D(G,H) = D(G,K)$. This problem arises naturally in algebraic number theory. More specifically, it is remarked in \cite[Section 1]{J_number} (also see \cite{K_arith}) that if $F/k$ is a Galois extension containing subfields $F_1$ and $F_2$, $G = \mathrm{Gal}(F/k)$ is the Galois group of $F/k$, $H = \mathrm{Gal}(F/F_1)$ and $K = \mathrm{Gal}(F/F_2)$, then $F_1$ and $F_2$ are \textit{Kronecker equivalent} (their Kronecker sets differ by a finite number of primes) if and only if
	\begin{equation*}
	\bigcup_{g\in G}H^g = \bigcup_{g\in G}K^g
	\end{equation*}
	(if both $H$ and $K$ are core-free in $G$, then the latter condition is equivalent to $D(G,H) = D(G,K)$). See \cite[Section 1.1]{FPS_der} and \cite[Section 4]{P_Kro} for more details about this connection.
	
	The following conjecture has been proposed by Pablo Spiga (see \cite[Conjecture 1]{S_06}).
	
	\begin{conj}[Spiga]
		\label{conj:Spiga}
		Let $G$ be a finite group acting faithfully and primitively on the cosets $[G:H_1]$ and $[G:H_2]$, with respective permutation characters $\pi_1$ and $\pi_2$. If $D(G,H_1) = D(G,H_2)$, then either $\pi_1 = \pi_2$, or one of $\pi_1-\pi_2$ and $\pi_2-\pi_1$ is a character of $G$.
	\end{conj}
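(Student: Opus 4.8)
Since the conjecture is open in general, the plan is to verify it for $G$ almost simple with socle $\PSL_2(q)$, the setting in which Theorem~\ref{thm:PSL2} is available. So suppose $G$ acts faithfully and primitively on $[G:H_1]$ and $[G:H_2]$ with $D(G,H_1)=D(G,H_2)$; then $H_1,H_2$ are core-free maximal subgroups of $G$, and the hypothesis is equivalent to $\bigcup_{g\in G}H_1^g=\bigcup_{g\in G}H_2^g$, that is, each $H_i$ is a fixer for the action of $G$ on $[G:H_{3-i}]$. Relabelling if necessary, assume $|H_2|\geqs|H_1|$ and apply Theorem~\ref{thm:PSL2} with point stabiliser $H_1$ and subgroup $K=H_2$. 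If $H_2$ is stable then $H_2\leqs H_1^g$ for some $g\in G$, and maximality forces $H_2=H_1^g$; hence $H_1$ and $H_2$ are conjugate and $\pi_1=\pi_2$, so the conjecture holds. Otherwise $H_2$ is $G$-conjugate to one of the (finitely many families of) subgroups of $L$ recorded in Table~\ref{tab:PSL2}, with $H_1$ the corresponding point stabiliser.

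It then remains to treat these finitely many configurations. First I would discard the rows for which the reverse containment $\bigcup_{g}H_1^g\subseteq\bigcup_{g}H_2^g$ fails --- these do not satisfy $D(G,H_1)=D(G,H_2)$ and so cannot occur --- as well as the rows in which the fixer $K=H_2$ cannot be taken to be a maximal subgroup of $G$. Deciding which rows survive amounts to comparing the fusion of semisimple, unipotent and field-automorphism elements in $H_1$ and $H_2$; this uses Dickson's classification of the subgroups of $\PSL_2(q)$ together with the conjugacy-class analysis of $\mathrm{A\Gamma L}_1(q)$ (a maximal parabolic of $\PGammaL_2(q)$) developed in Section~\ref{s:AGammaL}. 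I expect this reduction to leave only a short explicit list of triples $(G,H_1,H_2)$.

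For each surviving triple I would then compute the permutation characters explicitly. Using the classical character table of $\PSL_2(q)$ and of its almost simple overgroups, together with the fixed-point formula $\pi_{H_i}(g)=|g^G\cap H_i|\,|C_G(g)|/|H_i|$, one writes $\pi_1=\mathrm{Ind}_{H_1}^G 1_{H_1}$ and $\pi_2=\mathrm{Ind}_{H_2}^G 1_{H_2}$ as sums of irreducible characters of $G$, and verifies that either $\pi_1=\pi_2$ or, since $\pi_2(1)\geqs\pi_1(1)$, every irreducible constituent of $\pi_1$ occurs in $\pi_2$ with at least the same multiplicity, i.e.\ $\pi_2-\pi_1$ is a character. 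The main obstacle is exactly this last stage, together with the preceding reduction: the size data captured by $\rho_0$ do not by themselves decide equality of the conjugate-closures, so one genuinely needs the finer element-fusion information for $\PGammaL_2(q)$, and the character decompositions for the extensions by diagonal and field automorphisms require some care. A secondary subtlety is the borderline rows with $|H_1|=|H_2|$ but $H_1\not\sim_G H_2$, where the conjecture demands $\pi_1=\pi_2$ outright and this must be checked directly from the character values.
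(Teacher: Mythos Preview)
Your reduction is correct and matches the paper's: Lemma~\ref{l:psl_even_HK} records exactly the four surviving families when $q>61$ (three with $K=H^\delta$, where permutation equivalence gives $\pi_1=\pi_2$ at once, and one with $q$ even, $H_1$ of type $\GL_1(q)\wr S_2$, $H_2$ of type $P_1$, and $|G:G_0|$ odd), while small $q$ is handled by machine. For the one substantive case the paper's execution differs from your plan: rather than decomposing $\pi_1$ and $\pi_2$ separately into irreducibles of $G=\SL_2(q){:}\langle\phi^j\rangle$---whose character table is not classically available in the way that of $\SL_2(q)$ is---the paper works with $\theta=\pi_\Omega-\pi_\Delta$ directly. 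It first parametrises the conjugacy classes of $G$ (using the $\mathrm{A\Gamma L}_1(q)$ machinery of Section~\ref{s:AGammaL} together with a parallel $\SU_2(q)$ computation for elements meeting a Singer torus), evaluates $\theta$ on them, and computes $\langle\theta,\theta\rangle$ coset by coset over $G/G_0$; it then uses Clifford theory from $G_0$ to $G$ to exhibit exactly $\langle\theta,\theta\rangle$ distinct irreducible constituents of $\theta$, each with multiplicity one, which forces $\theta$ to be their sum. Your approach would work in principle, but the paper's route sidesteps the need to know the full character table of the extension by field automorphisms, and the inner-product bookkeeping with Clifford theory is where most of the actual effort lies.
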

	
	In \cite{S_06}, Conjecture \ref{conj:Spiga} is reduced to almost simple groups (see \cite[Theorem 1.2]{S_20} for an explicit statement) and it has been verified for all almost simple sporadic groups \cite[Theorem 1.3]{S_20}. The latter paper also establishes some partial results towards Conjecture \ref{conj:Spiga} for alternating and symmetric groups.
	
	Our final result verifies Conjecture \ref{conj:Spiga} for the groups with socle $\PSL_2(q)$. This is the first family of groups of Lie type for which the conjecture has been resolved.
	
	\begin{theorem}
		\label{thm:Spiga}
		Conjecture \ref{conj:Spiga} holds for all groups with socle $\PSL_2(q)$.
	\end{theorem}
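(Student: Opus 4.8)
The plan is to combine the classification of large fixers in Theorem \ref{thm:PSL2} with explicit computations of permutation characters. Write $G_0 = \PSL_2(q)$; since $G_0$ is simple and non-abelian, any $G$ with $\soc(G) = G_0$ is almost simple with $G_0 \normeq G \leqs \Aut(G_0) = \PGammaL_2(q)$, so Theorem \ref{thm:PSL2} applies to $G$. Suppose $G$ acts faithfully and primitively on $[G:H_1]$ and $[G:H_2]$ with $D(G,H_1) = D(G,H_2)$; then $H_1$ and $H_2$ are core-free maximal subgroups, and since both are core-free the hypothesis is equivalent to $\bigcup_{g\in G}H_1^g = \bigcup_{g\in G}H_2^g$. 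Relabelling if necessary, assume $|H_2|\geqs|H_1|$. If $H_2$ is $G$-conjugate to $H_1$, then the two actions are permutation isomorphic, so $\pi_1 = \pi_2$ and we are done; hence we may assume $H_2^G\neq H_1^G$.

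Under this assumption $H_2\subseteq\bigcup_g H_2^g = \bigcup_g H_1^g$, so every element of $H_2$ fixes a point in the action of $G$ on $[G:H_1]$; that is, $H_2$ is a fixer for this action. It is non-stable: if $H_2\leqs H_1^g$ for some $g$, then $|H_2|\geqs|H_1| = |H_1^g|$ together with the maximality of $H_1^g$ forces $H_2 = H_1^g$, contradicting $H_2^G\neq H_1^G$. Thus $H_2$ is a \emph{large} fixer of $G$ acting on $[G:H_1]$ (Definition \ref{def:max-fixer}), so by Theorem \ref{thm:PSL2} the triple $(G,H_1,H_2)$ is, up to $G$-conjugacy, one of those recorded in Table \ref{tab:PSL2} with $H = H_1$ and $K = H_2$. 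Since $H_2$ is a \emph{maximal} subgroup of $G$, only the rows in which the listed fixer is itself maximal in $G$ can occur, and this leaves a short, explicit list of families $(G,H_1,H_2)$ to analyse.

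For each surviving candidate we argue in two steps. First, Table \ref{tab:PSL2} only records that $H_2$ is a fixer on $[G:H_1]$, i.e. $\bigcup_g H_2^g\subseteq\bigcup_g H_1^g$; to be in the situation of Conjecture \ref{conj:Spiga} we also need the reverse inclusion, equivalently that $H_1$ is a fixer for the action of $G$ on $[G:H_2]$, which we check by inspecting the element orders and class fusions in the explicit subgroups, discarding those candidates where it fails. Second, for each genuine triple we compute the permutation characters $\pi_1 = \mathbf{1}_{H_1}^G$ and $\pi_2 = \mathbf{1}_{H_2}^G$ from the ordinary character table of $G$ (a subgroup of $\PGammaL_2(q)$, whose irreducibles are classical) and the fusion of the classes of $H_1$ and $H_2$ in $G$; the analysis of $\mathrm{A\Gamma L}_1(q)$ from Section \ref{s:AGammaL} is the main tool for controlling these fusions. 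Using $\langle\pi_1 - \pi_2,\chi\rangle_G = \langle\mathbf{1}_{H_1},\chi|_{H_1}\rangle_{H_1} - \langle\mathbf{1}_{H_2},\chi|_{H_2}\rangle_{H_2}$ for $\chi\in\mathrm{Irr}(G)$, and recalling $\pi_1(1) = |G:H_1|\geqs|G:H_2| = \pi_2(1)$, we verify in every case that either $\pi_1 = \pi_2$ or all these inner products are non-negative, i.e. $\pi_1 - \pi_2$ is a genuine character of $G$; this yields the conjecture.

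The main obstacle is the last step: one must carry out the permutation-character bookkeeping uniformly across the infinite families in Table \ref{tab:PSL2}, controlling the multiplicity of every irreducible constituent of $\pi_1$ and $\pi_2$ as a function of $q$ (and of $|G:G_0|$), and then dispatch separately the finitely many small $q$ for which the generic analysis degenerates, if necessary with machine computation. A further subtlety, already present in the proof of Theorem \ref{thm:PSL2}, is keeping track of the several $G$-classes of maximal subgroups of a given type and how they are permuted by diagonal and field automorphisms: this is exactly where pairs of non-conjugate maximal subgroups with $\pi_1 = \pi_2$ can occur, and such pairs must be recognised so that the first alternative of Conjecture \ref{conj:Spiga} is invoked rather than the second.
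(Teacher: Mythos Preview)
Your strategy is correct and matches the paper's approach: both reduce via Theorem \ref{thm:PSL2} to a short list of triples $(G,H_1,H_2)$ (the paper does this explicitly as Lemma \ref{l:psl_even_HK}), observe that in the cases with $H_2=H_1^{\delta}$ the permutation characters coincide, and then handle the one surviving infinite family ($q$ even, $H_1$ of type $\GL_1(q)\wr S_2$, $H_2$ of type $P_1$, $|G{:}G_0|$ odd) by a direct character computation.

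The only difference worth noting is in the execution of that last case. You propose computing $\langle\pi_1-\pi_2,\chi\rangle$ for each $\chi\in\mathrm{Irr}(G)$ via Frobenius reciprocity; the paper instead sets $\theta=\pi_1-\pi_2$, computes $\langle\theta,\theta\rangle$ explicitly from the conjugacy class data (Lemmas \ref{l:psl2_ell_conju}--\ref{l:psl2_inn_cos}), shows that $\theta\!\downarrow\!G_0=\sum_i\psi_i$, and then uses Clifford theory (Lemma \ref{l:rep_extpsi}) to see that each $G$-orbit on $\{\psi_i\}$ contributes a single irreducible constituent of $\theta$ with multiplicity one, the count matching $\langle\theta,\theta\rangle$ (Lemma \ref{l:psl2_innofperm}). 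This avoids having to write down $\mathrm{Irr}(G)$ in full and gives a cleaner uniform argument over all $G$ between $G_0$ and $\PGammaL_2(q)$, whereas your direct approach would require controlling the irreducibles of every such $G$ separately; both lead to the same conclusion.
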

	
	The proof of Theorem \ref{thm:PSL2}, and also Corollaries \ref{cor:PSL2_weak} and \ref{cor:rho_0_PSL2}, will be completed in Section \ref{s:thm1}. In Section \ref{s:Spiga}, we will establish Theorem \ref{thm:Spiga}, while Theorems \ref{thm:spo_rho_0} and \ref{thm:spo} will be proved in Section \ref{s:spo}. Finally, we complete the proofs of Theorem \ref{thm:alt} and Corollary \ref{cor:rho_1_An} in Section \ref{s:max}.

	\subsection*{Notation}
	
	Let $n$ be a positive integer. Then the set $\{1,\dots,n\}$ is sometimes denoted $[n]$. If $p$ is a prime divisor of $n$, then we write $n_p$ for the largest $p$-power dividing $n$, and we also write $n_{p'} = n/n_p$. Similarly, if $G$ is a cyclic group and $p$ is a prime divisor of $|G|$, then the unique Sylow $p$-subgroup and the unique Hall $p'$-subgroup of $G$ are denoted $G_p$ and $G_{p'}$, respectively.
	We adopt the standard notation for simple groups of Lie type from \cite{KL_classical}.

	\subsection*{Acknowledgements}
	
	This work was partially supported by NNSFC grant no. 11931005. The first author thanks the China Scholarship Council for supporting his doctoral studies at the University of Bristol. He also thanks the Southern
	University of Science and Technology (SUSTech) for their generous hospitality during a
	visit in 2023. The authors thank Tim Burness for his helpful comments on an earlier draft of the paper.

	\section{Preliminaries}
	
	\label{s:pre}
	
	Throughout, let $G\leqs\mathrm{Sym}(\Omega)$ be a finite transitive permutation group with point stabiliser $H$.
	
	\subsection{First observations}
	
	\label{ss:pre_obs}
	
	We first give some basic observations of the fixers of permutation groups.
	
	\begin{lem}
		\label{l:pre_iff}
		Let $K$ be a subgroup of $G$. Then the following statements are equivalent.
		\begin{itemize}\addtolength{\itemsep}{0.2\baselineskip}
			\item[{\rm (i)}] $K$ is a fixer.
			\item[{\rm (ii)}] $K^g$ is a fixer for some $g\in G$.
			\item[{\rm (iii)}] $K\subseteq \bigcup_{g\in G}H^g$.
			\item[{\rm (iv)}] $\bigcup_{g\in G}K^g\subseteq \bigcup_{g\in G}H^g$.
			\item[{\rm (v)}] Every element of $K$ is $G$-conjugate to an element in $H$.
			\item[{\rm (vi)}] For any $G$-conjugacy class $C$, $C\cap K\ne \emptyset$ implies that $C\cap H\ne \emptyset$.
		\end{itemize}
	\end{lem}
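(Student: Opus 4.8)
The plan is to prove the chain of equivalences by establishing a cycle of implications together with a couple of obvious shortcuts, relying throughout on the fact that an element $x \in G$ has a fixed point on $\Omega$ if and only if $x$ lies in some conjugate of the point stabiliser $H$ (this is the standard translation of ``fixes a coset $Hg$'' into ``$g x g^{-1} \in H$''). So the first step is to record this basic observation: for $x \in G$, $x$ fixes some point of $\Omega$ iff $x \in H^g$ for some $g \in G$ iff $x$ is $G$-conjugate to an element of $H$ iff the conjugacy class $x^G$ meets $H$. From this, (i) $\Leftrightarrow$ (iii) is immediate (apply the observation to every $x \in K$), and (iii) $\Leftrightarrow$ (v) is just a restatement, as is (v) $\Leftrightarrow$ (vi) once one notes that $x^G \cap K \neq \emptyset$ for some class $C = x^G$ ranges exactly over the classes meeting $K$.

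Next I would handle the conjugation-invariance statements. For (i) $\Leftrightarrow$ (ii): if $K$ is a fixer then every element of $K^g = g^{-1}Kg$ is conjugate to an element of $K$, hence (by (v), already shown equivalent to (i)) conjugate to an element of $H$, so $K^g$ is a fixer; the converse is symmetric, replacing $g$ by $g^{-1}$. This also shows that being a fixer is a property of the conjugacy class of $K$ in $G$. Then (iii) $\Rightarrow$ (iv) follows because $\bigcup_{g} H^g$ is a union of conjugacy classes, hence closed under $G$-conjugation: if $K \subseteq \bigcup_g H^g$ then $K^h \subseteq \bigcup_g H^g$ for every $h$, and taking the union over $h$ gives (iv). Finally (iv) $\Rightarrow$ (iii) is trivial since $K \subseteq \bigcup_{g} K^g$ (take $g = 1$).

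Putting these together, I would present the argument as: (i) $\Leftrightarrow$ (iii) $\Leftrightarrow$ (v) $\Leftrightarrow$ (vi) from the basic observation; (iii) $\Leftrightarrow$ (iv) from closure under conjugation; and (i) $\Leftrightarrow$ (ii) from the fact that $K$ and $K^g$ have the same multiset of conjugacy-class representatives. Since there is no real obstacle here — everything reduces to the elementary dictionary between fixed points on a coset space and conjugates of a point stabiliser — the only thing requiring a little care is to make sure the cycle of implications actually visits every one of the six statements, and to be explicit that ``every element of $K$ has a fixed point'' is being unwound pointwise via $\Omega \cong [G:H]$. I would therefore open the proof by fixing an identification $\Omega = [G:H]$ so that $H$ is the stabiliser of the trivial coset, state the fixed-point criterion as a displayed line, and then run through the equivalences in the order above.
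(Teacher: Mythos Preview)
Your proposal is correct and complete. The paper itself does not supply a proof of this lemma at all: it is stated as a ``first observation'' and left to the reader, since every implication reduces to the elementary fact you isolate, namely that $x$ fixes a point of $[G:H]$ if and only if $x$ lies in some $G$-conjugate of $H$. Your write-up is therefore more detailed than what the paper offers, but entirely in the same spirit.
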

	
	The \textit{spectrum} of a finite group $X$, denoted $\Spec(X)$, is the set of the orders of the elements of $X$. We also write $\pi(X)$ for the set of prime divisors of $|X|$, so $\pi(X)\subseteq\Spec(X)$.
	
	\begin{lem}
		\label{l:pre_fix}
		Let $K$ be a fixer of $G$. Then
		\begin{itemize}\addtolength{\itemsep}{0.2\baselineskip}
			\item[{\rm (i)}] $\Spec(K)\subseteq \Spec(H)$;
			\item[{\rm (ii)}] $\pi(K)\subseteq \pi(H)$;
			\item[{\rm (iii)}] $K$ is intransitive on $\Omega$; and
			\item[{\rm (iv)}] $KH\ne G$.
		\end{itemize}
	\end{lem}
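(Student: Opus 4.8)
The plan is to derive each item from the characterisations already collected in Lemma \ref{l:pre_iff}, since a fixer is essentially controlled by the condition that each of its elements is $G$-conjugate into $H$. First I would establish (i): let $x\in K$, say of order $m$. Since $K$ is a fixer, by Lemma \ref{l:pre_iff}(v) there is $g\in G$ with $x^g\in H$, and conjugation preserves element order, so $m\in\Spec(H)$. This gives $\Spec(K)\subseteq\Spec(H)$. Item (ii) is then immediate: $\pi(K)\subseteq\Spec(K)\subseteq\Spec(H)$, and for any prime $p\in\pi(K)$ Cauchy's theorem supplies an element of order $p$ in $K$, while $\Spec(H)$ consists of orders of elements of $H$, so $p$ divides $|H|$; hence $\pi(K)\subseteq\pi(H)$.

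For (iii), suppose for contradiction that $K$ is transitive on $\Omega$. Then $|\Omega|$ divides $|K|$ and, more to the point, $\Omega$ may be identified $K$-equivariantly with $[K:K_\omega]$ for a point $\omega$. By Jordan's theorem \cite{J_der}, quoted in the introduction, any non-trivial finite transitive permutation group contains a derangement; applying this to $K$ acting transitively on $\Omega$ produces an element of $K$ with no fixed point on $\Omega$, contradicting the assumption that $K$ is a fixer. (One should note $|\Omega|>1$, which holds since $G$ is non-trivial; if $|\Omega|=1$ there is nothing of interest, and indeed $K=G$ is then trivially a fixer but the statement of the lemma is vacuous.) Hence $K$ is intransitive.

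Finally, (iv) follows from (iii) together with a counting identity. Recall that $G$ is transitive with point stabiliser $H$, so $|\Omega|=|G:H|$, and for any subgroup $K\leqs G$ the $K$-orbit of the point $\omega$ stabilised by $H$ has size $|K:K\cap H|=|KH|/|H|$ (using $|KH|=|K||H|/|K\cap H|$). Thus $K$ is transitive on $\Omega$ if and only if $|KH|/|H|=|G:H|$, i.e.\ if and only if $KH=G$. By (iii), $K$ is not transitive, so $KH\ne G$, as claimed.

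I do not expect any genuine obstacle here: every step is a direct translation of Lemma \ref{l:pre_iff} or a standard fact (order is a conjugacy invariant, Cauchy's theorem, Jordan's theorem, the orbit--stabiliser/product formula). The only point requiring a little care is the implicit non-triviality hypothesis needed to invoke Jordan's theorem in (iii); this is guaranteed by the blanket assumption at the start of Section \ref{s:pre} that $G\leqs\Sym(\Omega)$ is a finite transitive permutation group, which in this paper is understood to be non-trivial (as in the opening paragraph of the introduction).
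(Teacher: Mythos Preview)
Your proof is correct and follows essentially the same approach as the paper: deduce (i) and (ii) from Lemma \ref{l:pre_iff}, and obtain (iii) and (iv) from Jordan's theorem on derangements. The paper's version is simply more terse, omitting the explicit orbit--stabiliser argument you give for (iv).
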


	\begin{proof}
		Parts (i) and (ii) are clear from Lemma \ref{l:pre_iff}. For parts (iii) and (iv), recall that any finite transitive group has a derangement \cite{J_der}.
	\end{proof}

	\begin{lem}
		\label{l:pre_red}
		Let $K$ be a fixer of $G$ and let $G_0\leqs G$. Then $K\cap G_0$ is a fixer of $G_0$.
	\end{lem}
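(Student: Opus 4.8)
The plan is essentially immediate from Definition~\ref{def:fixer}. Since $G_0\leqs G\leqs\Sym(\Omega)$, the group $G_0$ acts on $\Omega$ by restriction, and a subgroup of $G_0$ is a fixer of $G_0$ precisely when every one of its elements fixes a point of $\Omega$. Now $K\cap G_0$ is a subgroup of $K$, and by hypothesis every element of $K$ fixes a point of $\Omega$; a fortiori every element of $K\cap G_0$ does, so $K\cap G_0$ is a fixer of $G_0$. There is essentially nothing to do beyond unwinding the definitions, so I do not expect any genuine obstacle.

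For robustness one can also record the statement in the language of Lemma~\ref{l:pre_iff}: part~(iii) gives $K\subseteq\bigcup_{g\in G}H^g$, whence $K\cap G_0\subseteq G_0\cap\bigcup_{g\in G}H^g=\bigcup_{g\in G}(G_0\cap H^g)$, and each $G_0\cap H^g$ is a point stabiliser of $G_0$ in its action on $\Omega$; this again exhibits every element of $K\cap G_0$ as having a fixed point. If moreover $G_0$ is transitive on $\Omega$ with point stabiliser $H_0=H\cap G_0$, then since all point stabilisers of $G_0$ are $G_0$-conjugate, every element of $K\cap G_0$ is in fact $G_0$-conjugate into $H_0$, so the conclusion is consistent with characterisation~(v) of Lemma~\ref{l:pre_iff} applied to $G_0$. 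The one caveat worth flagging in the write-up is that one should argue directly (or via the displayed containment above) rather than quoting Lemma~\ref{l:pre_iff}(iv)--(vi) verbatim, since those items refer to $G$-conjugacy rather than $G_0$-conjugacy.
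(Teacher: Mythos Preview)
Your proof is correct and is essentially the obvious argument the paper has in mind; indeed the paper states this lemma without proof, treating it as an immediate consequence of Definition~\ref{def:fixer}. Your second paragraph (via Lemma~\ref{l:pre_iff}(iii)) is a sound alternative phrasing but unnecessary for the result.
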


	Let $\Delta$ be a finite set and suppose $X\leqs\mathrm{Sym}(\Delta)$. Then
	\begin{equation*}
	\fpr_\Delta(x):=\frac{|\fix_\Delta(x)|}{|\Delta|}
	\end{equation*}
	is the \textit{fixed point ratio} of $x\in X$, where $\fix_\Delta(x)$ is the set of fixed points of $x$ on $\Delta$. If $X$ is transitive on $\Delta$, then we have
	\begin{equation*}
	\fpr_\Delta(x) = \frac{|x^X\cap X_\delta|}{|x^X|},
	\end{equation*}
	where $\delta\in \Delta$ (see, for example, \cite[Lemma 1.2(iii)]{B_fpr_survey}).
	
	\begin{lem}
		\label{l:pre_fpr}
		Suppose there exists a set $\Delta$ such that $G\leqs\mathrm{Sym}(\Delta)$ and there exists $x\in K$ such that $\fpr_\Delta(x)\ne \fpr_\Delta(y)$ for any $y\in H$. Then $K$ is not a fixer of $G$.
	\end{lem}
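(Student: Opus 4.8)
The plan is to argue by contradiction, using the characterisation of fixers in Lemma \ref{l:pre_iff}. Suppose that $K$ is a fixer of $G$ (with respect to the action on $\Omega$) and let $x\in K$ be the element provided by the hypothesis. By Lemma \ref{l:pre_iff}(v), the element $x$ is $G$-conjugate to some element $y\in H$; write $x = y^g$ with $g\in G$.

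The key observation is that the fixed point ratio $\fpr_\Delta$ is constant on $G$-conjugacy classes. Indeed, since $G\leqs\Sym(\Delta)$, the element $g$ acts as a permutation of $\Delta$, and for any $z\in G$ this permutation restricts to a bijection from $\fix_\Delta(z)$ onto $\fix_\Delta(z^g)$: a point $\delta\in\Delta$ is fixed by $z$ if and only if $\delta g$ is fixed by $z^g$. Hence $|\fix_\Delta(z^g)| = |\fix_\Delta(z)|$, and therefore $\fpr_\Delta(z^g) = \fpr_\Delta(z)$ for all $z\in G$ and all $g\in G$. Applying this with $z = y$ gives $\fpr_\Delta(x) = \fpr_\Delta(y^g) = \fpr_\Delta(y)$, which contradicts the assumption that $\fpr_\Delta(x)\ne\fpr_\Delta(y)$ for every $y\in H$. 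We conclude that $K$ is not a fixer of $G$.

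There is no serious obstacle here: the only point requiring care is to justify that $\fpr_\Delta$ is a class function on $G$, and this is immediate from the definition via the bijection above. One could alternatively invoke the transitive formula $\fpr_\Delta(z) = |z^G\cap G_\delta|/|z^G|$ recorded before the statement, but since $G$ need not act transitively on $\Delta$ in the generality of the lemma, the direct bijection argument is cleaner and avoids this restriction.
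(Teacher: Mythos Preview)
Your proof is correct and follows essentially the same approach as the paper: the paper's proof is the single line ``This is given by the fact that $\fpr_\Delta(x) = \fpr_\Delta(x^g)$ for any $g\in \mathrm{Sym}(\Delta)$,'' and you have simply unpacked this class-function property and combined it explicitly with Lemma~\ref{l:pre_iff}(v).
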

	
	\begin{proof}
		This is given by the fact that $\fpr_\Delta(x) = \fpr_\Delta(x^g)$ for any $g\in \mathrm{Sym}(\Delta)$.
	\end{proof}
	
	Recall that the \textit{minimal degree} $\mu_\Delta(X)$ of $X$ is defined to be the smallest number of points moved by any non-identity element. That is,
	\begin{equation*}
	\mu_\Delta(X) = \min_{1\ne x\in X}\left(|\Delta|-|\fix_\Delta(x)|\right) = |\Delta|\cdot\left(1-\max_{1\ne x\in X}\fpr_\Delta(x)\right).
	\end{equation*}
	
	\begin{cor}
		\label{c:pre_min_deg}
		Suppose $K$ is a fixer of $G$. Then for any set $\Delta$ with $G\leqs\mathrm{Sym}(\Delta)$, we have $\mu_\Delta(K)\geqs \mu_\Delta(H)$.
	\end{cor}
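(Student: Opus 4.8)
The plan is to deduce the bound directly from the characterisation of fixers in Lemma~\ref{l:pre_iff} together with the fact, already exploited in Lemma~\ref{l:pre_fpr}, that fixed point ratios are constant on $\Sym(\Delta)$-conjugacy classes. First I would fix a set $\Delta$ with $G \leqs \Sym(\Delta)$ and choose a non-identity element $x \in K$ attaining the minimal degree of $K$ on $\Delta$, so that $\mu_\Delta(K) = |\Delta| - |\fix_\Delta(x)|$. Since $K$ is a fixer, Lemma~\ref{l:pre_iff}(v) provides an element $g \in G$ with $x^g \in H$; writing $y = x^g$, the element $y$ is non-trivial (conjugation by $g$ is a bijection fixing only the identity) and $\fpr_\Delta(y) = \fpr_\Delta(x)$, because $g \in G \leqs \Sym(\Delta)$.

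Rewriting this equality of fixed point ratios in terms of fixed point counts gives $|\Delta| - |\fix_\Delta(y)| = |\Delta| - |\fix_\Delta(x)| = \mu_\Delta(K)$, and since $y$ is a non-identity element of $H$, the left-hand side is at least $\mu_\Delta(H)$ by definition of the minimal degree of $H$ on $\Delta$. Hence $\mu_\Delta(K) \geqs \mu_\Delta(H)$, as claimed. Equivalently, one can phrase the argument by noting that the set of moved-point counts of non-trivial elements of $K$ is contained in the corresponding set for $H$, so the minimum over $K$ dominates the minimum over $H$; or, via the second displayed formula for $\mu_\Delta$, by observing that $\max_{1 \ne x \in K}\fpr_\Delta(x) \leqs \max_{1 \ne y \in H}\fpr_\Delta(y)$ and then subtracting from $1$ and multiplying by $|\Delta|$.

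I do not expect any real obstacle here, as the statement is essentially a repackaging of Lemma~\ref{l:pre_fpr}. The only point that needs a moment's care is to ensure the conjugating element lies in $G$, and hence in $\Sym(\Delta)$ so that fixed point ratios are preserved, which is precisely what Lemma~\ref{l:pre_iff}(v) guarantees; one should also record the trivial but necessary observation that a non-identity element is never $G$-conjugate to the identity, so that the comparison with $\mu_\Delta(H)$ is legitimate.
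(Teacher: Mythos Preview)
Your argument is correct and follows essentially the same idea as the paper's proof: both rest on the fact that $G$-conjugation preserves fixed point ratios on $\Delta$. The paper phrases it contrapositively (if $\mu_\Delta(K) < \mu_\Delta(H)$ then some $x\in K$ has $\fpr_\Delta(x)$ strictly larger than every $\fpr_\Delta(y)$ for $y\in H$, contradicting Lemma~\ref{l:pre_fpr}), whereas you argue directly via Lemma~\ref{l:pre_iff}(v); these are routine reformulations of one another.
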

	
	\begin{proof}
		If $\mu_\Delta(K) < \mu_\Delta(H)$, then there exists $x\in K$ such that $\fpr_\Delta(x) > \fpr_\Delta(y)$ for any $y\in H$. Now apply Lemma \ref{l:pre_fpr}.
	\end{proof}

	\subsection{Large fixers}
	
	\label{ss:pre_large}
	
	Recall that a non-stable fixer $K$ is \textit{large} if $|K| \geqs |H|$, and we say $K$ is \textit{strictly large} if $|K| > |H|$.
	
	\begin{ex}
		\label{ex:subfield}
		Let $G=\PGL_n(q)$ and suppose the point stabiliser $H=\PGL_n(q_0)$ comprises the images of all the invertible matrices over $\mathbb{F}_{q_0}$, where $q_0^r = q$ for some integer $r\geqs 3$. Let $B = U{:}D$ be a Borel subgroup of $G$ consisting of the images of all upper-triangular in $\GL_n(q)$, where $U$ is a Sylow $p$-subgroup of $G$ and $D$ is a maximal split torus. Then $K:=U{:}(D\cap H)$ is a large fixer of $G$, and so $G$ does not have the EKR property. To see this, first note that
		\begin{equation*}
		|K| =q^{n(n-1)/2}(q_0-1)^{n-1}\geqs q_0^{n^2-1} > |H|.
		\end{equation*}
		Let $x\in K$ and let $\widehat{x}\in B$ be a pre-image of $x$. Then $\widehat{x}$ is an upper-triangular matrix whose diagonal entries are all in $\mathbb{F}_{q_0}$. Hence, the elementary divisors of $\widehat{x}$ are polynomials in $\mathbb{F}_{q_0}$, and \cite[Theorem 6.7.3]{H_alg} implies that $x$ is $G$-conjugate to an element in $H$.
	\end{ex}
	
	As introduced in Section \ref{s:intro}, we say $G$ has the \textit{weak-EKR property} if $G$ has no strictly large fixer, while $G$ is said to have the \textit{strict-weak-EKR property} if $G$ has no large fixer. Now we list some results in the literature in the study of the EKR properties of permutation groups, which will be helpful in the study of weak-EKR properties, noting that the EKR property implies the weak-EKR property. The first result is \cite[Theorem 1.1]{MST_2-trans}.
	
	\begin{thm}
		\label{t:2-trans}
		Any finite $2$-transitive permutation group has the EKR property.
	\end{thm}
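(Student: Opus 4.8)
The plan is to realise intersecting subsets of $G$ as cocliques of the \emph{derangement graph} $\Gamma = \mathrm{Cay}(G,D)$, the Cayley graph on $G$ whose connection set $D$ is the set of derangements of $G$ on $\Omega$. Since $\fix_\Omega(g) = \fix_\Omega(g^{-1})$ the set $D$ is symmetric, so $\Gamma$ is an undirected $|D|$-regular graph on $|G|$ vertices (with $|D| \geqs 1$ by Jordan's theorem \cite{J_der}), and $S \subseteq G$ is intersecting precisely when $S$ is a coclique of $\Gamma$. Thus $G$ has the EKR property if and only if its independence number satisfies $\alpha(\Gamma) \leqs |H| = |G|/|\Omega|$. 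As $D$ is a union of $G$-conjugacy classes, the eigenvalues of $\Gamma$ are the numbers $\eta_\psi = \psi(1)^{-1}\sum_{d \in D}\psi(d)$ for $\psi \in \mathrm{Irr}(G)$, and in particular the trivial character gives the valency $\eta_1 = |D|$.

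We would next use $2$-transitivity to locate a negative eigenvalue coming from the natural module: the permutation character is $\pi = 1 + \chi$ with $\chi \in \mathrm{Irr}(G)$ of degree $|\Omega| - 1$, and since $\pi(d) = |\fix_\Omega(d)| = 0$ for all $d \in D$, summing over $D$ gives $\sum_{d \in D}\chi(d) = -|D|$ and hence $\eta_\chi = -|D|/(|\Omega| - 1)$. Granting for the moment that $\eta_\chi$ is the \emph{least} eigenvalue of $\Gamma$, the Hoffman ratio bound yields
\begin{equation*}
\alpha(\Gamma) \;\leqs\; |G|\cdot\frac{-\eta_\chi}{|D| - \eta_\chi} \;=\; |G|\cdot\frac{|D|/(|\Omega|-1)}{|D|\bigl(1 + 1/(|\Omega|-1)\bigr)} \;=\; \frac{|G|}{|\Omega|} \;=\; |H|,
\end{equation*}
exactly as required. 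So the whole theorem reduces to the spectral inequality $\sum_{d \in D}\psi(d) \geqs -|D|\,\psi(1)/(|\Omega|-1)$ for every nontrivial $\psi \in \mathrm{Irr}(G)$.

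To establish this we would invoke the classification of finite $2$-transitive groups. The affine case is clean: if $G = V \rtimes G_0$ with $G_0 \leqs \GL(V)$ transitive on $V \setminus \{0\}$ and $\Omega = V$, then the translation subgroup $V \leqs G$ is regular on $\Omega$ and any two distinct translations differ by a nontrivial translation, which is a derangement; hence $V$ is a clique of size $|\Omega|$ in $\Gamma$, and the clique--coclique bound for the vertex-transitive graph $\Gamma$ gives $\alpha(\Gamma)\,|\Omega| \leqs |G|$, i.e.\ $\alpha(\Gamma) \leqs |H|$, with no eigenvalue estimate needed. In the almost simple case one would work through the families supplied by the classification --- socle alternating (classical, with $\alpha(\Gamma_{S_n}) = (n-1)!$ after Frankl--Deza and Cameron--Ku), $\PSL_d(q)$ on projective points (together with $\PSL_2(11)$ and $A_7$ on $15$ points), $\PSp_{2d}(2)$ on a coset space, $\PSU_3(q)$, the Suzuki groups ${}^2B_2(q)$ and small Ree groups ${}^2G_2(q)$, and the sporadic examples $M_{11}, M_{12}, M_{22}, M_{23}, M_{24}$, $\mathrm{HS}$ and $\mathrm{Co}_3$ --- and in each case either exhibit a sharply transitive subset of $G$ (finishing via clique--coclique as above) or verify the spectral inequality directly from bounds on the values of the nontrivial irreducible characters of $G$ on its derangement classes.

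The main obstacle will be precisely this last step for the almost simple families that have no sharply transitive subset --- above all $\PSL_d(q)$ on projective space with $d \geqs 3$, $\PSp_{2d}(2)$, and the Suzuki and Ree groups --- where one needs rather sharp control of $\sum_{d \in D}\psi(d)$ simultaneously over all nontrivial $\psi \in \mathrm{Irr}(G)$; this forces one into the detailed character theory of groups of Lie type (Deligne--Lusztig theory, explicit low-rank character tables, and bounds on character values) together with a careful accounting of which conjugacy classes are derangement classes in the relevant action. By contrast, the reduction to the spectral inequality, the affine case, and the sporadic checks are comparatively routine.
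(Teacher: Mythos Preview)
The paper does not prove this theorem at all: it is simply quoted as \cite[Theorem 1.1]{MST_2-trans}, so there is no ``paper's own proof'' to compare against. Your outline is, in broad strokes, the strategy of that reference (derangement Cayley graph, the eigenvalue $-|D|/(|\Omega|-1)$ attached to the nontrivial constituent of the permutation character, the ratio bound, and a case analysis through the classification of $2$-transitive groups), and you have correctly identified the genuine difficulty, namely certifying that this eigenvalue is minimal for the almost simple families without a sharply transitive subset. One refinement worth noting: in \cite{MST_2-trans} the authors do not work with the unweighted derangement graph directly but with a suitably \emph{weighted} adjacency matrix (a linear combination of the class adjacency matrices supported on derangement classes), chosen so that the Hoffman-type bound becomes tight; this is what makes the character-theoretic estimates tractable, and your sketch would need the same device in the hard cases.
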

	
	The following is \cite[Lemma 14.6.1]{GM_EKR}.
	
	\begin{lem}
		\label{l:pre_strict-ekr}
		If $G$ has a regular subgroup, then $G$ has the EKR property.
	\end{lem}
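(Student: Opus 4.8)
\textbf{Proof proposal for Lemma~\ref{l:pre_strict-ekr}.}
The plan is to exhibit, for a transitive permutation group $G \leqs \Sym(\Omega)$ with point stabiliser $H$ admitting a regular subgroup $R$, an intersecting subset of $G$ of size strictly greater than $|H|$; this will witness the failure of the EKR property. The key observation is that $R$ being regular means $|R| = |\Omega| = |G:H|$, so $R$ is a left (and right) transversal of $H$ in $G$, and every non-identity element of $R$ is a derangement. I would first record the dual fact: since every non-identity element of $R$ is fixed-point-free, the set $R$ is \emph{semiregular}, and in particular for $r_1, r_2 \in R$ the product $r_1 r_2^{-1}$ lies in $R$ and hence is either the identity or a derangement --- so $R$ is as far from intersecting as possible.

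The construction then goes the other way. Consider the subset $S = H \cup \{r\}$ for a suitably chosen $r \in R \setminus \{1\}$, or more robustly consider $S = Hr$ for $r\in R$; but the cleanest choice is to take $S$ to be a coset, or a union involving $H$ itself. Concretely: $H$ is an intersecting set (it is even a fixer, as every element of $H$ fixes the base point), but $|H|$ is exactly the EKR bound, so I need one more element. Take any $h_0 \in H$ and any element $g \in G$ such that $g^{-1}$ maps the base point to a point moved by all of $R \setminus\{1\}$ --- actually the right move is to use the transversal structure directly. Since $R$ is a transversal for $H$, write $G = \bigsqcup_{r \in R} Hr$. Pick $r_0 \in R$ with $r_0 \neq 1$ and set $S = H \cup \{r_0\}$. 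To check $S$ is intersecting I must verify that $h r_0^{-1}$ has a fixed point for every $h \in H$, equivalently that $r_0 h^{-1}$ has a fixed point; since $H$ fixes the base point $\omega$, I instead want $r_0^{-1} h$ (for $h \in H$) to fix some point, which need not hold. So the honest construction is the one from \cite[Chapter 14]{GM_EKR}: take $S = R$ itself if one seeks a \emph{maximum} intersecting set in a complementary sense, but $R$ is not intersecting. The correct statement to prove is that the EKR bound $|H|$ is \emph{not} an upper bound, and the standard witness is a coset $Hg$ union-ed appropriately; I would follow the clique--coclique / Delsarte bound argument in the association scheme of $G$: $R$ is a coclique of size $|\Omega|$ in the derangement graph, forcing any clique (= intersecting set) containing a fixed transversal structure to be bounded, and the quotient $|G|/|R| = |H|$ is attained only when the ratio bound is tight, which the \emph{regularity} prevents from being the only large intersecting set.

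Cleaner approach: I would argue directly that the set $S = \{g \in G : \omega^g = \omega^{g_0}\}$ for a second base-like point does not obviously work, so instead I use the well-known fact that in the derangement graph $\Gamma$ of $G$, a regular subgroup $R$ is an \emph{independent set} meeting every maximum clique, and by the clique-coclique bound $\omega(\Gamma)\alpha(\Gamma) \leqs |G|$ we get $\omega(\Gamma) \leqs |H|$; equality in EKR would require $\alpha(\Gamma)=|\Omega|$ \emph{and} a specific eigenvalue structure, but the point is the \emph{opposite} --- the existence of $R$ shows the ratio bound is not the obstruction and one can enlarge $H$. Honestly, the slick proof is: the subset $S=H$ has size $|H|$ and is intersecting; now I claim one can add an element $r \in R$, $r \neq 1$, to get $S' = H \cup \{r\}$ which is still intersecting, because $H r^{-1}$ and $r H^{-1}$ consist of elements each of which is $G$-conjugate into... no. The genuinely correct and minimal argument, which I would write out, is this: take $S = Hr$ for any fixed $r \in R$. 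Then $|S| = |H|$, and for $h_1 r, h_2 r \in S$ we have $(h_1 r)(h_2 r)^{-1} = h_1 h_2^{-1} \in H$, which fixes $\omega$; so every coset $Hr$ is intersecting of size $|H|$. Now form $S^* = Hr_1 \cup (Hr_2 \cap \{\text{one extra element}\})$...

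\textbf{The actual plan.} I would prove the contrapositive phrasing via the canonical example: let $R$ be regular, identify $\Omega$ with $R$ so that $G \leqs \Sym(R)$ contains $R$ acting by right multiplication. Then $R$ is a sharply transitive set, hence a transversal of $H = G_{1_R}$. Define $S = \{ x \in G : 1_R \cdot x \in \{1_R, a\}\}$ for a fixed $1 \neq a \in R$, i.e.\ $S = H \cup Ha^{-1}\cdot(\text{correction})$ --- this is just $H \cup Hr$ where $r$ is the transversal element sending $1_R$ to $a$, so $|S| = 2|H| > |H|$. To see $S$ is intersecting: given $s_1, s_2 \in S$, $s_1 s_2^{-1}$ maps $1_R$ to one of $1_R \cdot s_1 s_2^{-1}$; if $s_1, s_2$ lie in the same coset this is in $H$ (fixes $1_R$); if $s_1 \in H, s_2 \in Hr$ then $s_1 s_2^{-1} \in H r^{-1} H$, and I must show each such element has a fixed point --- this is the one point that needs the regularity of $R$ together with the structure, and it is precisely \emph{not} automatic, which is why the book's statement is about \emph{non}-EKR rather than a trivial doubling. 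I therefore expect the main obstacle to be pinning down exactly why $Hr^{-1}H$ consists of non-derangements; the resolution is that $r \in R$ acts without fixed points \emph{on $\Omega$}, but $Hr^{-1}H$ need not, and indeed the correct classical argument (which I would reproduce from \cite[Lemma 14.6.1]{GM_EKR}) takes $S$ to be $H$ \emph{together with} the set of all $r \in R$ fixing a chosen second point under a different action --- equivalently uses that $|R| = |\Omega| > 1$ forces the Hoffman/ratio bound to be met by $R$ as a coclique and hence, by the structure of the derangement graph of a group with a regular subgroup, the maximum intersecting set has size $> |H|$ via an explicit union of $H$ with part of a conjugate. I would present the explicit set as $S = H \cup \{r\}$ with $r$ chosen in $R$ so that $r$ is conjugate into $H$ (possible as $R$ is a transversal and by Lemma~\ref{l:pre_iff}(v) applied to the singleton fixer $\langle r \rangle$ if $r$ has a fixed point --- but $r$ is a derangement, contradiction), so the honest final form: $S$ = union of $H$ with a single coset-representative's worth of extra structure giving $|S|=|H|+1$, intersecting because pairwise quotients land in $H \cup H^{g}$ for a well-chosen $g$, each a non-derangement; verifying that last clause, using only that $R$ is regular, is the crux and I would handle it by the transversal computation above.
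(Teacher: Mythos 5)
Your proposal never arrives at a proof: each candidate witness is floated and then abandoned, and you end by conceding that the decisive step is ``the crux'' to be handled ``by the transversal computation above'' --- but that computation does not address it. Concretely, the sets $H\cup\{r\}$ and $H\cup Hr$ with $1\ne r\in R$ are dead on arrival: since $1\in H$ and every non-identity element of a regular subgroup is a derangement, the pair $(r,1)$ already violates the intersecting condition. The surviving variant, $H\cup Ht$ where $t$ sends the base point to a second point, requires every element of the double coset $Ht^{-1}H$ to have a fixed point, and regularity of $R$ gives no control whatsoever over $Ht^{-1}H$; in general such double cosets contain derangements, so this route cannot be completed either. So what you have is a sequence of constructions, none of which is (or can be) verified.

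The deeper problem is that the one structural tool you invoke is applied with the roles reversed, and applied correctly it shows your strategy could never work. In the derangement graph of $G$, intersecting sets are \emph{cocliques}, while a regular subgroup $R$ is a \emph{clique} of size $|\Omega|$ (for distinct $r_1,r_2\in R$ the element $r_1r_2^{-1}\in R\setminus\{1\}$ is a derangement); you assert the opposite. The clique--coclique bound for this vertex-transitive graph then gives that every intersecting set has size at most $|G|/|\Omega|=|H|$: a regular subgroup \emph{forces} the EKR bound rather than defeating it, so no intersecting set of size exceeding $|H|$ exists to be exhibited. This is the content of the cited result \cite[Lemma 14.6.1]{GM_EKR} --- the paper gives no argument beyond this citation --- and it is the only reading under which Corollary \ref{c:pre_strict-ekr} follows from Lemma \ref{l:pre_strict-ekr}, since a strictly large fixer would be an intersecting set of size greater than $|H|$. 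In other words, the word ``not'' in the printed statement is evidently a slip, and the proof you should write is precisely the clique--coclique argument you half-remembered, with cliques and cocliques restored to their correct places.
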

	
	\begin{cor}
		\label{c:pre_strict-ekr}
		If $G$ has a regular subgroup, then $G$ does not have a strictly large fixer.
	\end{cor}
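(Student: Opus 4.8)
The plan is to prove the stronger statement that \emph{every} fixer $K$ of $G$ satisfies $|K| \leqs |H|$; since a strictly large fixer has order strictly greater than $|H|$, this is exactly what is needed.

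Fix a regular subgroup $R \leqs G$. Being transitive and semiregular, $R$ has order $|R| = |\Omega|$, and semiregularity means that every non-identity element of $R$ is a derangement, i.e.\ lies in no conjugate of $H$. Now let $K \leqs G$ be an arbitrary fixer. By Lemma \ref{l:pre_iff}(iii) we have $K \subseteq \bigcup_{g \in G} H^g$, so no non-identity element of $K$ is a derangement. Comparing the two conditions, an element of $(K \cap R) \setminus \{1\}$ would lie in some $H^g$ and simultaneously be a derangement, which is impossible; hence $K \cap R = \{1\}$.

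It remains to convert this disjointness into a bound on orders. Since $K$ and $R$ are subgroups of $G$ with $K \cap R = \{1\}$, the product set $KR \subseteq G$ has cardinality $|K|\,|R|/|K\cap R| = |K|\,|R|$, so $|K|\,|R| \leqs |G|$ and therefore
\[
|K| \leqs \frac{|G|}{|R|} = \frac{|G|}{|\Omega|} = |H|,
\]
as claimed. (Alternatively, one may quote directly the inequality $|R|\,|K| \leqs |G|$ valid for any semiregular subset $R$ and any fixer $K$, recorded in Section \ref{s:intro}.) There is essentially no obstacle here: the only substantive step is the observation $K \cap R = \{1\}$, which falls straight out of the definitions via Lemma \ref{l:pre_iff}, and the rest is the standard product formula for two subgroups of a finite group.
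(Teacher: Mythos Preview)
Your proof is correct. The paper deduces this corollary from Lemma~\ref{l:pre_strict-ekr} (quoted from \cite{GM_EKR}), whose content is precisely the clique--coclique bound in the derangement graph; your argument simply unpacks that bound for subgroups via $K\cap R=\{1\}$ and the product formula $|KR|=|K||R|$, so the two approaches coincide.
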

	
	Recall that $G$ is called \textit{primitive} if $H$ is a maximal subgroup of $G$.
	
	

	\begin{thm}
		\label{t:pre_prim_large}
		Let $G\leqs\mathrm{Sym}(\Omega)$ be a primitive group without the weak-EKR property. Then $G$ can be embedded in a wreath product $L\wr S_k$, where $k\geqs 1$, $L\leqs \mathrm{Sym}(\Sigma)$ is almost simple, and $\Omega = \Sigma^k$.
	\end{thm}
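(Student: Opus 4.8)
I would establish the theorem via its contrapositive: a primitive permutation group that is \emph{neither} almost simple \emph{nor} of product action type has the weak-EKR property. Indeed, granting this, a primitive group without the weak-EKR property must be almost simple or of product action type, and the asserted embedding into a product action $L\wr S_{k}$ on $\Omega=\Sigma^{k}$ with $L$ almost simple is then immediate: take $k=1$, $\Sigma=\Omega$, $L=G$ in the almost simple case, and use the defining embedding of a product action group in the other case (where, as noted below, the component $L$ is genuinely almost simple). By Corollary \ref{c:pre_strict-ekr} it suffices to show that a primitive group $G$ of any O'Nan--Scott type other than almost simple or product action contains a regular subgroup.

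The regular subgroup will always lie inside $N=\soc(G)$. If $G$ is of affine type, $N$ is the translation group and is regular by the definition of the affine action; if $G$ is of twisted wreath type, $\Omega$ is identified with $N=T^{k}$ acting by translation, so $N$ itself is regular. If $G$ is of diagonal type --- simple diagonal, compound diagonal, or one of the holomorph types --- then $N=T^{n}$ with $n\geqs 2$, and the stabiliser $D$ in $N$ of a point is a full (possibly compound) diagonal subgroup; the complement of $D$ in $N$ obtained by setting the appropriate coordinates equal to $1$ is transitive on $\Omega$ of order $|\Omega|$, hence regular, and it lies in $N\leqs G$. This disposes of every remaining type. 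Finally, a primitive group of product action type whose component is of diagonal type is classified as compound diagonal, so excluding the diagonal types guarantees that the component $L$ in the conclusion is almost simple.

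I expect the diagonal case to need the most care: one must invoke a sufficiently precise version of the O'Nan--Scott description to know that $\soc(G)$ has the shape $T^{n}$ with a full diagonal point stabiliser, check that the coordinate-setting subgroup is genuinely a complement to $D$ in $\soc(G)$ (which uses that the non-abelian simple group $T$ is centreless), and observe that this subgroup lies inside $G$ --- immediate since $G\geqs\soc(G)$ --- and not merely inside some ambient standard overgroup. The rest is routine bookkeeping with the O'Nan--Scott types, together with a direct check of the small-degree primitive groups where the generic structural descriptions degenerate.
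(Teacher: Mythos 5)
Your proposal is correct and takes essentially the same route as the paper: its proof likewise observes, citing the O'Nan--Scott theorem, that any primitive group not embeddable in a product action $L\wr S_k$ with $L$ almost simple contains a regular subgroup, and then applies Corollary \ref{c:pre_strict-ekr}; you simply spell out the type-by-type construction of the regular subgroup inside the socle (affine/twisted wreath: the socle itself; holomorph and diagonal types: a coordinate-setting complement to the diagonal point stabiliser), which the paper leaves as a routine check. Note that, exactly like the paper's own argument, what you actually establish is ``$G$ not almost simple and not of product action type $\Rightarrow$ $G$ has the weak-EKR property'', i.e.\ the formulation given in the introduction, so your reading of the theorem (whose wording ``with the weak-EKR property'' should be ``without'') agrees with the paper's intent.
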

	
	\begin{proof}
		If $G$ cannot be embedded in $L\wr S_k$ with its product action, then one can check in view of O'Nan-Scott theorem \cite{LPS_O'Nan-Scott} that $G$ has a regular subgroup. Now apply Corollary \ref{c:pre_strict-ekr}.
	\end{proof}
	
	The following elementary lemma will be useful to study the fixers of almost simple primitive groups with large sizes.
	
	\begin{lem}
		\label{l:pre_as_size}
		Suppose $G$ is a primitive group with socle $G_0$, and let $K$ be a subgroup of $G$. Then
		\begin{equation*}
		|K\cap G_0|\geqs \frac{|K|\cdot |H\cap G_0|}{|H|}.
		\end{equation*}
		In particular, if $|K|\geqs |H|$ (resp. $|K| > |H|$) then $|K\cap G_0|\geqs |H\cap G_0|$ (resp. $|K\cap G_0| > |H\cap G_0|$).
	\end{lem}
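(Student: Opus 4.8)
The plan is to exploit the normality of the socle together with the factorisation $G = G_0H$. First I would record that $G_0 = \soc(G)\normeq G$, and that since $G$ acts faithfully on $\Omega$ the point stabiliser $H$ is core-free; as $G_0\ne 1$, this forces $G_0\not\leqs H$, and then the maximality of $H$ (which holds because $G$ is primitive) gives $G = G_0H$. In particular $|G| = |G_0H| = |G_0|\cdot|H|/|H\cap G_0|$.

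Next, since $G_0$ is normal in $G$, the product $KG_0$ is a subgroup of $G$, so $|KG_0| = |K|\cdot|G_0|/|K\cap G_0|$ divides $|G|$; in particular $|KG_0|\leqs|G|$. Combining this with the formula for $|G|$ above and cancelling the common factor $|G_0|$ yields $|K|/|K\cap G_0|\leqs |H|/|H\cap G_0|$, which rearranges at once to the claimed inequality $|K\cap G_0|\geqs |K|\cdot|H\cap G_0|/|H|$.

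Finally, the ``in particular'' clause is immediate from this: if $|K|\geqs|H|$ then $|K\cap G_0|\geqs |K|\cdot|H\cap G_0|/|H|\geqs |H\cap G_0|$, and the identical computation with a strict inequality disposes of the case $|K|>|H|$.

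There is essentially no hard step here; the only point that warrants a line of justification is the factorisation $G = G_0H$, and even that is entirely standard for faithful primitive actions. Everything else is elementary order arithmetic.
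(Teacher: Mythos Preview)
Your proof is correct and follows essentially the same approach as the paper. The paper's one-line proof simply records that $|H\cap G_0| = |H|/|G:G_0|$ because $G_0$ is transitive (equivalently, $G = G_0H$), leaving implicit the observation that $|K:K\cap G_0| = |KG_0:G_0|\leqs |G:G_0|$; you spell out both steps explicitly and obtain the factorisation $G = G_0H$ via the maximality of $H$ rather than the transitivity of $G_0$, but the content is identical.
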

	
	\begin{proof}
		Note that $|H\cap G_0| = |H|/|G:G_0|$ since $G_0$ is transitive.
	\end{proof}
	
	\begin{cor}
		\label{c:pre_as_size}
		Suppose $G$ is a primitive group with socle $G_0$. Then $\rho_0(G)\leqs \rho_0(G_0)$.
	\end{cor}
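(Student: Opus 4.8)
The plan is simply to take a fixer of $G$ witnessing the maximum in the definition of $\rho_0(G)$ and restrict it to the socle. The one structural point to record at the outset is that, since $G$ is primitive, its socle $G_0$ is transitive on $\Omega$; hence $G_0\leqs\Sym(\Omega)$ is itself a transitive permutation group, with point stabiliser $H_0:=H\cap G_0$ and the same degree $|\Omega|$, so that $\rho_0(G_0)$ is computed with respect to this action on $\Omega$.

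Now let $K\leqs G$ be a fixer of $G$ achieving $\rho_0(G)=|K|/(|H|\sqrt{|\Omega|})$ (such a $K$ exists as the set of fixers is finite). First, by Lemma \ref{l:pre_red} the subgroup $K\cap G_0$ is a fixer of $G_0$ (acting on $\Omega$). Second, by Lemma \ref{l:pre_as_size} applied to $K\leqs G$,
\[
|K\cap G_0|\;\geqs\;\frac{|K|\cdot|H\cap G_0|}{|H|}\;=\;\frac{|K|\,|H_0|}{|H|}.
\]
Combining these two facts and recalling that $H_0$ and $|\Omega|$ are the data of the $G_0$-action gives
\[
\rho_0(G_0)\;\geqs\;\frac{|K\cap G_0|}{|H_0|\sqrt{|\Omega|}}\;\geqs\;\frac{|K|\,|H_0|/|H|}{|H_0|\sqrt{|\Omega|}}\;=\;\frac{|K|}{|H|\sqrt{|\Omega|}}\;=\;\rho_0(G),
\]
which is the claimed inequality.

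I do not expect any genuine obstacle in this argument: the work has already been done in Lemmas \ref{l:pre_red} and \ref{l:pre_as_size}, and the only subtlety is the bookkeeping point that both invariants $\rho_0(G)$ and $\rho_0(G_0)$ are being evaluated for the same primitive action on $\Omega$ (legitimate precisely because the socle of a primitive group is transitive). If one prefers, this can be phrased without choosing an extremal $K$: for \emph{every} fixer $K$ of $G$ the displayed chain shows $|K|/(|H|\sqrt{|\Omega|})\leqs\rho_0(G_0)$, and taking the maximum over $K$ yields $\rho_0(G)\leqs\rho_0(G_0)$.
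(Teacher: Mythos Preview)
Your proof is correct and follows exactly the approach indicated in the paper, which simply says to combine Lemmas \ref{l:pre_red} and \ref{l:pre_as_size}. You have spelled out the details cleanly, including the point that $G_0$ acts transitively on $\Omega$ so that $\rho_0(G_0)$ is computed for the same action.
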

	
	\begin{proof}
		Combine Lemmas \ref{l:pre_red} and \ref{l:pre_as_size}.
	\end{proof}

	\section{Conjugacy classes of $\mathrm{A\Gamma L}_1(q)$}
	
	\label{s:AGammaL}
	
	To study the fixers of two-dimensional linear groups in Sections \ref{s:PSL2} and \ref{s:thm1}, we will use the information of conjugacy classes of $\mathrm{A\Gamma L}_1(q)$, noting that it is isomorphic to a maximal subgroup of $\PGammaL_2(q)$ of type $P_1$. Here we write
	\begin{equation}
	\label{e:Gamma}
	\Gamma = (\mathbb{F}_q^+{:}\mathbb{F}_q^\times){:}\la \phi\ra\cong\mathrm{A\Gamma L}_1(q),
	\end{equation}
	where $q = p^f$ for some prime $p$, and $\la\phi\ra = \mathrm{Gal}(\mathbb{F}_q/\mathbb{F}_p)$ is the Galois group, with the multiplication
	\begin{equation*}
	(a,\lambda)\phi^i(b,\mu)\phi^j = (a+\lambda^{-1}b^{\phi^{-i}},\lambda\mu^{\phi^{-i}})\phi^{i+j}.
	\end{equation*}
	Recall that for a subfield $\mathbb{F}_{q_0}$ of $\mathbb{F}_q$, the trace map $\mathrm{Tr}_{\mathbb{F}_q/\mathbb{F}_{q_0}}:\mathbb{F}_q\to\mathbb{F}_{q_0}$ is given by
	\begin{equation*}
	\Tr_{\mathbb{F}_q/\mathbb{F}_{q_0}}(x) = \sum_{\sigma\in\mathrm{Gal}(\mathbb{F}_q/\mathbb{F}_{q_0})}\sigma(x).
	\end{equation*}
	It is well-known that the trace map $\mathrm{Tr}_{\mathbb{F}_q/\mathbb{F}_{q_0}}$ is surjective since $\mathbb{F}_q/\mathbb{F}_{q_0}$ is a separable extension. That is, $\Tr_{\mathbb{F}_q/\mathbb{F}_{q_0}}(\mathbb{F}_q) = \mathbb{F}_{q_0}$.
	
	The following lemma combines \cite[Lemmas 2.14 and 2.15]{Sh_conj_AGammaL}, which describes the conjugacy classes of $\Gamma$.
	
	\begin{lem}
		\label{l:pre_AGammaL_Sh}
		The elements $(a,1)\phi$ and $(b,1)\phi$ of $\Gamma$ are conjugate in $\Gamma$ if and only if $\Tr_{\mathbb{F}_q/\mathbb{F}_{p}}(a)\Tr_{\mathbb{F}_q/\mathbb{F}_{p}}(b)\ne 0$ or $\Tr_{\mathbb{F}_q/\mathbb{F}_{p}}(a) = \Tr_{\mathbb{F}_q/\mathbb{F}_{p}}(b) = 0$.
	\end{lem}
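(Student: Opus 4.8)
The plan is to analyze conjugacy in $\Gamma = (\mathbb{F}_q^+{:}\mathbb{F}_q^\times){:}\la\phi\ra$ restricted to the coset $(\mathbb{F}_q^+){:}\{1\}\cdot\phi$, i.e.\ elements of the form $(a,1)\phi$, using the explicit multiplication rule. First I would compute the conjugate of $(a,1)\phi$ by a general element $(c,\mu)\phi^j$ and read off which translation parameters $b$ arise. A direct computation with the stated formula shows that conjugating $(a,1)\phi$ by $(c,\mu)$ (the $\phi^j$-part only twists $a$ by a field automorphism, which does not change whether the trace to $\mathbb{F}_p$ vanishes, since $\Tr_{\mathbb{F}_q/\mathbb{F}_p}$ is Galois-invariant) yields an element $(b,1)\phi$ with $b$ of the form $b = \mu(a + c - c^{\phi^{-1}})$, or some such expression; the key structural point is that as $c$ ranges over $\mathbb{F}_q$ the quantity $c - c^{\phi^{-1}}$ ranges over the kernel of $\Tr_{\mathbb{F}_q/\mathbb{F}_p}$ (this is the additive Hilbert 90 / the fact that the image of $x\mapsto x - x^{\phi}$ is exactly $\ker\Tr$, which has codimension one), while multiplication by $\mu\in\mathbb{F}_q^\times$ can rescale.

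Carrying this out, the orbit of $(a,1)\phi$ under conjugation consists of all $(b,1)\phi$ with $b \in \mu(a + \ker\Tr_{\mathbb{F}_q/\mathbb{F}_p})$ for some $\mu\in\mathbb{F}_q^\times$. If $\Tr_{\mathbb{F}_q/\mathbb{F}_p}(a) = 0$, then $a + \ker\Tr = \ker\Tr$, which is $\mu$-invariant, so the orbit is exactly $\{(b,1)\phi : \Tr_{\mathbb{F}_q/\mathbb{F}_p}(b) = 0\}$. If $\Tr_{\mathbb{F}_q/\mathbb{F}_p}(a) = t \ne 0$, then $a + \ker\Tr$ is the affine hyperplane $\{x : \Tr(x) = t\}$, and as $\mu$ varies over $\mathbb{F}_q^\times$ the sets $\mu\cdot\{x:\Tr(x)=t\} = \{x : \Tr(x) = \mu^{-1}... \}$ — more precisely $\Tr(\mu y) $ need not be linear in $\mu$ in a simple way, but $\Tr_{\mathbb{F}_q/\mathbb{F}_p}(\mu x)$ for $\Tr(x)=t$ takes all nonzero values in $\mathbb{F}_p$ and also $0$, so the union over $\mu$ covers every $b$ with $\Tr(b) \ne 0$ together with some with $\Tr(b)=0$; one must check the orbit is precisely $\{(b,1)\phi : \Tr_{\mathbb{F}_q/\mathbb{F}_p}(b)\ne 0\}$, i.e.\ that no element with zero trace creeps in, which follows because the $\phi^j$-twist preserves trace and the translation part always adds something in $\ker\Tr$ to the (rescaled) original, keeping the trace nonzero once it starts nonzero — wait, rescaling by $\mu$ can send nonzero trace to zero trace, so the cleaner argument is: show directly that $(a,1)\phi$ with $\Tr(a)\ne 0$ is conjugate to $(a',1)\phi$ with $\Tr(a')\ne 0$ by exhibiting $c$ with $a' - a \in \{c - c^{\phi^{-1}}\}$ when $\Tr(a')=\Tr(a)$, handle scaling separately, and verify the two trace-classes (zero and nonzero) are not fused because $f$-th power map and translation by $\ker\Tr$ together cannot change membership in the subgroup $\ker\Tr$ versus its complement when combined appropriately.

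I would organize the final write-up as two directions. For the "if" direction: if both traces are nonzero, exhibit an explicit conjugating element (reducing first to a normalized representative, e.g.\ an element of trace equal to a fixed nonzero value, via scaling by a suitable $\mu$, then matching the remaining difference using additive Hilbert 90); if both traces are zero, similarly conjugate each to the representative $(0,1)\phi$ using translations, since $\ker\Tr = \{c-c^{\phi^{-1}} : c\in\mathbb{F}_q\}$. For the "only if" direction: observe that conjugation never mixes the two classes — the map $(a,1)\phi\mapsto \Tr_{\mathbb{F}_q/\mathbb{F}_p}(a)$, composed with the quotient $\mathbb{F}_p \to \mathbb{F}_p/(\text{scaling action})$, i.e.\ the Boolean invariant "is the trace zero?", is constant on $\Gamma$-conjugacy classes, because the translation part of any conjugate differs by an element of $\ker\Tr$ and the $\phi^j$-part preserves trace, while the $\mathbb{F}_q^\times$-part multiplies the trace by a nonzero scalar hence preserves the zero/nonzero dichotomy.

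The main obstacle will be bookkeeping the combined effect of all three types of conjugators simultaneously and making sure the parametrization of the orbit is exactly right — in particular pinning down that $\{c - c^{\phi^{-1}} : c \in \mathbb{F}_q\}$ equals $\ker\Tr_{\mathbb{F}_q/\mathbb{F}_p}$ (the additive analogue of Hilbert's Theorem 90, which one gets from the fact that this additive map has kernel $\mathbb{F}_p$ and image of $p$-codimension one inside $\mathbb{F}_q$, matching $\ker\Tr$), and handling the interaction of scaling by $\mu$ with the trace carefully so that no spurious fusion or failure of fusion is introduced. Since Lemma~\ref{l:pre_AGammaL_Sh} is quoted verbatim from \cite[Lemmas 2.14 and 2.15]{Sh_conj_AGammaL}, in the paper it would be legitimate to simply cite that reference; but the above sketch is how one would reconstruct the proof from scratch.
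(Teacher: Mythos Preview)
The paper gives no proof of this lemma at all; it simply cites \cite[Lemmas 2.14 and 2.15]{Sh_conj_AGammaL}, as you yourself note. So there is nothing in the paper to compare your reconstruction against, and your outline is the natural way to prove the result from scratch.

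That said, there is one concrete point you are circling around but never pin down, and it is exactly what resolves your worry that ``rescaling by $\mu$ can send nonzero trace to zero trace''. A direct computation with the multiplication rule gives
\[
((a,1)\phi)^{(c,\mu)} = \bigl(\mu(a - c + c^{\phi^{-1}}),\ \mu^{-1}\mu^{\phi^{-1}}\bigr)\phi,
\]
so the conjugate lies in the set $\{(b,1)\phi\}$ \emph{only when} $\mu^{\phi^{-1}} = \mu$, i.e.\ only when $\mu\in\mathbb{F}_p^\times$. You never isolate this constraint, and without it your ``only if'' argument is not valid: for general $\mu\in\mathbb{F}_q^\times$ it is simply false that ``the $\mathbb{F}_q^\times$-part multiplies the trace by a nonzero scalar'', since $\Tr(\mu a)$ is not $\mu\Tr(a)$ unless $\mu\in\mathbb{F}_p$. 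Once you observe that only $\mu\in\mathbb{F}_p^\times$ is relevant, everything collapses cleanly: then $\Tr_{\mathbb{F}_q/\mathbb{F}_p}(b) = \mu\cdot\Tr_{\mathbb{F}_q/\mathbb{F}_p}(a)$ with $\mu\in\mathbb{F}_p^\times$, so the zero/nonzero dichotomy is an honest conjugacy invariant, and for the ``if'' direction you choose $\mu = \Tr(b)/\Tr(a)\in\mathbb{F}_p^\times$ (or $\mu=1$ if both traces vanish) and then solve $b/\mu - a = c^{\phi^{-1}} - c$ via additive Hilbert 90. The $\phi^j$-part, as you say, only applies a Galois twist and changes nothing.
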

	
	In this section, we extend Lemma \ref{l:pre_AGammaL_Sh} by considering conjugacy classes in $\Gamma$ with some specific elements. Suppose $q = q_1^s = q_0^r$ for some positive integers $s$ and $r$, and assume $r$ is prime. We first record some observations on the trace maps.
	
	\begin{lem}
		\label{l:pre_AGammaL_trace}
		The following properties hold.
		\begin{itemize}\addtolength{\itemsep}{0.2\baselineskip}
			\item[{\rm (i)}] If $\mathbb{F}_{q_1}\subseteq\mathbb{F}_{q_0}$ and $r = p$, then $\Tr_{\mathbb{F}_q/\mathbb{F}_{q_1}}(\mathbb{F}_{q_0})=\{0\}$.
			\item[{\rm (ii)}] If $\mathbb{F}_{q_1}\not\subseteq \mathbb{F}_{q_0}$ or $r\ne p$, then $\Tr_{\mathbb{F}_q/\mathbb{F}_{q_1}}(\mathbb{F}_{q_0}) = \mathbb{F}_{q_1}\cap \mathbb{F}_{q_0}$.
		\end{itemize}
	\end{lem}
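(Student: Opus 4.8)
The plan is to reduce everything to the basic facts about trace maps in towers of finite fields, namely transitivity of the trace and the fact that a trace map between two finite fields is identically zero precisely when the degree of the extension is divisible by the characteristic. Recall $q = q_1^s = q_0^r$ with $r$ prime, and set $d = \mathbb{F}_{q_1}\cap\mathbb{F}_{q_0}$ (which is itself a finite field, the intersection of two subfields of $\mathbb{F}_q$). Throughout I would use the transitivity formula $\Tr_{\mathbb{F}_q/\mathbb{F}_{q_1}} = \Tr_{\mathbb{F}_{q_0}/d}\circ\,(\text{something})$ only after carefully arranging an intermediate field; the cleanest route is: for $x\in\mathbb{F}_{q_0}$, factor $\Tr_{\mathbb{F}_q/\mathbb{F}_{q_1}}$ through the compositum $\mathbb{F}_{q_1}\mathbb{F}_{q_0}$ and observe that restricted to $\mathbb{F}_{q_0}$ it only sees the $d$-structure.

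For part (i), assume $\mathbb{F}_{q_1}\subseteq\mathbb{F}_{q_0}$ and $r = p$. Then $\mathbb{F}_{q_0}\subseteq\mathbb{F}_q$ with $[\mathbb{F}_q:\mathbb{F}_{q_0}] = r = p$, and for $x\in\mathbb{F}_{q_0}$ one has $\Tr_{\mathbb{F}_q/\mathbb{F}_{q_1}}(x) = \Tr_{\mathbb{F}_{q_0}/\mathbb{F}_{q_1}}\!\bigl(\Tr_{\mathbb{F}_q/\mathbb{F}_{q_0}}(x)\bigr)$ by transitivity. Since $x\in\mathbb{F}_{q_0}$ is fixed by $\mathrm{Gal}(\mathbb{F}_q/\mathbb{F}_{q_0})$, which has order $p$, the inner trace is $\Tr_{\mathbb{F}_q/\mathbb{F}_{q_0}}(x) = p\cdot x = 0$ in characteristic $p$. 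Hence $\Tr_{\mathbb{F}_q/\mathbb{F}_{q_1}}(\mathbb{F}_{q_0}) = \{0\}$.

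For part (ii), assume $\mathbb{F}_{q_1}\not\subseteq\mathbb{F}_{q_0}$ or $r\ne p$. I would first show the inclusion $\Tr_{\mathbb{F}_q/\mathbb{F}_{q_1}}(\mathbb{F}_{q_0})\subseteq\mathbb{F}_{q_1}\cap\mathbb{F}_{q_0}$: the image lands in $\mathbb{F}_{q_1}$ by definition, and for $x\in\mathbb{F}_{q_0}$ it also lands in $\mathbb{F}_{q_0}$ because the Galois group $\mathrm{Gal}(\mathbb{F}_q/\mathbb{F}_{q_1})$, being cyclic generated by a power of Frobenius, permutes $\mathbb{F}_{q_0}$ (every subfield of $\mathbb{F}_q$ is Frobenius-stable), so the sum $\sum_\sigma \sigma(x)$ stays in $\mathbb{F}_{q_0}$. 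For the reverse inclusion I would argue that the composite $\mathbb{F}_{q_0}\hookrightarrow\mathbb{F}_q\xrightarrow{\Tr_{\mathbb{F}_q/\mathbb{F}_{q_1}}}\mathbb{F}_{q_1}$ has image exactly $d := \mathbb{F}_{q_1}\cap\mathbb{F}_{q_0}$: writing $\mathbb{F}_{q_1}\mathbb{F}_{q_0}$ for the compositum, transitivity gives $\Tr_{\mathbb{F}_q/\mathbb{F}_{q_1}}\!\restriction_{\mathbb{F}_{q_0}} = \Tr_{\mathbb{F}_{q_1}\mathbb{F}_{q_0}/\mathbb{F}_{q_1}}\circ\Tr_{\mathbb{F}_q/\mathbb{F}_{q_1}\mathbb{F}_{q_0}}\!\restriction_{\mathbb{F}_{q_0}}$, and since $\mathbb{F}_{q_0}$ is fixed pointwise by $\mathrm{Gal}(\mathbb{F}_q/\mathbb{F}_{q_0})\supseteq\mathrm{Gal}(\mathbb{F}_q/\mathbb{F}_{q_1}\mathbb{F}_{q_0})$ the inner trace is multiplication by $[\mathbb{F}_q:\mathbb{F}_{q_1}\mathbb{F}_{q_0}]$ and then surjectivity of $\Tr_{\mathbb{F}_{q_1}\mathbb{F}_{q_0}/\mathbb{F}_{q_1}}$ restricted to $\mathbb{F}_{q_0}$ (which equals $\Tr_{\mathbb{F}_{q_0}/d}$, a surjection onto $d$) together with the fact that $[\mathbb{F}_q:\mathbb{F}_{q_1}\mathbb{F}_{q_0}]$ is coprime to $p$ under the hypothesis will finish it. The coprimality is where the hypothesis bites, and I expect verifying it to be the main obstacle: one must check that $\mathbb{F}_{q_1}\not\subseteq\mathbb{F}_{q_0}$ or $r\ne p$ forces $p\nmid[\mathbb{F}_q:\mathbb{F}_{q_1}\mathbb{F}_{q_0}]$, which amounts to a careful bookkeeping of $p$-adic valuations of the relevant field degrees using $q = q_1^s = q_0^r$ and the primality of $r$ — essentially, if $p\mid[\mathbb{F}_q:\mathbb{F}_{q_1}\mathbb{F}_{q_0}]$ then $\mathbb{F}_{q_1}\mathbb{F}_{q_0}$ is a proper subfield of index divisible by $r=p$, and one deduces $\mathbb{F}_{q_1}\subseteq\mathbb{F}_{q_0}$, contradicting the hypothesis. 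I would organise this last step via the exponents: if $\mathbb{F}_q=\mathbb{F}_{p^f}$, $\mathbb{F}_{q_1}=\mathbb{F}_{p^a}$, $\mathbb{F}_{q_0}=\mathbb{F}_{p^b}$, then $f = s a = r b$, the compositum has degree $\mathrm{lcm}(a,b)$ over $\mathbb{F}_p$, and $[\mathbb{F}_q:\mathbb{F}_{q_1}\mathbb{F}_{q_0}] = f/\mathrm{lcm}(a,b) = \gcd(s\cdot(a/\gcd(a,b)),\,\dots)$ — a short gcd/lcm computation showing this index is coprime to $p$ unless $a\mid b$ (i.e. $\mathbb{F}_{q_1}\subseteq\mathbb{F}_{q_0}$) and $r=p$.
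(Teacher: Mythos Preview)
Your proof is correct. Part (i) is identical to the paper's argument. In part (ii) you take a somewhat different route: you derive the uniform formula
\[
\Tr_{\mathbb{F}_q/\mathbb{F}_{q_1}}(x)=[\mathbb{F}_q:\mathbb{F}_{q_1}\mathbb{F}_{q_0}]\cdot \Tr_{\mathbb{F}_{q_0}/d}(x)\qquad(x\in\mathbb{F}_{q_0},\ d=\mathbb{F}_{q_1}\cap\mathbb{F}_{q_0})
\]
and then show the scalar is a $p'$-number under the hypothesis. The paper instead splits into two subcases: when $\mathbb{F}_{q_1}\subseteq\mathbb{F}_{q_0}$ and $r\ne p$ it uses exactly your computation (the scalar is $r$); when $\mathbb{F}_{q_1}\not\subseteq\mathbb{F}_{q_0}$ it avoids identifying the restricted trace with $\Tr_{\mathbb{F}_{q_0}/d}$ altogether, arguing instead that $\mathbb{F}_q=\mathbb{F}_{q_1}\mathbb{F}_{q_0}$ (since $r$ is prime), so $\Tr_{\mathbb{F}_q/\mathbb{F}_{q_1}}$ cannot vanish on $\mathbb{F}_{q_0}$ by $\mathbb{F}_{q_1}$-linearity and surjectivity, and then scales a single nonzero value to hit all of $d$. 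Your approach is more uniform and conceptually transparent; the paper's is slightly more elementary in the second subcase. One remark: your closing gcd/lcm computation with the exponents $f=sa=rb$ is unnecessary, since the primality of $r$ already forces $\mathbb{F}_{q_1}\mathbb{F}_{q_0}\in\{\mathbb{F}_{q_0},\mathbb{F}_q\}$, so the index $[\mathbb{F}_q:\mathbb{F}_{q_1}\mathbb{F}_{q_0}]$ is either $1$ or $r$, and the latter occurs precisely when $\mathbb{F}_{q_1}\subseteq\mathbb{F}_{q_0}$ --- this is the clean way to finish.
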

	
	\begin{proof}
		First note that if $r = p$ then $\Tr_{\mathbb{F}_q/\mathbb{F}_{q_0}}(\mathbb{F}_{q_0}) = \{0\}$. Thus, if $\mathbb{F}_{q_1}\subseteq\mathbb{F}_{q_0}$ and $r = p$ then
		\begin{equation*}
		\Tr_{\mathbb{F}_q/\mathbb{F}_{q_1}}(x) = \Tr_{\mathbb{F}_{q_0}/\mathbb{F}_{q_1}}(\Tr_{\mathbb{F}_q/\mathbb{F}_{q_0}}(x)) = \Tr_{\mathbb{F}_{q_0}/\mathbb{F}_{q_1}}(0) = 0
		\end{equation*}
		for any $x\in\mathbb{F}_{q_0}$. This gives part (i).
		
		Next, we consider the case where $\mathbb{F}_{q_1}\subseteq\mathbb{F}_{q_0}$ and $r\ne p$. Assume $x\in\mathbb{F}_{q_0}$, noting that $\Tr_{\mathbb{F}_q/\mathbb{F}_{q_0}}(x) = rx$. Thus,
		\begin{equation*}
		\Tr_{\mathbb{F}_q/\mathbb{F}_{q_1}}(x) = \Tr_{\mathbb{F}_{q_0}/\mathbb{F}_{q_1}}(\Tr_{\mathbb{F}_q/\mathbb{F}_{q_0}}(x)) = \Tr_{\mathbb{F}_{q_0}/\mathbb{F}_{q_1}}(rx) = r\Tr_{\mathbb{F}_{q_0}/\mathbb{F}_{q_1}}(x).
		\end{equation*}
		This implies that $\Tr_{\mathbb{F}_q/\mathbb{F}_{q_1}}(\mathbb{F}_{q_0}) = \mathbb{F}_{q_1}$ since $(r,p) = 1$ and $\Tr_{\mathbb{F}_{q_0}/\mathbb{F}_{q_1}}(\mathbb{F}_{q_0}) = \mathbb{F}_{q_1}$.
		
		Finally, let us assume $\mathbb{F}_{q_1}\not\subseteq\mathbb{F}_{q_0}$, so $\mathbb{F}_q = \mathbb{F}_{q_1}\mathbb{F}_{q_0}$ since $\mathbb{F}_{q_0}$ is a maximal subfield of $\mathbb{F}_q$. Hence, if $\Tr_{\mathbb{F}_q/\mathbb{F}_{q_1}}(\mathbb{F}_{q_0}) = \{0\}$, then
		\begin{equation*}
		\Tr_{\mathbb{F}_q/\mathbb{F}_{q_1}}(\mathbb{F}_q) = \Tr_{\mathbb{F}_q/\mathbb{F}_{q_1}}(\mathbb{F}_{q_1}\mathbb{F}_{q_0}) = \{0\},
		\end{equation*}
		a contradiction to the surjectivity of $\Tr_{\mathbb{F}_q/\mathbb{F}_{q_1}}$. It follows that there exists $x\in\mathbb{F}_{q_0}$ such that $\Tr_{\mathbb{F}_q/\mathbb{F}_{q_1}}(x)\ne 0$, noting that $\Tr_{\mathbb{F}_q/\mathbb{F}_{q_1}}(x)\in\mathbb{F}_{q_1}\cap\mathbb{F}_{q_0}$. For any $y\in\mathbb{F}_{q_1}\cap\mathbb{F}_{q_0}$, we have $y(\Tr_{\mathbb{F}_q/\mathbb{F}_{q_1}}(x))^{-1}x\in\mathbb{F}_{q_0}$ and
		\begin{equation*}
		\Tr_{\mathbb{F}_q/\mathbb{F}_{q_1}}(y(\Tr_{\mathbb{F}_q/\mathbb{F}_{q_1}}(x))^{-1}x) = y.
		\end{equation*}
		This shows that $\mathbb{F}_{q_1}\cap\mathbb{F}_{q_0}\subseteq\Tr_{\mathbb{F}_q/\mathbb{F}_{q_1}}(\mathbb{F}_{q_0})$, which completes the proof as $\Tr_{\mathbb{F}_q/\mathbb{F}_{q_1}}(\mathbb{F}_{q_0})\subseteq\mathbb{F}_{q_1}\cap\mathbb{F}_{q_0}$ is obvious.
	\end{proof}
	
	Now let $i$ be an integer such that $|\phi^i|=s$.
	
	\begin{lem}
		\label{l:pre_AGammaL_conju}
		Let $a,b\in\mathbb{F}_q$. Then the following properties hold.
		\begin{itemize}\addtolength{\itemsep}{0.2\baselineskip}
			\item[{\rm (i)}] $\Tr_{\mathbb{F}_q/\mathbb{F}_{q_1}}(a)=\Tr_{\mathbb{F}_q/\mathbb{F}_{q_1}}(b),$ then $(a,1)\phi^i$ and $(b,1)\phi^i$ are $\mathbb{F}_q^+$-conjugate.
			\item[{\rm (ii)}] If $\Tr_{\mathbb{F}_q/\mathbb{F}_{q_1}}(a)\Tr_{\mathbb{F}_q/\mathbb{F}_{q_1}}(b)\neq 0$ , then $(a,1)\phi^i$ and $(b,1)\phi^i$ are $\mathbb{F}_q^+{:}\mathbb{F}_{q_1}^\times$-conjugate.
			\item[{\rm (iii)}] If $\Tr_{\mathbb{F}_q/\mathbb{F}_{q_1}}(a) = 0$ and $\Tr_{\mathbb{F}_q/\mathbb{F}_{q_1}}(b)\ne 0$, then $|(a,1)\phi^i|\ne|(b,1)\phi^i|$.
		\end{itemize}
	\end{lem}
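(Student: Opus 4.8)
The plan is to work directly with the multiplication formula in $\Gamma$ given in \eqref{e:Gamma} and conjugate $(a,1)\phi^i$ by elements of the subgroup $\mathbb{F}_q^+{:}\mathbb{F}_{q_1}^\times$, keeping track of how the translation part transforms. For $(c,\mu)\in\mathbb{F}_q^+{:}\mathbb{F}_{q_1}^\times$ one computes $(c,\mu)^{-1}(a,1)\phi^i(c,\mu)$ using the given formula; since $\phi^i$ has order $s$ and $\mathbb{F}_{q_1}$ is exactly the fixed field of $\phi^i$, the exponent $\phi^i$ acts trivially on $\mu\in\mathbb{F}_{q_1}^\times$, and the conjugate will again have the form $(a',1)\phi^i$ with $a'$ an explicit affine-linear expression in $a$, $c$, $\mu$. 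The crucial feature is that $\Tr_{\mathbb{F}_q/\mathbb{F}_{q_1}}$ annihilates the ``coboundary'' terms $c - c^{\phi^{-i}}$ (a telescoping sum over the $\phi^i$-orbit), so conjugation by $\mathbb{F}_q^+$ leaves $\Tr_{\mathbb{F}_q/\mathbb{F}_{q_1}}(a)$ unchanged, while conjugation by $\mu\in\mathbb{F}_{q_1}^\times$ scales it by $\mu$.

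\textbf{Parts (i) and (ii).} For part (i), I expect that the $\mathbb{F}_q^+$-conjugate of $(a,1)\phi^i$ by $(c,1)$ is $(a + c - c^{\phi^{-i}},1)\phi^i$ (up to the precise form dictated by the multiplication rule), and that the set $\{c - c^{\phi^{-i}} : c\in\mathbb{F}_q\}$ is precisely the kernel of $\Tr_{\mathbb{F}_q/\mathbb{F}_{q_1}}$ — this is the additive Hilbert 90 / surjectivity-of-trace argument: the image of the map $c\mapsto c - c^{\phi^{-i}}$ has dimension $f - f_1$ over $\mathbb{F}_p$ (its kernel is the fixed field $\mathbb{F}_{q_1}$), and it is contained in $\ker\Tr_{\mathbb{F}_q/\mathbb{F}_{q_1}}$, which has the same dimension by surjectivity of the trace; hence the two coincide. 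Therefore $(a,1)\phi^i$ and $(b,1)\phi^i$ are $\mathbb{F}_q^+$-conjugate exactly when $a - b$ lies in this kernel, i.e. when $\Tr_{\mathbb{F}_q/\mathbb{F}_{q_1}}(a) = \Tr_{\mathbb{F}_q/\mathbb{F}_{q_1}}(b)$, giving (i). For part (ii), if $\Tr_{\mathbb{F}_q/\mathbb{F}_{q_1}}(a)\Tr_{\mathbb{F}_q/\mathbb{F}_{q_1}}(b)\ne 0$, first use an element of $\mathbb{F}_{q_1}^\times$ to rescale so that both traces become equal (solving $\mu\,\Tr_{\mathbb{F}_q/\mathbb{F}_{q_1}}(a) = \Tr_{\mathbb{F}_q/\mathbb{F}_{q_1}}(b)$ for $\mu$ in $\mathbb{F}_{q_1}^\times$, which is possible since both traces are nonzero elements of $\mathbb{F}_{q_1}$), then apply part (i) inside $\mathbb{F}_q^+$ to finish.

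\textbf{Part (iii) and the main obstacle.} For part (iii), the idea is to show the orders genuinely differ by computing $((a,1)\phi^i)^s$, which lands in the base group $\mathbb{F}_q^+{:}\mathbb{F}_q^\times$; iterating the multiplication rule, the translation part of this $s$-th power should be a sum over the $\phi^i$-orbit of $a$, which is exactly $\Tr_{\mathbb{F}_q/\mathbb{F}_{q_1}}(a)$ (possibly up to a unit or a reindexing). Thus $((a,1)\phi^i)^s = (\Tr_{\mathbb{F}_q/\mathbb{F}_{q_1}}(a),1)$ is trivial iff $\Tr_{\mathbb{F}_q/\mathbb{F}_{q_1}}(a) = 0$, whereas if the trace is nonzero this element is a nontrivial translation, which has order $p$. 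So when $\Tr_{\mathbb{F}_q/\mathbb{F}_{q_1}}(a) = 0$ the order of $(a,1)\phi^i$ divides $s$, while when $\Tr_{\mathbb{F}_q/\mathbb{F}_{q_1}}(b)\ne 0$ the order of $(b,1)\phi^i$ equals $ps$ (or at any rate is divisible by $p$ beyond the $s$ coming from $\phi^i$), so the two orders cannot be equal. I expect the main technical obstacle to be bookkeeping in the iterated multiplication: the formula $(a,\lambda)\phi^i(b,\mu)\phi^j = (a+\lambda^{-1}b^{\phi^{-i}},\lambda\mu^{\phi^{-i}})\phi^{i+j}$ twists the translation parts by powers of $\phi$ at each step, so I must carefully verify that the accumulated translation in the $s$-th power really is the full trace $\Tr_{\mathbb{F}_q/\mathbb{F}_{q_1}}(a)$ and not some scrambled variant — and similarly in (i) I must pin down the exact exponent ($\phi^{-i}$ versus $\phi^{i}$) in the coboundary term $c - c^{\phi^{\pm i}}$, though since $\phi^{-i}$ and $\phi^i$ generate the same cyclic group $\langle\phi^i\rangle$ this does not affect which subspace of $\mathbb{F}_q$ arises. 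Throughout, Lemma~\ref{l:pre_AGammaL_trace} is available if I need to identify $\mathbb{F}_{q_1}\cap\mathbb{F}_{q_0}$ with the relevant trace image, but for the present lemma the key inputs are just the surjectivity of $\Tr_{\mathbb{F}_q/\mathbb{F}_{q_1}}$ and the dimension count for the coboundary map.
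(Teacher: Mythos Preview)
Your proposal is correct and follows essentially the same approach as the paper: for (i) conjugate by $(c,1)$ and invoke additive Hilbert~90 to identify the coboundaries $\{c^{\phi^{-i}}-c\}$ with $\ker\Tr_{\mathbb{F}_q/\mathbb{F}_{q_1}}$; for (ii) first scale by $\mu\in\mathbb{F}_{q_1}^\times$ to match the traces, then apply (i); for (iii) compute $((a,1)\phi^i)^s=(\Tr_{\mathbb{F}_q/\mathbb{F}_{q_1}}(a),1)$ and compare. The only cosmetic difference is that the paper quotes Hilbert~90 directly rather than your dimension count, and writes the coboundary as $c^{\phi^{-i}}-c$ rather than $c-c^{\phi^{-i}}$, but as you note this does not affect the argument.
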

	
	\begin{proof}
		First assume $\Tr_{\mathbb{F}_q/\mathbb{F}_{q_1}}(a)= \Tr_{\mathbb{F}_q/\mathbb{F}_{q_1}}(b)$, so $\Tr_{\mathbb{F}_q/\mathbb{F}_{q_1}}(b-a)=0$.
		Note that $\la \phi^i \ra=\mathrm{Gal}(\mathbb{F}_q/\mathbb{F}_{q_1}).$
		Thus, by the additive form of Hilbert 90, there exists $c\in\mathbb{F}_q$ such that $b- a = c^{\phi^{-i}}-c$.
		This implies that
		\begin{equation*}
		(( a,1)\phi^i)^{(c,1)}
		= (-c,1)( a,1)\phi^i(c,1) = (-c+ a+c^{\phi^{-i}},1)\phi^i = (b,1)\phi^i\\,
		\end{equation*}
		which gives (i).

		Next, assume both $\Tr_{\mathbb{F}_q/\mathbb{F}_{q_1}}(a)$ and $\Tr_{\mathbb{F}_q/\mathbb{F}_{q_1}}(b)$ are in $\mathbb{F}_{q_1}^{\times}$.
		Then there exists $\lambda\in \mathbb{F}_{q_1}^{\times}$ such that $\lambda\Tr_{\mathbb{F}_q/\mathbb{F}_{q_1}}(a)=\Tr_{\mathbb{F}_q/\mathbb{F}_{q_1}}(b)$, and hence $\Tr_{\mathbb{F}_q/\mathbb{F}_{q_1}}(\lambda a)=\Tr_{\mathbb{F}_q/\mathbb{F}_{q_1}}(b)$.
		In view of (i), we see that $(\lambda a, 1)\phi^i$ and $(b,1)\phi^i$ are conjugate in $\mathbb{F}_q^+$.
		Since $\lambda^{\phi^i}=\lambda$, we have
		$((a,1)\phi^i)^{(0,\lambda)}=(a,1)^{(0,\lambda)}\phi^i=(\lambda a, 1)\phi^i$, which implies (ii).
		
		Now assume $\Tr_{\mathbb{F}_q/\mathbb{F}_{q_1}}(a) = 0$ and $\Tr_{\mathbb{F}_q/\mathbb{F}_{q_1}}(b)\ne 0$.  Then
		\begin{equation*}
		((a,1)\phi^i)^s = (a+a^{\phi^{-i}}+\cdots+a^{\phi^{-(s-1)i}},1) = (\Tr_{\mathbb{F}_q/\mathbb{F}_{q_1}}(a),1) = (0,1)
		\end{equation*}
		is the identity of $\Gamma$, whereas
		\begin{equation*}
		((b,1)\phi^i)^s = (\Tr_{\mathbb{F}_q/\mathbb{F}_{q_1}}(b),1) \ne (0,1).
		\end{equation*}
		This shows (iii).
	\end{proof}

	\begin{lem}
		\label{l:pre_AGammaL_conju_subfield}
		Suppose $r\nmid s$ or $r\ne p$. Then for any $a\in\mathbb{F}_q$, there exists $b\in\mathbb{F}_{q_0}$ such that $(a,1)\phi^i$ is conjugate to $(b,1)\phi^i$ by an element in $\mathbb{F}_q^+{:}\mathbb{F}_q^\times$.
	\end{lem}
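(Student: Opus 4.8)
The plan is to reduce the whole statement to a single invariant, namely the trace $\Tr_{\mathbb{F}_q/\mathbb{F}_{q_1}}$, and then to combine Lemmas \ref{l:pre_AGammaL_conju} and \ref{l:pre_AGammaL_trace}. Note first that $\la\phi^i\ra$ is the unique subgroup of order $s$ in $\la\phi\ra$, so $\la\phi^i\ra = \mathrm{Gal}(\mathbb{F}_q/\mathbb{F}_{q_1})$ and the trace map in question is the relevant one. Set $t := \Tr_{\mathbb{F}_q/\mathbb{F}_{q_1}}(a)\in\mathbb{F}_{q_1}$, and split into the cases $t = 0$ and $t\ne 0$.

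If $t = 0$, I would simply take $b = 0\in\mathbb{F}_{q_0}$: since $\Tr_{\mathbb{F}_q/\mathbb{F}_{q_1}}(b) = 0 = t$, Lemma \ref{l:pre_AGammaL_conju}(i) gives that $(a,1)\phi^i$ and $(b,1)\phi^i$ are $\mathbb{F}_q^+$-conjugate, which is more than enough. This case needs no hypothesis on $r$, $s$, $p$. If $t\ne 0$, the key is to exhibit some $b\in\mathbb{F}_{q_0}$ with $\Tr_{\mathbb{F}_q/\mathbb{F}_{q_1}}(b)\ne 0$: for then $t\cdot\Tr_{\mathbb{F}_q/\mathbb{F}_{q_1}}(b)\ne 0$, so Lemma \ref{l:pre_AGammaL_conju}(ii) shows that $(a,1)\phi^i$ and $(b,1)\phi^i$ are $\mathbb{F}_q^+{:}\mathbb{F}_{q_1}^\times$-conjugate, and hence conjugate by an element of $\mathbb{F}_q^+{:}\mathbb{F}_q^\times$ because $\mathbb{F}_{q_1}^\times\leqs\mathbb{F}_q^\times$. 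So everything comes down to checking that $\Tr_{\mathbb{F}_q/\mathbb{F}_{q_1}}(\mathbb{F}_{q_0})\ne\{0\}$, and here I would invoke Lemma \ref{l:pre_AGammaL_trace}. Writing $q = p^f$ (so $q_1 = p^{f/s}$ and $q_0 = p^{f/r}$), one has $\mathbb{F}_{q_1}\subseteq\mathbb{F}_{q_0}$ if and only if $(f/s)\mid(f/r)$, i.e. if and only if $r\mid s$; thus the hypothesis that $r\nmid s$ or $r\ne p$ is precisely the negation of the statement that $\mathbb{F}_{q_1}\subseteq\mathbb{F}_{q_0}$ and $r = p$, which is exactly the hypothesis of Lemma \ref{l:pre_AGammaL_trace}(ii). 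That lemma then gives $\Tr_{\mathbb{F}_q/\mathbb{F}_{q_1}}(\mathbb{F}_{q_0}) = \mathbb{F}_{q_1}\cap\mathbb{F}_{q_0}$, which contains $1$ and is therefore non-zero, so a suitable $b$ exists.

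I do not expect a genuine obstacle here: the argument is a direct assembly of the two preceding lemmas. The only steps requiring care are the elementary bookkeeping that translates the arithmetic condition on $r$, $s$ and $p$ into the subfield-containment hypothesis of Lemma \ref{l:pre_AGammaL_trace}, and the observation that the conjugating torus element produced by Lemma \ref{l:pre_AGammaL_conju}(ii) lies in $\mathbb{F}_{q_1}^\times\leqs\mathbb{F}_q^\times$, so that the conclusion is indeed about conjugacy in $\mathbb{F}_q^+{:}\mathbb{F}_q^\times$ as claimed.
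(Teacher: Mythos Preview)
Your proof is correct and follows essentially the same approach as the paper's: split on whether $\Tr_{\mathbb{F}_q/\mathbb{F}_{q_1}}(a)$ vanishes, take $b=0$ in the first case via Lemma~\ref{l:pre_AGammaL_conju}(i), and in the second case use Lemma~\ref{l:pre_AGammaL_trace}(ii) to find $b\in\mathbb{F}_{q_0}$ with non-zero trace and conclude by Lemma~\ref{l:pre_AGammaL_conju}(ii). Your additional bookkeeping (translating $r\nmid s$ into $\mathbb{F}_{q_1}\not\subseteq\mathbb{F}_{q_0}$, and noting $\mathbb{F}_{q_1}^\times\leqs\mathbb{F}_q^\times$) makes explicit what the paper leaves implicit.
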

	
	\begin{proof}
		If $\mathrm{Tr}_{\mathbb{F}_q/\mathbb{F}_{q_1}}(a)=0,$ then Lemma \ref{l:pre_AGammaL_conju}(i) implies $(a,1)\phi^i$ is $\mathbb{F}_q^+$-conjugate to $(0,1)\phi^i.$
		If $\mathrm{Tr}_{\mathbb{F}_q/\mathbb{F}_{q_1}}(a)\neq 0,$ then by Lemma \ref{l:pre_AGammaL_trace}(ii), there exists $b\in \mathbb{F}_{q_0}$ such that $\mathrm{Tr}_{\mathbb{F}_q/\mathbb{F}_{q_1}}(b)\neq 0,$ and hence $(a,1)\phi^i$ is $\mathbb{F}_q^+{:}\mathbb{F}_q^\times$-conjugate to $(b,1)\phi^i$ by Lemma \ref{l:pre_AGammaL_conju}(ii).
	\end{proof}

	\begin{lem}
		\label{l:pre_AGammaL_conju_sub}
		Let $X\leqs\Gamma$ be a subgroup such that $\mathbb{F}_q^+{:}\la\phi\ra_p\leqs X$. Suppose $x\in X$ and $\la x\ra\cap \mathbb{F}_q^+ = 1$. Then $x$ is $X$-conjugate to an element in $X\cap (\mathbb{F}_q^\times{:}\la\phi\ra)$.
	\end{lem}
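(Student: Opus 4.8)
The plan is to conjugate $x$ by a single translation in $\mathbb{F}_q^+\leqs X$ chosen so as to kill the affine part of $x$. Write $x=(a,\lambda)\phi^i$ with $a\in\mathbb{F}_q$, $\lambda\in\mathbb{F}_q^\times$ and $0\leqs i<f$, and put $\theta=\phi^{-i}$, regarded as an automorphism of $\mathbb{F}_q$; then $\la\theta\ra=\la\phi^i\ra=\mathrm{Gal}(\mathbb{F}_q/\mathbb{F}_{q'})$ with $q'=p^{\gcd(i,f)}$ and $d:=|\theta|=|\phi^i|$. Using the multiplication rule defining $\Gamma$, one computes, for any $c\in\mathbb{F}_q$,
\[
x^{(c,1)}=\big(a-c+\lambda^{-1}c^{\phi^{-i}},\,\lambda\big)\phi^i=\big(a-(1-L)(c),\,\lambda\big)\phi^i ,
\]
where $L\colon\mathbb{F}_q\to\mathbb{F}_q$, $c\mapsto\lambda^{-1}c^{\theta}$, is $\mathbb{F}_p$-linear. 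Since $(c,1)\in\mathbb{F}_q^+\leqs X$, the lemma will follow once we produce $c\in\mathbb{F}_q$ with $(1-L)(c)=a$, i.e.\ once we show $a\in\mathrm{Im}(1-L)$.

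I would split into two cases according to whether $\mathrm{Nm}_{\mathbb{F}_q/\mathbb{F}_{q'}}(\lambda)=1$. If $\mathrm{Nm}_{\mathbb{F}_q/\mathbb{F}_{q'}}(\lambda)\ne1$, then $1-L$ is injective: a nonzero $c$ in its kernel would satisfy $c^{\theta}=\lambda c$, and applying $\mathrm{Nm}_{\mathbb{F}_q/\mathbb{F}_{q'}}$ forces $\mathrm{Nm}_{\mathbb{F}_q/\mathbb{F}_{q'}}(\lambda)=1$, a contradiction; hence $1-L$ is bijective (as $\dim_{\mathbb{F}_p}\mathbb{F}_q<\infty$) and $a\in\mathrm{Im}(1-L)$ for free, without using the hypothesis on $\la x\ra$. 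If $\mathrm{Nm}_{\mathbb{F}_q/\mathbb{F}_{q'}}(\lambda)=1$, then multiplicative Hilbert~90 for the cyclic extension $\mathbb{F}_q/\mathbb{F}_{q'}$ yields $\nu\in\mathbb{F}_q^\times$ with $\lambda=\nu^{\theta}\nu^{-1}$. An easy induction gives $x^k=(a_k,\lambda_k)\phi^{ik}$ with $\lambda_k=\prod_{j=0}^{k-1}\lambda^{\theta^j}$ and $a_k=\sum_{j=0}^{k-1}\lambda_j^{-1}a^{\theta^j}$; after substituting $\lambda=\nu^{\theta}\nu^{-1}$ the products telescope to $\lambda_j=\nu^{\theta^j}\nu^{-1}$, so that $\lambda_d=1$ and $a_d=\nu\sum_{j=0}^{d-1}(a\nu^{-1})^{\theta^j}=\nu\,\Tr_{\mathbb{F}_q/\mathbb{F}_{q'}}(a\nu^{-1})$. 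Since $\phi^{id}=1$ we get $x^d=(a_d,1)\in\mathbb{F}_q^+$, so the hypothesis $\la x\ra\cap\mathbb{F}_q^+=1$ forces $x^d=1$, whence $a_d=0$ and $\Tr_{\mathbb{F}_q/\mathbb{F}_{q'}}(a\nu^{-1})=0$. Additive Hilbert~90 then provides $w\in\mathbb{F}_q$ with $w-w^{\theta}=a\nu^{-1}$, and taking $c=\nu w$ one checks directly that $(1-L)(c)=\nu w-\lambda^{-1}(\nu w)^{\theta}=\nu(w-w^{\theta})=a$, completing the argument.

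I expect the case $\mathrm{Nm}_{\mathbb{F}_q/\mathbb{F}_{q'}}(\lambda)=1$ to be the crux: here $1-L$ genuinely has a nontrivial kernel (of $\mathbb{F}_p$-dimension $\gcd(i,f)$), so $a$ need not lie in its image for an arbitrary element of $\Gamma$, and the point is to convert the group-theoretic constraint $\la x\ra\cap\mathbb{F}_q^+=1$ into the arithmetic one $\Tr_{\mathbb{F}_q/\mathbb{F}_{q'}}(a\nu^{-1})=0$. The two ingredients that make this work are the telescoping formula for $\lambda_j$ and the observation that in this case $|x|=d$, so that $x^d$---which a priori only lies in $\mathbb{F}_q^+$---must in fact be trivial; the rest is routine bookkeeping with the additive and multiplicative forms of Hilbert's Theorem~90. (A more structural alternative is to note that $\mathbb{F}_q^+{:}\la\phi\ra_p$ is a Sylow $p$-subgroup of $\Gamma$ lying inside $X$ and run a Sylow/Schur--Zassenhaus argument on $\la x\ra$; but the direct computation above appears to be the shortest route, and in fact uses only $\mathbb{F}_q^+\leqs X$.)
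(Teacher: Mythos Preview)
Your proof is correct and takes a genuinely different route from the paper's. The paper argues in two stages: it first uses that $\mathbb{F}_q^+{:}\la\phi\ra_p$ is a Sylow $p$-subgroup of $\Gamma$ contained in $X$ to conjugate $\la x\ra_p$ into $\mathbb{F}_q^+{:}\la\phi\ra_p$, applies Lemma~\ref{l:pre_AGammaL_conju}(i) (which is essentially your additive Hilbert~90 step, but only for $\lambda=1$) to push the generator of $\la x\ra_p$ to some $\phi^i$, and then separately conjugates $\la x\ra_{p'}$ into a Hall $p'$-subgroup $X\cap(\mathbb{F}_{q_1}^\times{:}\la\phi\ra_{p'})$ of $C_X(\phi^i)$ via solubility. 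Your argument instead handles $x=(a,\lambda)\phi^i$ in one shot by conjugating by a single translation $(c,1)$, reducing everything to solving $(1-L)(c)=a$; you then treat the two cases $\mathrm{Nm}_{\mathbb{F}_q/\mathbb{F}_{q'}}(\lambda)\ne 1$ (where $1-L$ is bijective) and $\mathrm{Nm}_{\mathbb{F}_q/\mathbb{F}_{q'}}(\lambda)=1$ (where multiplicative Hilbert~90 plus the hypothesis $\la x\ra\cap\mathbb{F}_q^+=1$ yield a trace-zero condition, and additive Hilbert~90 finishes). Your approach is more direct, avoids the auxiliary Lemma~\ref{l:pre_AGammaL_conju}, and---as you correctly note---only uses $\mathbb{F}_q^+\leqs X$, strictly less than the stated hypothesis $\mathbb{F}_q^+{:}\la\phi\ra_p\leqs X$. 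The paper's approach, by contrast, genuinely uses the full Sylow $p$-subgroup hypothesis for the initial conjugation and fits more naturally with the surrounding lemmas, but yours gives a sharper and more self-contained statement.
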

	
	\begin{proof}
		First note that $\mathbb{F}_q^+{:}\la\phi\ra_p$ is a Sylow $p$-subgroup of $\Gamma$. Thus, conjugating by a suitable element in $X$, we may assume $\la x\ra_p\leqs \mathbb{F}_q^+{:}\la\phi\ra_p$. Let $y$ be a generator of $\la x\ra_p$. Then $y = (a,\phi^i)$ for some $a\in \mathbb{F}_q$ and some integer $i$ such that $\phi^i\in\la\phi\ra_p$. Let $|\phi^i| = s$ and $q = q_1^s$. Then we have
		\begin{equation*}
		y^s=(a+a^{\phi^{-i}}+\dots+a^{\phi^{-(s-1)i}},1)\phi^i=(\mathrm{Tr}_{\mathbb{F}_q/\mathbb{F}_{q_1}}(a),1)\in \mathbb{F}_q^{+}\cap \la y \ra=\{(0,1)\}.
		\end{equation*}
		Thus, by Lemma \ref{l:pre_AGammaL_conju}(i), there exists $b\in\mathbb{F}_q^+$ such that $y^{(b,1)} = \phi^i$, and hence
		\begin{equation*}
		x^{(b,1)}\in C_\Gamma(y^{(b,1)}) = C_\Gamma(\phi^i) = (\mathbb{F}_{q_1}^+{:}\mathbb{F}_{q_1}^\times){:}\la\phi\ra.
		\end{equation*}
		Note that $\mathbb{F}_{q_1}^+\leqslant X$, so $C_X(\phi^i)=\mathbb{F}_{q_1}^+{:}(X\cap(\mathbb{F}_{q_1}^\times{:}\la\phi\ra))$, and $X\cap (\mathbb{F}_{q_1}^{\times}{:}\la \phi \ra_{p'})$ is a Hall-$p'$ subgroup of $C_X(\phi^i)$.
		Thus,
		$\la x\ra_{p'}^{(b,1)z}\leqs X\cap (\mathbb{F}_{q_1}^\times{:}\la\phi\ra_{p'})$ for some $z\in C_X(\phi^i)$. Therefore,
		\begin{equation*}
		\la x^{(b,1)z}\ra = \la x\ra_{p'}^{(b,1)z}\la y\ra^{(b,1)z} =  \la x\ra_{p'}^{(b,1)z}\la\phi^i\ra \leqs \mathbb{F}_{q_1}^\times{:}\la\phi\ra
		\end{equation*}
		and the proof is complete by noting that $(b,1)z\in X$.
	\end{proof}
	\section{Two-dimensional linear groups}
	
	\label{s:PSL2}
	
	Let $G\leqs\mathrm{Sym}(\Omega)$ be a primitive group with socle $G_0 = \PSL_2(q)$ for some prime power $q = p^f$, and let $H$ be a point stabiliser of $G$. Write $G = G_0.O$ for some $O\leqs\Out(G_0)$. For any subgroup $K$ of $G$, we write $K_0:=K\cap G_0$ (in particular, $H_0:=H\cap G_0$). In this section, we will study the large fixers of $G$.
	
	For convenience, we also set up the notation of some subgroups of $G$, with respect to a fixed basis of $\mathbb{F}_q^2$. Let $Q,D\leqs \PGL_2(q)$ be the images of the subgroups of $\GL_2(q)$ consisting of all the upper-triangular unipotent matrices and all the diagonal matrices, respectively. That is,
	\begin{equation}
	\label{e:Q}
	Q:=\left\{xZ(\GL_2(q)):x =
	\begin{pmatrix}
	1&a\\
	&1
	\end{pmatrix}
	\mbox{ for some }a\in\mathbb{F}_q
	\right\}
	\end{equation}
	and
	\begin{equation}
	\label{e:D}
	D:=\left\{xZ(\GL_2(q)):x =
	\begin{pmatrix}
	\lambda&\\
	&\mu
	\end{pmatrix}
	\mbox{ for some }\lambda,\mu\in\mathbb{F}_q^*
	\right\}.
	\end{equation}
	Note that $Q\cong C_p^f$ and $D\cong C_{q-1}$. Moreover, we have
	\begin{equation*}
	N_{G}(Q) = (Q{:}(D{:}\la \phi\ra))\cap G = Q{:}(G\cap (D{:}\la \phi\ra))
	\end{equation*}
	is a maximal subgroup of $G$ of type $P_1$. In particular,
	\begin{equation*}
	N_{\PGammaL_2(q)}(Q) = (Q{:}D){:}\la\phi\ra\cong\mathrm{A\Gamma L}_1(q),
	\end{equation*}
	so we define the following isomorphism $\rho:N_{\PGammaL_2(q)}(Q)\to \Gamma$ as follows (recall that $\Gamma$ is defined in \eqref{e:Gamma}):
	\begin{equation}
	\label{e:pi}
	\rho:
	\begin{pmatrix}
	1&a\\
	&1
	\end{pmatrix}
	\begin{pmatrix}
	\lambda&\\
	&\mu
	\end{pmatrix}
	Z(\GL_2(q))\phi^i\mapsto (a,\lambda\mu^{-1})\phi^i.
	\end{equation}
	
	We first establish Theorem \ref{thm:PSL2} and Corollary \ref{cor:rho_0_PSL2} when $q$ is small.
	
	\begin{prop}
		\label{p:PSL2_61}
		The statements of Theorem \ref{thm:PSL2} and Corollary \ref{cor:rho_0_PSL2} hold for $q\leqs 61$.
	\end{prop}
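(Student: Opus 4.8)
The plan is to reduce Proposition~\ref{p:PSL2_61} to a finite computation. For each prime power $q \leqs 61$, there are only finitely many almost simple primitive groups $G$ with socle $\PSL_2(q)$, and each such $G$ has finite order, so in principle everything here is decidable by machine. Concretely, I would first enumerate, for each admissible $q = p^f \leqs 61$, the groups $G$ with $\PSL_2(q) \normeq G \leqs \PGammaL_2(q)$ and, for each such $G$, the conjugacy classes of core-free maximal subgroups $H$ (these are tabulated in the literature, e.g. in the subgroup structure results of Dickson and their modern refinements, and are also directly accessible in \textsc{Magma} or \textsc{GAP}). For each pair $(G,H)$ this fixes the permutation representation on $\Omega = [G:H]$, hence $|\Omega| = |G:H|$.

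Next, for each $(G,H)$ I would run over the conjugacy classes of subgroups $K \leqs G$ with $|K| \geqs |H|$ (there are only finitely many, and typically very few that are non-stable), and decide for each whether $K$ is a fixer using the criterion of Lemma~\ref{l:pre_iff}: $K$ is a fixer if and only if every $G$-conjugacy class meeting $K$ also meets $H$, equivalently $\Spec$-type containment at the level of conjugacy classes. This is a routine check once one has the class data of $G$ and the fusion of elements of $K$ and $H$ into $G$-classes. One also discards the stable cases, i.e. those $K$ contained in some conjugate of $H$; since $H$ is maximal this just means $K^g \leqs H$ for some $g$, again a finite check. The non-stable fixers that survive are then compared against the relevant rows of Table~\ref{tab:PSL2} (restricted to the values $q \leqs 61$); verifying that the list of surviving $K$, up to $G$-conjugacy, coincides with the table entries is a direct inspection. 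For Corollary~\ref{cor:rho_0_PSL2}, once all fixers $K$ of all $(G,H)$ with $q \leqs 61$ are in hand, one simply computes $\max |K|/(|H|\sqrt{|\Omega|})$ over all of them and checks it is strictly less than $1/\sqrt 2$; by Corollary~\ref{c:pre_as_size} it suffices to do this for $G = G_0 = \PSL_2(q)$ simple, which cuts the work down further.

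A few small reductions make the computation cleaner and worth spelling out. By Lemma~\ref{l:pre_fix}(ii) any fixer $K$ satisfies $\pi(K) \subseteq \pi(H)$, which immediately eliminates most large subgroups $K$ before any class-fusion check; by Lemma~\ref{l:pre_as_size}, $|K| \geqs |H|$ forces $|K_0| \geqs |H_0|$, so one can work inside $G_0$ and then account for the outer part; and Theorem~\ref{t:2-trans} together with Lemma~\ref{l:pre_strict-ekr} handles several of the $2$-transitive actions (those of type $P_1$, and the natural actions) at a stroke, since the EKR property gives $|K| \leqs |H|$ for all fixers, leaving only the possibility of a non-stable fixer of order exactly $|H|$ to track. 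One should also recall that $\PSL_2(4) \cong \PSL_2(5) \cong A_5$, $\PSL_2(9) \cong A_6$ and $\PSL_2(7) \cong \PSL_3(2)$, so the corresponding small cases can be cross-checked against known data for those groups.

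The main obstacle is not conceptual but bookkeeping: the number of pairs $(G,H)$ is moderately large (there are several maximal subgroup types for each $q$, and up to four choices of $G$ between $G_0$ and $\PGammaL_2(q)$ when $f$ has several divisors, with further subtlety when $q$ is odd and $\PGL_2(q)$ sits strictly between), and for each one must be careful about the exceptional maximal subgroups ($A_4$, $S_4$, $A_5$) that occur only for special congruences of $q$, about the split between the two classes of dihedral-type and subfield-type subgroups that fuse under $\PGL_2(q)$, and about correctly identifying when a large $K$ is genuinely non-stable rather than merely a large subgroup of some $H^g$. I would therefore carry out the enumeration with explicit computer algebra, record the output, and present the verification as a case-by-case confirmation against Table~\ref{tab:PSL2}; the delicate part of the write-up is simply ensuring that the group-theoretic data used (maximal subgroups, their orders, and element-class fusions) is the standard one and is cited precisely.
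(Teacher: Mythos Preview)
Your approach is correct and essentially the same as the paper's: the paper's proof consists of the single sentence ``This can be done easily with the aid of {\sc Magma}'', so your detailed plan for structuring the finite computation is simply an elaboration of what the authors leave implicit. The reductions you outline (using $\pi(K)\subseteq\pi(H)$, Lemma~\ref{l:pre_as_size}, and the $2$-transitive case) are sensible optimisations but not required for correctness.
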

	
	\begin{proof}
		This can be done easily with the aid of {\sc Magma} \cite{Magma}.
	\end{proof}
	
	Thus, we assume $q > 61$ from now on, and we list the maximal subgroups of $G$ up to $G$-conjugacy in Table \ref{tab:max_PSL2} (in fact, the description of the maximal subgroups in the table holds for all $q > 11$).
	
	{\small
		\begin{table}
			\[
			\begin{array}{lllll} \hline
			\mbox{Type}&M_0&O&\mbox{Conditions}&\mbox{Conjugates}\\\hline
			P_1 & C_p^f{:}C_{(q-1)/(2,q-1)} & & &\\
			\GL_1(q)\wr S_2 & D_{2(q-1)/(2,q-1)} & & &\\
			\GL_1(q^2) & D_{2(q+1)/(2,q+1)} & & &\\
			\GL_2(q_0) & \PGL_2(q_0) & \leqs\la\phi\ra & \mbox{$q = q_0^2$ odd} &\delta\\
			& \PSL_2(q_0) & & \mbox{$q = q_0^r$ odd, $r$ odd prime} &\\
			& & & \mbox{$q = q_0^r$ even, $q_0\ne 2$, $r$ prime} &\\
			2^{1+2}_-.\mathrm{O}_2^-(2) & S_4 & =1 & \mbox{$q = p\equiv \pm 1\pmod 8$} & \delta \\
			& A_4 & \leqs\la\delta\ra & \mbox{$q = p\equiv \pm 3,5,\pm 13\pmod{40}$} &  \\
			& & =\la\delta\ra & \mbox{$q = p\equiv \pm 11, \pm 19\pmod{40}$} &  \\
			A_5 & A_5 & =1 & \mbox{$q = p\equiv \pm 1\pmod{10}$} & \delta \\
			& & \leqs\la \phi\ra & \mbox{$q = p^2$, $p\equiv \pm 3\pmod{10}$} & \delta \\
			\hline
			\end{array}
			\]
			\caption{Maximal subgroup $M$ of $G$ with $\soc(G) = \PSL_2(q)$ for $q > 61$}
			\label{tab:max_PSL2}
	\end{table}}
	
	\begin{rem}
		\label{r:tab:max_PSL2}
		Let us give some comments on Table \ref{tab:max_PSL2}.
		\begin{itemize}\addtolength{\itemsep}{0.2\baselineskip}
			\item[{\rm (i)}] The definition of types of subgroups is followed from Kleidman and Liebeck \cite{KL_classical}, which roughly describes the structure of $M$.
			\item[{\rm (ii)}] In the third column, we record the condition of $O\leqs\Out(G_0)$ such that $M_0.O$ is a maximal subgroup of $G_0.O$. We refer the reader to \cite[Tables 8.1 and 8.2]{BHR_classical} for necessary and sufficient conditions of the existence and maximality of some subgroups.
			\item[{\rm (iii)}] The fifth column describes the number of conjugates of the relevant maximal subgroups in $G$. More precisely, if ``$\delta$" appears in that column, then there are exactly two $G$-conjugacy classes of such maximal subgroups, namely $M$ and $M^\delta$, otherwise there is a unique $G$-conjugacy classes of such maximal subgroups.
		\end{itemize}
	\end{rem}
	
	The following is \cite[Satz 8.27]{H_book}.
	
	\begin{lem}
		\label{l:PSL2_sub}
		Any subgroup of $G_0$ is isomorphic to one of the following groups:
		\begin{itemize}\addtolength{\itemsep}{0.2\baselineskip}
			\item[{\rm (i)}] $C_p^m{:}C_t$ for some $m\leqs f$ with $t\mid (p^m-1,p^f-1)$;
			\item[{\rm (ii)}] $C_m$ or $D_{2m}$ with $m\mid (p^f\pm 1)/(2,q-1)$;
			\item[{\rm (iii)}] $D_8$, $A_4$, $S_4$ or $A_5$;
			\item[{\rm (iv)}] $\PSL_2(p^m)$ with $m\mid f$, or $\PGL_2(p^m)$ with $2m\mid f$.
		\end{itemize}
	\end{lem}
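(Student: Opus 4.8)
The plan is to reprove this classical theorem of Dickson by analysing a subgroup $K\leqs G_0=\PSL_2(q)$, with $q=p^f$, according to its $p$-local structure, using the action of $G_0$ on the $q+1$ points of the projective line $\PG(1,q)$. I will use the following standard facts about $G_0$: every nontrivial element is unipotent of order $p$ and fixes a unique point of $\PG(1,q)$; or is split semisimple, lying in a conjugate of $D_0:=D\cap G_0\cong C_{(q-1)/d}$ where $d=(2,q-1)$, and fixes exactly two points; or is non-split semisimple, lying in a cyclic maximal torus of order $(q+1)/d$, and fixes no point. The normaliser in $G_0$ of a maximal split (resp.\ non-split) torus is dihedral of order $2(q-1)/d$ (resp.\ $2(q+1)/d$), and any two distinct maximal tori of $G_0$ meet trivially, since a common nontrivial element would fix at least three points of $\PG(1,q)$ and hence be trivial. \emph{Case 1: $p\mid|K|$ and a Sylow $p$-subgroup $P$ of $K$ is normal in $K$.} Then $P$, being a nontrivial $p$-group, fixes a unique point $\infty$, so $K\leqs N_{G_0}(P)$ is contained in the point stabiliser $(G_0)_\infty=Q{:}D_0\cong C_p^f{:}C_{(q-1)/d}$ (with $Q$, $D$ as in \eqref{e:Q}, \eqref{e:D}), which is a Frobenius group for $q>3$ since $D_0$ acts fixed-point-freely on $Q$. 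Hence $K$ is either conjugate into $D_0$, and so cyclic of order dividing $(q-1)/d$ as in (ii), or $K=W{:}S$ with $1\ne W\leqs Q$ of order $p^m$ and $S$ a cyclic subgroup of a conjugate of $D_0$ acting fixed-point-freely on $W$; such an $S$ has order dividing both $p^m-1$ and $p^f-1$, which gives $K\cong C_p^m{:}C_t$ with $t\mid(p^m-1,p^f-1)$, as in (i).

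\emph{Case 2: $p\mid|K|$ but no Sylow $p$-subgroup of $K$ is normal in $K$.} Then $K$ contains two distinct Sylow $p$-subgroups $P_1\ne P_2$, fixing distinct points of $\PG(1,q)$; conjugating, we may take these to be the upper- and lower-triangular unipotent subgroups with entries ranging over $\mathbb{F}_p$-subspaces $A_1,A_2\leqs\mathbb{F}_q$. The crux of the proof is Dickson's field-generation argument: forming the products $u(a)\ell(b)$ for $a\in A_1$, $b\in A_2$ and tracking the resulting diagonal entries shows that these generate a subfield $\mathbb{F}_{p^m}\leqs\mathbb{F}_q$ (so $m\mid f$) and that $\langle P_1,P_2\rangle$ contains a copy of $\SL_2(p^m)$, whence $\PSL_2(p^m)\leqs K$. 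It then remains to check that the only subgroups of $\PSL_2(q)$ containing $\PSL_2(p^m)$ are $\PSL_2(p^m)$ itself and, when $2m\mid f$, the subfield subgroup $\PGL_2(p^m)\leqs\PSL_2(p^{2m})$; so $K$ is one of these, as in (iv).

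\emph{Case 3: $p\nmid|K|$.} Now $K$ is a $p'$-group, so every cyclic subgroup of $K$ lies in a maximal torus of $G_0$, the maximal tori meeting $K$ nontrivially are pairwise trivially intersecting, and the $K$-normaliser of each is contained in a dihedral group. If $K$ lies in the normaliser of a single maximal torus it is cyclic or dihedral, as in (ii) (or the $D_8$ entry of (iii)). In general, the slickest argument is to observe that the full preimage $\widetilde{K}$ of $K$ in $\GL_2(\overline{\mathbb{F}}_p)$ is again a $p'$-group, because $Z(\GL_2(\overline{\mathbb{F}}_p))$ has no $p$-torsion; hence $\widetilde{K}$ admits a faithful two-dimensional representation in characteristic zero and so embeds in $\GL_2(\mathbb{C})$, and the classification of finite subgroups of $\GL_2(\mathbb{C})$ shows that $K$, being the image of $\widetilde{K}$ in $\PGL_2(\mathbb{C})$, is cyclic, dihedral, $A_4$, $S_4$ or $A_5$, as in (ii) and (iii). (Alternatively, one argues directly from the partition of $K$ by maximal tori together with an orbit-counting estimate on $\PG(1,q)$.) Assembling the three cases yields the stated list.

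The single genuinely nontrivial point is the field-generation step in Case 2: that two non-commuting unipotent subgroups generate $\SL_2$ over the subfield spanned by their entries, together with the determination of the overgroups of $\PSL_2(p^m)$ inside $\PSL_2(q)$. This is an explicit matrix computation and is where essentially all the content of Dickson's theorem lies; Cases 1 and 3 reduce rapidly to the Frobenius structure of the Borel subgroup and the classical list of finite subgroups of $\PGL_2$, respectively. Since this is a well-known classical result, one may of course instead simply cite \cite[Satz 8.27]{H_book}, as the authors do.
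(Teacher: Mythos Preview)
The paper does not prove this lemma at all; it simply records that the result is \cite[Satz 8.27]{H_book} and moves on. Your proposal, by contrast, sketches the classical proof of Dickson's theorem, and as an outline it is essentially correct and follows the standard tripartite structure (normal Sylow $p$-subgroup, non-normal Sylow $p$-subgroup, $p'$-subgroup). So you are supplying strictly more than the paper does, and your final sentence already acknowledges that simply citing Huppert suffices here.

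Two small technical points are worth tightening if you want the sketch to stand on its own. In Case~3, the full preimage of $K$ in $\GL_2(\overline{\mathbb{F}}_p)$ is infinite, since the centre $\overline{\mathbb{F}}_p^{\,\times}$ is; you should instead take the preimage in $\SL_2(q)$ (or $\SL_2(\overline{\mathbb{F}}_p)$), which is a finite central extension of $K$ by $C_{(2,q-1)}$ and hence still a $p'$-group, after which the lift to characteristic zero and the appeal to the classification of finite subgroups of $\PGL_2(\mathbb{C})$ go through as you say. In Case~2, the assertion that ``the only subgroups of $\PSL_2(q)$ containing $\PSL_2(p^m)$ are $\PSL_2(p^m)$ itself and $\PGL_2(p^m)$'' is not literally true (larger subfield subgroups also contain it); the usual argument instead takes $L$ to be the subgroup generated by all Sylow $p$-subgroups of $K$, shows $L\cong\PSL_2(p^m)$ for the appropriate $m$, notes that $L\trianglelefteq K$, and then computes $N_{G_0}(L)$ to pin down $K$. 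You correctly identify this field-generation step as the substantive content of the proof.
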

	
	\begin{cor}
		\label{c:PSL2_p-element}
		Let $K$ be a subgroup of $G$ which does not contain a unipotent element of prime order. Then $K_0$ is isomorphic to $A_4$, $S_4$, $A_5$, or a cyclic group, or a dihedral group.
	\end{cor}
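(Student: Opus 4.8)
The plan is to invoke Dickson's classification of the subgroups of $\PSL_2(q)$, recorded here as Lemma \ref{l:PSL2_sub}, and then eliminate the possibilities that are incompatible with the hypothesis. The first step is to record the elementary observation that a nontrivial element of $G_0 = \PSL_2(q)$ has order $p$ if and only if it is unipotent: the subgroup $Q$ in \eqref{e:Q} is a Sylow $p$-subgroup of $\PSL_2(q)$, it is elementary abelian of order $q$, and it consists of unipotent elements, so every element of order $p$ lies in a $G_0$-conjugate of $Q$ and is therefore unipotent, while conversely a nontrivial unipotent element of $\PSL_2(q)$ has order $p$. Since $K_0 = K\cap G_0 \leqs K$, the hypothesis on $K$ then says that $K_0$ contains no element of order $p$.

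Next I would apply Lemma \ref{l:PSL2_sub} to $K_0 \leqs G_0$ and run through its four cases. In cases (ii) and (iii) there is nothing to do: $K_0$ is then cyclic, dihedral, or one of $D_8, A_4, S_4, A_5$, and since $D_8$ is dihedral, $K_0$ already lies in the asserted list of isomorphism types (the hypothesis is not even needed here). In case (iv), $K_0$ contains a copy of $\PSL_2(p^m)$ with $m\mid f$, hence $m\geqs 1$, so $K_0$ has an element of order $p$; and in case (i), $K_0 \cong C_p^m{:}C_t$, so if $m\geqs 1$ the subgroup $C_p^m$ supplies an element of order $p$. In both situations that element is unipotent by the first step, contradicting the hypothesis. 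Hence case (iv) cannot occur, and in case (i) we must have $m=0$, so $K_0\cong C_t$ is cyclic. Combining the surviving cases yields the claim.

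I do not expect a substantial obstacle: the argument is a short case check against Lemma \ref{l:PSL2_sub}. The only points that need a little care are the identification of the elements of order $p$ with the unipotent elements — which looks slightly different in the two characteristics, since for even $q$ the involutions of $\PSL_2(q)$ are unipotent, so among dihedral groups only $C_2 = D_2$ survives in that case — and the observation that the conclusion is a disjunction of isomorphism types, so one need not pin down which dihedral or exceptional groups actually occur for a given $q$.
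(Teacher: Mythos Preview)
Your proof is correct and follows exactly the approach the paper intends: the corollary is stated without proof precisely because it is an immediate consequence of Lemma~\ref{l:PSL2_sub}, and your case analysis against that lemma is the natural argument. Your identification of order-$p$ elements with unipotent elements and the resulting elimination of cases (i) with $m\geqs 1$ and (iv) are exactly what is required.
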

	
	The main theorem of \cite{CJ_cyclic} classifies the finite simple groups whose cyclic subgroups of same order are conjugate. In particular, we have the following.
	
	\begin{lem}
		\label{l:PSL2_cyclic_conju}
		If $q$ is even or $f$ is odd, then any two cyclic subgroups of $G_0$ of the same order are $G_0$-conjugate.
	\end{lem}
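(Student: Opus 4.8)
The fastest route is to invoke the classification in \cite{CJ_cyclic} of the finite simple groups in which cyclic subgroups of equal order are all conjugate, and to check that $\PSL_2(q)$ with $q$ even or $f$ odd appears on that list. I shall instead sketch a self-contained argument, since it makes transparent where the hypothesis is used. The plan is to partition the nontrivial cyclic subgroups of $G_0=\PSL_2(q)$ into three kinds and treat each separately: (a) \emph{unipotent} ones, of order $p$; (b) \emph{split semisimple} ones, of order dividing $(q-1)/(2,q-1)$; and (c) \emph{non-split semisimple} ones, of order dividing $(q+1)/(2,q-1)$. Since a Sylow $p$-subgroup of $G_0$ is elementary abelian of order $q$, every cyclic $p$-subgroup has order $1$ or $p$, and the order-$p$ ones are kind (a). A nontrivial cyclic subgroup of order coprime to $p$ is generated by a semisimple element whose eigenvalues lie either in $\mathbb{F}_q$ — then its order divides $(q-1)/(2,q-1)$, giving kind (b) — or in $\mathbb{F}_{q^2}\setminus\mathbb{F}_q$ — then its order divides $(q+1)/(2,q-1)$, giving kind (c). As $p\nmid q\pm1$ and $(q-1)/(2,q-1)$, $(q+1)/(2,q-1)$ are coprime, each nontrivial cyclic subgroup has exactly one kind, so it suffices to show that within each kind, any two subgroups of a fixed order are $G_0$-conjugate.

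For the semisimple kinds no hypothesis on $q$ or $f$ is needed: I would use that a maximal split torus of $G_0$ is cyclic of order $(q-1)/(2,q-1)$, a maximal non-split torus is cyclic of order $(q+1)/(2,q-1)$, and all maximal tori of a given type are $G_0$-conjugate. A cyclic subgroup of kind (b) (resp.\ (c)) is diagonalisable over $\mathbb{F}_q$ (resp.\ $\mathbb{F}_{q^2}$), hence lies in a maximal split (resp.\ non-split) torus $T$; and since $T$ is cyclic it has a unique subgroup of each order. Thus two cyclic subgroups of the same order $m$ and the same semisimple kind are each the unique order-$m$ subgroup of a maximal torus of a fixed type, and all such tori are conjugate, so the subgroups are conjugate. (When $q$ is even the subgroups of order $2$ are unipotent rather than semisimple, but then they are generated by transvections and $\PSL_2(q)=\SL_2(q)$ has a single class of involutions, so these are all conjugate.)

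It remains to treat kind (a) when $q$ is odd, which is the only non-routine point and which I expect to be the main obstacle. I would write $u_t$ for the image in $G_0$ of $\left(\begin{smallmatrix}1&t\\0&1\end{smallmatrix}\right)$, $t\in\mathbb{F}_q^\times$, so that $\langle u_t\rangle=\{u_{kt}:k\in\mathbb{F}_p\}$, and observe that $u_t$ is $G_0$-conjugate to $u_{s^2 t}$ for every $s\in\mathbb{F}_q^\times$ (conjugate by the image of $\mathrm{diag}(s,s^{-1})$). Hence $\langle u_t\rangle$ and $\langle u_{t'}\rangle$ are $G_0$-conjugate whenever $t'\in t\cdot\mathbb{F}_p^\times\cdot(\mathbb{F}_q^\times)^2$, so it is enough to show $\mathbb{F}_p^\times\not\subseteq(\mathbb{F}_q^\times)^2$, which forces $\mathbb{F}_p^\times\cdot(\mathbb{F}_q^\times)^2=\mathbb{F}_q^\times$. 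This reduces to the short computation $(q-1)/2\equiv\tfrac{p-1}{2}f\pmod{p-1}$, giving $x^{(q-1)/2}=\bigl(x^{(p-1)/2}\bigr)^f$ for $x\in\mathbb{F}_p^\times$; when $f$ is odd this is the Legendre symbol of $x$ modulo $p$, which is $-1$ for suitable $x$, as required. (The same computation shows that for $f$ even one has $\mathbb{F}_p^\times\subseteq(\mathbb{F}_q^\times)^2$, so then there are two classes of unipotent cyclic subgroups — this is why the hypothesis cannot be dropped, and is exactly what happens for $\PSL_2(9)\cong A_6$.) Beyond this quadratic-character computation the proof is routine bookkeeping with the subgroup structure of $\PSL_2(q)$.
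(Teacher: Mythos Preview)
Your argument is correct. The paper itself gives no proof at all: it simply cites the main theorem of \cite{CJ_cyclic}, which classifies all finite simple groups with the property that cyclic subgroups of equal order are conjugate, and observes that $\PSL_2(q)$ with $q$ even or $f$ odd appears on that list. You mention this option at the outset and then supply a direct, self-contained argument instead. Your treatment of the semisimple cyclic subgroups via the two types of maximal tori is standard and clean (and is essentially the content of Lemma~\ref{l:PSL2_ele_cyc} in the paper), while your handling of the unipotent case is the genuine content: the reduction to showing $\mathbb{F}_p^\times\not\subseteq(\mathbb{F}_q^\times)^2$ and the quadratic-character calculation $(q-1)/2\equiv\tfrac{p-1}{2}f\pmod{p-1}$ pinpoint exactly why the hypothesis ``$f$ odd'' is needed. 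The advantage of your approach is that it is elementary and explains the phenomenon; the advantage of the paper's approach is brevity, at the cost of invoking a substantial external classification.
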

	
	The following lemma is \cite[Satz 8.5]{H_book}.
	
	\begin{lem}
		\label{l:PSL2_ele_cyc}
		Let $x\in G_0$. Then either $|x| = p$, or $x$ is contained in a cyclic subgroup of $G_0$ of order $(q-1)/(2,q-1)$ or $(q+1)/(2,q-1)$.
	\end{lem}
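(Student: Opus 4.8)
The plan is to pass to the linear group $\SL_2(q)$ and then run through the standard trichotomy of elements into semisimple split, semisimple non-split, and unipotent types. Write $G_0 = \SL_2(q)/Z$ with $Z = \{\pm I\}$, so $|Z| = (2,q-1)$, and lift $x$ to an element $\widehat{x}\in\SL_2(q)$. The characteristic polynomial of $\widehat{x}$ is $t^2 - (\tr\widehat{x})t + 1$, so its eigenvalues over $\overline{\mathbb{F}_q}$ form a pair $\lambda,\lambda^{-1}$, and I distinguish cases according to whether $\lambda = \lambda^{-1}$ and, if not, whether $\lambda\in\mathbb{F}_q$.

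Suppose first that $\lambda\ne\lambda^{-1}$ and $\lambda\in\mathbb{F}_q^\times$. Then $\widehat{x}$ is diagonalisable over $\mathbb{F}_q$ and hence conjugate into the split torus $T^+ = \{\mathrm{diag}(\mu,\mu^{-1}) : \mu\in\mathbb{F}_q^\times\}\cong C_{q-1}$ of $\SL_2(q)$. When $q$ is odd we have $-I = \mathrm{diag}(-1,-1)\in T^+$, so the image of $T^+$ in $G_0$ is cyclic of order $(q-1)/(2,q-1)$, and $x$ lies in a $G_0$-conjugate of it. Next suppose $\lambda\ne\lambda^{-1}$ but $\lambda\notin\mathbb{F}_q$. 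Then $\lambda\in\mathbb{F}_{q^2}\setminus\mathbb{F}_q$, and applying the Frobenius to the characteristic polynomial shows $\lambda^{-1} = \lambda^q$, so $\lambda^{q+1} = 1$. Realising $\mathbb{F}_{q^2}$ as a two-dimensional $\mathbb{F}_q$-space on which $\mathbb{F}_{q^2}^\times$ acts by multiplication, the norm-one subgroup $T^- = \{z\in\mathbb{F}_{q^2}^\times : z^{q+1} = 1\}$ embeds in $\SL_2(q)$, since the determinant of multiplication by $z$ is the norm $z^{q+1}$; it is cyclic of order $q+1$ because $\mathbb{F}_{q^2}^\times$ is cyclic, and $\widehat{x}$ is conjugate into it. Again $-1\in T^-$ when $q$ is odd, so the image of $T^-$ in $G_0$ is cyclic of order $(q+1)/(2,q-1)$, containing a conjugate of $x$.

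Finally suppose $\lambda = \lambda^{-1}$, i.e. $\lambda = \pm 1$. Then $\widehat{x}$ has both eigenvalues equal to $\lambda$, so $\lambda^{-1}\widehat{x}$ is unipotent. If $\lambda^{-1}\widehat{x} = I$ then $\widehat{x}\in Z$ and $x = 1$, which trivially lies in a cyclic subgroup of the required order; otherwise $\lambda^{-1}\widehat{x}$ is a nontrivial unipotent element, of order $p$, and as $\widehat{x}$ differs from it by the central element $\pm I$, the image $x$ has order $p$ in $G_0$. This exhausts all possibilities for the eigenvalues of $\widehat{x}$, completing the proof.

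The mathematics here is entirely standard, so the only obstacle is bookkeeping: one must check that the non-split torus really is cyclic of order exactly $q+1$ (which is where the cyclicity of $\mathbb{F}_{q^2}^\times$ is used), and carefully track the central subgroup $Z$ when $p$ is odd, so that the cyclic orders in $G_0$ come out as $(q\mp 1)/(2,q-1)$ rather than $q\mp 1$ and the exceptional elements are correctly identified as having order $p$.
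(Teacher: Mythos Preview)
Your argument is correct and is the standard proof of this classical fact. The paper does not prove the lemma at all: it simply cites \cite[Satz 8.5]{H_book} (Huppert's textbook) and moves on. Your eigenvalue trichotomy (split semisimple, non-split semisimple, unipotent-up-to-centre) is exactly the argument that underlies that reference, and the bookkeeping with $Z=\{\pm I\}$ is handled correctly; in particular you have verified that the tori $T^\pm$ contain $-I$ when $q$ is odd, so their images in $G_0$ have the stated orders, and that the residual case $\lambda=\pm1$ forces $x=1$ or $|x|=p$.
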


	\begin{lem}
		\label{l:PSL2_ele_cyc_PGL}
		Let $m\in\Spec(\PGL_2(q))$. Then either $m = p$, or $m$ divides $q-1$ or $q+1$.
	\end{lem}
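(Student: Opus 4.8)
The plan is to reduce the statement for $\PGL_2(q)$ to the known element orders in $\PSL_2(q)$ together with the structure of $\PGL_2(q)/\PSL_2(q)\cong C_{(2,q-1)}$, so the only genuinely new case is $q$ odd and $x\in\PGL_2(q)\setminus\PSL_2(q)$. First I would dispose of the easy case: if $q$ is even then $\PGL_2(q) = \PSL_2(q)$ and the claim is exactly Lemma \ref{l:PSL2_ele_cyc}. Likewise, if $x\in\PSL_2(q)$ (for any $q$), then Lemma \ref{l:PSL2_ele_cyc} already tells us $|x|$ is either $p$ or divides $(q-1)/(2,q-1)$ or $(q+1)/(2,q-1)$, each of which divides $q-1$ or $q+1$, so the conclusion holds.

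It remains to treat $q$ odd and $x\in\PGL_2(q)\setminus\PSL_2(q)$. The standard way I would argue: lift $x$ to a matrix $\widehat{x}\in\GL_2(q)$, and consider the semisimple and unipotent parts, or more directly split on the rational canonical form of $\widehat{x}$ over $\mathbb{F}_q$. If $\widehat{x}$ is not semisimple, then (in dimension $2$) it has a repeated eigenvalue $\lambda\in\mathbb{F}_q^\times$ and a single Jordan block, so $\widehat{x}$ has order $p\cdot|\lambda|$ in $\GL_2(q)$; modulo scalars, $x$ has order $p$, giving $m = p$. If $\widehat{x}$ is semisimple with eigenvalues in $\mathbb{F}_q$, then $\widehat{x}$ is conjugate to a diagonal matrix $\mathrm{diag}(\lambda,\mu)$, so $x$ lies in the image of the diagonal torus $D\cong C_{q-1}$ defined in \eqref{e:D}, whence $m = |x|$ divides $q-1$. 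If $\widehat{x}$ is semisimple with eigenvalues in $\mathbb{F}_{q^2}\setminus\mathbb{F}_q$, say $\nu,\nu^q$ with $\nu\in\mathbb{F}_{q^2}^\times$ of order dividing $q^2-1$, then $\widehat{x}$ is conjugate to an element of a nonsplit torus of $\GL_2(q)$ of order $q^2-1$; the image in $\PGL_2(q)$ lies in a cyclic group of order $(q^2-1)/(q-1) = q+1$, so $m$ divides $q+1$. In every case $m = p$ or $m\mid (q-1)$ or $m\mid(q+1)$, as required.

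Alternatively, and perhaps more cleanly for inclusion in the paper, I would invoke the subgroup structure already recorded: every element of $\PGL_2(q)$ lies in a maximal subgroup, and by the list in Table \ref{tab:max_PSL2} (valid for $q>11$, with the handful of small $q$ checked directly, e.g.\ via {\sc Magma} as in Proposition \ref{p:PSL2_61}) applied with $G = \PGL_2(q)$, every maximal subgroup of $\PGL_2(q)$ is of type $P_1$ (a group $C_p^f{:}C_{q-1}$, all of whose element orders divide $p$ or $q-1$), $\GL_1(q)\wr S_2$ (a dihedral group $D_{2(q-1)}$, element orders dividing $q-1$ or equal to $2$), $\GL_1(q^2)$ (a dihedral group $D_{2(q+1)}$, element orders dividing $q+1$ or equal to $2$), a subfield subgroup $\PGL_2(q_0)$, or one of $S_4$, $A_5$. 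The subfield and almost-simple cases are handled by an easy induction on $q$ together with the observation that an element of order dividing $p$, $q_0-1$ or $q_0+1$ certainly has order dividing $p$, $q-1$ or $q+1$; and for $S_4$, $A_5$ one notes directly that these occur only for $q = p$ with $p\equiv\pm1\pmod 8$ or $p\equiv\pm1\pmod{10}$, so that the relevant small element orders ($1,2,3,4,5$) all divide $p-1$ or $p+1$.

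The main obstacle is essentially bookkeeping rather than a deep difficulty: one must make sure the nonsplit torus computation in $\PGL_2(q)$ is correct (the passage from order $q^2-1$ in $\GL_2(q)$ to order $q+1$ in $\PGL_2(q)$, using that $\mathbb{F}_q^\times\le\mathbb{F}_{q^2}^\times$ has index $q+1$), and one must not overlook the outer involutions and the small-$q$ subfield/sporadic subgroups. I expect the cleanest write-up to use the rational-canonical-form trichotomy above, since it avoids any appeal to the (more elaborate) maximal subgroup classification and works uniformly for all $q$.
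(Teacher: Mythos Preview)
Your proof is correct. Note, however, that the paper does not actually supply a proof of this lemma: it is stated immediately after Lemma~\ref{l:PSL2_ele_cyc} (which is cited from \cite[Satz~8.5]{H_book}) and is evidently intended as a standard companion fact, left to the reader. Your rational-canonical-form trichotomy (repeated eigenvalue, split semisimple, nonsplit semisimple) is the expected elementary argument and is exactly what one would write if asked to justify the lemma; the alternative route via the maximal subgroup list is also valid but, as you yourself note, heavier than necessary.
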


	Now we consider each primitive action of $G$ in turn.
	
	\begin{lem}
		\label{l:PSL2_para}
		Suppose $H$ is of type $P_1$. Then $G$ has no large fixer.
	\end{lem}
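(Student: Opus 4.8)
The plan is to show that whenever $H$ is of type $P_1$, every non-stable fixer $K$ satisfies $|K| < |H|$, so $G$ has no large fixer. Here $H = N_G(Q) = Q{:}(G\cap(D{:}\la\phi\ra))$, which under the isomorphism $\rho$ of \eqref{e:pi} corresponds to a subgroup of $\Gamma\cong\mathrm{A\Gamma L}_1(q)$ containing the full translation subgroup $\mathbb{F}_q^+$. Since $G$ acts $2$-transitively on the $q+1$ points of the projective line in this action, Theorem \ref{t:2-trans} already tells us $G$ has the EKR property, hence the weak-EKR property, so no \emph{strictly} large fixer exists; the work is to rule out a non-stable fixer $K$ with $|K| = |H|$ exactly. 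Suppose for contradiction such a $K$ exists. By Lemma \ref{l:pre_as_size} we may pass to $K_0 = K\cap G_0$ and $H_0 = H\cap G_0 = Q{:}C_{(q-1)/(2,q-1)}$, with $|K_0|\geqs|H_0|$, and $K_0$ is still a fixer of $G_0$ (Lemma \ref{l:pre_red}); note $|H_0| = q(q-1)/(2,q-1)$, which is divisible by $p$.

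The first key step is to argue that $K_0$ must contain a unipotent element of order $p$. If not, then by Corollary \ref{c:PSL2_p-element}, $K_0$ is cyclic, dihedral, or one of $A_4,S_4,A_5$; all of these have order far below $|H_0|\sim q^2/2$ for $q>61$ (a cyclic or dihedral subgroup of $G_0$ has order at most $q+1$), contradicting $|K_0|\geqs|H_0|$. So $K_0$, hence $K$, contains a conjugate of a nontrivial element of $Q$. Since $K$ is a fixer, by Lemma \ref{l:pre_iff} every element of $K$ is $G$-conjugate into $H$; as the only elements of order $p$ in $G$ lie (up to conjugacy) in $Q \le H$, this is automatic for unipotent elements, but it forces a structural constraint: the Sylow $p$-subgroup of $K$ has order a $p$-power dividing $q = p^f$, and a fixer cannot be too large because $K$ is intransitive on $\Omega$ (Lemma \ref{l:pre_fix}(iii)), so $|K| \le |G|/(q+1) = |H|$ with equality only if... — this is the crux.

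The main obstacle, and the technical heart of the proof, is the equality case: ruling out $|K| = |H|$. The natural approach is to use the fixed-point-ratio / spectrum obstructions of Section \ref{ss:pre_obs}. Since $G$ is $2$-transitive of degree $q+1$, a subgroup $K$ with $|K| = |H| = |G|/(q+1)$ and $K$ intransitive must have orbits of sizes summing to $q+1$ with $|K|$ divisible by each orbit length; if $K$ had an orbit of length $q$ it would be (conjugate to) contained in a point stabiliser, i.e. stable, contradiction. So $K$ is transitive on some proper subset, or has all orbits of length $< q$; a counting argument on $\sum_{x\in K}|\fix_\Omega(x)| = (\text{number of }K\text{-orbits})\cdot|K|$ combined with $|K|=|G|/(q+1)$ pins down the orbit structure, and then one invokes Lemma \ref{l:pre_AGammaL_conju}(iii) (the spectrum obstruction inside $\mathrm{A\Gamma L}_1(q)$) to show the resulting $K$ must contain an element whose $G$-conjugacy class meets no point stabiliser — contradicting Lemma \ref{l:pre_fix}(i) via $\Spec(K)\not\subseteq\Spec(H)$. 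Concretely, I expect the decisive point to be that any subgroup of $\Gamma$ of order $|H_0|$ not lying inside a point stabiliser must contain a field automorphism $\phi^i$ of prime order $r$ with $r\mid f$, and by Lemma \ref{l:pre_AGammaL_conju}(iii) the coset $Q\phi^i$ contains elements of order $rp$ as well as elements of order $r$, at least one of which has no conjugate in $H$; the number-theoretic bookkeeping (which $r$ can occur, and matching against $\Spec(H)$) is where the real effort lies.
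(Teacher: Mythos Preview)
Your first step---using $2$-transitivity and Theorem~\ref{t:2-trans} to get $|K|\leqs|H|$ for every fixer $K$---is exactly what the paper does, and it reduces everything to the equality case $|K|=|H|$.

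For that equality case, however, you are working far too hard, and parts of your sketch are confused. The paper dispatches it in one line: if $|K|=|H|$ then $K$ lies in some maximal subgroup $\widehat{K}$ of $G$ with $|\widehat{K}|\geqs|H|$, and by inspecting Table~\ref{tab:max_PSL2} the parabolic $P_1$ is strictly the largest maximal subgroup of $G$ (for $q>61$), so $\widehat{K}$ must itself be of type $P_1$. Hence $K\leqs H^g$ for some $g$, and equality of orders forces $K=H^g$, i.e.\ $K$ is stable. No orbit-counting, no spectrum analysis, no trace-map machinery from Section~\ref{s:AGammaL} is needed.

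Your proposed route via orbit structure and ``subgroups of $\Gamma$ of order $|H_0|$ not lying inside a point stabiliser'' is muddled: $\Gamma\cong N_{\mathrm{P\Gamma L}_2(q)}(Q)$ \emph{is} the point stabiliser, so any subgroup of $\Gamma$ is automatically stable. The speculative endgame about field automorphisms $\phi^i$ of prime order and Lemma~\ref{l:pre_AGammaL_conju}(iii) is aimed at a difficulty that does not exist here---that machinery is needed later in Section~\ref{s:thm1} for the genuinely hard cases (subfield and split-torus stabilisers), not for $P_1$.
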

	
	\begin{proof}
		Note that $G$ is a $2$-transitive permutation group, so $G$ has the EKR property by Theorem \ref{t:2-trans}. This implies that $|K|\leqs |H|$ for any fixer $K$ of $G$. Hence, we only need to consider the case where $|K| = |H|$. By inspecting Table \ref{tab:max_PSL2}, we deduce that $K$ is conjugate to $H$ in $G$, which completes the proof.
	\end{proof}

	\begin{lem}
		\label{l:PSL2_C6_S}
		Suppose $q > 61$ and $H$ is of type $2^{1+2}_-.\mathrm{O}_2^-(2)$ or $A_5$, and let $K$ be a large fixer of $G$. Then $K = H^\delta$ and one of the following holds:
		\begin{itemize}\addtolength{\itemsep}{0.2\baselineskip}
			\item[{\rm (i)}] $G = \PSL_2(p)$, $p\equiv \pm 1\pmod 8$ and $H \cong S_4$;
			\item[{\rm (ii)}] $G = \PSL_2(p)$, $p\equiv \pm 1\pmod{10}$ and $H \cong A_5$;
			\item[{\rm (iii)}] $G = \PSL_2(p^2)$, $3\ne p\equiv\pm 3\pmod{10}$ and $H\cong A_5$.
		\end{itemize}
	\end{lem}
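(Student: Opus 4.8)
The plan is to exploit the very restrictive subgroup structure of two-dimensional linear groups, together with the constraints $\Spec(K)\subseteq\Spec(H)$ and $|K|\geqs|H|$. First I would observe, from Table~\ref{tab:max_PSL2}, that the hypotheses on $H$ force $q=p$ or $q=p^2$, so $q>61$ gives $p>5$; moreover $H_0:=H\cap G_0$ is one of $A_4,S_4,A_5$ with $|H:H_0|\leqs 2$, whence $\pi(H)\subseteq\{2,3,5\}$ and every element of $H$ has order at most $6$. By Lemma~\ref{l:pre_fix}, $\pi(K)\subseteq\pi(H)$, so $p\nmid|K|$, and hence Corollary~\ref{c:PSL2_p-element} shows that $K_0:=K\cap G_0$ is cyclic, dihedral, or isomorphic to one of $A_4,S_4,A_5$. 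Since $|K|\geqs|H|$ forces $|K_0|\geqs|H_0|\geqs 12$ by Lemma~\ref{l:pre_as_size}, while $\Spec(K_0)\subseteq\Spec(H)$ means every element of $K_0$ has order at most $6$, the cyclic and dihedral possibilities are ruled out, so $K_0\in\{A_4,S_4,A_5\}$.

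Next I would pin down $K_0$. Comparing element orders (so $5\in\Spec(H)$ only if $H_0\cong A_5$, and $S_4$ is not a subgroup of $G_0$ whenever $H_0\cong A_4$, since then $p\equiv\pm3\pmod 8$) together with $|K_0|\geqs|H_0|$, one finds $K_0\cong H_0$ in every case; in particular $|K_0|=|H_0|$, so $|K|=|H|$ and $K$ surjects onto $G/G_0$, and $K_0$ is a maximal subgroup of $G_0$ (when $K_0\cong A_4$ this uses that $A_4$ is then maximal in $G_0$, as neither $S_4$ nor $A_5$ embeds in $G_0$ in that case).

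Now a short case split, aimed at showing that $K$ is forced into the configurations (i)--(iii). If $H_0\cong A_4$, then either $G=G_0$ and $K=K_0\cong A_4$, or $G=\PGL_2(p)$ and $K\cong S_4$ (the bound on element orders excludes $A_4\times C_2$); in both subcases Table~\ref{tab:max_PSL2}, with the subgroup structure of $\PSL_2(p)$, shows there is a single $G$-class of subgroups isomorphic to $H$, so $K$ is $G$-conjugate to $H$, contradicting that $K$ is non-stable. If $H_0\cong A_5$ but $G>G_0$, then $G=\PSigmaL_2(p^2)$ and, since $C_G(A_5)=1$, both $H$ and $K$ are isomorphic to $S_5$; from $N_{G_0}(A_5)=A_5$ and $|K|=|H|=120$ we get $K=N_G(K_0)$ and $H=N_G(H_0)$, and since $p\equiv\pm2\pmod 5$ means $A_5$ is not realised over $\mathbb{F}_p$, the field automorphism fuses the two $G_0$-classes of $A_5$; hence $K_0$ is $G$-conjugate to $H_0$ and $K$ to $H$, again a contradiction. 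In every remaining case---$H=S_4\leqs G_0=\PSL_2(p)$ with $p\equiv\pm1\pmod 8$, or $H=A_5\leqs G_0$ with $q=p\equiv\pm1\pmod{10}$ or $q=p^2$, $p\equiv\pm3\pmod{10}$ (and $p\ne3$ since $q>61$)---there are exactly two $G$-classes of subgroups isomorphic to $H_0$, namely $H$ and $H^\delta$, and non-stability forces $K$ into the class of $H^\delta$, giving (i)--(iii). I would then also check that $H^\delta$ is a genuine non-stable fixer: being maximal and non-conjugate to $H$ it is non-stable, and by Lemma~\ref{l:pre_iff}(vi) it is a fixer because in $G_0$ the classes of elements of orders $2$ and $3$ (and $4$, when $H_0\cong S_4$) are each unique, while the elements of order $5$ fall into two classes interchanged by $x\mapsto x^2$, mirroring the two classes of $5$-cycles in $A_5$, so that $H$ and $H^\delta$ meet exactly the same $G_0$-classes.

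The step I expect to be the main obstacle is the conjugacy bookkeeping underlying the last two paragraphs: determining precisely how many $G$-conjugacy classes each of $A_4$, $S_4$ and $A_5$ has inside the relevant almost simple group and which of them are fused by the diagonal and field automorphisms---most delicately, the fusion of the two $A_5$-classes of $\PSL_2(p^2)$ under the field automorphism---and verifying that $H^\delta$ meets no $G_0$-class that $H$ avoids, which hinges on the splitting of the order-$5$ elements of $\PSL_2(q)$ into two classes. These points require careful use of \cite{BHR_classical,H_book} and of the known centralisers of semisimple elements of two-dimensional linear groups.
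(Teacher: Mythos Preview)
Your overall strategy is sound and matches the paper's approach closely, but there is a genuine error in the case $H_0\cong A_5$ with $G=\PSigmaL_2(p^2)$. You claim that since $A_5$ is not realised over $\mathbb{F}_p$, the field automorphism $\phi$ fuses the two $G_0$-classes of $A_5$, making $K_0$ conjugate to $H_0$ in $G$. This is false: Table~\ref{tab:max_PSL2} records that when $q=p^2$, $p\equiv\pm 3\pmod{10}$ and $O\leqs\langle\phi\rangle$, there remain \emph{two} $G$-classes of such maximal subgroups, interchanged by $\delta$ (see Remark~\ref{r:tab:max_PSL2}(iii)). In other words, the stabiliser in $\Out(G_0)$ of an $A_5$-class is precisely $\langle\phi\rangle$, so $\phi$ normalises each class while $\delta$ swaps them. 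Hence $K=H^\delta$ is genuinely non-stable in $G=\PSigmaL_2(p^2)$, and you cannot dispose of it by a conjugacy argument.

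The paper instead shows that $K=H^\delta$ fails to be a \emph{fixer} in this case. Writing $H\cong K\cong S_5$, the involutions in the coset $G\setminus G_0=G_0\phi$ fall into two $G$-classes (see \cite[Proposition~3.2.9]{BG_classical}), and $\delta$ interchanges them. Since all involutions in $H\setminus H_0$ are $H$-conjugate (they form the single class of transpositions in $S_5$), $H$ meets exactly one of these two $G$-classes, and $K=H^\delta$ meets only the other. Thus any involution $x\in K\setminus K_0$ satisfies $x^G\cap H=\emptyset$, so $K$ is not a fixer. This is the missing ingredient you need to rule out case (iii) when $G>G_0$.
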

	
	\begin{proof}
		First assume $H$ is of type $2^{1+2}_-.\mathrm{O}_2^-(2)$, so $q = p$ is a prime, and $H$ does not contain a unipotent element. Note that $\Spec(H) \subseteq \{1,2,3,4\}$, so if $K_0$ is cyclic or dihedral, then $|K_0| < |H_0|$ and so $|K| < |H|$ by Lemma \ref{l:pre_as_size}. In view of Corollary \ref{c:PSL2_p-element}, it suffices to consider the cases where $K_0\cong H_0$, so $G = \PSL_2(p)$, $p\equiv \pm 1\pmod 8$ and $H\cong S_4$ by inspecting Table \ref{tab:max_PSL2}. By Lemma \ref{l:PSL2_cyclic_conju}, we see that $K = H^\delta$ is indeed a fixer of $G$, and this case is recorded as in (i).
		
		Now assume $H$ is of type $A_5$, so $H_0\cong A_5$. By arguing as above, we see that the only possibility is $K = H^\delta$. If $G = G_0$, then by Lemma \ref{l:PSL2_ele_cyc}, any cyclic subgroup of $K$ is conjugate to a subgroup of $H$, noting that there is no element of order $p$ in $K$.
		Thus, if $G = G_0$ then $K = H^\delta$ is a fixer of $G$, as recorded in (ii) and (iii). Finally, assume $G = \PSigmaL_2(q)$, so $H\cong K\cong S_5$. Note that $x$ and $x^\delta$ are not $G$-conjugate if $x\in K\setminus K_0$ is an involution (see \cite[Proposition 3.2.9]{BG_classical}). Moreover, any involution in $H\setminus H_0$ are $H$-conjugate to $x^\delta\in H$. It follows that $x^G\cap H = \emptyset$, and so $K$ is not a fixer of $G$.
	\end{proof}
	
	%
	%
	%
	%

	\begin{lem}
		\label{l:PSL2_C3}
		Suppose $q> 61$ and $H$ is of type $\GL_1(q^2)$. Then $G$ has no large fixer.
	\end{lem}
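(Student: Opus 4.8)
The plan is to argue by contradiction. Suppose $K$ is a large fixer of $G$ and set $K_0 = K\cap G_0$. By Lemma \ref{l:pre_red}, $K_0$ is a fixer of $G_0$; by Lemma \ref{l:pre_as_size}, $|K_0|\geqs |H_0|$; and by Lemma \ref{l:pre_fix}(i), $\Spec(K_0)\subseteq\Spec(H_0)$. Here $H_0\cong D_{2(q+1)/(2,q-1)}$, so $|H_0| = q+1$ if $q$ is odd and $|H_0| = 2(q+1)$ if $q$ is even, and $\Spec(H_0)$ consists of $1$, $2$ and the divisors of $(q+1)/(2,q-1)$. Writing $q = p^f$, we have $q\equiv 0\pmod p$ and hence $p\nmid q+1$; this is the key divisibility fact. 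The strategy is to use these constraints, together with the classification of subgroups of $G_0$ in Lemma \ref{l:PSL2_sub}, to force $K_0$ to be $G_0$-conjugate to $H_0$, and then to conclude that $K$ is stable --- a contradiction.

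Suppose first that $p$ is odd. Then $p\notin\Spec(H_0)$, so $K_0$ has no element of order $p$, i.e.\ no nontrivial unipotent element; hence Corollary \ref{c:PSL2_p-element} gives that $K_0$ is isomorphic to $A_4$, $S_4$, $A_5$, a cyclic group, or a dihedral group. Since $q > 61$ gives $|K_0|\geqs |H_0| = q+1 > 60$, the groups $A_4$, $S_4$, $A_5$ are excluded; and by Lemma \ref{l:PSL2_sub}(ii) a cyclic subgroup of $G_0$ has order at most $(q+1)/2 < |H_0|$, so $K_0$ is not cyclic. Thus $K_0\cong D_{2m}$ with $m\mid (q\pm 1)/2$, and $2m = |K_0|\geqs q+1$ forces $m = (q+1)/2$, so $K_0 = N_{G_0}(C_{(q+1)/2})$ is the normaliser of a maximal non-split torus of $G_0$. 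Since these torus normalisers form a single $G_0$-conjugacy class and $H_0$ is one of them, $K_0$ is $G_0$-conjugate to $H_0$.

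Suppose now that $p = 2$, so $q\geqs 64$ and $|H_0| = 2(q+1)$. Now $H_0$ does contain unipotent involutions, so the shortcut above is unavailable and I appeal to the full list of Lemma \ref{l:PSL2_sub}. Since $|K_0|\geqs 2(q+1)\geqs 130$, the groups $D_8$, $A_4$, $S_4$, $A_5$ are too small, a cyclic subgroup has order at most $q+1 < 2(q+1)$, and a dihedral subgroup $D_{2m}$ with $m\mid q\pm 1$ and $2m\geqs 2(q+1)$ must have $m = q+1$; so $K_0$ is isomorphic to $D_{2(q+1)}$, or to $C_2^m{:}C_t$ with $t\mid 2^m-1$ and $m\leqs f$, or to $\PSL_2(2^m)$ with $m\mid f$ and $m < f$ (note that $K_0\ne G_0$ by Lemma \ref{l:pre_fix}(iii), as $G_0$ is transitive on $\Omega$). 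In the second case $2^m\leqs q$ forces $t\geqs 2(q+1)/2^m > 2$, and in the third case $|K_0| = 2^m(2^{2m}-1)\geqs 2(q+1)$ forces $m\geqs 2$. In either of these cases $K_0$ contains a cyclic subgroup (a complement $C_t$, respectively a split maximal torus of $\PSL_2(2^m)$) whose order is an odd divisor of $q-1$ greater than $2$; choosing a prime $r\geqs 3$ dividing this order, we have $r\mid q-1$, hence $r\nmid q+1$ since $q$ is even, so $r\notin\Spec(H_0)$, contradicting $\Spec(K_0)\subseteq\Spec(H_0)$. Therefore $K_0\cong D_{2(q+1)} = N_{G_0}(C_{q+1})$, and as before $K_0$ is $G_0$-conjugate to $H_0$.

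Finally, since $K_0$ is $G_0$-conjugate to $H_0$ in every case, I replace $K$ by a $G_0$-conjugate so that $K_0 = H_0$; this preserves $|K|\geqs |H|$ and the property of being non-stable. As $G_0\normeq G$, the group $K$ normalises $K_0 = H_0$, so $K\leqs N_G(H_0)$. Now $H$ normalises $H\cap G_0 = H_0$, so $H\leqs N_G(H_0)$; moreover $N_G(H_0)\ne G$, for otherwise $H_0$ would be a nontrivial proper normal subgroup of the simple group $G_0$. Since $H$ is maximal in $G$, it follows that $N_G(H_0) = H$, whence $K\leqs H$ --- contradicting that $K$ is non-stable. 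Hence $G$ has no large fixer. I expect the case $p = 2$ to be the only real obstacle: there $H_0$ contains unipotent involutions, so the parabolic-type and subfield-type candidates for $K_0$ cannot be eliminated by a prime-order argument, and one must instead use that these candidates contain elements of odd order dividing $q-1$, none of which occur in the dihedral group $H_0$.
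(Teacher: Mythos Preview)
Your proof is correct and takes a genuinely different route from the paper's. The paper argues via a maximal overgroup $\widehat{K}$ of $K$: since $|\widehat{K}|\geqs|H|$, inspection of Table~\ref{tab:max_PSL2} forces $\widehat{K}$ to be of type $P_1$ or $\GL_2(q^{1/2})$, and in each case a short gcd calculation (using $\bigl(\tfrac{q-1}{(2,q-1)},\tfrac{q+1}{(2,q-1)}\bigr)=1$ and its subfield analogue) bounds $|K_0|$ strictly below $|H_0|$, contradicting Lemma~\ref{l:pre_as_size}. You instead work bottom-up with $K_0$: you run through Dickson's list (Lemma~\ref{l:PSL2_sub}) directly, eliminate every possibility except $K_0\cong H_0$ by size and spectrum constraints, and then finish with the clean normaliser argument $K\leqs N_G(H_0)=H$ to conclude that $K$ is stable.

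Both arguments hinge on the same arithmetic fact---odd divisors of $q-1$ greater than $1$ do not divide $q+1$---but they organise it differently. The paper's top-down approach is more uniform with the surrounding lemmas (it is the same template used for the $\GL_1(q)\wr S_2$ and subfield cases and feeds directly into Proposition~\ref{p:PSL2_summarise}). Your approach avoids the maximal-subgroup table altogether and yields the slightly finer conclusion that any fixer $K$ with $|K_0|\geqs|H_0|$ is already stable, not merely that no such non-stable $K$ exists; the normaliser step at the end is a nice device that the paper does not use here.
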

	
	\begin{proof}
		Suppose $K$ is a large fixer of $G$, and let $\widehat{K}$ be a maximal overgroup of $K$ in $G$. Since $|K|\geqs |H|$, we have $\widehat{K}$ is of type $P_1$ or $\GL_2(q^{1/2})$ by inspecting Table \ref{tab:max_PSL2}.
		
		First assume $\widehat{K}$ is of type $P_1$, so $\widehat{K}_0 = (C_p^f{:}C_{(q-1)/(2,q-1)})$. Note that
		\begin{equation}
		\label{e:gcd}
		\left(\frac{q-1}{(2,q-1)},\frac{q+1}{(2,q-1)}\right) = 1.
		\end{equation}
		As $K_0$ contains an element of order $|K_0|_{p'}$, it follows that $|K_0|_{p'}\in\Spec(H_0)$, and so  $|K_0|_{p'}\leqs 2$ by \eqref{e:gcd}. Note that if $p$ is odd, then $H_0$ does not contain an element of order $p$, which yields $|K_0|_p = 1$ and thus $|K_0|\leqs 2 < |H_0|$. If $q$ is even, then
		\begin{equation*}
		|K_0| \leqs  2|K_0|_p\leqs 2q < 2(q+1) = |H_0|.
		\end{equation*}
		We eliminate both cases by Lemma \ref{l:pre_as_size}.
		
		Thus, to complete the proof, we may assume $\widehat{K}$ is of type $\GL_2(q^{1/2})$, so $\widehat{K}_0\cong\PGL_2(q^{1/2})$. Observe that
		\begin{equation*}
		\left(q^{1/2}\pm 1,\frac{q+1}{(2,q-1)}\right)\text{ divides }\left(q-1,\frac{q+1}{(2,q-1)}\right)\text{, which divides }(2,q-1).
		\end{equation*}
		This shows that $|K_0|\leqs (2,q-1)|\widehat{K}_0|_p\leqs (2,q-1)q^{1/2} < |H_0|$. Again, this is incompatible with Lemma \ref{l:pre_as_size}, so we conclude the proof.
	\end{proof}
	
	Now we turn to two primitive actions of $G$ that need special attention. As above, let $\widehat{K}$ be a maximal overgroup of $K$ in $G$.
	
	\begin{lem}
		\label{l:PSL2_C2}
		Suppose $H$ is of type $\GL_1(q)\wr S_2$ and $q > 61$. If $K$ is a large fixer of $G$, then $q$ is even, and $\widehat{K}$ is of type $P_1$ or $\GL_2(q^{1/2})$. Moreover, if $q$ is even, $G = G_0$ and $K$ is a maximal subgroup of type $P_1$ or $\GL_2(q^{1/2})$, then $K$ is a large fixer of $G$.
	\end{lem}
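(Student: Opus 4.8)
The plan is to prove the two implications separately, writing $K_0 = K\cap G_0$ and $\widehat K_0 = \widehat K\cap G_0$ and using repeatedly that $K_0$ is a fixer of $G_0$ (Lemma \ref{l:pre_red}), so $\Spec(K_0)\subseteq\Spec(H_0)$ by Lemma \ref{l:pre_fix}, and that $|K|\geqs|H|$ forces $|K_0|\geqs|H_0| = 2(q-1)/(2,q-1)$ by Lemma \ref{l:pre_as_size}; here $H_0 = D_{2(q-1)/(2,q-1)}$.

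For the forward direction I would first bound the type of a maximal overgroup $\widehat K$ of $K$. Since $K$ is non-stable and the maximal subgroups of type $\GL_1(q)\wr S_2$ form a single $G$-class (Table \ref{tab:max_PSL2}), $\widehat K$ is not of that type; comparing $|\widehat K_0|$ with $|H_0|$ in Table \ref{tab:max_PSL2} — using $q>61$ to discard types $A_5$ and $2^{1+2}_-.\mathrm{O}_2^-(2)$, and $q_0(q_0^2-1)<|H_0|$ to discard subfield types $\GL_2(q_0)$ with $q=q_0^r$, $r\geqs 3$ — shows that $\widehat K$ has type $P_1$, $\GL_1(q^2)$ or $\GL_2(q^{1/2})$, the last only when $q$ is a square. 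Next I would eliminate type $\GL_1(q^2)$: then $\widehat K_0$ is dihedral of order $2(q+1)/(2,q+1)$, whose only subgroup of order $\geqs|H_0|$ is itself, so $K_0 = \widehat K_0$; but then $(q+1)/(2,q+1)\in\Spec(K_0)\setminus\Spec(H_0)$, contradicting that $K_0$ is a fixer. Finally I would rule out $q$ odd: if $\widehat K$ has type $P_1$ then $\widehat K_0 = C_p^f{:}C_{(q-1)/2}$ and $[\widehat K_0:K_0]\leqs q/2<p^f$, forcing $K_0\cap C_p^f\ne 1$; if $\widehat K$ has type $\GL_2(q^{1/2})$ then $\widehat K_0 = \PGL_2(q_0)$ and, by Lemma \ref{l:PSL2_sub} together with a comparison of orders, the only subgroups of order $\geqs|H_0|=q_0^2-1$ are $\PSL_2(q_0)$ and $\PGL_2(q_0)$. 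In either case $K_0$ has an element of order $p$, which is impossible since $p$ is odd, $p\nmid q-1$ and $\Spec(H_0)\subseteq\{d:d\mid(q-1)/2\}\cup\{2\}$. Hence $q$ is even and $\widehat K$ has type $P_1$ or $\GL_2(q^{1/2})$.

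For the converse, suppose $q$ is even, $G = G_0 = \PSL_2(q) = \SL_2(q)$, and $K$ is maximal of type $P_1$ or $\GL_2(q^{1/2})$. Then $K$ is non-stable (its type differs from that of $H$, and $K$ is maximal, so $K\leqs H^g$ would give $K=H^g$) and $|K|>|H|$ by a direct order count. By Lemma \ref{l:pre_iff} it remains to show that every element of $K$ is $G_0$-conjugate into $H$, which we take to be the normaliser of the split torus $D$ of \eqref{e:D}, so that $D\leqs H$. A non-identity $x\in K$ is either an involution — and all involutions of $\SL_2(q)$ are conjugate, while $H = D_{2(q-1)}$ contains involutions — or $x$ is semisimple with distinct eigenvalues $\mu,\mu^{-1}$ lying in $\mathbb{F}_q^\times$: for type $P_1$ because $x$ is (conjugate to) an upper triangular matrix, and for $K\cong\SL_2(q_0)$ with $q=q_0^2$ because $\mu,\mu^{-1}\in\mathbb{F}_{q_0}\cup\mathbb{F}_{q_0^2} = \mathbb{F}_q$; then $x$ is $\SL_2(q)$-conjugate to $\mathrm{diag}(\mu,\mu^{-1})\in D\leqs H$. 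Hence $K$ is a large fixer.

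The step I expect to be the main obstacle is the fixer verification in the converse, in particular the observation that a subfield subgroup $\SL_2(q_0)\leqs\SL_2(q)$ with $q=q_0^2$, although it contains torus elements that are non-split relative to $\SL_2(q_0)$, has all such elements diagonalisable over $\mathbb{F}_q=\mathbb{F}_{q_0^2}$ and hence conjugate into the split torus of $\SL_2(q)$, which lies inside $H$ — this is exactly what makes the subfield subgroup a fixer, and its failure for $q$ odd (where $\PGL_2(q_0)$ contains unipotent elements of the forbidden order $p$) is what pins down that $q$ must be even. The remaining bookkeeping — deciding which subgroups of each overgroup $\widehat K_0$ attain order $|H_0|$ — is routine, via Lemma \ref{l:PSL2_sub} and the subgroup lattices of cyclic and dihedral groups.
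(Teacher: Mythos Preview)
Your proof is correct and follows essentially the same approach as the paper. The forward direction matches the paper's order comparisons and the key observation that $H_0$ contains no element of order $p$ when $q$ is odd; you phrase the $q$-odd argument as ``large $K_0$ must contain a $p$-element'' while the paper phrases it contrapositively via Corollary~\ref{c:PSL2_p-element} (``no $p$-element forces $K_0$ small''), but these are the same argument. For the converse you give a direct eigenvalue/diagonalisation argument in $\SL_2(q)$, which is precisely what the paper's citations of Lemmas~\ref{l:PSL2_cyclic_conju} and~\ref{l:PSL2_ele_cyc} unwind to.
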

	
	\begin{proof}
		First assume $K$ is a large fixer of $G$, noting that $|\widehat{K}|\geqs |K|\geqs |H|$. If $\widehat{K}$ is of type $\GL_1(q^2)$, then $\widehat{K}$ is clearly not a fixer of $G$, and any proper subgroup of $\widehat{K}$ has order strictly less than $|H|$. Thus, by inspecting Table \ref{tab:max_PSL2}, this implies that $\widehat{K}$ is of type $P_1$ or $\GL_2(q^{1/2})$.
		
		Now assume $q$ is odd, noting that $H_0 \cong D_{q-1}$ does not contain an element of order $p$. This implies that a large fixer $K$ does not contain an element of order $p$. If $\widehat{K}$ is of type $P_1$, then $x$ lies in a cyclic subgroup of $G_0$ of order $(q-1)/2$, which implies that $|K_0| \leqs (q-1)/2 < |H_0|$, a contradiction to Lemma \ref{l:pre_as_size}. If $\widehat{K}$ is of type $\GL_2(q^{1/2})$, then $\widehat{K}_0\cong \PGL_2(q^{1/2})$, and by Corollary \ref{c:PSL2_p-element} we see that $|K_0| \leqs \max(|D_{2(q^{1/2}+1)}|, |A_5|) < |H_0|$, which is incompatible with Lemma \ref{l:pre_as_size} once again.
		
		To complete the proof, we assume $q$ is even and $G = G_0$, so $H\cong D_{2(q-1)}$. First assume $K$ is a maximal subgroup of type $P_1$ and let $x\in K$. Note that either $|x| = 2$ or $x$ lies in a cyclic subgroup of $K$ of order $2^f-1$. By Lemma \ref{l:PSL2_cyclic_conju}, we see that $x$ is $G$-conjugate to an element in $H$, so $K$ is indeed a fixer of $G$. Finally, let us assume $K$ is a maximal subgroup of type $\GL_2(q^{1/2})$, so $K\cong\PSL_2(q^{1/2})$. Let $x\in K$, noting by Lemma \ref{l:PSL2_ele_cyc} that either $x$ is contained in a cyclic subgroup of $K$ of order $q^{1/2}\pm 1$, or $|x| = 2$. In the former case, $x$ is contained in a cyclic subgroup of $G$ of order $q-1$, so $x$ is conjugate to an element in $H$ by Lemma \ref{l:PSL2_cyclic_conju}, while in the latter case it is also easy to see that $x^G\cap H\ne \emptyset$. It follows that $K$ is a fixer of $G$.
	\end{proof}
	
	Finally, let us consider the case where $H$ is a subfield subgroup.
	
	\begin{lem}
		\label{l:PSL2_C5_even}
		Suppose $H$ is of type $\GL_2(q^{1/2})$. If $K$ is a large fixer of $G$, then $q$ is even and $\widehat{K}$ is of type $P_1$. Moreover, if $q$ is even, $G = G_0$ and $K \cong C_2^f{:}C_{2^{f/2}+1}$, then $K$ is a large fixer of $G$.
	\end{lem}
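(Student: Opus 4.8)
The plan is to handle the two directions separately, passing freely between $K$ and $K_0 = K\cap G_0$ and splitting on the parity of $q$.

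\emph{Overgroup analysis.} For the ``only if'' direction I would start by assuming $K$ is a large fixer of $G$. By Lemma~\ref{l:pre_red}, $K_0$ is a fixer of $G_0$ in its action on $[G_0:H_0]$, and by Lemma~\ref{l:pre_as_size}, $|K_0|\geqs|H_0|$; moreover if $K_0\leqs H_0^g$ for some $g\in G_0$ then $K_0 = H_0^g$ and so $K\leqs N_G(K_0) = H^g$, contrary to $K$ being non-stable. Hence $K_0$ is a non-stable fixer of $G_0$ with $|K_0|\geqs|H_0|$, and in particular $\Spec(K_0)\subseteq\Spec(H_0)$ by Lemma~\ref{l:pre_fix}. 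Next let $\widehat{K}$ be a maximal overgroup of $K$ in $G$. Since $|H_0| = q^{1/2}(q-1)$ (whether $H_0$ is $\PSL_2(q^{1/2})$ or $\PGL_2(q^{1/2})$) and $|\widehat{K}_0|\geqs|K_0|\geqs|H_0|$, an inspection of Table~\ref{tab:max_PSL2} together with $q>61$ shows $\widehat{K}$ has type $P_1$ or $\GL_2(q^{1/2})$. In the latter case, if $q$ is even there is a unique $G$-class of subgroups of type $\GL_2(q^{1/2})$, so $\widehat{K}$ is conjugate to $H$ and $K$ is stable, a contradiction; thus for $q$ even, $\widehat{K}$ necessarily has type $P_1$, and it remains only to show $q$ cannot be odd.

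\emph{Excluding $q$ odd.} Assume $q$ is odd, so $H_0\cong\PGL_2(q^{1/2})$, and by the maximality conditions in Table~\ref{tab:max_PSL2} we have $G\leqs\PSigmaL_2(q)$. Recall that $G_0$ has exactly two conjugacy classes of elements of order $p$; since $\PGL_2(q^{1/2})$ has a single such class, the elements of order $p$ lying in $H_0$ --- hence in any $G$-conjugate of $H_0$ --- form just one of these, say $\mathcal{C}$. The diagonal automorphism $\delta$ interchanges the two classes, so the other equals $\mathcal{C}^\delta\ne\mathcal{C}$; as $G\leqs\PSigmaL_2(q)$ they remain distinct in $G$, so no element of $\mathcal{C}^\delta$ is $G$-conjugate into $H$. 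I would then rule out the two surviving possibilities for $\widehat{K}$ by showing $K$ meets $\mathcal{C}^\delta$. If $\widehat{K}$ has type $\GL_2(q^{1/2})$ and is not conjugate to $H$, then $\widehat{K}_0$ lies in the second $G_0$-class of subfield subgroups $\cong\PGL_2(q^{1/2})$, so up to conjugacy $\widehat{K}_0 = H_0^\delta$; then $|K_0|\geqs|H_0| = |H_0^\delta|$ forces $K_0 = H_0^\delta$, whose order-$p$ elements lie in $\mathcal{C}^\delta$. If instead $\widehat{K}$ has type $P_1$, then $\widehat{K}_0\cong C_p^f{:}C_{(q-1)/2}$ by Table~\ref{tab:max_PSL2}; writing $K_0 = R{:}E$ with $R$ the (normal) Sylow $p$-subgroup of $K_0$ and $E$ a cyclic complement, $|E|$ divides $(q-1)/2$ and $|E|\in\Spec(K_0)\subseteq\Spec(H_0)$, so being coprime to $p$ it divides $q^{1/2}-1$ or $q^{1/2}+1$, whence $|E|\leqs q^{1/2}+1$; then $|R|\cdot|E| = |K_0|\geqs|H_0| = q^{1/2}(q-1)$ forces $|R| = q$, so $R$ is a full Sylow $p$-subgroup of $G_0$ and meets both $\mathcal{C}$ and $\mathcal{C}^\delta$. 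In either case $K$ contains an element of $\mathcal{C}^\delta$, hence one not $G$-conjugate into $H$, so $K$ is not a fixer by Lemma~\ref{l:pre_iff} --- a contradiction. Therefore $q$ is even, and $\widehat{K}$ has type $P_1$.

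\emph{The construction.} For the ``if'' direction I would argue as follows. Suppose $q$ is even, $G = G_0 = \PSL_2(q)$, and $K\cong C_2^f{:}C_{2^{f/2}+1}$. Such a subgroup of $G_0$ contains, as a normal subgroup, a full Sylow $2$-subgroup of $G_0$, hence normalises it, so up to conjugacy $K = Q{:}C$ where $C$ is the subgroup of $D\cong C_{q-1}$ of order $q^{1/2}+1$ (which exists as $q^{1/2}+1\mid q-1$). Now $|K| = q(q^{1/2}+1) > q^{1/2}(q-1) = |\PSL_2(q^{1/2})| = |H|$, so $K$ is non-stable and strictly large. Every non-identity element of $K$ either lies in $Q$, so is an involution, or is $K$-conjugate into $C$, so has order dividing $q^{1/2}+1$. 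Since $q$ is even, $G_0$ has a single class of involutions and $H\cong\PSL_2(q^{1/2})$ contains involutions, so the involutions of $K$ are $G$-conjugate into $H$; and for each divisor $d>1$ of $q^{1/2}+1$, the subgroup $H$ contains a cyclic subgroup of order $d$ inside a torus of order $q^{1/2}+1$, which by Lemma~\ref{l:PSL2_cyclic_conju} is $G_0$-conjugate to the subgroup of order $d$ of $C$; thus every element of $K$ of order $d$ is $G_0$-conjugate into $H$. Hence every element of $K$ is $G_0$-conjugate into $H$, so $K$ is a fixer by Lemma~\ref{l:pre_iff}, and being non-stable of order exceeding $|H|$ it is a large fixer of $G$.

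\emph{Main obstacle.} The hard part will be the case $\widehat{K}$ of type $P_1$ with $q$ odd: one must be certain that $K_0$ contains an entire Sylow $p$-subgroup of $G_0$, which rests on the bound $|E|\leqs q^{1/2}+1$ read off from $\Spec(K_0)\subseteq\Spec(H_0)$, and then on the fusion of the two classes of order-$p$ elements of $G_0$ under the outer automorphisms present in $G$; identifying the relevant parabolic with $\mathrm{A\Gamma L}_1(q)$ via the isomorphism $\rho$ of~\eqref{e:pi} makes the conjugacy analysis of Section~\ref{s:AGammaL} available for this.
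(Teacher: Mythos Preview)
Your proposal is correct and follows essentially the same approach as the paper: reduce $\widehat{K}$ to type $P_1$ or $\GL_2(q^{1/2})$ by order considerations, eliminate odd $q$ via the two $G_0$-classes of order-$p$ elements (one meeting $H_0$, the other not), and for even $q$ verify the construction using Lemma~\ref{l:PSL2_cyclic_conju}. The only cosmetic difference is in the $P_1$ case for odd $q$: you argue that $|K_0|\geqs|H_0|$ together with $|E|\leqs q^{1/2}+1$ forces the Sylow $p$-subgroup of $K_0$ to be full, hence to meet $\mathcal{C}^\delta$, whereas the paper runs the contrapositive, bounding $|K_0|_p\leqs (q+1)/2$ from $K_0\cap\mathcal{C}^\delta=\emptyset$ and concluding $|K_0|<|H_0|$; these are the same argument read in opposite directions.
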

	
	\begin{proof}
		First assume $K$ is a large fixer of $G$. Then by inspecting Table \ref{tab:max_PSL2}, either $\widehat{K}$ is of type $P_1$, or $q$ is odd, $G\leqs\PSigmaL_2(q)$ and $K = H^\delta$ (note that $|\widehat{K}|\geqs |H|$). Let $x\in H_0$ is an element of order $p$. As noted in \cite[Remark 5.5.2(ii)]{BG_classical}, any element of $H_0$ of order $p$ is $G_0$-conjugate to $x$. Thus, if $q$ is odd and $K = H^\delta$, then $x^\delta\in K_0$ is not $G_0$-conjugate to any element in $H_0$ (see \cite[Lemma 3.2.8]{BG_classical}), so $K$ is not a fixer of $G$ by Lemma \ref{l:pre_red}. Thus, we may assume $\widehat{K}$ is of type $P_1$. Suppose $q$ is odd. Note that $\widehat{K}_0$ contains a Sylow $p$-subgroup of $G_0$, so
		\begin{equation*}
		|x^{G_0}\cap \widehat{K}_0| = |(x^\delta)^{G_0}\cap \widehat{K}_0| = (q-1)/2,
		\end{equation*}
		and thus $|K_0|_p\leqs (q+1)/2$. Moreover, as a Hall $p'$-subgroup of $\widehat{K}_0$ is cyclic, we see that the order of a Hall $p'$-subgroup of $K_0$ lies in $\Spec(H_0) = \Spec(\PGL_2(q^{1/2}))$. By Lemma \ref{l:PSL2_ele_cyc_PGL}, it follows that $|K_0|_{p'}\leqs q^{1/2}+1$. This implies that
		\begin{equation*}
		|K_0| = |K_0|_p\cdot |K_0|_{p'} \leqs (q^{1/2}+1)(q+1)/2 < |\PGL_2(q^{1/2})| = |H_0|,
		\end{equation*}
		which is incompatible with Lemma \ref{l:pre_as_size}. This shows that $q$ is even and $\widehat{K}$ is of type $P_1$.
		
		Finally, if $q$ is even, $G = G_0$ and $K \cong C_2^f{:}C_{2^{f/2}+1}$ is a subgroup of a maximal subgroup of type $P_1$, then we see that $K$ is a fixer of $G$ by applying Lemma \ref{l:PSL2_cyclic_conju}.
	\end{proof}
	
	\begin{lem}
		\label{l:PSL2_C5_odd}
		Suppose $q > 61$ and $H$ is of type $\GL_2(q^{1/r})$ with $r\geqs 3$ a prime. If $K$ is a large fixer of $G$, then there exists a maximal overgroup $\widehat{K}$ of $K$ of type $P_1$. Moreover, if $G = G_0$ and $K \cong C_p^f{:}C_{(q^{1/r}-1)/(2,q-1)}$, then $K$ is a large fixer of $G$.
	\end{lem}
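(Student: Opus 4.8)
The plan is to prove both implications; I will assume $q>61$ throughout, as the remaining cases are covered by Proposition~\ref{p:PSL2_61}. For the forward implication, the key reduction is: \emph{it suffices to show that a large fixer $K$ has $O_p(K_0)\ne 1$, where $K_0 = K\cap G_0$}. Indeed, $\PSL_2(q)$ has trivial-intersection Sylow $p$-subgroups, so a nontrivial $p$-subgroup of $G_0$ lies in a unique Sylow $p$-subgroup $R$ of $G_0$; since $O_p(K_0)$ is characteristic in $K_0\normeq K$ it is normal in $K$, so $K$ stabilises $R$ and hence $K\leqs N_G(R)$, a maximal subgroup of type $P_1$. So I would assume $O_p(K_0)=1$ and derive a contradiction, using that $K$ is non-stable with $|K|\geqs|H|$. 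By Lemmas~\ref{l:pre_red}, \ref{l:pre_fix} and \ref{l:pre_as_size} we have $|K_0|\geqs|H_0| = |\PSL_2(q_0)|$ with $q_0 = q^{1/r}\geqs 3$, and $\Spec(K_0)\subseteq\Spec(H_0) = \{1,p\}\cup\{d : d\mid(q_0\pm1)/(2,q_0-1)\}$.

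Now apply Lemma~\ref{l:PSL2_sub}: as $O_p(K_0)=1$, $K_0$ is not of type (i) and is not $D_8$, so $K_0$ is cyclic, dihedral, one of $A_4,S_4,A_5$, or $\PSL_2(p^m)$ or $\PGL_2(p^m)$ with $m<f$. If $K_0$ is cyclic or dihedral, its cyclic part has order $m'\mid(q\pm1)/(2,q-1)$ with $p\nmid m'$ and $m'\in\Spec(H_0)$, forcing $m'\leqs(q_0+1)/(2,q_0-1)$ and hence $|K_0|\leqs 2(q_0+1)<|\PSL_2(q_0)|$ (using $q_0\geqs3$), contradicting Lemma~\ref{l:pre_as_size}. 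If $K_0\in\{A_4,S_4,A_5\}$ then $|\PSL_2(q_0)|\leqs 60$, so $q_0\in\{3,4,5\}$; since $r$ is an odd prime, $\PGL_2(q_0)\not\leqs\PSL_2(q)$ and (for $q_0=3$) $A_5\not\leqs\PSL_2(3^r)$, which forces $K_0\cong H_0$. As there is a single $G_0$-class of such subgroups and $H_0=\PSL_2(q_0)$ is maximal in $G_0$ (because $q=q_0^r$ with $r$ prime), $K_0$ is $G_0$-conjugate to $H_0$, so $K\leqs N_G(K_0)$ is $G$-conjugate into $H$, contradicting that $K$ is non-stable.

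The remaining, and main, case is $K_0\cong\PSL_2(p^m)$ or $\PGL_2(p^m)$ with $m<f$. Here I would take a primitive prime divisor $\ell$ of $p^{2m}-1$; then $\ell$ divides $p^m+1\in\Spec(K_0)$, so $\ell\in\Spec(H_0)$, and comparing $\ell$ with the divisors of $(q_0\pm1)/(2,q_0-1)$ forces $m\mid f_0 := f/r$. For the $\PGL_2$ case (which requires $p$ odd) an element of order $p^m-1$ likewise forces $2m\mid f_0$; the Zsygmondy exceptions $m=1$ and $(p,m)=(2,3)$ are handled directly (the latter using elements of order $9$), and the possibilities $p^m\in\{2,3\}$ reduce to the earlier cases. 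Consequently $K_0$ is isomorphic to a subfield subgroup of $H_0=\PSL_2(q_0)$; since $K_0\cong\PSL_2(p^m)$ with $p^m\geqs4$ is nonsolvable, it cannot meet trivially the socle-type part of any $G$-conjugate of $H$ containing it, and because a subfield subgroup $\PSL_2(q_0)$ of $\PSL_2(q)$ is determined by the $\mathbb{F}_{q_0}$-span of a defining $\mathbb{F}_{p^m}$-structure of $K_0$, there is a unique $G$-conjugate $H^g$ of $H$ with $K_0\leqs H^g$; hence $N_G(K_0)\leqs N_G(H^g)=H^g$ and $K$ is stable, a contradiction. This step is the main obstacle: it is the only place where the conclusion genuinely relies on non-stability rather than crude order/spectrum estimates, and it requires both the primitive prime divisor bookkeeping with its small exceptions and the structural uniqueness of subfield subgroups. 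Combining the cases, $O_p(K_0)\ne 1$, so $K$ lies in a maximal subgroup of type $P_1$.

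For the converse, suppose $G=G_0$ and $K\cong C_p^f{:}C_{(q_0-1)/(2,q-1)}$. Up to conjugacy $K=Q{:}C$, where $C$ is the unique subgroup of order $(q_0-1)/(2,q-1)$ of the split torus $D\cap G_0\cong C_{(q-1)/(2,q-1)}$; choosing $H_0=\PSL_2(q_0)$ aligned with our fixed basis, the isomorphism $\rho$ of \eqref{e:pi} carries $K$ to $\mathbb{F}_q^+{:}C'$ and the Borel $H_0\cap N_{G_0}(Q)$ to $\mathbb{F}_{q_0}^+{:}C'$, where $C'\leqs\mathbb{F}_{q_0}^\times$ has order $(q_0-1)/(2,q-1)$. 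Given $x\in K$ with $\rho(x)=(a,\lambda)$, $\lambda\in C'$: if $\lambda\ne1$, then solving $c(\lambda^{-1}-1)=-a$ shows $(a,\lambda)$ is $\mathbb{F}_q^+$-conjugate to $(0,\lambda)\in\rho(H_0)$, so $x$ is $G_0$-conjugate into $H$; if $\lambda=1$, then $x$ is unipotent, and since $r$ is odd we have $\sum_{i=0}^{r-1}q_0^i\equiv r\equiv1\pmod2$, so $\mathbb{F}_{q_0}^\times\not\subseteq(\mathbb{F}_q^\times)^2$, whence both $G_0$-classes of nontrivial unipotents meet $H_0$ and again $x$ is $G_0$-conjugate into $H$. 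By Lemma~\ref{l:pre_iff}, $K$ is a fixer, and $|K| = q(q_0-1)/(2,q-1) > q_0(q_0^2-1)/(2,q-1) = |H|$ since $q_0^{r-1}>q_0+1$ for $r\geqs3$; therefore $K$ is a large fixer of $G$.
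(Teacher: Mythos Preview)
Your argument is correct and takes a genuinely different route from the paper's. The paper works \emph{top-down}: it picks a maximal overgroup $\widehat{K}$ of $K$ and eliminates the possibilities $\GL_1(q)\wr S_2$, $\GL_1(q^2)$, and $\GL_2(q^{1/s})$ in turn (the last by an inductive descent through the maximal subgroups of $\widehat{K}$), leaving only type $P_1$. You instead work \emph{bottom-up}: you observe that $O_p(K_0)\ne 1$ forces $K$ into a $P_1$ via the trivial-intersection property of Sylow $p$-subgroups, and then use Dickson's list (Lemma~\ref{l:PSL2_sub}) to rule out every possible shape of $K_0$ with $O_p(K_0)=1$. Your approach avoids the recursion but pays for it in case analysis; it also makes the role of non-stability more visible, since that hypothesis is only used when $K_0\cong H_0$ and the single $G_0$-class of subfield $\PSL_2(q_0)$ forces $K\leqs N_G(K_0)$ into a conjugate of $H$.

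Two small points. First, your claim that an element of order $p^m-1$ in $\PGL_2(p^m)$ forces $2m\mid f_0$ is not justified and appears false; but you do not need it. Your Zsigmondy argument on $p^{2m}-1$ already gives $m\mid f_0$, and then $|K_0|\geqs|H_0|$ (Lemma~\ref{l:pre_as_size}) forces $m=f_0$ in the $\PSL_2$ case, while in the $\PGL_2$ case $m=f_0$ is impossible since $\PGL_2(q_0)\leqs G_0$ would require $2\mid r$, so $m\leqs f_0/2$ and $|K_0|<|H_0|$. Second, your ``unique $H^g$ containing $K_0$'' step, while actually true (one can check $N_{G_0}(K_0)=N_{H_0}(K_0)$ because $2m\mid f\iff 2m\mid f_0$ when $r$ is odd and $m\mid f_0$), is more than you need once you know $m=f_0$: then $K_0\cong H_0$, there is a single $G_0$-class of such subfield subgroups by Table~\ref{tab:max_PSL2}, and $K\leqs N_G(K_0)=H^g$ immediately. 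Your converse argument is correct and slightly more explicit than the paper's, which simply invokes Example~\ref{ex:subfield}; your treatment of the two unipotent classes via the parity of $\sum_{i}q_0^i$ is the key detail that makes the $\PSL_2$ version go through.
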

	
	\begin{proof}
		First assume $K$ is a large fixer of $G$, noting that
		\begin{equation}
		\label{e:gcd_1/r}
		\left(\frac{q\pm 1}{(2,q-1)},\frac{q^{1/r}\mp 1}{(2,q-1)}\right)\text{ divides }\left(\frac{q\pm 1}{(2,q-1)},\frac{q\mp 1}{(2,q-1)}\right) = 1.
		\end{equation}
		Thus, if $\widehat{K}$ is of type $\GL_1(q)\wr S_2$ or $\GL_1(q^2)$, then $\widehat{K}_0 \cong D_{2(q\pm 1)/(2,q-1)}$ and so $K_0\leqs D_{2(q^{1/r}\pm 1)/(2,q-1)}$ by applying \eqref{e:gcd_1/r}, which implies that $|K_0|\leqs 2(q^{1/r}+1) < |H_0|$, a contradiction to Lemma \ref{l:pre_as_size}.
		
		Now assume $\widehat{K}$ is of type $\GL_2(q^{1/s})$ for some prime $s\mid f$, and in view of Lemma \ref{l:pre_red} we may assume $G = G_0$. Then $s\leqs r$ as $|\widehat{K}|\geqs|K|\geqs|H|$. If $s = r$ then $\widehat{K}$ is $G$-conjugate to $H$, and so $K$ is a stable fixer, a contradiction. Thus, we may assume $s < r$, and let $\widetilde{K}$ be the maximal overgroup of $K$ in $\widehat{K}$. First note that if $\widetilde{K}$ is of type $P_1$ (as a subgroup of $\widehat{K}$) then $K$ has a maximal overgroup in $G$ of type $P_1$, so we do not need to consider this case. Similarly, if $\widetilde{K}$ is of type $\GL_1(q^{1/s})\wr S_2$ or $\GL_1(q^{2/s})$, then $K$ is contained in a maximal subgroup of $G$ of type $\GL_1(q)\wr S_2$ or $\GL_1(q^2)$, and we note that both cases have been handled above. If $\widetilde{K}$ is of type $2_-^{1+2}.\mathrm{O}_2^-(2)$ or $A_5$, then $q^{1/s} = p$ or $p^2$ by inspecting Table \ref{tab:max_PSL2}, so $s$ is the largest prime divisor of $f$, a contradiction to $s<r$. It suffices to consider the case where $\widetilde{K}$ is of type $\GL_2(q^{1/t})$ for some $t$ with $s\mid t$. This can be done by repeating the above argument.
		
		This shows that $K$ has a maximal overgroup $\widehat{K}$ in $G$ of type $P_1$. Finally, let $G = G_0$ and $K\cong C_p^f{:}C_{(q^{1/r}-1)/(2,q-1)}$. Then by combining Lemma \ref{l:pre_red} and the argument in Example \ref{ex:subfield}, we see that $K$ is a fixer of $G$. 
	\end{proof}

	To summarise, we have the following proposition.
	
	\begin{prop}
		\label{p:PSL2_summarise}
		Let $K$ be a large fixer of $G$ and assume $q>61$. Then one of the following holds:
		\begin{itemize}\addtolength{\itemsep}{0.2\baselineskip}
			\item[{\rm (a)}] $q$ is even, $H$ is of type $\GL_1(q)\wr S_2$ and $\widehat{K}$ is of type $P_1$;
			\item[{\rm (b)}] $q$ is even, $H$ is of type $\GL_1(q)\wr S_2$ and $\widehat{K}$ is of type $\GL_2(q^{1/2})$;
			\item[{\rm (c)}] $q$ is even, $H$ is of type $\GL_2(q^{1/2})$ and $\widehat{K}$ is of type $P_1$;
			\item[{\rm (d)}] $H$ is of type $\GL_2(q^{1/r})$ with $r$ odd, and $\widehat{K}$ is of type $P_1$.
		\end{itemize}
	\end{prop}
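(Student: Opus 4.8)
The plan is to prove this by a routine case division over the type of the point stabiliser $H$, since $G$ is primitive and so $H$ is maximal and appears in Table \ref{tab:max_PSL2} (whose description is valid for all $q > 11$). The types to consider are $P_1$, $\GL_1(q)\wr S_2$, $\GL_1(q^2)$, a subfield subgroup of type $\GL_2(q^{1/r})$ with $r$ a prime, $2^{1+2}_-.\mathrm{O}_2^-(2)$, and $A_5$. First I would clear away the types that cannot support a large fixer: Lemma \ref{l:PSL2_para} excludes type $P_1$ and Lemma \ref{l:PSL2_C3} excludes type $\GL_1(q^2)$, so for these the hypothesis on $K$ is vacuous. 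For the types $2^{1+2}_-.\mathrm{O}_2^-(2)$ and $A_5$, Lemma \ref{l:PSL2_C6_S} already classifies every large fixer completely (it equals the maximal subgroup $H^\delta$, and $(G,H)$ is one of three explicitly listed pairs), so these are fully settled and do not require the further analysis carried out in Section \ref{s:thm1}; what is left are exactly the cases yielding (a)--(d).

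For the two remaining families I would simply invoke the lemmas of this section. If $H$ is of type $\GL_1(q)\wr S_2$, then by Lemma \ref{l:PSL2_C2} any large fixer $K$ forces $q$ even with maximal overgroup $\widehat{K}$ of type $P_1$ or $\GL_2(q^{1/2})$, which are cases (a) and (b). If $H$ is a subfield subgroup, then I would split on the (prime) subfield index $r$. For $r = 2$, so $H$ is of type $\GL_2(q^{1/2})$, Lemma \ref{l:PSL2_C5_even} shows a large fixer forces $q$ even with $\widehat{K}$ of type $P_1$, which is (c). For $r \geqs 3$, Lemma \ref{l:PSL2_C5_odd} gives a maximal overgroup of $K$ of type $P_1$, which is (d) (and there $r$ is an odd prime). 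Assembling these outcomes across all types of $H$ completes the proof.

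I do not expect a genuine obstacle here: the proposition is a bookkeeping statement that packages the lemmas proved earlier in the section, and all the substantive content --- the forward estimates in Lemmas \ref{l:PSL2_C2}, \ref{l:PSL2_C5_even} and \ref{l:PSL2_C5_odd}, and ultimately the construction of the fixers in (a)--(d) using the conjugacy-class analysis of $\mathrm{A\Gamma L}_1(q)$ from Section \ref{s:AGammaL} --- lies in those results. The only point needing a little care is to be sure Table \ref{tab:max_PSL2} is exhaustive for $q > 61$ and that the subfield case is correctly partitioned according to the index $r$; with that in hand, the proposition is immediate.
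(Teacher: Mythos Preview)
Your proposal is correct and matches the paper's approach: the proposition is stated immediately after Lemmas \ref{l:PSL2_para}--\ref{l:PSL2_C5_odd} with only the words ``To summarise, we have the following proposition'' and no separate proof, so it is precisely the bookkeeping collation you describe. One small point worth flagging is that, as literally stated, the proposition does not cover the large fixers $K = H^\delta$ arising in Lemma \ref{l:PSL2_C6_S}; the paper (like you) treats those cases as already fully classified and applies the proposition only when $H$ is of type $\GL_1(q)\wr S_2$ or $\GL_2(q_0)$.
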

	
	Each case appearing in Proposition \ref{p:PSL2_summarise} requires more refined treatment, and we will work on it in Section \ref{s:thm1}. To do this, the following lemma will be useful. Here we note that $G_0\cong \SL_2(q)$ if $q$ is even. Let
	\begin{equation}
	\label{e:SL_invo}
	z =
	\begin{pmatrix}
	0&1\\
	1&0
	\end{pmatrix}\in \SL_2(q)
	\end{equation}
	and suppose $G = \la G_0,\psi\ra$, where $\psi\in\la\phi\ra$.
	
	\begin{lem}
		\label{l:PSL2_even_conju}
		Suppose $q$ is even. Let $x\in G$ be of even order, and suppose the order of $G_0x$ in $G/G_0$ is odd. Then $\la x^2\ra$ is $G$-conjugate to a subgroup of $\la\psi\ra_{2'}$ and $\la x\ra$ is $G$-conjugate to a subgroup of $\la z\ra\times \la\psi\ra_{2'}$.
	\end{lem}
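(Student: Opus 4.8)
The plan is to first determine the $2$-part of $x$, then reduce by $G_0$-conjugacy to the case where that $2$-part is a fixed transvection lying in the root group $Q$, and finally transfer everything into $\mathrm{A\Gamma L}_1(q)$ through the isomorphism $\rho$ of \eqref{e:pi} and apply the trace-map results of Section~\ref{s:AGammaL}.

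First I would pin down the $2$-part $x_2$ of $x$. Since $\la x\ra$ surjects onto $\la G_0x\ra$, the coset $G_0x_2$ is a $2$-element of the odd-order group $\la G_0x\ra\leqs G/G_0$, so $G_0x_2 = G_0$, i.e.\ $x_2\in G_0$. As $q$ is even, every Sylow $2$-subgroup of $G_0 = \SL_2(q)$ is elementary abelian (a $G_0$-conjugate of $Q$), so $x_2$ has order at most $2$; since $|x|$ is even, $|x_2| = 2$. Hence $|x| = 2b$ with $b$ odd, $x_2 = x^b$ is an involution, $|x^2| = b$ is odd, and $\la x\ra = \la x_2\ra\times\la x^2\ra$. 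Because all involutions of $\SL_2(q)$ with $q$ even are transvections and so form a single $G_0$-class, after replacing $x$ by a $G_0$-conjugate I may assume $x_2 = z_0$, where $z_0 := \rho^{-1}((1,1))$ is a fixed transvection lying in $Q$.

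The crucial structural observation is that $C_G(z_0)\leqs N_G(Q)$: the transvection $z_0$ fixes a unique $1$-space of $\mathbb{F}_q^2$, hence lies in a unique Sylow $2$-subgroup of $G_0$, namely $Q$, so everything centralising $z_0$ normalises $Q$. A short matrix computation gives $C_{G_0}(z_0) = Q$; since in addition $\psi$ fixes $z_0$ (its matrix has entries in $\mathbb{F}_p$), normalises $Q$, and meets $G_0$ trivially, a counting argument gives $C_G(z_0) = Q{:}\la\psi\ra$, and $\rho$ restricts to an isomorphism $C_G(z_0)\to\mathbb{F}_q^+{:}\la\psi\ra\leqs\Gamma$ (viewing $\la\psi\ra\leqs\la\phi\ra$). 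As $x$ centralises $x_2 = z_0$, we have $x\in C_G(z_0)$; write $w := \rho(x^2) = (c,1)\phi^j$ with $\phi^j\in\la\psi\ra$ and $c\in\mathbb{F}_q$. Since $|w| = |x^2| = b$ is odd, $m := |\phi^j|$ is odd; letting $\mathbb{F}_{q_1}$ be the fixed field of $\phi^j$ (so $q = q_1^m$), one computes $w^m = (\Tr_{\mathbb{F}_q/\mathbb{F}_{q_1}}(c),1)\in\mathbb{F}_q^+$, and as this has odd order it is trivial, i.e.\ $\Tr_{\mathbb{F}_q/\mathbb{F}_{q_1}}(c) = 0$. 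Lemma~\ref{l:pre_AGammaL_conju}(i) then provides $u_0\in\mathbb{F}_q$ with $w^{(u_0,1)} = \phi^j$; setting $u := \rho^{-1}((u_0,1))\in Q$ we obtain $(x^2)^u = \phi^j\in\la\psi\ra$, whence $\la x^2\ra^u\leqs\la\psi\ra_{2'}$ because $|x^2|$ is odd. This proves the first assertion. For the second, note $u\in Q$ is abelian and $z_0\in Q$, so $z_0^u = z_0$ and therefore $\la x\ra^u = \la z_0\ra\times\la x^2\ra^u\leqs\la z_0\ra\times\la\psi\ra_{2'}$; finally $z$ and $z_0$ are involutions with entries in $\mathbb{F}_p$, hence both lie in $C_{G_0}(\psi)$, where they are conjugate because the subgroup $\SL_2(2)\leqs C_{G_0}(\psi)$ has a single class of involutions, and any $h\in C_{G_0}(\psi)$ with $z_0^h = z$ centralises $\la\psi\ra_{2'}$, so $(\la z_0\ra\times\la\psi\ra_{2'})^h = \la z\ra\times\la\psi\ra_{2'}$.

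The step I expect to be the main obstacle is the identification $C_G(z_0) = Q{:}\la\psi\ra$ — in particular checking that, after the reduction, $x$ genuinely lies inside a conjugate of the parabolic $N_G(Q)\leqs N_{\PGammaL_2(q)}(Q)\cong\mathrm{A\Gamma L}_1(q)$ — together with making this match up cleanly with $\rho$. Once $x$ is inside $\mathrm{A\Gamma L}_1(q)$ the remaining work is short, the only real point being the observation that ``$|x^2|$ odd'' forces the relevant trace to vanish, since the $\mathbb{F}_q^+$-component of $w^m$ lies in a $2$-group.
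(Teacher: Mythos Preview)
Your proof is correct. The paper's argument is shorter and more conceptual: it conjugates $x^t$ directly to $z$ (rather than to a transvection $z_0\in Q$), notes that $C_G(z)=P{:}\la\psi\ra$ with $P$ a Sylow $2$-subgroup of $G_0$, and then simply invokes Hall's theorem for soluble groups to move the odd-order element $(x^2)^g$ into the Hall $2'$-subgroup $\la\psi\ra_{2'}$ of $C_G(z)$ by an element of $C_G(z)$, which automatically fixes $z$. Your route instead transfers into $\mathrm{A\Gamma L}_1(q)$ via $\rho$ and appeals to the trace machinery of Lemma~\ref{l:pre_AGammaL_conju}(i), then requires a further conjugation inside $\SL_2(2)$ to carry $z_0$ to $z$. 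The payoff of the paper's approach is brevity and that it avoids any explicit calculation; the payoff of yours is that it rehearses precisely the $\Gamma$-computation used repeatedly later in Sections~\ref{s:thm1} and~\ref{s:Spiga}, and makes the conjugating element completely explicit.
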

	
	\begin{proof}
		Let $t = |G_0x|$. Then $|x| = 2t$ and $x^t$ is an involution in $G_0$. Note that any element in $G_0$ of even order is an involution, and all involutions in $G_0$ are conjugate. Thus, there exists $g\in G_0$ such that $(x^t)^g = z$ and so $x^g\in C_G(z) = P{:}\la\psi\ra$, where $P$ is a Sylow $2$-subgroup of $G_0$ containing $z$. In particular, $(x^2)^g\in C_G(z)$ is an element of odd order $t$. It follows that there exists $g_1\in C_G(z)$ such that $(x^2)^{gg_1}$ is contained in $\la\psi\ra_{2'}$ since $\la\psi\ra_{2'}$ is a Hall $2'$-subgroup of the soluble group $C_G(z)$. Therefore,
		\begin{equation*}
		\la x\ra^{gg_1} = \la x^t\ra^{gg_1} \times \la x^2\ra^{gg_1} \leqs \la z\ra \times \la\psi\ra_{2'},
		\end{equation*}
		which concludes the proof.
	\end{proof}

	\section{Proof of Theorem \ref{thm:PSL2}}
	
	\label{s:thm1}
	
	In this section, we will establish Theorem \ref{thm:PSL2}, and we adopt the notation in Section \ref{s:PSL2}. In view of Proposition \ref{p:PSL2_61}, we may also assume $q > 61$, so it suffices to consider the four cases recorded in Proposition \ref{p:PSL2_summarise}.

	\subsection{Case (a)}
	
	First let us consider case (a) of Proposition \ref{p:PSL2_summarise}.
	
	\begin{lem}
		\label{l:PSL2_(a)}
		Suppose $|K|\geqs |H|$ and assume Proposition \ref{p:PSL2_summarise}(a). Then $K$ is a fixer of $G$ if and only if $K\leqs\widehat{K}_0.\la\psi\ra_{2'}$.
	\end{lem}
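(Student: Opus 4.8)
The plan is to prove the two implications separately. Throughout, write $\widehat{K} = N_G(Q)$ for the maximal subgroup of type $P_1$ containing $K$ and use the isomorphism $\rho$ of \eqref{e:pi} to identify $\widehat{K}$ with the subgroup $\mathbb{F}_q^+{:}(\mathbb{F}_q^\times{:}\la\psi\ra)$ of $\mathrm{A\Gamma L}_1(q)$; then $Q\cong\mathbb{F}_q^+$ is the unipotent radical, which is elementary abelian (as $q$ is even) and normal in $\widehat K$, $\widehat{K}_0 = Q{:}D$, and $\widehat{K}_0.\la\psi\ra_{2'} = Q{:}(D{:}\la\psi\ra_{2'})$ is the full preimage in $\widehat K$ of $\la\psi\ra_{2'}\cong \widehat K/\widehat K_0\cong G/G_0$, with $Q$ a normal Sylow $2$-subgroup. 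In particular $K\leqs \widehat{K}_0.\la\psi\ra_{2'}$ if and only if every element of $K$ has image of odd order in $G/G_0$. For the backward implication I would take $x\in K\leqs \widehat K_0.\la\psi\ra_{2'}$ and, using Lemma \ref{l:pre_iff}, conjugate it into $H$: if $|x|$ is odd then $\la x\ra$ is a $2'$-group, hence $G$-conjugate (by Schur--Zassenhaus, since $Q$ is a normal Sylow $2$-subgroup) into the complement $D{:}\la\psi\ra_{2'}=\la D,\la\psi\ra_{2'}\ra\leqs H$; if $|x|$ is even then $G_0x$ has odd order, so Lemma \ref{l:PSL2_even_conju} conjugates $\la x\ra$ into $\la z\ra\times\la\psi\ra_{2'}=\la z,\la\psi\ra_{2'}\ra\leqs H$ (here $z\in H_0$ and $z$ commutes with $\psi$ because $z$ has entries in $\mathbb{F}_p$). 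This settles $(\Leftarrow)$, and uses nothing about $|K|$.

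For $(\Rightarrow)$ I would argue contrapositively, assuming $|K|\geqs |H|$ and that the image of $K$ in $G/G_0$ has even order, and producing an element of $K$ that is not $G$-conjugate into $H$. The key preliminary observation is that a Sylow $2$-subgroup of $H=D_{2(q-1)}{:}\la\psi\ra$ is the abelian group $\la z\ra\times\la\psi\ra_2$, so that every $2$-element of $H$ of order $\geqs 4$ has the same order as its image in $H/H_0\cong G/G_0$; since $G/G_0$ is cyclic, it follows that no element of $G$ of $2$-power order $\geqs 4$ whose image in $G/G_0$ has strictly smaller order can be $G$-conjugate into $H$. Let $2^b\geqs 2$ be the $2$-part of the order of the image of $K$ in $G/G_0$ and choose a $2$-element $x\in K$ whose image in $G/G_0$ has order exactly $2^b$. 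Writing $\rho(x)=(a,\mu)\psi^j$ with $\psi^j$ of order $2^b$ and $\mathbb{F}_{q_0}=\Fix(\psi^j)$, the $\mathbb{F}_q^\times$-component of $x^{2^b}$ is $N_{\mathbb{F}_q/\mathbb{F}_{q_0}}(\mu)\in\mathbb{F}_{q_0}^\times$, which has odd order and hence equals $1$ as $x$ is a $2$-element; thus $x^{2^b}\in Q$ and $|x|\in\{2^b,2^{b+1}\}$. If $|x|=2^{b+1}$ we are done by the preliminary observation. Otherwise $x^{2^b}=1$, and writing $\mu=\beta/\beta^{\psi^{-j}}$ for suitable $\beta\in\mathbb{F}_q^\times$ (Hilbert~90), a computation of the kind used for Lemma \ref{l:pre_AGammaL_conju} gives $x^{2^b}=(\beta^{-1}\Tr_{\mathbb{F}_q/\mathbb{F}_{q_0}}(\beta a),1)$, so that the map $T\colon c\mapsto\beta^{-1}\Tr_{\mathbb{F}_q/\mathbb{F}_{q_0}}(\beta c)$ is $\mathbb{F}_{q_0}$-linear with kernel an $\mathbb{F}_{q_0}$-hyperplane of $\mathbb{F}_q$ (so $|\ker T|=q/q_0$), and $T(a)=0$.

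Since $|K|\geqs|H|=2(q-1)|\psi|$ while $KQ/Q\leqs\mathbb{F}_q^\times{:}\la\psi\ra$ has order at most $(q-1)|\psi|$, the subspace $V:=K\cap Q$ is nontrivial. For $u=(v,1)\in V\leqs K$ we have $\rho(ux)=(v+a,\mu)\psi^j$ and hence $(ux)^{2^b}=(T(v),1)$; so if $V\not\leqs\ker T$ we may choose $v\in V$ with $T(v)\neq 0$, whereupon $ux\in K$ is a $2$-element of order $2^{b+1}$ whose image in $G/G_0$ has order $2^b$, which by the preliminary observation is not $G$-conjugate into $H$ --- contradicting that $K$ is a fixer. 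The case $V\leqs\ker T$ remains, and I expect this to be the main obstacle; I would resolve it by counting. As $V=K\cap Q$ is normalised by $K$, let $\mathbb{F}_{q_1}$ be the largest subfield of $\mathbb{F}_q$ with $\mathbb{F}_{q_1}^\times V=V$; then $|V|=q_1^s$ for some $s\geqs 1$ and $(KQ/Q)\cap\mathbb{F}_q^\times\leqs\mathbb{F}_{q_1}^\times$, so $|K|\leqs q_1^s(q_1-1)|\psi|<q_1^{s+1}|\psi|$. Combined with $|K|\geqs2(q-1)|\psi|$ and the fact that $q$ and $q_1$ are $2$-powers, this gives $q_1^{s+1}\geqs 2q$, whereas $V\leqs\ker T$ gives $q_1^s=|V|\leqs q/q_0$. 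Writing $d_i=\log_2 q_i$ and $d_1=f/k$ with $k$ a positive integer (possible since $d_1\mid f$), the first inequality forces $k<s+1$ and the second forces $k\geqs s\cdot2^b/(2^b-1)>s$, which is impossible. Carrying out the computation of $T$ and this final counting step carefully is where the real work lies; the rest amounts to bookkeeping with the conjugacy data for $\mathrm{A\Gamma L}_1(q)$ developed in Section \ref{s:AGammaL}.
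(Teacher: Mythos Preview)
Your backward implication matches the paper's argument essentially verbatim: odd-order elements are pushed into the Hall $2'$-complement $D{:}\la\psi\ra_{2'}\leqs H$, while even-order elements are handled by Lemma~\ref{l:PSL2_even_conju}.

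Your forward implication is correct but takes a genuinely different route. The paper immediately passes to a $2$-element $x\in K$ whose image in $G/G_0$ has order exactly~$2$, so that $\la G_0,x\ra=G_0.\la\varphi\ra$ with $\varphi$ the involutory field automorphism. Since $H_0.\la\varphi\ra\cong C_{q-1}{:}C_2^2$ has no element of order~$4$, the fixer hypothesis forces $x$ to be an involution, and by \cite[Proposition~4.9.1(d)]{GLS_CFSG3} one may then take $\varphi\in K$. The same reasoning applied to $y\varphi$ for every involution $y\in K_0$ shows that a Sylow $2$-subgroup of $K_0$ lies in $C_{G_0}(\varphi)\cong\SL_2(q^{1/2})$, whence $|K_0|\leqs q^{1/2}(q^{1/2}-1)<2(q-1)=|H_0|$, contradicting Lemma~\ref{l:pre_as_size}. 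By contrast you keep the full $2$-part $2^b$, compute $(ux)^{2^b}$ explicitly via the trace map $T$ using the $\mathrm{A\Gamma L}_1(q)$ calculus of Section~\ref{s:AGammaL}, and either produce an order-$2^{b+1}$ derangement or force $V\leqs\ker T$ and reach a numerical contradiction through the auxiliary subfield $\mathbb{F}_{q_1}$. Your argument is entirely self-contained within the machinery of Section~\ref{s:AGammaL} and avoids the external citation to \cite{GLS_CFSG3}; the paper's argument is shorter and more structural, exploiting the involution centraliser directly rather than a counting contradiction.
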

	
	\begin{proof}
		First assume $K$ is a fixer of $G$ and suppose that $|K:K_0|$ is even. Then there exists $x\in K\setminus K_0$ such that $|G_0x| = 2$ and $|x|$ is a $2$-power, so $\la G_0,x\ra = G_0.\la \varphi\ra$, where $\varphi$ is an involutory field automorphism. It follows that $x$ is $G$-conjugate to an element of $H_0.\la\varphi\ra\cong C_{q-1}{:}C_2^2$, which contains no element of order $4$. Thus, $x$ is an involution, so by \cite[Proposition 4.9.1(d)]{GLS_CFSG3}, $x$ is $G_0$-conjugate to $\varphi$. With this in mind, we may assume $\varphi\in K$ by considering a suitable $G$-conjugate of $K$. By arguing as above, $y\varphi$ is an involution for any involution $y\in K_0$. This implies that $C_{G_0}(\varphi)\cong \SL_2(q^{1/2})$ contains a Sylow $2$-subgroup $P$ of $K_0$, which yields $|P|\leqs q^{1/2}$ and hence $|K_0|_p\leqs q^{1/2}$. It follows from Lemma \ref{l:PSL2_sub}(i) that $|K_0|_{p'}\leqs q^{1/2}-1$. Therefore, $|K_0|\leqs q^{1/2}(q^{1/2}-1) < 2(q-1) = |H_0|$, which is incompatible with Lemma \ref{l:pre_as_size}.
		
		It suffices to show that $K = \widehat{K}_0.\la\psi\ra_{2'}$ is a fixer of $G$. To see this, we may assume $K = (Q{:}D){:}\la\psi\ra_{2'}$ (recall that $Q$ and $D$ are defined in \eqref{e:Q} and \eqref{e:D}, respectively). With a suitable conjugation of $H$, we may also assume that $H = D{:}(\la z\ra\times\la\psi\ra)$, where $z$ is defined as in \eqref{e:SL_invo}. Let $x\in K$. If $|x|$ is odd, then $x$ is $K$-conjugate to an element in a Hall $2'$-subgroup $D{:}\la\psi\ra_{2'}$ of $K$, so $x^G\cap H\ne \emptyset$. If $|x|$ is even, then we can apply Lemma \ref{l:PSL2_even_conju} (note that $|K:K_0|$ is odd), so $x$ is $G$-conjugate to an element in $\la z\ra\times\la\psi\ra_{2'}\leqs H$. This shows that $K$ is indeed a fixer of $G$, which completes the proof.
	\end{proof}

	\subsection{Case (b)}
	
	Next, we turn to case (b) of Proposition \ref{p:PSL2_summarise}.
	
	\begin{lem}
		\label{l:PSL2_(b)}
		Assume Proposition \ref{p:PSL2_summarise}(b). Then $\widehat{K}$ is a large fixer of $G$.
	\end{lem}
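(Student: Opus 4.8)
The plan is to show that in case (b)---where $q=2^f$, $H$ is of type $\GL_1(q)\wr S_2$ (so $H\cong D_{2(q-1)}$ since $G=G_0$ by Lemma \ref{l:pre_red} and the structure of $\Out(G_0)$ when $q$ is even), and $\widehat{K}$ is of type $\GL_2(q^{1/2})$ (so $\widehat{K}\cong \PSL_2(q^{1/2})$)---the maximal overgroup $\widehat{K}$ is itself a fixer of $G$. By Lemma \ref{l:pre_iff}, it suffices to check that every element of $\widehat{K}$ is $G$-conjugate to an element of $H$. So first I would run through the element orders occurring in $\widehat{K}\cong\PSL_2(q^{1/2})$ using Lemma \ref{l:PSL2_ele_cyc}: every nontrivial element of $\widehat{K}$ either has order $2$, or lies in a cyclic subgroup of $\widehat{K}$ of order $q^{1/2}-1$ or $q^{1/2}+1$.

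Next, I would dispose of the three types of elements. For an involution $x\in\widehat{K}$: since $q$ is even, all involutions of $G_0=\SL_2(q)$ are conjugate (unipotent), and $H\cong D_{2(q-1)}$ certainly contains an involution, so $x^G\cap H\neq\emptyset$. For $x$ of order dividing $q^{1/2}-1$: then $x$ lies in a cyclic subgroup of $G_0$ of order $q^{1/2}-1$, which divides $q-1=|H_0\cap C_{q-1}|$; since $H$ contains a cyclic subgroup of order $q-1$, and by Lemma \ref{l:PSL2_cyclic_conju} (valid since $q$ is even) any two cyclic subgroups of $G_0$ of the same order are $G_0$-conjugate, $x$ is $G$-conjugate into $H$. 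The remaining case is $x$ of order dividing $q^{1/2}+1$. Here I would note that $(q^{1/2}+1)\mid(q-1)$: indeed $q-1=(q^{1/2}-1)(q^{1/2}+1)$, so $q^{1/2}+1$ divides $q-1$. Hence $x$ again lies in a cyclic subgroup of $G_0$ of order dividing $q-1$, and the same conjugacy argument via Lemma \ref{l:PSL2_cyclic_conju} places $x$ inside a conjugate of $H$. Combining the three cases, every element of $\widehat{K}$ is $G$-conjugate to an element of $H$, so $\widehat{K}$ is a fixer; and since $\widehat{K}$ is of type $\GL_2(q^{1/2})$ while $H$ is of type $\GL_1(q)\wr S_2$, $\widehat{K}$ is not contained in any conjugate of $H$, so it is non-stable. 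As $\widehat{K}\cong\PSL_2(q^{1/2})$ has order $q^{1/2}(q-1)$ and $|H|=2(q-1)$, for $q>61$ we have $|\widehat{K}|\geqs|K|\geqs|H|$ automatically (and in fact $|\widehat{K}|>|H|$), so $\widehat{K}$ is a large fixer.

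The one subtlety I would be careful about is the structure claim $G=G_0$ in this case: strictly, Proposition \ref{p:PSL2_summarise}(b) only asserts that $q$ is even and $H$ is of type $\GL_1(q)\wr S_2$ with $\widehat{K}$ of type $\GL_2(q^{1/2})$, not that $G=G_0$. If $G>G_0$, one invokes Lemma \ref{l:pre_red}: a fixer of $G$ restricts to a fixer of $G_0$, and it suffices to exhibit $\widehat{K}\cap G_0$ as a fixer of $G_0$ acting on $[G_0:H_0]$, which is exactly the computation above; then lift back using that $\widehat{K}\cap G_0$ already has order $\geqs|H_0|$ by Lemma \ref{l:pre_as_size}. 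I expect the main (modest) obstacle to be bookkeeping: making sure the divisibility $q^{1/2}+1\mid q-1$ and the application of Lemma \ref{l:PSL2_cyclic_conju} are set up so that the cyclic subgroups in question genuinely lie in $G_0$, and confirming that $\widehat{K}$ cannot accidentally be stable. The rest is routine, so the proof will be short.
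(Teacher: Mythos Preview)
Your argument for the case $G=G_0$ is correct, and indeed this case is already recorded in Lemma~\ref{l:PSL2_C2}. The real content of Lemma~\ref{l:PSL2_(b)} is precisely the case $G>G_0$, and this is where your proposal has a genuine gap.

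You write that when $G>G_0$ one invokes Lemma~\ref{l:pre_red}, showing $\widehat{K}\cap G_0$ is a fixer of $G_0$ and then ``lifting back''. But Lemma~\ref{l:pre_red} only goes one way: a fixer of a larger group restricts to a fixer of a subgroup, not conversely. Knowing that $\widehat{K}_0=\widehat{K}\cap G_0$ is a fixer of $G_0$ says nothing about the elements of $\widehat{K}\setminus\widehat{K}_0$, i.e.\ those involving nontrivial field automorphisms, and it is exactly these elements that must be shown to meet $H$ up to $G$-conjugacy. The correct use of Lemma~\ref{l:pre_red} (and the one the paper makes) is in the opposite direction: it suffices to prove the result for $G=\PGammaL_2(q)$, since if $\widehat{K}'$ is a fixer of $\PGammaL_2(q)$ on $[\PGammaL_2(q):H']$ then $\widehat{K}'\cap G$ is a fixer of $G$ on the same set, which is canonically $[G:H]$.

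Once one is working in $\PGammaL_2(q)$, the elements $x\in\widehat{K}$ lying outside $G_0$ require substantive new arguments. The paper first shows that any $x\in\widehat{K}$ lies in a soluble non-cyclic subgroup $M\leqs\widehat{K}$, and then splits according to the structure of $M_0=M\cap G_0$ via Lemma~\ref{l:PSL2_sub}. When $M_0$ is cyclic or dihedral the element can be pushed into $N_G(D)=H$; when $M_0\cong C_2^r{:}C_t$ one reduces to the parabolic $N_G(Q)$ and uses the trace-map machinery of Section~\ref{s:AGammaL} (Lemmas~\ref{l:pre_AGammaL_trace} and~\ref{l:pre_AGammaL_conju}) to conjugate $x$ into $H$. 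None of this is covered by your cyclic-subgroup argument, which only sees elements of $G_0$.
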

	
	\begin{proof}
		In view of Lemma \ref{l:pre_red}, we may assume $G=\PGammaL_2(q)$,  and we also assume $H=D{:}(\la z \ra \times \la \phi \ra)$, where $z$ is described as in \eqref{e:SL_invo}. For convenience, we write $K=C_G(\phi^{f/2})$, so ${K}_0\cong \SL_2(2^{f/2})$ is the matrix group comprising all $2\times 2$ matrices of determinant $1$ over $\mathbb{F}_{2^{f/2}}$.
		
		Let $x\in K$. We first claim that there is a soluble non-cyclic subgroup of $K$ containing $x$.
		Let $M\leqs K$ be a minimal non-cyclic group containing $x$ (that is, any proper subgroup of $M$ containing $x$ is cyclic).
		To prove the claim, it suffices to show that $M$ is soluble.
		We argue by contradiction and suppose that $M$ is insoluble.
		Then $M_0 = M\cap G_0$ is insoluble, and hence $M_0\cong \SL_2(2^t)$ for some $t$ dividing $f/2$ by Lemma \ref{l:PSL2_sub}. Note that the finite groups with a cyclic maximal subgroup are classified in \cite[Theorem 1]{PK_max_cyclic}. Thus, by inspecting the list of groups described in \cite[Theorem 1]{PK_max_cyclic}, and with the observation $M_0\cong\SL_2(2^t)$ in mind, we see that any maximal subgroup of $M$ is not cyclic. In other words, there is a non-cyclic maximal subgroup of $M$ containing $x$, which is incompatible with the minimality of $M$. This completes the proof of the claim.
		
		Now let $M\leqs K$ be a soluble non-cyclic group containing $x$ (the existence of $M$ follows from the claim above).
		Observe that $M_0 \ne 1$ since $M$ is not cyclic. Once again, by applying Lemma \ref{l:PSL2_sub}, we see that either $M_0\cong C_2^r{:}C_t$ with $t$ divides $(2^r-1,2^{f/2}-1)$, or $M_0\cong C_t$ or $D_{2t}$ with $t\mid 2^{f/2}\pm 1$.
		
		Suppose $M_0$ is cyclic or dihedral. Then $M$ contains a normal cyclic subgroup $C$ of order $t$ with $t\mid 2^{f/2}\pm 1$.
		By Lemma \ref{l:PSL2_cyclic_conju}, $C$ is $K_0$-conjugate to a subgroup of $D$, where $D\leqs G$ is a subgroup defined in \eqref{e:D}, noting that $2^f-1 = (2^{f/2}+1)(2^{f/2}-1)$.
		It follows from Lemma \ref{l:PSL2_ele_cyc} that $M\leqslant N_G(C)$ is conjugate to a subgroup of $H=N_G(D)$.
		
		Therefore, it remains to consider the case where $M_0\cong C_2^r{:}C_t$.
		Here $M$ has a unique Sylow $2$-subgroup. As noted in \cite[Satz 8.2(c)]{H_book}, two distinct Sylow $2$-subgroups of $K_0$ intersect trivially, and thus $M$ normalises a unique Sylow $2$-subgroup of $K_0$.
		With this in mind, we may replace $M$ and $x$ with suitable $K_0$-conjugates, and assume $x\in M\leqs N_K(Q\cap K_0)$ (recall \eqref{e:Q} for the definition of $Q$).
		To prove that $x^G\cap H\neq \emptyset,$ we first note that
		\begin{equation*}
		\rho(N_K(Q\cap K_0))=\{(a,b)\phi^i\mid a\in \mathbb{F}_{2^{f/2}}, b\in\mathbb{F}_{2^{f/2}}^\times, 1\leqs i \leqs |\phi|  \} =  (\mathbb{F}_{2^{f/2}}^+{:}\mathbb{F}_{2^{f/2}}^{\times}){:}\la \phi \ra.
		\end{equation*}
		If $\la x \ra \cap Q=1,$ then $x$ is $N_G(Q)$-conjugate to an element in $D{:}\la \phi \ra\leqs H$ by Lemma \ref{l:pre_AGammaL_conju_sub}.
		Thus, we may assume $|\la x \ra \cap Q|=2$.
		If $|G_0x|$ is odd, then Lemma \ref{l:PSL2_even_conju} implies that $x$ is $G$-conjugate to an element in $\la z \ra \times \la \phi \ra\leqs H,$ and hence we may further assume $|G_0x|$ is even.
		Let $y\in \la x \ra \cap Q$ be a non-identity element.
		Then there exists $g\in N_{K_0}(Q\cap K_0)$ such that $\rho(y^g) = (1,1)$, so
		\begin{equation*}
		\rho(x^g)\in C_{\rho(N_K(Q\cap K_0))}(\rho(y^g)) = C_{\rho(N_K(Q\cap K_0))}((1,1)) =\mathbb{F}_{2^{f/2}}^+{:}\la \phi \ra.
		\end{equation*}
		It follows that $\rho(x^g) = (a,1)\phi^i$ for some $a\in\mathbb{F}_{2^{f/2}}$ and some integer $i$ with $s:=|\phi^i| = |G_0x|$, so $s$ is even by our assumption. Suppose $q_1^s = q$, noting that $\mathbb{F}_{q_1}\subseteq \mathbb{F}_{2^{f/2}}$ as $s$ is even. Then $\Tr_{\mathbb{F}_q/\mathbb{F}_{q_1}}(a) = 0$ by Lemma \ref{l:pre_AGammaL_trace}(i), whence $\rho(x^g)$ is $\mathbb{F}_q^+$-conjugate to $\phi^i$ by Lemma \ref{l:pre_AGammaL_conju}(i). This shows that $x$ is $G$-conjugate to $\phi^i\in H$, and we conclude the proof.
	\end{proof}
	

	\subsection{Case (c)}
	
	Now we consider Proposition \ref{p:PSL2_summarise}(c), and we write $G = G_0.\la\psi\ra$ for some $\psi\in\la\phi\ra$. Without loss of generality, we may assume $\widehat{K} = N_G(Q)$. Let $C\leqs D$ be the unique subgroup of order $2^{f/2}+1$, where $D$ is defined in \eqref{e:D}, and let $L = Q{:}(C{:}\la\psi\ra)$.

	\begin{lem}
		\label{l:PSL2_(c)}
		Assume Proposition \ref{p:PSL2_summarise}(c). Then $K$ is a large fixer of $G$ if and only if $|G:G_0|$ is odd and $K = L$.
	\end{lem}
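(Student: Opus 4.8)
Looking at this lemma, I need to prove that in case (c) -- where $q$ is even, $H$ is of type $\GL_2(q^{1/2})$, and $\widehat{K}$ is of type $P_1$ -- the subgroup $K$ is a large fixer if and only if $|G:G_0|$ is odd and $K = L = Q{:}(C{:}\la\psi\ra)$.

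Here is my plan.

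\medskip

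\textbf{Plan.} The proof splits into verifying the two implications. For the "if" direction, I would assume $|G:G_0|$ is odd and $K = L = Q{:}(C{:}\la\psi\ra)$, and show $L$ is a large fixer. First, the size computation: $|L| = 2^f(2^{f/2}+1)|\psi| \geqs 2^f(2^{f/2}+1) > |\PSL_2(2^{f/2})|\cdot|G:G_0| = |H|$ (using that $|G:G_0|$ is odd so is coprime to the $2$-part), so $L$ is strictly large; it is non-stable since it contains unipotent elements whereas $H$ of subfield type... actually non-stability follows because $|L| > |H|$. The real work is showing $L$ is a fixer, i.e. every $x \in L$ is $G$-conjugate into $H$. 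Since $L_0 = Q{:}C \cong C_2^f{:}C_{2^{f/2}+1}$ and $|L:L_0|$ is odd (as $|G:G_0|$ is odd), I would argue as in Lemma~\ref{l:PSL2_C5_even}: for $x$ of odd order, $x$ is $L$-conjugate into the Hall $2'$-subgroup $C{:}\la\psi\ra_{2'}$, which lies in a point stabiliser of type $\GL_2(q^{1/2})$ up to conjugacy (using Lemma~\ref{l:PSL2_cyclic_conju} for the cyclic part and the fact that $C \leqs \PSL_2(2^{f/2})$); for $x$ of even order, since $|G_0 x|$ is odd, Lemma~\ref{l:PSL2_even_conju} applies and $\la x\ra$ is $G$-conjugate into $\la z\ra \times \la\psi\ra_{2'}$, which I must check lies inside a conjugate of $H$. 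The embedding $C{:}\la\psi\ra \into \PGammaL_2(2^{f/2}).\la\psi\ra$ together with $z$ normalising things should place this in $H$; here I would use that a subfield subgroup $\SL_2(2^{f/2}).\la\psi\ra$ contains an involution and a maximal torus of order $2^{f/2}-1$... but $C$ has order $2^{f/2}+1$, so I actually need $C$ to embed via the containment $C \leqs D_{2(2^{f/2}+1)} \leqs \PGL_2(2^{f/2})$ -- wait, $2^{f/2}+1$ divides $2^{f/2}+1$, and this dihedral group of order $2(2^{f/2}+1)$ is the $\GL_1(q_0^2)$-type subgroup of $\PSL_2(2^{f/2})$, so indeed $C{:}\la z\ra \leqs \PSL_2(2^{f/2})$ up to conjugacy, and adjoining $\la\psi\ra$ gives a subgroup of $H$. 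This needs care.

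\medskip

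\textbf{The "only if" direction.} Assume $K$ is a large fixer; I must show $|G:G_0|$ is odd and $K = L$. By Proposition~\ref{p:PSL2_summarise}(c) and Lemma~\ref{l:PSL2_C5_even} we already know $q$ is even and $\widehat{K} = N_G(Q)$ is of type $P_1$. The key point is that $H_0 \cong \PSL_2(2^{f/2})$ has spectrum controlled by $2$, $2^{f/2}-1$, $2^{f/2}+1$ and their divisors. Since $K$ is a fixer, $\Spec(K) \subseteq \Spec(H)$ by Lemma~\ref{l:pre_fix}(i). Working inside $\widehat{K}_0 \cong C_2^f{:}C_{2^f-1}$: the Hall $2'$-part of $K_0$ is cyclic of order dividing $2^f - 1 = (2^{f/2}-1)(2^{f/2}+1)$, but its order must lie in $\Spec(H_0)$, and any divisor of $2^f-1$ that divides $q-1$ (i.e. $2^f-1$) but lies in $\Spec(\PSL_2(2^{f/2}))$ must divide $2^{f/2}+1$ or $2^{f/2}-1$. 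I'd show it must divide $2^{f/2}+1$: a divisor $d$ of $2^f-1$ lying in $\Spec(\PSL_2(2^{f/2}))$ divides $2^{f/2}\pm1$; if some prime $\ell \mid \gcd(d, 2^{f/2}-1)$ and another prime divides the $2^{f/2}+1$ part, then $d$ itself (having coprime factors from both) would not lie in the spectrum unless... actually since $\gcd(2^{f/2}-1, 2^{f/2}+1) = 1$, $d$ is a divisor of exactly one of them or a product -- but the spectrum of $\PSL_2(2^{f/2})$ contains no element of order a nontrivial product of a divisor of $2^{f/2}-1$ and a divisor of $2^{f/2}+1$. Combined with maximality $|K_0| \geqs |H_0| = \frac{1}{2}2^{f/2}(2^f-1)$ forcing $|K_0|_{2'}$ large, and $|K_0|_2 \leqs 2^f$, I get $|K_0|_{2'} = 2^{f/2}+1$ and $|K_0|_2 = 2^f$, hence $K_0 = Q{:}C$ up to conjugacy. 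Then for $|K:K_0|$: if it were even, $K$ contains an element projecting to an involution $G_0 x$ with $\la G_0, x\ra = G_0.\la\varphi\ra$ for an involutory field automorphism $\varphi$; arguing as in Lemma~\ref{l:PSL2_(a)}, $\varphi$ (up to conjugacy, $\varphi \in K$) forces $y\varphi$ to have even order... and in $H = \PSL_2(2^{f/2}).O$ one checks there is no suitable conjugate, or a spectrum/centraliser argument shows the resulting $2$-structure is incompatible with $|K_0| = 2^f(2^{f/2}+1)$ -- this mirrors the $C_{G_0}(\varphi) \cong \SL_2(2^{f/4})$ bound. So $|K:K_0|$ is odd, whence $|G:G_0|$ is odd, and since $\widehat{K} = N_G(Q)$, the Hall $2'$-structure forces $K = Q{:}(C{:}\la\psi\ra) = L$.

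\medskip

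\textbf{Main obstacle.} I expect the hardest step to be the even-order parity analysis in the "only if" direction -- ruling out $|K:K_0|$ even -- because it requires a delicate interplay between the structure of field automorphisms of even characteristic $\PSL_2$, the spectrum of the subfield subgroup $H$, and the constraint $|K_0| = 2^f(2^{f/2}+1)$; the centraliser of an involutory field automorphism is $\SL_2(2^{f/2})$ with Sylow $2$-subgroup of order only $2^{f/2}$, which bounds $|K_0|_2 \leqs 2^{f/2} \cdot (\text{something})$ and should contradict $|K_0|_2 = 2^f$, but pinning this down when $\varphi \notin K$ initially (only a conjugate is) needs the "replace $K$ by a conjugate" trick from Lemma~\ref{l:PSL2_(a)} executed carefully. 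A secondary subtlety is verifying in the "if" direction that $\la z \rangle \times \la\psi\ra_{2'}$ genuinely sits inside a single conjugate of $H$ of type $\GL_2(q^{1/2})$, for which I would exhibit the explicit subfield subgroup containing both the involution $z$ and a compatible copy of $\la\psi\ra_{2'}$.
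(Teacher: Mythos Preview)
Your overall strategy matches the paper's: reduce to $K_0 = L_0$ via the spectrum constraint on the cyclic Hall $2'$-part of $K_0$ inside $\widehat{K}_0 = Q{:}D$, then rule out $|K:K_0|$ even by an involution argument in the style of Lemma~\ref{l:PSL2_(a)}, and for the ``if'' direction split on parity of $|x|$ using Lemma~\ref{l:PSL2_even_conju}. The even-$|K:K_0|$ step you flag is indeed the crux, and your mechanism is essentially right (though note $C_{G_0}(\varphi) = \SL_2(2^{f/2}) = H_0$, not $\SL_2(2^{f/4})$): the point is that $H_0.\la\varphi\ra \cong \SL_2(q^{1/2})\times C_2$ has no element of order $4$, so every $y\varphi$ with $y\in Q$ must be an involution, forcing $y\in C_{G_0}(\varphi)=H_0$; but then $Q\leqs H_0$, impossible since $|Q|=2^f>2^{f/2}$.

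There is, however, a genuine gap in your ``only if'' direction. After establishing $K_0 = L_0$ and $|K:K_0| = |G:G_0|$ odd, you write ``the Hall $2'$-structure forces $K = Q{:}(C{:}\la\psi\ra) = L$''. This is not automatic: $N_G(Q)/L_0 \cong (D/C){:}\la\psi\ra \cong C_{2^{f/2}-1}{:}\la\psi\ra$, and it is not obvious that all complements to $L_0$ in $N_G(Q)$ projecting onto $G/G_0$ are $N_G(Q)$-conjugate. The paper devotes a full paragraph to this, taking $x$ with $K = \la K_0, x\ra$ and splitting into the cases $\la x\ra\cap Q\ne 1$ (where one conjugates into $C_G(\la x^g\ra\cap Q)\leqs Q{:}\la\psi\ra\leqs L$) and $\la x\ra\cap Q = 1$ (where one conjugates $x$ into $D{:}\la\psi\ra$, computes that $x^s$ has order dividing $q_1^{1/2}+1$, and then uses a norm computation and Hilbert~90 to produce $w\in D$ with $x^w\in L$). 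You should expect to need an argument of this kind; it does not fall out of Hall's theorem alone.
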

	
	\begin{proof}
		First assume $G = G_0$. Note that $K$ has an element of order $|K|_{2'}$, so if $K$ is a fixer of $G$ then $|K|_{2'}\leqs q^{1/2}+1$ by Lemma \ref{l:PSL2_ele_cyc}. Now if $|K|_{2'} < q^{1/2}+1$ then $|K|_{2'} \leqs q^{1/2}-1$, which yields $|K| < |H|$. Thus, $K$ is a large fixer of $G$ only if $K = L$, and we note that $L$ is a fixer of $G$ by Lemma \ref{l:PSL2_C5_even}.
		
		It follows by Lemmas \ref{l:pre_red} and \ref{l:pre_as_size} that $K$ is a large fixer of $G$ only if $K_0 = L_0$. Moreover, since $|H:H_0| = |G:G_0|$ and
		\begin{equation*}
		\frac{|K_0|}{|H_0|} = \frac{q^{1/2}}{q^{1/2}-1},
		\end{equation*}
		we see that $|K| \geqs |H|$ if and only if $|K:K_0| = |G:G_0|$.
		
		Assume $|G:G_0| = |\psi|$ is even. Let $\varphi\in\la\phi\ra$ be an involutory field automorphism. Then $\varphi\in G$ and we may assume that $\varphi\in K$ and $H = C_G(\varphi)$. Here we note that $H_0.\la \varphi\ra\cong \SL_2(q^{1/2})\times C_2$, so it does not contain an element of order $4$ since its Sylow $2$-subgroups are elementary abelian. Thus, by arguing as in the proof of Lemma \ref{l:PSL2_(a)}, we see that if $K$ is a fixer of $G$, then for any $x\in K$ with $2$-power order and $|G_0x| = 2$, we have $x$ is an involution. This implies that $y\varphi$ is an involution for any $y\in Q\leqs K_0$, which yields $H = C_G(\varphi)$ contains the Sylow $2$-subgroup $Q$ of $K_0$. This is clearly impossible.
		
		Therefore, $|K:K_0|=|G:G_0|$ is odd, and we claim that $K$ is $G$-conjugate to $L$. To see this, let $x\in K$ be such that $K=\la K_0x \ra.$
		Note that $K_0=L_0$ is normal in $N_G(Q),$ so we only need to show that $x$ is $N_G(Q)$-conjugate to an element in $L.$
		If $\la x \ra \cap Q\ne 1$, then there exists $g\in N_G(Q)$ such that $\la  x^g \ra \cap Q \leqs C_{Q}(\psi ),$ and hence $x^g\in C_G(\la x^g \ra\cap Q)=Q{:}\la \psi \ra \leqs L.$ Thus, we assume $\la x\ra\cap Q=1$, so $|x|$ is odd. Moreover, by replacing $x$ with a suitable $N_G(Q)$-conjugate, we may assume $x$ is contained in the Hall $2'$-subgroup $D{:}\la \psi \ra$ of $N_G(Q).$ So we may write $x=y\psi^i$ for some element $y\in D$ and integer $i$. Let $s=|\psi^i|$ and $q=q_1^s.$ Then $x^s\in K_0=L_0$ implies that
		$$|x^s|=|yy^{\psi^i}\dots y^{\psi^{(s-1)i}}|=|y^{\frac{q-1}{q_1-1}}|$$ divides $(q^{1/2}+1,q_1-1)=q_1^{1/2}+1,$ and hence
		$$y^{\frac{(q^{1/2}-1)(q^{1/2}+1)}{q_1^{1/2}-1}}=1.$$
		Note that $\frac{q^{1/2}-1}{q_1^{1/2}-1}$ and $q^{1/2}+1$ are coprime, and hence there exist two integers $k_1$ and $k_2$ such that $k_1\frac{q^{1/2}-1}{q_1^{1/2}-1}+k_2(q^{1/2}+1)=1$. Let $z=y^{-k_2(q^{1/2}+1)}.$ Then $(yz)^{q^{1/2}+1}=1,$ and  $$z^\frac{q-1}{q_1-1}=z^{\frac{(q^{1/2}-1)(q^{1/2}+1)}{(q_1^{1/2}-1)(q_1^{1/2}+1)}}=zz^{\psi^i}\dots z^{\psi^{(s-1)i}}=1.$$ Thus, there exists $w\in D$ such that $z=w^{-1}w^{\psi^{-i}}$ due to Hilbert Theorem 90, and so $$x^{w}=(y\psi^i)^{w}=yw^{-1}w^{\psi^{-i}}\psi^i=yz\psi^i\in L.$$ This verifies the claim, so $K$ is a fixer only if $|G:G_0|$ is odd and $K = L$.
		
		Finally, assume $|G:G_0| = |\psi|$ is odd, and we show that $K = L$ is a fixer of $G$. To see this, we may assume that $H_0$ is the subgroup of $\SL_2(q)$ comprising all the invertible $2\times 2$ matrices over $\mathbb{F}_{q^{1/2}}$. Note that $C{:}\la\psi\ra\leqs H$ and $z\in H$, where $z$ is the element defined in \eqref{e:SL_invo}. Now we argue as in the proof of Lemma \ref{l:PSL2_(a)} and show that every $x\in K$ is $G$-conjugate to an element in $H$. If $x\in K$ is of odd order, then $x$ is $K$-conjugate to an element in a Hall $2'$-subgroup $C{:}\la\psi\ra$ of $K$, and so $x^G\cap H\ne \emptyset$. And if $|x|$ is even, then by Lemma \ref{l:PSL2_even_conju}, $x$ is $G$-conjugate to an element in $\la z\ra\times\la\psi\ra\leqs H$. This shows that $K = L$ is a fixer of $G$, which concludes the proof.
	\end{proof}

	\subsection{Case (d)}
	
	Finally, we deal with Proposition \ref{p:PSL2_summarise}(d). Throughout, we write $q_0 = q^{1/r}$ and we recall that $G = G_0.O$ for some $O\leqs\Out(G_0) = \la\delta\ra\times\la\phi\ra$. We may identify $H\cap \PGL_2(q)$ with the image of the subgroup of $\GL_2(q)$ consisting of all the invertible $2\times 2$ matrices over $\mathbb{F}_{q_0}$. That is, $H = C_G(\phi^{f/r})$. Thus, we have $D\cap H\cong C_{q_0-1}$, where $D$ is defined in \eqref{e:D}.
	
	\begin{lem}
		\label{l:PSL2_(d)_q0}
		Assume Proposition \ref{p:PSL2_summarise}(d). Then $|K_0|_{p'}$ divides $(q_0-1)/(2,q-1)$.
	\end{lem}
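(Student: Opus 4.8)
The plan is to exploit the fact that, by Proposition \ref{p:PSL2_summarise}(d), the maximal overgroup $\widehat{K}$ has type $P_1$, so $K$ embeds in $N_G(Q)$ for a suitable conjugate of $Q$, and hence $K_0 \leqslant N_{G_0}(Q) = Q{:}C_{(q-1)/(2,q-1)}$. Thus the Hall $p'$-part $|K_0|_{p'}$ divides $(q-1)/(2,q-1)$, and $K_0$ contains a cyclic element $x$ of order exactly $|K_0|_{p'}$ (take a generator of a Hall $p'$-subgroup of the metacyclic group $K_0$). Since $K$ is a fixer of $G$, Lemma \ref{l:pre_red} tells us $K_0$ is a fixer of $G_0$, so $x$ is $G_0$-conjugate to an element of $H_0 = H \cap G_0$. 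The first step is therefore to pin down $\Spec(H_0)$ precisely, where $H_0 \cong \PSL_2(q_0)$ or $\PGL_2(q_0)$ according to the parity constraints in Table \ref{tab:max_PSL2}: by Lemmas \ref{l:PSL2_ele_cyc} and \ref{l:PSL2_ele_cyc_PGL}, every element order in $H_0$ divides $p$, $q_0-1$ or $q_0+1$ (up to the usual factor of $(2,q-1)$).

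Next I would combine the two divisibility facts. We have $m := |K_0|_{p'}$ dividing $(q-1)/(2,q-1)$, and also $m \in \Spec(H_0)$, so $m$ divides one of $p$, $(q_0-1)/(2,q-1)$, $(q_0+1)/(2,q-1)$ (after multiplying through by $(2,q-1)$ as needed). Since $m$ is coprime to $p$, the case $m \mid p$ forces $m \leqslant (2,q-1) \leqslant 2$, which certainly divides $(q_0-1)/(2,q-1)$ as long as this quantity is nonzero — and it is, since $q_0 \geqslant p \geqslant 2$ and in the even case $q_0 \ne 2$ by Table \ref{tab:max_PSL2}, so $q_0 - 1 \geqslant 2$; if $q_0 - 1 = 2$ then $(2,q-1)=1$ and again $m\le 2 = q_0-1$. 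The interesting case is $m \mid (q_0+1)/(2,q-1)$: here I would use the coprimality computation already recorded in \eqref{e:gcd_1/r}, namely that $\gcd\bigl((q-1)/(2,q-1),\,(q_0+1)/(2,q-1)\bigr)$ divides $\gcd\bigl((q-1)/(2,q-1),\,(q_0-1)(\cdots)\bigr)$ — more directly, since $m$ divides both $(q-1)/(2,q-1)$ and $(q_0+1)/(2,q-1)$, and $q \equiv 1 \pmod{q_0-1}$ while the relevant gcd of $q_0+1$ with $q-1 = (q_0-1)(q_0^{r-1}+\cdots+1)$ collapses (using $r$ odd, so $q_0^{r-1}+\cdots+q_0+1 \equiv r \pmod{q_0+1}$), we get that $m$ divides $\gcd(q_0+1, (q-1)) \mid \gcd(q_0+1, q_0-1)\cdot(\text{small}) \le 2(2,q_0+1)$, which again divides $(q_0-1)/(2,q-1)$ unless $q_0$ is tiny — and $q_0 \geqslant q^{1/3} > 61^{1/3}$... actually $q_0$ can still be small, so I should handle $q_0 \in \{2,3,4,5\}$ (equivalently small $p$) by hand, noting $q > 61$ forces $r$ large in those cases, which constrains things further.

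The remaining case $m \mid (q_0-1)/(2,q-1)$ is exactly the desired conclusion, so nothing is needed there. I expect the main obstacle to be the bookkeeping with the factor $(2,q-1)$ and the genuinely exceptional small-$q_0$ situations: one must be careful that when $H_0 \cong \PGL_2(q_0)$ the element orders are $p$, $q_0 \pm 1$ without the halving, whereas $K_0 \leqslant Q{:}C_{(q-1)/(2,q-1)}$ always carries the halving when $q$ is odd, and reconciling these two normalisations is where sign/parity errors creep in. A clean way to organise this is to first reduce to $G = G_0$ via Lemma \ref{l:pre_red} (so $K_0 = K \cap G_0$ is a fixer of $G_0$ and $H_0$ is as in Table \ref{tab:max_PSL2}), then split on $\Spec(H_0)$, and in each branch invoke the gcd identity \eqref{e:gcd_1/r} together with the elementary fact that a divisor of $(q-1)/(2,q-1)$ lying in $\{1,\dots\} \cap \Spec(H_0)$ must, after the dust settles, divide $(q_0-1)/(2,q-1)$.
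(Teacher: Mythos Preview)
Your approach is essentially the paper's: reduce to $K_0\leqs \widehat{K}_0=Q{:}C_{(q-1)/(2,q-1)}$, take an element of order $m=|K_0|_{p'}$, use that it must be $G_0$-conjugate into $H_0$, and then invoke the gcd identity. You even cite the right tool, \eqref{e:gcd_1/r}, which (with the sign choice $\pm=-$) says exactly that $\gcd\!\bigl((q_0+1)/(2,q-1),(q-1)/(2,q-1)\bigr)=1$. That single line, together with Lemma~\ref{l:PSL2_ele_cyc} for $H_0\cong\PSL_2(q_0)$, finishes the proof: $m$ is coprime to $p$, divides $(q-1)/(2,q-1)$, and lies in $\Spec(H_0)$, so either $m\mid(q_0-1)/(2,q-1)$ (done) or $m\mid(q_0+1)/(2,q-1)$, whence $m=1$ by the gcd identity. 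This is precisely the paper's three-line argument.

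Where you drift is in the surrounding scaffolding. First, in case~(d) the prime $r$ is odd, so by Table~\ref{tab:max_PSL2} one always has $H_0\cong\PSL_2(q_0)$; the $\PGL_2(q_0)$ alternative never occurs and the attendant normalisation worry is moot. Second, your ``more directly'' computation contains a slip: modulo $q_0+1$ one has $q_0\equiv-1$, so $\sum_{i=0}^{r-1}q_0^i\equiv 1$ (not $r$) when $r$ is odd --- equivalently, the clean reason \eqref{e:gcd_1/r} works is that $r$ odd gives $q_0+1\mid q+1$, hence $(q_0+1)/(2,q-1)$ divides $(q+1)/(2,q-1)$, which is coprime to $(q-1)/(2,q-1)$. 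Once you use \eqref{e:gcd_1/r} there is no residual small-$q_0$ case analysis to do; drop that paragraph entirely.
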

	
	\begin{proof}
		Note that
		\begin{equation*}
		\left(\frac{q_0+1}{(2,q-1)},\frac{q-1}{(2,q-1)}\right)\text{ divides }\left(\frac{q+1}{(2,q-1)},\frac{q-1}{(2,q-1)}\right) = 1,
		\end{equation*}
		and $K_0$ has an element of order $|K_0|_{p'}$. Now apply Lemma \ref{l:PSL2_ele_cyc} for $H_0\cong\PSL_2(q_0)$.
	\end{proof}

	%
	%
	%
	%
	%
	%
	%
	%
	
	First, we consider the case where $r\ne p$. Define the following group
	\begin{equation}
	\label{e:LI}
	L^{\rm I}:=Q{:}(H\cap (D{:}\la\phi\ra)) \cong (C_p^f{:}C_{\frac{q_0-1}{(2,q-1)}}).O.
	\end{equation}
	Note that $|L^{\rm I}| > |H|$ since $r\geqs 3$.

	\begin{lem}
		\label{l:PSL2_d_I}
		Assume Proposition \ref{p:PSL2_summarise}(d), where $r\ne p$. Then $K$ is a fixer of $G$ if and only if $K$ is $G$-conjugate to a subgroup of $L^{\rm I}$.
	\end{lem}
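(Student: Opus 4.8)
The plan is to reduce everything to computations in $\Gamma\cong\mathrm{A\Gamma L}_1(q)$ via the isomorphism $\rho$ of \eqref{e:pi}. Since we are in case~(d) of Proposition~\ref{p:PSL2_summarise}, we may take $\widehat{K}=N_G(Q)$, so that both $K$ and $L^{\rm I}$ lie inside $N_G(Q)\leqs N_{\PGammaL_2(q)}(Q)$ and we may pass to their images in $\Gamma$; note that, by the definition \eqref{e:LI} of $L^{\rm I}$ and the identification $H=C_G(\phi^{f/r})$, the torus part of $\rho(L^{\rm I})$ is contained in $\mathbb{F}_{q_0}^\times$, while $Q\leqs L^{\rm I}$ carries the full $\mathbb{F}_q^+$.

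For the ``if'' direction it suffices, by Lemma~\ref{l:pre_iff}, to show that $L^{\rm I}$ is itself a fixer, that is, that $x^G\cap H\neq\emptyset$ for every $x\in L^{\rm I}$. If $x\in\PGL_2(q)$, then $x$ lifts to an upper-triangular matrix over $\mathbb{F}_q$ whose two diagonal entries have ratio in $\mathbb{F}_{q_0}$, and the argument of Example~\ref{ex:subfield} (rational canonical form over $\mathbb{F}_{q_0}$) places a $G$-conjugate of $x$ in $H$. If $x$ involves a non-trivial field automorphism, I would follow the template of Lemmas~\ref{l:PSL2_(b)} and \ref{l:PSL2_(c)} and split on whether $\la x\ra\cap Q=1$. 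When it is, one chooses a suitable overgroup $X\leqs G$ of $\la x\ra Q$ carrying enough of the $p$-part for Lemma~\ref{l:pre_AGammaL_conju_sub} to apply inside $G$, so that $x$ is $G$-conjugate to an element whose $\rho$-image lies in $\mathbb{F}_q^\times{:}\la\phi\ra$; a norm computation, using the constraint on $|x|$ and the coprimality established in the proof of Lemma~\ref{l:PSL2_(d)_q0}, then shows this conjugate lies in $H$. When $\la x\ra\cap Q\neq1$, pick a non-trivial $y$ in that intersection; after a $Q$-conjugation we may assume $\rho(y)=(1,1)$, so that $\rho(x)\in C_\Gamma((1,1))=\mathbb{F}_q^+{:}\la\phi\ra$ and hence $\rho(x)=(a,1)\phi^i$. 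Lemma~\ref{l:pre_AGammaL_conju_subfield} --- and this is exactly where the hypothesis $r\neq p$ is used, through the surjectivity of the trace in Lemma~\ref{l:pre_AGammaL_trace}(ii) --- now conjugates $\rho(x)$ inside $\mathbb{F}_q^+{:}\mathbb{F}_q^\times$, which corresponds to $N_{\PGL_2(q)}(Q)\leqs G_0$, to $(b,1)\phi^i$ with $b\in\mathbb{F}_{q_0}$, an element of $H$.

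For the ``only if'' direction, suppose $K$ is a fixer; by Proposition~\ref{p:PSL2_summarise}(d) we may assume $K\leqs\widehat{K}=N_G(Q)$, and we must conjugate $K$ into $L^{\rm I}$. First handle $K_0=K\cap G_0$, which is a fixer of $G_0$ by Lemma~\ref{l:pre_red} and lies in $N_{G_0}(Q)=Q{:}D_0$ with $D_0\cong C_{(q-1)/(2,q-1)}$ cyclic of $p'$-order. By Schur--Zassenhaus, after conjugating $K_0$ by an element of $Q$, a $p'$-complement $C$ of $K_0\cap Q$ in $K_0$ lies in $D_0$; since $|C|=|K_0|_{p'}$ divides $(q_0-1)/(2,q-1)$ by Lemma~\ref{l:PSL2_(d)_q0}, $C$ lies in the unique subgroup $D_0\cap H$ of $D_0$ of that order, whence $K_0\leqs Q{:}(D_0\cap H)\leqs L^{\rm I}$. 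It then remains to absorb the outer part of $K$: using that $K$ normalises $K_0$ (hence $K\cap Q$), that $N_G(Q)/Q$ is soluble, and that $\Spec(K)\subseteq\Spec(H)$ together with Lemma~\ref{l:PSL2_ele_cyc_PGL} confines torus parts to $\mathbb{F}_{q_0}^\times$, I would run the dichotomy of the previous paragraph on coset representatives of $K_0$ in $K$ --- Lemma~\ref{l:pre_AGammaL_conju_sub} when the cyclic subgroup they generate meets $Q$ trivially, Lemma~\ref{l:pre_AGammaL_conju_subfield} otherwise --- to obtain a $G$-conjugate of $K$ inside $Q{:}(H\cap(D{:}\la\phi\ra))=L^{\rm I}$.

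The step I expect to be hardest is this last one: passing from element-wise conjugacy statements to a single $g\in G$ with $K^g\leqs L^{\rm I}$, and ensuring throughout that conjugations happen inside $G$ rather than merely inside $\PGammaL_2(q)$. The way round it is to carry out the reductions at the level of subgroups of $\Gamma$: once $K_0$ is pinned inside $L^{\rm I}$, one fixes $K\cap Q$ (or, when $K\cap Q=1$, a generating torus of $K_0$) as a subgroup normalised by $H\cap(D{:}\la\phi\ra)$, and observes that its normaliser in $N_G(Q)$ has a transparent triangular description in $\Gamma$, so the remaining field-automorphism cosets of $K_0$ in $K$ can all be adjusted at once. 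Controlling precisely these field-automorphism cosets is where $r\neq p$, via Lemmas~\ref{l:pre_AGammaL_trace}(ii) and \ref{l:pre_AGammaL_conju_subfield}, does the essential work.
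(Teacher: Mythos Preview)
Your ``if'' direction follows the paper's line: split on whether $\langle x\rangle\cap Q$ is trivial, invoke Lemma~\ref{l:pre_AGammaL_conju_sub} in the first case and Lemma~\ref{l:pre_AGammaL_conju_subfield} (which is where $r\ne p$ enters) in the second. The paper is marginally cleaner in that it applies Lemma~\ref{l:pre_AGammaL_conju_sub} with $X=L^{\rm I}$ directly, landing in $L^{\rm I}\cap(D{:}\langle\phi\rangle)=(H\cap D){:}\langle\phi\rangle\leqs H$ without any auxiliary norm computation, but the substance is the same.

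The ``only if'' direction has a genuine gap, precisely at the step you flag as hardest. The paper resolves it with two devices you do not use. First, it works with $K_1:=K\cap\PGL_2(q)$ rather than $K_0$: since $G/\PGL_2(q)$ embeds in $\langle\phi\rangle$ and is cyclic, one has $K=\langle K_1,x\rangle$ for a \emph{single} $x$, so only one conjugation is required. Second, and crucially, $K_1\leqs Q{:}(H\cap D)$ holds \emph{automatically}, with no preliminary conjugation: the image of $K_1$ in the cyclic group $D$ has order dividing $q_0-1$ by the fixer condition and the argument of Lemma~\ref{l:PSL2_(d)_q0}, and $Q{:}(H\cap D)$ is \emph{normal} in $N_G(Q)$ because $H\cap D$ is characteristic in $D$. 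Hence any $g\in N_G(Q)$ with $x^g\in L^{\rm I}$ gives $K^g=\langle K_1^g,x^g\rangle\leqs L^{\rm I}$ at once. Your plan to ``run the dichotomy on coset representatives'' and then ``adjust at once'' does not furnish such a $g$; in particular, $K\cap Q$ need not be normalised by $H\cap(D{:}\langle\phi\rangle)$, so the proposed workaround does not go through as stated. There is also a sub-case your outline misses: after conjugating $x$ into $D{:}\langle\phi\rangle$ via Lemma~\ref{l:pre_AGammaL_conju_sub}, one must separate $C:=\langle x\rangle\cap D=1$ (handled by Hilbert~90, conjugating $x$ to a power of $\phi$) from $C\ne1$, where the paper uses the fixer hypothesis on $x$ itself together with the factorisation $N_G(C)=D\cdot N_H(C)$ to produce the required $N_G(Q)$-conjugation into $L^{\rm I}$; Lemma~\ref{l:pre_AGammaL_conju_subfield} is not the right tool here, as it concerns elements of shape $(a,1)\phi^i$, not $(0,\lambda)\phi^i$.
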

	
	\begin{proof}
		It suffices to consider the case where $G = \PGammaL_2(q)$.
		We first show that $L^{\rm I}$ is indeed a fixer of $G$. Note that $(H\cap D){:}\la\phi\ra\leqs H$, so by Lemma \ref{l:pre_AGammaL_conju_sub}, if $x\in L^{\rm I}$ and $\la x\ra \cap Q = 1$ then $x^G\cap H\ne\emptyset$. Thus, we may assume that $x\in L^{\rm I}$ and $|\la x\ra\cap Q| = p$. Let $m$ be an integer such that $1\ne x^m\in Q$, and write $y = x^m$. Then by considering a suitable $N_G(Q)$-conjugate of $x$, we may assume $y\in C_G(\phi)$ and so $x\in C_{N_G(Q)}(y) \leqs Q{:}\la\phi\ra$. That is, $\rho(x) = (a,1)\phi^i$ for some integer $i$ and some $a\in\mathbb{F}_q$ (recall that the isomorphism $\rho$ is defined in \eqref{e:pi}). Let $s = |\phi^i|$ and write $q = q_1^s$.
		Note that $r\ne p$, so by Lemma \ref{l:pre_AGammaL_conju_subfield}, there exists $b\in\mathbb{F}_{q_0}$ such that $(a,1)\phi^i$ is conjugate to $(b,1)\phi^i$ by an element in $\Gamma$. This implies that $x$ is conjugate to $\rho^{-1}((b,1)\phi^i)$ by an element in $N_G(Q)$, and so $x$ is $G$-conjugate to an element in $H$ by noting that $\rho^{-1}((b,1)\phi^i)\in H$.
		
		To complete the proof, we show that any fixer $K\leqs N_G(Q)$ is $G$-conjugate to a subgroup of $L^{\rm I}$. Let $K_1:=K\cap \PGL_2(q)$. Then $K = \la K_1,x\ra$ for some $x\in K$. Note that $K_1\leqs Q{:}(H\cap D)\normeq N_G(Q)$. Thus, it suffices to show that $x^g\in L^{\rm I}$ for some $g\in N_G(Q)$, noting that $K_1^g\leqs Q{:}(H\cap D)\leqs L^{\rm I}$. If $|\la x\ra\cap Q| = p$, then for $y\in\la x\ra\cap Q$ of order $p$, $y^g\in C_Q(\phi)$ for some $g\in D$, and hence
		\begin{equation*}
		x^g\in C_{N_G(Q)}(y^g) \leqs Q{:}\la\phi\ra \leqs L^{\rm I}.
		\end{equation*}
		Now assume $|\la x\ra\cap Q| = 1$, so $x$ is $N_G(Q)$-conjugate to an element in $D{:}\la\phi\ra$ by Lemma \ref{l:pre_AGammaL_conju_sub}. Thus, we may assume $\rho(x) = (0,\lambda)\phi^i$ for some integer $i\in\{0,\dots,f-1\}$ and some $\lambda\in\mathbb{F}_q^\times$. If $C:=\la x\ra\cap D = 1$, then $i\ne 0$ and $x^s = 1$, where $s = |\phi^i|$, so
		\begin{equation*}
		(0,1) = \rho(x)^s = ((0,\lambda)\phi^i)^s = (0,\lambda\lambda^{\phi^i}\cdots\lambda^{\phi^{i(s-1)}}) = (0,\mathbf{N}_{\mathbb{F}_q/\mathbb{F}_{q_1}}(\lambda)),
		\end{equation*}
		where $q = q_1^s$ (recall that $\mathbf{N}_{\mathbb{F}_q/\mathbb{F}_{q_1}}(\lambda)$ is the norm of $\lambda$ in the field extension $\mathbb{F}_q/\mathbb{F}_{q_1}$). By Hilbert Theorem 90, there exists $\mu\in\mathbb{F}_q^\times$ such that $\lambda = \mu^{\phi^{-i}}\mu^{-1}$, and thus
		\begin{equation*}
		(\phi^i)^{(0,\mu)} = (0,\mu^{-1})\phi^i(0,\mu)\phi^{-i}\phi^i = (0,\mu^{\phi^{-i}}\mu^{-1})\phi^i = (0,\lambda)\phi^i = \rho(x).
		\end{equation*}
		This implies that $x$ is $D$-conjugate to $\phi^i\in L^{\rm I}$.
		
		Finally, assume $x\in D{:}\la\phi\ra$ and $|C|\ne 1$. Since $K$ is a fixer of $G$ and $x\in K$, there exists $g\in G$ such that $x^g\in H$, so $x^g\in N_G(C^g)$. Note that any two subgroups of $H\cap \PGL_2(q)$ isomorphic to $C_m$ for $m > 2$ are $H$-conjugate. In fact, if $C\cong C_2$, then it lies in $H_0$ if and only if $C^g\leqs H_0$, so they are also $H$-conjugate. This implies that $C = C^{gh_1}$ for some $h_1\in H$, which yields $x^{gh_1}\in N_H(C)$ and $gh_1\in N_G(C)$. Observe that $N_G(C) = D{:}\la z,\phi\ra$ is a maximal subgroup of $G$ of type $\GL_1(q)\wr S_2$, where
		\begin{equation*}
		z =
		\begin{pmatrix}
		0&1\\
		-1&0
		\end{pmatrix}
		Z(\GL_2(q)),
		\end{equation*}
		and hence $N_G(C) = DN_H(C)$ since $z,\phi\in H$. This implies that there exists $h_2\in H$ such that $gh_1h_2\in D\leqs N_G(Q)$ and $x^{gh_1h_2}\in N_G(Q)\cap H \leqs L^{\rm I}$. This completes the proof.
	\end{proof}
	
	Next, we turn to the case where $r = p$ and $(p,|K:K_0|) = 1$. Here we define
	\begin{equation}
	\label{e:LII}
	L^{{\rm II}}:=Q{:}(H\cap (D{:}\la\phi\ra_{p'})) \cong (C_p^f{:}C_{\frac{q_0-1}{(2,q-1)}}).O_{p'}.
	\end{equation}

	\begin{rem}
		\label{r:LII_order}
		We remark that if $r = p$ then $|L^{\rm II}| > |H|$ for all $q > 61$. To see this, first note that
		\begin{equation*}
		\frac{|H|}{|L^{\rm II}|} = \frac{q_0(q_0+1)}{q}\cdot |O_p|\leqs \frac{q_0(q_0+1)}{q}\cdot f\leqs \frac{q_0(q_0+1)}{q}\cdot \log_2q.
		\end{equation*}
		Thus, if $p \geqs 5$ then we have
		\begin{equation*}
		\frac{|H|}{|L^{\rm II}|} \leqs \frac{2\log_2q}{q^{3/5}},
		\end{equation*}
		which is less than $1$ for all $q > 61$. Now assume $p = 3$, so $q_0 = q^{1/3}$ is a $3$-power and
		\begin{equation*}
		\frac{|H|}{|L^{\rm II}|} \leqs \frac{3(q_0+1)}{q_0^2}\cdot \log_3 q_0.
		\end{equation*}
		The latter term is less than $1$ if $q_0\geqs 27$, and one can also check that $|L^{\rm II}| > |H|$ if $q_0 = 9$. Note that if $q_0 = 3$ then $q = 27$ and $|L^{\rm II}| < |H|$.
	\end{rem}

	\begin{lem}
		\label{l:PSL2_d_II}
		Assume Proposition \ref{p:PSL2_summarise}(d), where $r = p$ and $(p,|K:K_0|) = 1$. Then $K$ is a fixer of $G$ if and only if $K$ is $G$-conjugate to a subgroup of $L^{\rm II}$.
	\end{lem}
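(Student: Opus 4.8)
The plan is to follow the template of Lemma~\ref{l:PSL2_d_I}, adapting it to the situation $r=p$. As there, using Lemma~\ref{l:pre_red} together with the fact that $L^{\rm II}$, $H$ and $Q$ all behave well under the inclusion $G\leqs\PGammaL_2(q)$, I would first reduce to the case $G=\PGammaL_2(q)$, and then work inside $N_G(Q)\cong\mathrm{A\Gamma L}_1(q)$ via the isomorphism $\rho$ of \eqref{e:pi}, recalling that $H=C_G(\phi^{f/p})$ contains $\la\phi\ra$ and that $D\cap H\cong C_{q_0-1}$. The structural observation that makes the $r=p$ case go through much as the $r\ne p$ case is the following: every element we shall meet --- whether lying in $L^{\rm II}$ or in a fixer $K$ with $(p,|K:K_0|)=1$ --- has image of $p'$-order in $G/G_0$, so the field-automorphism part $\phi^i$ that appears always has order $s$ coprime to $p=r$; hence Lemma~\ref{l:pre_AGammaL_conju_subfield} remains applicable and the vanishing phenomenon of Lemma~\ref{l:pre_AGammaL_trace}(i) is never triggered here (it is what one needs for the complementary case $p\mid|K:K_0|$).

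For the ``if'' direction I would show that $L^{\rm II}$ is a fixer of $G$. Since $L^{\rm II}=Q{:}(H\cap(D{:}\la\phi\ra_{p'}))$ is soluble with $Q$ as its unique Sylow $p$-subgroup and a $p'$-complement, any $x\in L^{\rm II}$ with $\la x\ra\cap Q=1$ is a $p'$-element, hence $L^{\rm II}$-conjugate into $H\cap(D{:}\la\phi\ra_{p'})\leqs H$ by Hall's theorem. If instead $|\la x\ra\cap Q|=p$, I would conjugate a generator of $\la x\ra\cap Q$ into $C_Q(\phi)$ by an element of $N_G(Q)$, so that $\rho(x)$ becomes $(a,1)\phi^i$ for some $a\in\mathbb{F}_q$; here $s:=|\phi^i|$ equals the order of $xG_0$, which is coprime to $p$ since $L^{\rm II}$ induces a group of $p'$-order on $G/G_0$, so Lemma~\ref{l:pre_AGammaL_conju_subfield} produces $b\in\mathbb{F}_{q_0}$ with $(a,1)\phi^i$ conjugate to $(b,1)\phi^i$ in $\Gamma$, and $\rho^{-1}((b,1)\phi^i)\in H$. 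Thus every element of $L^{\rm II}$ is $G$-conjugate into $H$, so $L^{\rm II}$ --- and hence every $G$-conjugate of a subgroup of it --- is a fixer by Lemma~\ref{l:pre_iff}.

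For the ``only if'' direction, let $K\leqs N_G(Q)$ be a fixer and write $K_1:=K\cap\PGL_2(q)$, so that $K=\la K_1,x\ra$ for some $x\in K$ as $K/K_1$ is cyclic. By Lemma~\ref{l:PSL2_(d)_q0} the Hall $p'$-subgroup of $K_1$ is cyclic of order dividing $q_0-1$, so after an $N_G(Q)$-conjugation I may assume $K_1\leqs Q{:}(D\cap H)$, which is normal in $N_G(Q)$; it then suffices to find $g\in N_G(Q)$ with $x^g\in L^{\rm II}$. If $|\la x\ra\cap Q|=p$, centralising a generator of $\la x\ra\cap Q$ as above gives $\rho(x^g)=(a,1)\phi^i$, and now the hypothesis $(p,|K:K_0|)=1$ forces $s=|\phi^i|=|xG_0|$ to be coprime to $p$, whence $\phi^i\in(G\cap\la\phi\ra)_{p'}\leqs H$ and $x^g\in Q{:}(G\cap\la\phi\ra)_{p'}\leqs L^{\rm II}$. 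If $\la x\ra\cap Q=1$, then $x$ is a $p'$-element (an order-$p$ element of $\la x\ra$ would lie in the Sylow $p$-subgroup $Q$ of $K$), so by Hall's theorem I may assume $\la x\ra\leqs G\cap(D{:}\la\phi\ra_{p'})$; writing $C:=\la x\ra\cap D$, if $C=1$ a Hilbert~$90$ computation as in Lemma~\ref{l:PSL2_d_I} conjugates $x$ to some $\phi^i\in L^{\rm II}$, while if $C\ne1$ I would use that $K$ is a fixer (so $x^h\in H$ for some $h\in G$) together with the $H$-conjugacy of the cyclic subgroups of $H\cap\PGL_2(q)$ and the factorisation $N_G(C)=D\cdot N_H(C)$ to move $x$ into $N_G(Q)\cap H\leqs L^{\rm II}$.

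I expect the main obstacle to be the last sub-case of the ``only if'' direction --- the ``torus-type'' elements $x$ with $\la x\ra\cap Q=1$ and $\la x\ra\cap D\ne1$ --- where no direct normal-form computation is available and one must combine the fixer hypothesis with the conjugacy and fusion properties of cyclic subgroups of the subfield subgroup $H$ and of the normaliser $N_G(C)$ of such a torus, essentially reproducing the corresponding argument of Lemma~\ref{l:PSL2_d_I}. A secondary nuisance is the bookkeeping between $\PGL_2(q)$ and $\PSL_2(q)$ (deciding which diagonal and torus elements actually lie in $G$), which the reduction to $G=\PGammaL_2(q)$ together with Lemma~\ref{l:pre_red} is designed to neutralise.
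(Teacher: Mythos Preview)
Your approach is essentially the same as the paper's, and the argument is almost complete, but there is one genuine slip in the final sub-case of the ``only if'' direction. You write that the torus-type element is moved ``into $N_G(Q)\cap H\leqs L^{\rm II}$''. This inclusion is false: since $r=p$ divides $f$, we have $\la\phi\ra_p\leqs H\cap N_G(Q)$, whereas $L^{\rm II}=Q{:}(H\cap(D{:}\la\phi\ra_{p'}))$ only contains the $p'$-part $\la\phi\ra_{p'}$. So $N_G(Q)\cap H$ is strictly larger than what $L^{\rm II}$ can absorb.

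The repair is already implicit in your own set-up. You have placed $x$ in $D{:}\la\phi\ra_{p'}$ before running the Lemma~\ref{l:PSL2_d_I} argument, and the conjugating element produced there lies in $D$, which normalises $D{:}\la\phi\ra_{p'}$. Hence the conjugate of $x$ lands in $(D{:}\la\phi\ra_{p'})\cap H$, and this \emph{is} contained in $L^{\rm II}$. The paper makes exactly this point by introducing the auxiliary subgroup $R:=N_{\PGL_2(q)}(Q).\la\phi\ra_{p'}$, noting that $K\leqs R$ (because $|K:K_1|$ divides $|K:K_0|$ and is therefore prime to $p$), and then landing in $R\cap H\leqs L^{\rm II}$ rather than in $N_G(Q)\cap H$. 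Once you track this $p'$-constraint through the last step, your proof coincides with the paper's.
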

	
	\begin{proof}
		Once again, we only need to consider the case where $G = \PGammaL_2(q)$, and we argue similarly as in the proof of Lemma \ref{l:PSL2_d_I}. First, we show that $L^{\rm II}$ is a fixer of $G$. Note that if $x\in L^{\rm II}$ and $\la x\ra\cap Q = 1$ then $|x|$ is coprime to $p$ and hence $x$ is $N_G(Q)$-conjugate to an element in a Hall $p'$-subgroup $H\cap (D{:}\la\phi\ra_{p'})\leqs H$ of $L^{\rm{II}}$, so $x^G\cap H\ne \emptyset$. Now assume $x\in L^{\rm II}$ and $|\la x\ra\cap Q| = p$, so $y:=x^m\in Q$ for some integer $m < |x|$. It follows that there exists $z\in N_G(Q)$ such that $y^z\in C_Q(\phi)$ and $x^z\in C_{N_G(Q)}(y^z)\leqs Q{:}\la\phi\ra$. Thus, $\rho(x^z) = (a,1)\phi^i$ for some integer $i$ and some element $a\in\mathbb{F}_q$. Note that $x\in L^{\rm II}$ implies that $r\nmid |\phi^i|$, so we can apply Lemma \ref{l:pre_AGammaL_conju_subfield} once again, which shows that $x^z$ is $N_G(Q)$-conjugate to $\rho^{-1}((b,1)\phi^i)$ for some $b\in\mathbb{F}_{q_0}$, noting that $\rho^{-1}((b,1)\phi^i)\in H$.
		
		Now let $K\leqs N_G(Q)$ be a fixer of $G$ such that $(p,|K:K_0|) = 1$, and we will show that $K$ is $G$-conjugate to a subgroup of $L^{\rm II}$. Let
		\begin{equation*}
		R:=N_{\PGL_2(q)}(Q).\la\phi\ra_{p'}= N_G(Q)\cap (\PGL_2(q).\la\phi\ra_{p'}).
		\end{equation*}
		Then $K\leqs R$ and $K = \la K_1,x\ra$ for some $x\in K$, where $K_1 := K\cap \PGL_2(q)$, noting that $K_1\leqs Q{:}(H\cap D)\normeq N_G(Q)$. Since $K_1^g\leqs Q{:}(H\cap D)\leqs L^{\rm II}$ for any $g\in N_G(Q)$, it suffices to show that $x$ is $N_G(Q)$-conjugate to an element of $L^{\rm II}$. If $|\la x\ra\cap Q| = p$ then for any $y\in\la x\ra\cap Q$ of order $p$ we have $y^g\in C_Q(\phi)$ for some $g\in D$, which yields
		\begin{equation*}
		x^g\in C_R(y^g)\leqs Q{:}\la\phi\ra_{p'}\leqs L^{\rm II}.
		\end{equation*}
		This allows us to assume $|\la x\ra\cap Q| = 1$, so $|x|$ is coprime to $p$, and hence $x$ is $N_G(Q)$-conjugate to an element in a Hall-$p'$ subgroup $D{:}\la\phi\ra_{p'}$ of $N_G(Q)$. One can use the same arguments as in the proof of Lemma \ref{l:PSL2_d_I} to show that $x$ is $D$-conjugate to an element in $R\cap H\leqs L^{\rm II}$, so we omit the details.
	\end{proof}
	
	\begin{rem}
		\label{r:LI_LII}
		We remark that not every subgroup of $G$ of the form $(C_p^f{:}C_{\frac{q_0-1}{(2,q-1)}}).O$ is isomorphic to $L^{\rm{I}}$. That is, there might exist a subgroup $K$ of $G$ such that $K_0 = L_0^{\rm I}$ and $K/K_0 = L^{\rm I}/L^{\rm I}_0$, while $K\not\cong L^{\rm I}$. Similarly, not every subgroup of $G$ of the form $(C_p^f{:}C_{\frac{q_0-1}{(2,q-1)}}).O_{p'}$ is isomorphic to $L^{\rm{II}}$. To see this, assume $G = \PGammaL_2(q)$, $p$ is odd and $r$ is a prime divisor of $p-1$. Then $|\phi| = r$, $H\cong \PGL_2(p)\times C_r$ and we have $L^{\rm I} = L^{\rm II} = Q{:}((D\cap H)\times \la\phi\ra)$. Let $x$ be a generator of $D$, and define $K = \la Q{:}(D\cap H),x\phi\ra$. Then $K$ is isomorphic to a group of the form $(C_p^f{:}C_{\frac{q_0-1}{(2,q-1)}}).O$. Note that
		\begin{equation*}
		(x\phi)^r = xx^{\phi^{-1}}x^{\phi^{-2}}\cdots x^{\phi^{-(r-1)}} = x^{(p^r-1)/(p-1)},
		\end{equation*}
		and so $|x\phi| = r(p-1)$. However, $L^{\rm I} = L^{\rm II}$ does not contain an element of order $r(p-1)$, whence $L^{\rm I}\not\cong K$.
	\end{rem}
	
	Finally, let us consider the case where $r = p$ divides $|K:K_0|$. Define
	\begin{equation*}
	M:=\left\{xZ(\GL_2(q)):x =
	\begin{pmatrix}
	1&a\\
	&1
	\end{pmatrix}
	\mbox{ for some }a\in\mathbb{F}_q\mbox{ with }\Tr_{\mathbb{F}_q/\mathbb{F}_{q_0}}(a) = 0
	\right\}\leqs Q,
	\end{equation*}
	noting that $M\cong C_p^{f-f/p}$. Let
	\begin{equation}
	\label{e:LIII}
	L^{\rm III} := M{:}(H\cap (D{:}\la\phi\ra))\cong C_p^{f-f/p}{:}C_{\frac{q_0-1}{(2,q-1)}}.O.
	\end{equation}
	
	\begin{rem}
		\label{r:LIII_order}
		Note that
		\begin{equation*}
		\frac{|H|}{|L^{\rm III}|} = \frac{q_0^2(q_0+1)}{q}
		\end{equation*}
		if $r = p$. Thus, it is easy to see that $|L^{\rm III}| > |H|$ if and only if $p > 3$.
	\end{rem}
	
	\begin{lem}
		\label{l:PSL2_d_III}
		Assume Proposition \ref{p:PSL2_summarise}(d), where $r = p$ divides $|K:K_0|$. Then $K$ is a fixer of $G$ if and only if $K$ is $G$-conjugate to a subgroup of $L^{\rm III}$.
	\end{lem}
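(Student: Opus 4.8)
The plan is to follow the same two-part strategy used in Lemmas \ref{l:PSL2_d_I} and \ref{l:PSL2_d_II}: first show $L^{\rm III}$ is a fixer of $G$, then show every fixer $K$ with $p\mid|K:K_0|$ contained in $N_G(Q)$ is $G$-conjugate into $L^{\rm III}$. As before we may reduce to $G=\PGammaL_2(q)$ via Lemma \ref{l:pre_red}, and since $\widehat K$ has type $P_1$ (Proposition \ref{p:PSL2_summarise}(d)), we may assume $K\leqs N_G(Q)$, so $\rho(K)\leqs\Gamma$ and the machinery of Section \ref{s:AGammaL} applies. The key structural point is that $M$ is precisely the kernel of $\Tr_{\mathbb{F}_q/\mathbb{F}_{q_0}}$ on $Q\cong\mathbb{F}_q^+$, and $M$ is normalised by $D{:}\la\phi\ra$ because both the field automorphisms and the torus action permute the trace-zero elements among themselves.

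\textbf{$L^{\rm III}$ is a fixer.} Let $x\in L^{\rm III}$. If $\la x\ra\cap Q=1$, then by Lemma \ref{l:pre_AGammaL_conju_sub}, $x$ is $N_G(Q)$-conjugate into $(H\cap D){:}\la\phi\ra\leqs H$, so $x^G\cap H\neq\emptyset$. Otherwise $|\la x\ra\cap Q|=p$; take $y\in\la x\ra\cap Q$ of order $p$. Conjugating by a suitable element of $N_G(Q)$ we may assume $y\in C_G(\phi)$, so $\rho(x)=(a,1)\phi^i$ for some $a\in\mathbb{F}_q$ and some $i$, and since $x\in L^{\rm III}$ the relevant power $y=x^m$ has $\rho$-image lying in the image of $M$, i.e.\ $a$ (up to the torus scaling used to normalise $y$) has trace zero into $\mathbb{F}_{q_0}$. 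Put $s=|\phi^i|$ and $q=q_1^s$. The crucial case is $r=p\mid s$: then $\mathbb{F}_{q_1}\subseteq\mathbb{F}_{q_0}$, so Lemma \ref{l:pre_AGammaL_trace}(i) gives $\Tr_{\mathbb{F}_q/\mathbb{F}_{q_1}}(a)=0$ (using $\Tr_{\mathbb{F}_q/\mathbb{F}_{q_0}}(a)=0$ and transitivity of trace), whence $\rho(x)=(a,1)\phi^i$ is $\mathbb{F}_q^+$-conjugate to $\phi^i\in H$ by Lemma \ref{l:pre_AGammaL_conju}(i). If instead $r\nmid s$, then Lemma \ref{l:pre_AGammaL_conju_subfield} applies directly to move $(a,1)\phi^i$ to $(b,1)\phi^i$ with $b\in\mathbb{F}_{q_0}$, hence into $H$. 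So $L^{\rm III}$ is a fixer.

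\textbf{Every such fixer lies in a conjugate of $L^{\rm III}$.} Let $K\leqs N_G(Q)$ be a fixer with $p\mid|K:K_0|$, and write $K_1=K\cap\PGL_2(q)$, so $K_1\leqs Q{:}(H\cap D)$ and $K=\la K_1,x\ra$ for some $x$ with $p\mid|G_0x^{\pm}|$ in the quotient; more carefully, choose $x\in K$ whose image generates a Sylow-$p$-relevant part so that we may take $\rho(x)=(a,1)\phi^i$ with $p\mid|\phi^i|$ after conjugating the unipotent part into $C_G(\phi)$ as above. The point is to show $a$ can be taken trace-zero into $\mathbb{F}_{q_0}$: since $K$ is a fixer, $x$ is $G$-conjugate into $H$, and $H\cong\PGL_2(q_0).O$ contains no element of the relevant order unless the $s$-th power of $\rho(x)$ — which equals $(\Tr_{\mathbb{F}_q/\mathbb{F}_{q_1}}(a),1)$ — behaves compatibly; combined with Lemma \ref{l:pre_AGammaL_conju}(iii) and the constraint $p\mid s$, this forces $\Tr_{\mathbb{F}_q/\mathbb{F}_{q_1}}(a)=0$, and then because $K_0=L_0^{\rm III}$-type forces the unipotent radical of $K$ to lie in (a conjugate of) $M$ rather than all of $Q$ — here one uses that a Sylow $p$-subgroup of $\widehat K_0=C_p^f{:}C_{(q-1)/2}$ meeting $H$ in a Sylow $p$-subgroup of $H_0$ has order $q_0^{f/p}$, matching $|M|$. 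Finally, handle the cyclic/torus part $\la x\ra\cap Q=1$ exactly as in Lemma \ref{l:PSL2_d_I} (via Hilbert 90 and the observation that $N_G(C)=DN_H(C)$ for $C=\la x\ra\cap D$), concluding that $x$, and hence $K$, is $N_G(Q)$-conjugate into $L^{\rm III}=M{:}(H\cap(D{:}\la\phi\ra))$.

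\textbf{Main obstacle.} The delicate point is the converse direction: pinning down that the unipotent part of a fixer $K$ with $p\mid|K:K_0|$ is forced into a conjugate of $M$ (not merely into $Q$), i.e.\ extracting the trace-zero condition $\Tr_{\mathbb{F}_q/\mathbb{F}_{q_0}}(a)=0$ from the bare hypothesis that every element of $K$ is $G$-conjugate into $H$. This requires carefully tracking how the $\mathbb{F}_q^\times$-scaling built into $\rho$ interacts with the trace — a nonzero scalar multiplies the trace by that scalar, so "trace zero" is well-defined on $Q$-elements up to the $D$-action — together with the element-order obstruction of Lemma \ref{l:pre_AGammaL_conju}(iii) applied with $\mathbb{F}_{q_1}$ in place of $\mathbb{F}_{q_0}$. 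Everything else is a routine adaptation of the proofs of Lemmas \ref{l:PSL2_d_I} and \ref{l:PSL2_d_II}.
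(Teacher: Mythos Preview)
Your plan for the forward direction (that $L^{\rm III}$ is a fixer) is roughly aligned with the paper, but your case split by $\la x\ra\cap Q$ creates a gap. When $\la x\ra\cap Q=1$, your appeal to Lemma~\ref{l:pre_AGammaL_conju_sub} requires $\mathbb{F}_q^+{:}\la\phi\ra_p\leqs X$, which fails for $X=L^{\rm III}$ since $L^{\rm III}\cap Q=M\ne Q$; applying it with $X=N_G(Q)$ only lands $x$ in $D{:}\la\phi\ra$, not in $(H\cap D){:}\la\phi\ra$. The paper avoids this by instead splitting on whether $x\in L^{\rm II}$: if so, Lemma~\ref{l:PSL2_d_II} applies; if not, one shows $\la x\ra_p\not\leqs M$, conjugates $\la x\ra_p$ into the Sylow $p$-subgroup $M{:}\la\phi\ra_p$ of $L^{\rm III}$, and then the generator $y$ satisfies $\rho(y)=(a,1)\phi^i$ with $\Tr_{\mathbb{F}_q/\mathbb{F}_{q_0}}(a)=0$ \emph{and} $p\mid|\phi^i|$, so $y$ is $Q$-conjugate to $\phi^i$ and $x$ lands in $C_G(\phi^i)\leqs C_G(\phi^{f/p})=H$.

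The converse direction has a more serious gap. Your sentence ``$K_0=L_0^{\rm III}$-type forces the unipotent radical of $K$ to lie in (a conjugate of) $M$'' is circular: nothing in the hypotheses tells you the shape of $K_0$, only that $K$ is a fixer with $p\mid|K:K_0|$. The paper's argument here is substantially more delicate and proceeds in three steps you do not capture. First, one shows by contradiction that $\la x\ra\cap Q=1$ is \emph{forced}: if not, one conjugates $y$ (the $p$-part of $x$) into $(Q\cap H){:}\la\phi\ra_p$ using that $K$ is a fixer, then into $\la\phi^i\ra$, yielding $\la x\ra\cap Q\leqs\la\phi^i\ra\cap G_0=1$, a contradiction. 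Second, after conjugating so that $\phi^{f/p}\in K$, one takes an arbitrary $w\in K\cap Q$ and observes that $w\phi^{f/p}\in K$ must be $G$-conjugate into $H\cap(\PGL_2(q).\la\phi^{f/p}\ra)\cong\PGL_2(q_0)\times\la\phi^{f/p}\ra$, forcing $|w\phi^{f/p}|=p$; Lemma~\ref{l:pre_AGammaL_conju}(iii) then gives $\Tr_{\mathbb{F}_q/\mathbb{F}_{q_0}}(w)=0$, i.e.\ $w\in M$. This is the mechanism that puts $K\cap Q$ inside $M$, and it is not a size argument. Third, a final conjugation by an element of $Q$ is needed to simultaneously place $K_1$ and $x$ inside $L^{\rm III}$; this uses a commutator computation exploiting that $C_Q(C)=1$ for the relevant Hall $p'$-subgroup $C$ of $K_1$, and cannot be waved through by analogy with Lemma~\ref{l:PSL2_d_I}.
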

	
	\begin{proof}
		We first show that $L^{\rm III}$ is a fixer of $G$, and again, we may assume $G = \PGammaL_2(q)$. Let $x\in L^{\rm III}$. If $x\in L^{\rm II}$, then by arguing as in the proof of Lemma \ref{l:PSL2_d_II}, we see that $x$ is $N_G(Q)$-conjugate to an element  in $H$. Thus, we may assume $x\notin L^{\rm II}$. Suppose $\la x\ra_p\leqs M$. Note that $\la x\ra_{p'}$ lies in a Hall $p'$-subgroup of $C_{L^{\rm III}}(\la x\ra_p)$, which is contained in a Hall $p'$-subgroup of $L^{\rm II}$. Thus, $\la x\ra = \la x\ra_p\la x\ra_{p'}\leqs L^{\rm II}$, a contradiction.
		
		Hence, it suffices to consider the case where $\la x\ra_p$ is not a subgroup of $M$.
		Since $M{:}\la \phi\ra_p$ is a Sylow $p$-subgroup of $L^{\rm III}$, we may also assume $\la x\ra_p\leqs M{:}\la \phi\ra_p$. Let $y$ be a generator of $\la x\ra_p$. Then $\rho(y) = (a,1)\phi^{i}$ for some $a\in\mathbb{F}_q$ with $\Tr_{\mathbb{F}_q/\mathbb{F}_{q_0}}(a) = 0$, and we see that $p\mid |\phi^i|$ since $y\in M{:}\la \phi\ra_p$ and $y\notin M$. Let $s = |\phi^i|$ and $q_1^s = q$. Then $\Tr_{\mathbb{F}_q/\mathbb{F}_{q_1}}(a) = 0$ by Lemma \ref{l:pre_AGammaL_trace}(i).
		Now Lemma \ref{l:pre_AGammaL_conju}(i) implies that $\rho(y)$ is $\mathbb{F}_q^+$-conjugate to $(0,1)\phi^i$, and thus $y$ is $Q$-conjugate to $\phi^i$. It follows that $x$ is $Q$-conjugate to an element in $C_G(\phi^i)\leqs C_G(\phi^{f/p}) = H$. We conclude that $L^{\rm III}$ is a fixer of $G$.
		
		To complete the proof, let $K\leqs N_G(Q)$ be a fixer of $G$, and we need to show that $K$ is $G$-conjugate to a subgroup of $L^{\rm III}$. Let $x\in K$ be such that $K=\la K_1, x \ra,$ where $K_1=K\cap \PGL_2(q)$.
		By our assumption, we have $p$ divides $|G_0x|$, which divides $|x|$. Let $y$ be a generator of the Sylow $p$-subgroup $\la x\ra_p$ of $\la x\ra$. Suppose $\la x\ra\cap Q\ne 1$. Then there exists $g_1\in D$ such that $\la x^{g_1}\ra\cap Q\leqs C_{N_G(Q)}(\phi)$, and thus
		\begin{equation*}
		y^{g_1}\in C_{N_G(Q)}(\la x^{g_1}\ra\cap Q)\leqs Q{:}\la\phi\ra.
		\end{equation*}
		In particular, we have $y^{g_1}\leqs Q{:}\la\phi\ra_p$ since $|y|$ is a $p$-power. Note that $y\in K$ and $K$ is a fixer of $G$, so there exists $g_2\in G$ such that $y^{g_1g_2}\in (Q\cap H){:}\la\phi\ra_p$. It follows that $\rho(y^{g_1g_2}) = (a,1)\phi^i$ for some $a\in\mathbb{F}_{q_0}$ with $p\mid |\phi^i|=:s$. Write $q = q_1^s$, so we have $\mathbb{F}_{q_1}\subseteq \mathbb{F}_{q_0}$ and $\Tr_{\mathbb{F}_q/\mathbb{F}_{q_1}}(a) = 0$ by Lemma \ref{l:pre_AGammaL_trace}(i). Now Lemma \ref{l:pre_AGammaL_conju}(i) implies that $\rho(y^{g_1g_2})$ is $\mathbb{F}_q^+$-conjugate to $(0,1)\phi^i$, and so $y^{g_1g_2}$ is $Q$-conjugate to $\phi^i$. Therefore, $y^{g_1g_2g_3} = \phi^i$ for some $g_3\in Q$, which yields
		\begin{equation*}
		1\ne (\la x\ra \cap Q)^{g_1g_2g_3} = (\la y\ra \cap Q)^{g_1g_2g_3} \leqs (\la y\ra\cap G_0)^{g_1g_2g_3} = \la y^{g_1g_2g_3}\ra\cap G_0 = \la\phi^i\ra\cap G_0 = 1,
		\end{equation*}
		a contradiction.
		
		This implies that $\la x\ra\cap Q = 1$, and so there exists $h_1\in N_G(Q)$ such that $y^{h_1}\in\la\phi\ra_p$ by Lemma \ref{l:pre_AGammaL_conju_sub}, noting that $\la\phi\ra_p$ is a Sylow $p$-subgroup of $H\cap (D{:}\la\phi\ra)$. Let $y^{h_1} = \phi^i$ and let $s = |\phi^i|$. Then $p\mid s$ and $\mathbb{F}_{q_1}\subseteq\mathbb{F}_{q_0}$, where $q_1^s = q$.
		It is easy to see that
		\begin{equation*}
		\rho(x^{h_1})\in \rho(C_{N_G(Q)}(y^{h_1})) = \rho(C_{N_G(Q)}(\phi^i)) = (\mathbb{F}_{q_1}^+{:}\mathbb{F}_{q_1}^\times){:}\la \phi\ra.
		\end{equation*}
		Moreover, if $z$ is a generator of $\la x\ra_{p'}$, then $\rho(z^{h_1h_2})\in \mathbb{F}_{q_1}^\times{:}\la\phi\ra_{p'}$ for some $h_2\in C_{N_G(Q)}(y^{h_1})$, and so
		\begin{equation*}
		\la x^{h_1h_2}\ra = \la y^{h_1h_2}\ra\la z^{h_1h_2}\ra\leqs (H\cap D){:}\la\phi\ra.
		\end{equation*}
		Note that $\phi^{f/p}\in\la y^{h_1h_2}\ra\leqs K^{h_1h_2}$, and for any $w\in K^{h_1h_2}\cap Q$, we have $w\phi^{f/p}\in K^{h_1h_2}$ is $G$-conjugate to an element in
		\begin{equation*}
		H\cap (\PGL_2(q).\la\phi^{f/p}\ra)\cong \PGL_2(q_0)\times \la \phi^{f/p}\ra.
		\end{equation*}
		This implies that $|w\phi^{f/p}| = p$. Now if $w\notin M$, then $\rho(w\phi^{f/p}) = (a,1)\phi^{f/p}$ for some $a\in\mathbb{F}_q$ with $\Tr_{\mathbb{F}_q/\mathbb{F}_{q_0}}(a)\ne 0$, so
		\begin{equation*}
		|w\phi^{f/p}| = |(a,1)\phi^{f/p}| \ne |(0,1)\phi^{f/p}| = p
		\end{equation*}
		by Lemma \ref{l:pre_AGammaL_conju}(iii), which gives a contradiction. It follows that $K^{h_1h_2}\cap Q\leqs M$.
		
		Therefore, conjugating by an element in $G$ if necessary, we may assume $x\in (H\cap D){:}\la \phi\ra$ and $K\cap Q\leqs M$. Once again, we write $K = \la K_1,x\ra$, where $K_1 = K\cap \PGL_2(q)$. Noting that $K_1 = (K\cap M){:}C^g$ for some $g\in Q$ and $C\leqs (D\cap H)$. Then we have
		\begin{equation*}
		C^{g^{x^{-1}}} = C^{xgx^{-1}} = C^{x^{-1}xgx^{-1}} = C^{gx^{-1}} = C^{gh} = C^{hg}
		\end{equation*}
		for some $h\in K\cap M$. This is because $x\in N_G(K_1)$ implies that $C^{gx^{-1}}\leqs K_1$, and every subgroup of $K_1$ isomorphic to $C$ is $(K\cap M)$-conjugate to $C^g$ as $C^g$ is a Hall $p'$-subgroup of $K_1$. Thus, $g^{x^{-1}}(hg)^{-1}\in C_Q(C) = 1$, which yields
		\begin{equation*}
		x^{g^{-1}} = gxg^{-1}x^{-1}x = g(g^{-1})^{x^{-1}}x = h^{-1}x\in (M{:}C){:}\la\phi\ra.
		\end{equation*}
		Finally, we observe that
		\begin{equation*}
		K_1^{g^{-1}} = (K\cap M)^{g^{-1}}{:}C^{gg^{-1}} = (K\cap M){:}C\leqs (M{:}C){:}\la \phi\ra.
		\end{equation*}
		We conclude the proof by noting that $K^{g^{-1}} = \la K_1^{g^{-1}},x^{g^{-1}}\ra\leqs (M{:}C){:}\la\phi\ra\leqs L^{\rm III}$.
	\end{proof}

	\begin{lem}
		\label{l:PSL2_d_III_rem}
		Suppose $q = q_0^p$ and let $K\leqs G$ be such that $K_0\cong C_p^{f-f/p}{:}C_{(q_0-1)/2}$ and $p$ divides $|K:K_0|$, then $K$ is $G$-conjugate to a subgroup of $L^{\rm III}$.
	\end{lem}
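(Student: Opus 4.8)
The plan is to force $K$ into a maximal parabolic subgroup, pin down the unipotent radical of $K_0$ up to conjugacy, and then control the remaining elements of $K$ through its action on $K_0$; the only substantial point is the last step, which must replace the fixer argument used in Lemma \ref{l:PSL2_d_III}. For the first step, observe that since $q$ is odd and $p\nmid(q_0-1)/2$, the subgroup $P:=O_p(K_0)\cong C_p^{f-f/p}$ is the unique Sylow $p$-subgroup of $K_0$; it is characteristic in $K_0\normeq K$, hence normal in $K$, and nontrivial as $p$ is odd. Distinct Sylow $p$-subgroups of $\PSL_2(q)$ meet trivially (\cite[Satz 8.2(c)]{H_book}), so $P$ lies in a unique Sylow $p$-subgroup $Q$ of $G_0$, and then $K\leqs N_G(Q)$ because $K$ normalises $P$. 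Since $L^{\rm III}\leqs N_G(Q)$, the task becomes to conjugate $K$ into $L^{\rm III}$ inside $N_G(Q)$; here I may assume $G=\PGammaL_2(q)$, the general case following by intersecting with $G$.

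Next I would pin down $P$ up to $N_G(Q)$-conjugacy. Write $K_0=P{:}C$ with $C\cong C_{(q_0-1)/2}$ and conjugate inside the parabolic $N_G(Q)$ so that $C\leqs D$. Then $\rho(C)$ is the subgroup of $\mathbb{F}_q^\times$ of order $(q_0-1)/2$, which is contained in $\mathbb{F}_{q_0}^\times$ and, since $q>61$ forces $(q_0-1)/2\geqs 2$, generates $\mathbb{F}_{q_0}$ over $\mathbb{F}_p$ (otherwise $(q_0-1)/2$ would divide $p^e-1\leqs\sqrt{q_0}-1$ for some proper divisor $e$ of $f/p$, which is absurd). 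As $C$ normalises $P$, the $\mathbb{F}_p$-subspace $\rho(P)\leqs\mathbb{F}_q$ is invariant under multiplication by $\mathbb{F}_p[\rho(C)]=\mathbb{F}_{q_0}$, hence is an $\mathbb{F}_{q_0}$-hyperplane of $\mathbb{F}_q$; non-degeneracy of the trace form $(x,y)\mapsto\Tr_{\mathbb{F}_q/\mathbb{F}_{q_0}}(xy)$ then gives $\rho(P)=c\,\rho(M)$ for some $c\in\mathbb{F}_q^\times$. Conjugating $K$ by the element of $D$ with $\rho$-image $(0,c^{-1})$, which centralises $C$ because $D$ is abelian, I may assume $P=M$. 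Consequently $K\leqs N_{N_G(Q)}(M)$, so $\rho(K)\leqs N_\Gamma(\rho(M))=\mathbb{F}_q^+\rtimes(\mathbb{F}_{q_0}^\times\rtimes\la\phi\ra)$, using that $\rho(M)=\ker\Tr_{\mathbb{F}_q/\mathbb{F}_{q_0}}$ is $\la\phi\ra$-invariant with multiplicative stabiliser $\mathbb{F}_{q_0}^\times$.

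Finally I would show $K\leqs L^{\rm III}$ by proving that the additive coordinate of $\rho(z)$ lies in $\rho(M)=\ker\Tr_{\mathbb{F}_q/\mathbb{F}_{q_0}}$ for every $z\in K$; given this, one routinely factors $z=m\cdot h$ with $m\in M$ and $h\in H\cap(D{:}\la\phi\ra)$ (using $H=C_G(\phi^{f/p})$), so $z\in L^{\rm III}$. Write $\rho(z)=(a,\mu)\phi^j$ with $\mu\in\mathbb{F}_{q_0}^\times$. Since $z$ normalises $K_0$ and all Hall $p'$-subgroups of $K_0=M{:}C$ are $M$-conjugate, there is $m_0\in M$ with $zm_0^{-1}\in N_K(C)$; as $\rho(m_0)$ has additive coordinate in $\ker\Tr_{\mathbb{F}_q/\mathbb{F}_{q_0}}$ and $\mu^{-1}\in\mathbb{F}_{q_0}$, replacing $z$ by $zm_0^{-1}$ alters $a$ only by an element of $\ker\Tr_{\mathbb{F}_q/\mathbb{F}_{q_0}}$, so I may assume $z\in N_K(C)$. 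Choose $\nu\in\rho(C)$ with $\nu\ne 1$, possible since $|C|\geqs 2$. A direct computation in $\Gamma$ shows that the additive coordinate of $(0,\nu)^{\rho(z)}$ equals a nonzero $\mathbb{F}_{q_0}$-scalar times $a^{\phi^j}$; but $(0,\nu)^{\rho(z)}\in\rho(C)\leqs\mathbb{F}_{q_0}^\times$ has additive coordinate $0$, so $a=0$, as required.

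The hard part is this final step. In Lemma \ref{l:PSL2_d_III} the hypothesis that $K$ is a fixer is precisely what excludes elements of $K$ whose unipotent part leaves $M$; here no such hypothesis is available, so the exclusion must be extracted from the structure of $K_0$ alone, namely from the fact that $K$ permutes the Hall $p'$-complements of $K_0$ among themselves. This works only because of the observation in the second step that the torus $C$ is large enough — which is where the bound $q>61$ is used — to force $\rho(P)$ to be an $\mathbb{F}_{q_0}$-subspace rather than merely an $\mathbb{F}_p$-subspace of the correct dimension.
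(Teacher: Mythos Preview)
Your proof is correct, but the route differs from the paper's in two substantive ways.

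For the identification of $P$ with $M$: the paper uses the hypothesis $p\mid|K:K_0|$ to produce an element $x\in K$ with $|K_0x|=p$ and then (after a conjugation) $x=g\phi^{f/p}$ with $g\in Q$. Since $\phi^{f/p}$ acts on $\mathbb{F}_q$ as a regular unipotent $\mathbb{F}_{q_0}$-linear map, it fixes a \emph{unique} $\mathbb{F}_{q_0}$-hyperplane, and $x$-invariance of $K\cap Q$ forces $K\cap Q=M$ on the nose. You instead first move $C$ into $D$, use that $\mathbb{F}_p[\rho(C)]=\mathbb{F}_{q_0}$ to see that $\rho(P)$ is an $\mathbb{F}_{q_0}$-hyperplane, and then conjugate by an element of $D$ to obtain $P=M$. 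For the endgame: the paper observes that once $K_0=M{:}(D_0\cap H)=:E$ it suffices to compute $N_G(E)=L^{\rm III}$, whereas you carry out an element-by-element coordinate computation via the action on $C$.

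What each approach buys: your argument never invokes the hypothesis $p\mid|K:K_0|$, so it actually proves the stronger statement that \emph{every} $K$ with $K_0\cong C_p^{f-f/p}{:}C_{(q_0-1)/2}$ (and $q>61$) is conjugate into $L^{\rm III}$; it also avoids the Jordan-block observation for $\phi^{f/p}$. On the other hand, after your step~2 you already have $K_0=E$, so the paper's finish $K\leqs N_G(K_0)=N_G(E)=L^{\rm III}$ would conclude more cheaply than your step~3. One small point to tighten: the reduction ``assume $G=\PGammaL_2(q)$, intersect at the end'' is not literally valid, but since every conjugation you perform can be done inside $G_0$ (in step~2, choose $c$ to be a square in $\mathbb{F}_q^\times$, which is possible because $(q-1)/(q_0-1)$ is odd so $\mathbb{F}_{q_0}^\times\not\subseteq(\mathbb{F}_q^\times)^2$), your argument goes through for arbitrary $G$ without change.
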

	
	\begin{proof}
		First note that $K_0$ has a unique normal Sylow $p$-subgroup. Thus, by considering a suitable $G$-conjugate, we may assume $K\leqs N_G(Q)$. Let $x\in K$ be such that $|x| = p^m$ for some $m$ and $|K_0x| = p$. Then $x\in G_0.\la\phi^{f/p}\ra$. It follows that $x$ is $G$-conjugate to an element in $Q{:}\la\phi^{f/p}\ra_p$ since it is a Sylow $p$-subgroup of $G_0.\la\phi^{f/p}\ra$. This allows us to further assume $x = g\phi^{f/p}\in K$ for some $g\in Q$.
		
		Next, we show that the unique Sylow $p$-subgroup $K\cap Q$ of $K_0$ is no other than $M$. Let $C$ be a Hall $p'$-subgroup of $K_0$. Then $\rho(C) = \la (a,\lambda)\ra$ for some $a\in\mathbb{F}_q^+$ and $\lambda\in\mathbb{F}_{q_0}^\times$. Moreover, observe that $\la\lambda\ra = \mathbb{F}_{q_0}^\square$ is the group of all non-zero square elements in $\mathbb{F}_{q_0}$ under multiplication. Note that $(b,1)^{(a,\lambda)} = (\lambda b,1)$ for any $b\in\mathbb{F}_q$, whence the set
		\begin{equation*}
		S = \{c\in\mathbb{F}_q : (c,1)\in\rho(K\cap Q)\}
		\end{equation*}
		is closed under the multiplication of $\mathbb{F}_{q_0}^\square$ as $\rho(K\cap Q)\normeq \rho(K)$. In fact, one can show that $S$ is a $(p-1)$-dimensional $\mathbb{F}_{q_0}$-subspace of $\mathbb{F}_q$. Moreover, note that $\rho(x) = (b,1)\phi^{f/p}$ for some $b\in\mathbb{F}_q^+$, and we have
		\begin{equation*}
		(c,1)^{\rho(x)} = (c,1)^{(b,1)\phi^{f/p}} = (c^{\phi^{f/p}},1).
		\end{equation*}
		Thus, $S$ is also invariant under $\phi^{f/p}$. Since $\phi^{f/p}$ fixes a unique $1$-dimensional $\mathbb{F}_{q_0}$-subspace of $\mathbb{F}_q$, it is a regular unipotent element on $\mathbb{F}_q$ as an $\mathbb{F}_{q_0}$-linear map. It follows that $\phi^{f/p}$ fixes a unique $(p-1)$-dimensional $\mathbb{F}_{q_0}$-subspace of $\mathbb{F}_q$. Now observe that
		\begin{equation*}
		\{c\in\mathbb{F}_q : \Tr_{\mathbb{F}_q/\mathbb{F}_{q_0}}(c) = 0\}
		\end{equation*}
		is a $(p-1)$-dimensional $\mathbb{F}_{q_0}$-subspace fixed by $\phi^{f/p}$. Therefore, we have
		\begin{equation*}
		S = \{c\in\mathbb{F}_q : \Tr_{\mathbb{F}_q/\mathbb{F}_{q_0}}(c) = 0\}
		\end{equation*}
		and we conclude that $K\cap Q = M$.
		
		Hence, we have $K_0 = M{:}C$ with $|C| = (q_0-1)/2 = |D_0\cap H|$. Then there exists $g\in Q$ such that $C^g = D_0\cap H$, so $K_0^g = M^g{:}C^g = M{:}(D_0\cap H)=:E$, whence $K^g\leqs N_G(K_0^g) = N_G(E)$. It suffices to show that $N_G(E)\leqs L^{\rm III}$, and we claim that $N_G(E) = L^{\rm III}$. To see this, first note that $Q\leqs N_{G_0}(M)$, and the order of a Hall $p'$-subgroup of $N_{G_0}(M)$ divides
		\begin{equation*}
		|D_0\cap H| = (p^{f-f/p}-1,(p^f-1)/2)
		\end{equation*}
		by Lemma \ref{l:PSL2_sub}. It follows that $N_{G_0}(M) = Q{:}(D_0\cap H)$. Observe that both $Q{:}(D_0\cap H)$ and $E$ are Frobenius groups. Thus, we have $N_{Q}(E) = M$, and hence $E$ is self-normalising in $G_0$. This implies that
		\begin{equation*}
		|N_G(E)|\leqs |N_{G_0}(E)|\cdot |G:G_0| = |E|\cdot |G:G_0|\leqs L^{\rm III}.
		\end{equation*}
		On the other hand, it is evident that $L^{\rm III}\leqs N_G(E)$, which yields $L^{\rm III} = N_G(E)$ as claimed. This shows that $K^g\leqs L^{\rm III}$ as explained above.
	\end{proof}

	Now we are in the position to prove Theorem \ref{thm:PSL2}.
	
	\begin{proof}[Proof of Theorem \ref{thm:PSL2}.]
		We refer the reader to Proposition \ref{p:PSL2_61} for the case where $q\leqs 61$, so we may assume $q > 61$. Assume $H$ is of type $2^{1+2}_-.\mathrm{O}_2^-(2)$ or $A_5$. Then Lemma \ref{l:PSL2_C6_S} gives a classification of large fixers. If $H$ is of type $P_1$ or $\GL_1(q^2)$, then $G$ has no large fixer by Lemmas \ref{l:PSL2_para} and \ref{l:PSL2_C3}.
		
		To complete the proof, assume $H$ is of type $\GL_1(q)\wr S_2$ or $\GL_2(q_0)$ with $q = q_0^r$. Then Proposition \ref{p:PSL2_summarise} shows that only four cases arise, and cases (a), (b) and (c) are handled in Lemmas \ref{l:PSL2_(a)}, \ref{l:PSL2_(b)} and \ref{l:PSL2_(c)}, respectively.
		
		Finally, assume Proposition \ref{p:PSL2_summarise}(d), so $H$ is of type $\GL_2(q^{1/r})$ with $r$ odd, and $K$ is a subgroup of a maximal parabolic subgroup of $G$. The case where $r\ne p$ is treated in Lemma \ref{l:PSL2_d_I}. If $r = p$ and $(p,|K:K_0|) = 1$ then $K$ is a fixer if and only if $K$ is $G$-conjugate to a subgroup of $L^{\rm II}$ defined as in \eqref{e:LII} (see Lemma \ref{l:PSL2_d_II}), and as remarked in Remark \ref{r:LII_order}, we see that $|L^{\rm II}| > |H|$. Now if $r = p$ divides $|K:K_0|$, then by Lemma \ref{l:PSL2_d_III}, $K$ is a fixer of $G$ if and only if $K$ is $G$-conjugate to a subgroup of $L^{\rm III}$ defined as in \eqref{e:LIII}. However, if $p = 3$ then $|H| > |L^{\rm III}|$ (see Remark \ref{r:LIII_order}), so this case is excluded in Table \ref{tab:PSL2}. The associated condition recorded in Table \ref{tab:PSL2} is sufficient in view of Lemma \ref{l:PSL2_d_III_rem}.
	\end{proof}
	
	As an immediate application, we establish Corollaries \ref{cor:PSL2_weak} and \ref{cor:rho_0_PSL2}.
	
	\begin{proof}[Proof of Corollary \ref{cor:PSL2_weak}]
		If $H$ is not of type $\GL_2(q^{1/r})$ then the statement is deduced straightforwardly by inspecting Table \ref{tab:PSL2}, so we may assume $H$ is of type $\GL_2(q^{1/r})$.
		
		First assume $r = p = 2$. By Lemma \ref{l:PSL2_(c)}, $G$ has a large fixer if and only if $|G:G_0|$ is odd. Thus, $G$ has the weak-EKR property if and only if $|G:G_0|$ is even. One can obtain the same argument when considering the strict-weak-EKR property.
		
		Now assume $r$ is odd. Here if $r\ne p$ then $L^{\rm I}$ is a large fixer, while $L^{\rm II}$ is a large fixer otherwise (see Lemmas \ref{l:PSL2_d_I} and \ref{l:PSL2_d_II}, respectively). Thus, in either case, $G$ does not have the weak-EKR property.
	\end{proof}
	
	\begin{proof}[Proof of Corollary \ref{cor:rho_0_PSL2}]
		First note that $|H|\sqrt{|\Omega|} = \sqrt{|G|\cdot |H|}$. In view of Proposition \ref{p:PSL2_61}, we may assume $q > 61$, and so one of the cases described in Proposition \ref{p:PSL2_summarise} arises.
		
		Assume Proposition \ref{p:PSL2_summarise}(a). Then
		\begin{equation*}
		\frac{|K|}{\sqrt{|G|\cdot |H|}}\leqs \frac{q(q-1)}{\sqrt{q(q^2-1)\cdot 2(q-1)}} = \frac{1}{\sqrt{2}}\cdot \sqrt{\frac{q}{q+1}} < \frac{1}{\sqrt{2}}.
		\end{equation*}
		Here we note that the bound is sharp. Indeed, if $G = G_0$ and $K$ is a maximal subgroup of $G$ of type $P_1$, then the ratio in the above inequality tends to $1/\sqrt{2}$ as $q\to \infty$. One can check the desired bound for other cases in Proposition \ref{p:PSL2_summarise} easily.
	\end{proof}

	\section{Proof of Theorem \ref{thm:Spiga}}
	
	\label{s:Spiga}
	
	Next, we turn to the proof of Theorem \ref{thm:Spiga}, and we may assume $q > 61$ since the conclusion to Theorem \ref{thm:Spiga} with $q\leqs 61$ can be verified with the aid of {\sc Magma}. Recall that $D(G,H)$ is the set of derangements of $G$ in its coset action on $[G:H]$. The following result follows from Theorem \ref{thm:PSL2}.
	
	\begin{lem}
		\label{l:psl_even_HK}
		Suppose $\soc(G) = \PSL_2(q)$, $q > 61$ and $H,K$ are maximal subgroups of $G$ with $|K|\geqs |H|$. Then $D(G,H) = D(G,K)$ if and only if one of the following holds:
		\begin{itemize}\addtolength{\itemsep}{0.2\baselineskip}
			\item[{\rm (a)}] $q$ is even, $H$ is of type $\GL_1(q)\wr S_2$, $K$ is of type $P_1$ and $[G:G_0]$ is odd;
			\item[{\rm (b)}] $G = G_0$, $q=p\equiv \pm1\pmod 8$, $H$ is of type $2_-^{1+2}.\mathrm{O}_2^-(2)$ and $K = H^\delta$;
			\item[{\rm (c)}] $G = G_0$, $q=p\equiv \pm1\pmod {10}$, $H$ is of type $A_5$ and $K = H^\delta$;
			\item[{\rm (d)}] $G = G_0$, $q = p^2$, $3\ne p\equiv\pm 3\pmod {10}$, $H$ is of type $A_5$ and $K = H^\delta$.
		\end{itemize}
	\end{lem}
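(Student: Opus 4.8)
The key reformulation is that, by Lemma \ref{l:pre_iff}, $D(G,H)\subseteq D(G,K)$ holds if and only if $\bigcup_{g\in G}K^g\subseteq\bigcup_{g\in G}H^g$, i.e. if and only if $K$ is a fixer of $G$ for the action on $[G:H]$; swapping the roles of $H$ and $K$, we see that $D(G,H)=D(G,K)$ if and only if $K$ is a fixer on $[G:H]$ \emph{and} $H$ is a fixer on $[G:K]$. We may assume $H^G\ne K^G$, since otherwise $D(G,H)=D(G,K)$ holds trivially and none of (a)--(d) is needed. As $|K|\geqs|H|$, a stable fixer $K$ on $[G:H]$ would force $K=H^g$ for some $g$, which is excluded; hence $K$ is a large fixer on $[G:H]$, and Theorem \ref{thm:PSL2} shows that $K$ is $G$-conjugate to a subgroup of one of the groups $L$ recorded in Table \ref{tab:PSL2}. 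Since $K$ is maximal, $L$ must itself be a maximal subgroup and $K$ is $G$-conjugate to $L$.

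The first task is therefore to scan the groups $L$ produced in Lemmas \ref{l:PSL2_C6_S} and \ref{l:PSL2_(a)}--\ref{l:PSL2_d_III_rem} against the list in Table \ref{tab:max_PSL2} and retain only the maximal ones. The groups $L^{\rm I}$, $L^{\rm II}$, $L^{\rm III}$ and $Q{:}(C{:}\langle\psi\rangle)$ are all proper in the parabolic $N_G(Q)$, so they are never maximal and can be discarded. What survives are three families: (1) $q$ even, $H$ of type $\GL_1(q)\wr S_2$ and $K=N_G(Q)$ of type $P_1$, which by Lemma \ref{l:PSL2_(a)} is a fixer on $[G:H]$ exactly when $\widehat K_0.\langle\psi\rangle_{2'}=N_G(Q)$, i.e. when $|G:G_0|$ is odd; (2) $q$ even, $H$ of type $\GL_1(q)\wr S_2$ and $K$ the subfield subgroup of type $\GL_2(q^{1/2})$, which is a fixer on $[G:H]$ by Lemma \ref{l:PSL2_(b)}; and (3) $G=G_0$, $K=H^\delta$ with $H$ of type $2^{1+2}_-.\mathrm{O}_2^-(2)$ or $A_5$, as in Lemma \ref{l:PSL2_C6_S}.

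It then remains to decide in each of (1)--(3) whether $H$ is also a fixer on $[G:K]$. In (3) the automorphism $\delta$ interchanges the $G$-classes of $H$ and $K$, so conjugating the inclusion $\bigcup_{g}K^g\subseteq\bigcup_{g}H^g$ by $\delta$ yields the reverse inclusion at once; combined with Lemma \ref{l:PSL2_C6_S}, these give precisely cases (b), (c), (d). In (1), $[G:K]$ is the natural action on the projective line over $\mathbb{F}_q$, and I would check directly that every element of $H$ fixes a point, hence is $G$-conjugate into $N_G(Q)$: the diagonal elements and field automorphisms (and their products) plainly fix a $1$-space; a monomial ``swap'' is, for $q$ even, a transvection and so has an eigenvector; and any element of even order is handled by Lemma \ref{l:PSL2_even_conju}, which (since $|G:G_0|$ is odd) conjugates $\langle x\rangle$ into $\langle z\rangle\times\langle\psi\rangle_{2'}\leqs C_G(z)\leqs N_G(Q)$ once $z$ is taken to be a transvection in $Q$. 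This produces case (a). Finally, (2) is eliminated: $H$ contains a cyclic subgroup of order $q-1$, whereas $q-1\notin\Spec(\PSL_2(q^{1/2}))=\Spec(K)$ for $q>61$, so $H$ is not a fixer on $[G:K]$ by Lemma \ref{l:pre_fix}(i), and $D(G,H)\ne D(G,K)$. The main obstacle is the second step --- correctly cross-referencing Theorem \ref{thm:PSL2} with the maximality data of Table \ref{tab:max_PSL2} --- together with the explicit fixer check in case (a), where the parity of $q$ (making monomial swaps into transvections) and of $|G:G_0|$ genuinely enters.
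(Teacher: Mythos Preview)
Your proposal is correct and follows essentially the same route as the paper: reduce via Theorem \ref{thm:PSL2} to the maximal large fixers, then verify or eliminate the reverse containment case by case. You are in fact more explicit than the paper in two places. First, you spell out the $\delta$-symmetry for cases (b)--(d), which the paper leaves implicit. Second, you explicitly dispose of the subfield case (your family (2)), which the paper's one-line ``In view of Theorem~\ref{thm:PSL2}'' passes over; your spectrum argument there is the right idea, though you should say $\Spec(K_0)$ rather than $\Spec(K)$, or equivalently observe that the element of order $q-1$ lies in $H_0\leqs G_0$ and so any $G$-conjugate of it must lie in $K\cap G_0=K_0$.

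For case (a) your element-by-element check is fine (and your observation that every $dz\psi^i$ has even order closes the apparent gap), but the paper's argument is slicker: for $x\in H$ of odd order, $x$ lies in a Hall $2'$-subgroup of $H$, namely $D{:}\langle\psi\rangle$, which is visibly a Hall $2'$-subgroup of $K=N_G(Q)$ as well; for $x$ of even order, Lemma~\ref{l:PSL2_even_conju} lands $x$ in $C_G(z)$, and since all involutions of $G_0$ are conjugate for $q$ even, $C_G(z)$ is $G$-conjugate into $N_G(Q)$. Your phrasing ``once $z$ is taken to be a transvection in $Q$'' is slightly loose --- one cannot move $z$ without moving $\langle\psi\rangle$ --- but the cleaner statement is that $C_G(z)\leqs N_G(P)$ for the Sylow $2$-subgroup $P$ containing $z$, and $N_G(P)$ is conjugate to $N_G(Q)$.
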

	
	\begin{proof}
		In view of Theorem \ref{thm:PSL2}, it suffices to show that $D(G,K) \subseteq  D(G,H)$ in case (a). Let $x\in H$. If $|x|$ is odd, then $x$ is in a Hall $2'$-subgroup of $H$, which is $G$-conjugate to a Hall $2'$-subgroup of $K$. Now assume $|x|$ is even. Note that $|xG_0|$ is odd, which allows us to apply Lemma \ref{l:PSL2_even_conju}, yielding that $x$ is $G$-conjugate to an element of $C_G(z)$. It follows that $x$ is $G$-conjugate to an element of $C_G(u)\leqs K$, where $u\in K$ is a unipotent element. Therefore, $H\subseteq \bigcup_{g\in G}K^g$ and so $D(G,K)\subseteq D(G,H)$.
	\end{proof}

Now we establish Theorem \ref{thm:Spiga}. We thank an anonymous referee for providing the much simpler proof than the previous version.

\begin{proof}[Proof of Theorem \ref{thm:Spiga}]
	It suffices to consider the four cases recorded in Lemma \ref{l:psl_even_HK}.
	Note that in cases (b), (c) and (d), the actions of $G$ on $[G:H]$ and $[G:K]$ are permutation equivalent, so their permutation characters coincide.
	
	With this in mind, we assume case (a) of Lemma \ref{l:psl_even_HK} for the remainder of the proof. We may identify $\Delta:=[G:K]$ with the set of $1$-subspaces of $\mathbb{F}_q^2$, and $\Omega:=[G:H]$ with the set of $2$-subsets of $\Delta$. In addition, the permutation characters of $G$ on $\Delta$ and $\Omega$ are denoted $\pi_\Delta$ and $\pi_\Omega$, respectively (that is, $\pi_\Delta(x) = |\fix_\Delta(x)|$ and $\pi_\Omega(x) = |\fix_\Omega(x)|$). Recall that our aim is to show that $\pi_\Omega-\pi_\Delta$ is a character (a linear combination of complex irreducible characters with positive integer coefficients) of $G$.
	
	Since $G$ is $2$-transitive on $\Delta,$ it follows that $\pi_{\Delta}$ is a sum of the trivial character of $G$ and another irreducible character. Note that $\pi_{\Omega}=\mathbf{1}_H \uparrow G$ is the induced character of the trivial character of $H$, and thus
	\begin{equation*}
	\la \pi_{\Omega}, \pi_{\Delta} \ra_G=\la \mathbf{1}_H \uparrow G, \pi_{\Delta} \ra_G= \la \mathbf{1}_H , \pi_{\Delta} \downarrow H \ra_H =\frac{1}{|H|}\sum_{h\in H}|\fix_\Delta(h)|,
	\end{equation*}
	which coincides with the number of orbits of $H$ acting on $\Delta$ by Burnside's lemma. On the other hand, it is not difficult to see that $H$ has precisely two orbits on $\Delta$.
	Therefore, $\la \pi_{\Omega}, \pi_{\Delta} \ra_G=2,$ and hence both irreducible constituents of $\pi_{\Delta}$ are constituents of  $\pi_{\Omega},$ yielding $\pi_{\Omega}-\pi_{\Delta}$ is a character of $G$.
\end{proof}

	\section{Sporadic groups}
	
	\label{s:spo}
	
	Now we turn to almost simple sporadic groups and we will establish Theorems \ref{thm:spo_rho_0} and \ref{thm:spo} in this section. Recall that $D(G,H)\subseteq D(G,K)$ if and only if $K$ is a fixer of $G$ in its action on $[G:H]$.
	
	\begin{prop}
		\label{p:spo}
		The conclusion to Theorem \ref{thm:spo} holds, and Corollary \ref{cor:rho_1_An} holds for sporadic groups.
	\end{prop}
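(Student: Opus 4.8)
The plan is to prove Proposition \ref{p:spo} by a computational case-by-case analysis, leaning heavily on the classification of maximal subgroups and the character-table data available for the (finitely many) almost simple sporadic groups $G$. For each such $G$ and each pair of conjugacy classes $(H^G,K^G)$ of core-free maximal subgroups with $|K|\geqs |H|$ and $H^G\ne K^G$, the task is to decide whether $K$ is a fixer of $G$ in its action on $[G:H]$, equivalently whether every element of $K$ is $G$-conjugate into $H$ (Lemma \ref{l:pre_iff}). The key observation that makes this tractable is Lemma \ref{l:pre_fix}(i): a necessary condition is $\Spec(K)\subseteq\Spec(H)$, and more precisely Lemma \ref{l:pre_iff}(vi) says that for every $G$-class $C$ meeting $K$ we need $C\cap H\ne\emptyset$. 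So the first step is to filter the (large but finite) list of candidate triples $(G,H,K)$ by the crude size bound $|K|\geqs|H|$ together with the spectrum condition $\Spec(K)\subseteq\Spec(H)$; this already eliminates the overwhelming majority of pairs.

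For the survivors, I would use the permutation character data: the fixed-point numbers of each $G$-class on $[G:H]$ (the values of the permutation character $1_H^G$) are tabulated, and $K$ is a fixer on $[G:H]$ if and only if $1_H^G$ is nonzero on every $G$-class meeting $K$. Since for a maximal subgroup $K$ one knows precisely which $G$-classes are represented in $K$ (this is standard character-table-of-maximal-subgroups data, available in {\sc GAP} or {\sc Magma} via the ATLAS), this reduces each remaining case to a finite check. Where the permutation character is not directly stored, one computes $1_H^G$ from the fusion of $H$-classes in $G$, or simply constructs the primitive action explicitly in {\sc Magma} and tests the fixer condition on generators of $K$ (using that it suffices to check cyclic subgroups, hence a generating set of representatives of conjugacy classes of cyclic subgroups of $K$). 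The output of this step is the list of triples in Table \ref{tab:sporadic}, and simultaneously a verification that no other triple works, which is exactly the content of Theorem \ref{thm:spo}.

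For the bound on $\rho_1$ in Corollary \ref{cor:rho_1_An} restricted to sporadic socle, I would argue as follows: a fixer which is itself a maximal subgroup $K$ must satisfy $|K|\geqs|H|$ only in the finitely many primitive actions $[G:H]$ where some maximal $K$ with $|K|\geqs|H|$ is a fixer, and these are precisely the cases enumerated above (together with the trivial case $K=H^g$, which is stable, not a fixer in the relevant sense, or gives ratio $|H|/(|H|\sqrt{|\Omega|})<1$). For each such case one computes $|K|/(|H|\sqrt{|\Omega|}) = |K|\sqrt{|H|}/\sqrt{|G|}$ directly from the known group orders and checks it is less than $1/\sqrt2$; since all these values are concrete rational expressions in the (tabulated) orders, this is a finite arithmetic verification. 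The maximum over sporadic groups turns out to be strictly below $1/\sqrt2$, and combined with the alternating-group analysis (Theorems \ref{thm:alt} and the lemmas referenced there), the overall maximum $\sqrt{2/5}$ is attained only at $(A_5,S_3)$.

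The main obstacle is bookkeeping and reliability rather than conceptual difficulty: the largest sporadic groups (the Monster, Baby Monster, and a few others) have long lists of maximal subgroups, and for some of them the complete list of maximal subgroups or the full set of class fusions is only available in the literature rather than conveniently in a single computer-algebra database, so one must be careful to cite the correct source for the maximal subgroup classification in those cases and to confirm that the spectrum-and-fusion data is complete. In practice, though, the size filter $|K|\geqs|H|$ combined with $\Spec(K)\subseteq\Spec(H)$ is so restrictive that only a handful of pairs survive in each group, and for those the check is routine; I expect the proof to reduce to verifying a short explicit table, which is what Proposition \ref{p:spo} asserts.
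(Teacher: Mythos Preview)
Your proposal is correct and follows essentially the same computational strategy as the paper: filter candidate pairs $(H,K)$ by order and spectrum, then use stored class-fusion data (the paper uses \textsf{GAP}'s Character Table Library and \texttt{FusionConjugacyClasses}) to decide the fixer condition, with ad hoc spectrum/prime-divisor arguments for the handful of large-group cases where fusion data is incomplete. The paper is more explicit than you are about the Monster---it partitions the maximal subgroups of $\mathbb{M}$ into three tiers according to what data is available and handles each tier separately---but your acknowledgement of exactly this bookkeeping obstacle shows you have anticipated the only genuine difficulty.
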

	
	\begin{proof}
		First assume $G$ is not the Monster group $\mathbb{M}$. The conjugacy classes of $G$, $H$ and $K$ can be read off from the character tables of $G$ and $H$, which can be accessed computationally via the \textsf{GAP} Character Table Library \cite{B_GAPCTL}. In addition, apart from the special case where $G = \mathbb{B}$ and $(2^2\times F_4(2)){:}2\in\{H,K\}$, the fusion maps of $H$-conjugacy classes and $K$-conjugacy classes in $G$ can be obtained using the \textsf{GAP} function \texttt{FusionConjugacyClasses}, so it is easy to check whether $x^G\cap H = \emptyset$ for each $x\in K$. If $G = \mathbb{B}$ and $(2^2\times F_4(2)){:}2\in\{H,K\}$ then $H\not\cong K$ and so $|H|\ne |K|$ since there is a unique $G$-class of such subgroups, and one can check that $\Spec(K)\not\subseteq\Spec(H)$ whenever $|K| > |H|$, which is incompatible with Lemma \ref{l:pre_fix}(i).
		
		To complete the proof, we assume $G = \mathbb{M}$, and we will show that no case arises in this setting. The list $\mathcal{A}$ of maximal subgroups of $G$ is presented in \cite[Table 1]{DLP_M}, and \cite[Theorem 1]{DLP_M} asserts that any two isomorphic maximal subgroups of $G$ are $G$-conjugate, which implies that $|H|\ne |K|$. Let $\mathcal{A}_1\subseteq \mathcal{A}$ be the set of maximal subgroups of $G$ whose character table can be accessed via the \textsf{GAP} function \texttt{NamesOfFusionSources}. Note that for any subgroup $L$ in $\mathcal{A}_1$, the fusion map of $L$-classes can be obtained via \texttt{FusionConjugacyClasses} as above. Thus, if $H,K\in\mathcal{A}_1$ then we can adopt the same method as above.
		
		Now we define another set $\mathcal{A}_2\subseteq\mathcal{A}\setminus\mathcal{A}_1$ of maximal subgroups of $G$, comprising the remaining almost simple subgroups, as well as the subgroups with a permutation representation accessible in \cite{W_WebAt}. It is easy to obtain the spectra of subgroups contained in $\mathcal{A}_1\cup \mathcal{A}_2$, and we can use Lemma \ref{l:pre_fix}(i) to eliminate all cases where $H,K\in\mathcal{A}_1\cup\mathcal{A}_2$ and at least one of these subgroups is in $\mathcal{A}_2$.
		
		Now let us turn to the maximal subgroups lying in $\mathcal{A}_3:=\mathcal{A}\setminus(\mathcal{A}_1\cup\mathcal{A}_2)$, noting that
		\begin{equation*}
		\mathcal{A}_3 = \{2^{10+16}.\POmega_{10}^+(2),2^{5+10+20}.(S_3\times \PSL_5(2)),2^{3+6+12+18}.(\PSL_3(2)\times 3S_6)\}.
		\end{equation*}
		If $H\in\mathcal{A}_3$ then one can see that $\pi(K)\not\subseteq\pi(H)$ for any $K$ with $|K| > |H|$, so Lemma \ref{l:pre_fix}(ii) implies that no case arises. It suffices to consider the cases where $K\in\mathcal{A}_3$.
		
		If $K = 2^{10+16}.\POmega_{10}^+(2)$, so $\pi(K) = \{2,3,5,7,17,31\}$ then it is routine to check that $\pi(K)\not\subseteq\pi(H)$ for any $H$ with $|H| < |K|$, which is incompatible with Lemma \ref{l:pre_fix}(ii).
		
		Next, assume $K = 2^{5+10+20}.(S_3\times \PSL_5(2))$. Then $\pi(K)\subseteq\pi(H)$ and $|K| > |H|$ implies that $H = S_3\times\mathrm{Th}$. Note that $K$ contains a Sylow $2$-subgroup of $G$, and so $32\in\Spec(K)$. However, since $\mathrm{Th}$ has no element of $32$, we have $32\notin\Spec(H)$, which is incompatible with Lemma \ref{l:pre_fix}(i).
		
		Finally, assume $K = 2^{3+6+12+18}.(\PSL_3(2)\times 3S_6)$. Again, $K$ contains a Sylow $2$-subgroup of $G$, so $K$ has an element of order $32$. Here if $H\in\mathcal{A}_1$ then one can check that either $|H| > |K|$ or $32\notin\Spec(H)$ with the aid of \textsf{GAP}. Note that $\pi(K) = \{2,3,5,7\}$, so if $H\notin\mathcal{A}_1$ and $|K| > |H|$ then Lemma \ref{l:pre_fix}(ii) implies that
		\begin{equation*}
		H\in\{3^8.\POmega_8^-(3).2, (3^2{:}2\times \POmega_8^+(3)).S_4, \PGL_2(29)\}\subseteq\mathcal{A}_2.
		\end{equation*}
		In each case, one can check that $32\notin\Spec(H)$, which completes the proof of Theorem \ref{thm:spo}.
		
		The statement of Corollary \ref{cor:rho_1_An} for sporadic groups can be deduced easily.
	\end{proof}

	It might be difficult to determine all the large fixers of sporadic groups since the list of subgroups of some sporadic groups is not easy to work out computationally. However, we are able to establish Theorem \ref{thm:spo_rho_0}, which verifies Conjecture \ref{conj:rho_0_as} in this setting.
	
	\begin{proof}[Proof of Theorem \ref{thm:spo_rho_0}]
		It suffices to show that
		\begin{equation}\label{e:rho_0}
		2|K|^2 < |G|\cdot |H|
		\end{equation}
		for any fixer $K$ of $G$. Assume $K$ is a fixer of $G$ and let $\widehat{K}$ be a maximal overgroup of $K$ in $G$. Define
		\begin{equation*}
		|\widehat{K}|_{\pi(H)}:=\prod_{p\in \pi(H)}|\widehat{K}|_p,
		\end{equation*}
		which is the $\pi(H)$-part of $|\widehat{K}|$. By Lemma \ref{l:pre_fix}(ii), we see that $|K| \leqs |\widehat{K}|_{\pi(H)}$, so \eqref{e:rho_0} holds if
		\begin{equation}\label{e:spo_size}
		2(|\widehat{K}|_{\pi(H)})^2 < |G|\cdot |H|.
		\end{equation}
		One can check that this inequality holds unless $(G,H,\widehat{K})$ is one of the cases recorded in Table \ref{tab:spo_size}. Let us consider these remaining cases in turn.
		
		First assume $G = \mathbb{B}$, $H = \PGL_2(11)$ and $\widehat{K} = 2^{1+22}.\mathrm{Co}_2$. Here $|\widehat{K}|_{\pi(H)} = 2^{41}\cdot 3^6\cdot 5^3\cdot 11$, and we note that $\widehat{K}$ has two $G$-conjugacy classes of elements of order $5$, one of which does not meet $H$. This shows that $|K|_5\leqs 5^2$, so
		\begin{equation*}
		2|K|^2\leqs 2^{83}\cdot 3^{12}\cdot 5^4\cdot 11^2 < |G|\cdot |H|.
		\end{equation*}
		
		For the other cases listed in Table \ref{tab:spo_size}, we note that $K \ne \widehat{K}$ by Theorem \ref{thm:spo}, so $K$ is contained in a maximal subgroup $\widetilde{K}$ of $\widehat{K}$, which can be obtained using {\sc Magma}. One can establish \eqref{e:rho_0} by repeating the argument above.
	\end{proof}
	
	{\small
		\begin{table}[h!]
			\begin{tabular}{@{}lll@{}}
				\toprule
				$G$ & $H$ & $\widehat{K}$ \\ \midrule
				$\Ma_{11}$&$S_5$&$\Ma_{10}$\\
				$\Ma_{12}$&$\PSL_2(11)$&$\Ma_{11}$\\
				$\mathrm{Co}_3$&$\Ma_{23}$&$\mathrm{McL}.2$\\
				$\mathrm{Co}_3$&$2\times \Ma_{12}$&$\mathrm{McL}.2$\\
				$\mathrm{Co}_3$&$A_4\times S_5$&$\mathrm{McL}.2$\\
				\bottomrule
			\end{tabular}
			\begin{tabular}{@{}lll@{}}
				\toprule
				$G$ & $H$ & $\widehat{K}$ \\ \midrule
				$\mathrm{Co}_2$&$\Ma_{23}$&$\PSU_6(2).2$\\
				$\mathrm{Fi}_{22}$&$\Ma_{12}$&$2.\PSU_6(2)$\\
				$\mathrm{Fi}_{22}.2$&$\Ma_{12}.2$&$2.\PSU_6(2).2$\\
				$\mathrm{Fi}_{24}'$&$\PGL_2(13)$&$\mathrm{Fi}_{23}$\\
				$\mathbb{B}$&$\PGL_2(11)$&$2^{1+22}.\mathrm{Co}_2$\\
				\bottomrule
			\end{tabular}
			\caption{The triples $(G,H,\widehat{K})$ such that \eqref{e:spo_size} does not hold} \label{tab:spo_size}
		\end{table}
	}

	\section{Alternating and symmetric groups}

	\label{s:max}

	In this final section, we consider the groups $G$ with $\soc(G) = A_n$ for some $n\geqs 5$. We seek to determine the maximal subgroups of $G$ which are large fixers.
	
	\begin{prop}
		\label{p:max_An_24}
		The conclusion to Theorem \ref{thm:alt} holds for $n\leqs 24$.
	\end{prop}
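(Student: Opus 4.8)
The statement is a finite verification: one must check all $n$ with $5 \leqs n \leqs 24$ and all almost simple $G$ with $\soc(G) = A_n$, namely $G = A_n$ and $G = S_n$. The plan is to carry this out with {\sc Magma} \cite{Magma}, using the necessary conditions of Lemma \ref{l:pre_fix} as a filter and a direct conjugacy-class comparison for the few surviving pairs.

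First I would enumerate the relevant maximal subgroups $H \leqs G$: the intransitive ones are the set stabilisers $(S_k \times S_{n-k}) \cap G$ with $1 \leqs k < n/2$, and the imprimitive ones are the block-system stabilisers $(S_a \wr S_b) \cap G$ for each factorisation $n = ab$ with $a,b \geqs 2$ (keeping only those that are actually maximal). For each such $H$ I would run over all core-free maximal subgroups $K$ of $G$ with $|K| \geqs |H|$ and $K \not\cong H$; the maximal subgroups of $A_n$ and $S_n$ --- including all primitive ones --- are completely known and available in {\sc Magma} for every $n \leqs 24$.

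To make the search manageable I would first discard every pair $(H,K)$ violating the necessary conditions $\Spec(K) \subseteq \Spec(H)$ or $\pi(K) \subseteq \pi(H)$ of Lemma \ref{l:pre_fix}; note that the minimal-degree criterion of Corollary \ref{c:pre_min_deg} gives nothing extra here, since an intransitive or imprimitive $H$ always contains elements of small support in the natural action on $\{1,\dots,n\}$. For each pair surviving the spectrum filter I would test the fixer condition directly: $K$ is a fixer of $G$ on $[G:H]$ exactly when every $G$-conjugacy class meeting $K$ also meets $H$. Because $G \leqs S_n$, this reduces to comparing cycle types, with the one caveat that an $S_n$-class may split into two $A_n$-classes --- precisely when its cycle type consists of pairwise distinct odd parts --- so that for $G = A_n$ those classes must be handled with care rather than read off from cycle types alone. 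Running this check leaves only the triple $(G,H,K) = (A_5, S_3, A_4)$, in which $H$ stabilises a $3$-subset of $\{1,\dots,5\}$ and $K = A_4$ stabilises a point; here one checks directly that every element of $A_4$ is $A_5$-conjugate into $S_3$, so $D(A_5, S_3) \subseteq D(A_5, A_4)$.

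The difficulty is not any single hard step but the volume of bookkeeping --- two choices of $G$, twenty values of $n$, and all pairs of maximal subgroups --- combined with the need to get the $A_n$ class-fusion right: the spectrum conditions keep the computation feasible, but the genuinely borderline pairs (those with $\Spec(K) \subseteq \Spec(H)$) must be resolved by the finer conjugacy-class analysis, not merely by comparing element orders.
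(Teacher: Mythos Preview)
Your proposal is correct and matches the paper's approach: the paper's own proof is the single sentence ``This can be done with the aid of {\sc Magma},'' and you have simply spelled out a sensible implementation of that computation, including the spectrum/prime-divisor filters from Lemma~\ref{l:pre_fix} and the care needed with $A_n$-class splitting. Your restriction to $G\in\{A_n,S_n\}$ is the right reading of the hypothesis, since the condition that $H$ act on $\{1,\dots,n\}$ forces $G\leqs S_n$.
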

	
	\begin{proof}
		This can be done with the aid of {\sc Magma}.
	\end{proof}
	
	Thus, in order to prove Theorem \ref{thm:alt}, we may assume $n\geqs 25$. Note that $H$ is naturally of one of the following three cases:
	
	\begin{itemize}\addtolength{\itemsep}{0.2\baselineskip}
		\item[{\rm (a)}] $H$ is intransitive on $[n]$, and thus $H$ is a setwise stabiliser of a $k$-subset of $[n]$ with $1\leqs k < n/2$;
		\item[{\rm (b)}] $H$ is imprimitive on $[n]$, and thus $H$ is the stabiliser of a partition of $[n]$ into $b$ parts, with each part of size $a$ (so $ab = n$);
		\item[{\rm (c)}] $H$ is primitive on $[n]$.
	\end{itemize}
	
	Now we compare the orders of maximal subgroups of $G$. To begin with, we record a classical result considering the sizes of primitive groups, which is \cite[Corollary 1.2]{M_prim_order}.
	
	\begin{lem}
		\label{l:max_size_prim}
		Let $X$ be a primitive group of degree $m\geqs 25$. Then $|X| < 2^m$ if $X\notin\{A_m,S_m\}$.
	\end{lem}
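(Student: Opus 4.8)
The plan is to combine the O'Nan--Scott theorem \cite{LPS_O'Nan-Scott} with the Classification of Finite Simple Groups, which is the strategy of \cite{M_prim_order}. Let $X$ be primitive of degree $m\geqs 25$ with $A_m\not\leqs X$ (equivalently $X\notin\{A_m,S_m\}$). By O'Nan--Scott, $X$ has socle of affine, almost simple, simple diagonal, product action, or twisted wreath type, and it suffices to prove $|X|<2^m$ in each case.

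Four of the five types are routine. If $X$ is affine of degree $m=p^e$ then $|X|\leqs m\,|\GL_e(p)|<m\cdot p^{e^2}=m^{e+1}$, and $m^{e+1}<2^m$ for every prime power $m=p^e\geqs 25$. If $X$ has socle $T^k$ of diagonal type then $m=|T|^{k-1}$ and $|X|\leqs|T|^k\,k!\,|\Out(T)|$, which is far below $2^{|T|^{k-1}}$ as $|T|\geqs 60$; the twisted wreath case, in which $m=|T|^k$ with $k\geqs 6$, is even more comfortable. If $X$ is of product action type, say $X\leqs W\wr S_k$ acting on $\Delta^k$ with $W$ primitive of almost simple or diagonal type on $\Delta$ (so $|\Delta|\geqs 5$ and $m=|\Delta|^k$), then the crude bound $|X|\leqs(|\Delta|!)^k\,k!$ already yields $|X|<2^{|\Delta|^k}$ by a short computation, the tightest instance being $|\Delta|=5$, $k=2$, $m=25$, where $|X|\leqs 120^2\cdot 2=28800<2^{25}$. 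Here one must retain the estimate $|W|\leqs|\Delta|!$ (rather than anything of the form $|W|<2^{|\Delta|}$, which fails when $W\supseteq A_{|\Delta|}$), but in that situation the image of $X$ in $\Sym(\Delta^k)$ is a proper subgroup, so the hypothesis $X\notin\{A_m,S_m\}$ is not infringed.

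The substantial case is the almost simple one, $T_0\normeq X\leqs\Aut(T_0)$ with $T_0$ simple acting primitively of degree $m$. One needs, for each family of simple groups, a lower bound for the smallest degree of a faithful primitive representation, compared against $|\Aut(T_0)|$. For $T_0=A_\ell$ the natural action gives $X\in\{A_\ell,S_\ell\}$ and is excluded, while every other primitive action has degree $\geqs\binom{\ell}{2}$ (apart from finitely many exceptions of degree $<25$), and $\ell!<2^{\binom{\ell}{2}}$ for all $\ell\geqs 5$. For the classical groups one uses the standard minimal-degree estimates — for instance $(q^d-1)/(q-1)$ for $\PSL_d(q)$ on projective points — against $|\Aut(T_0)|$, which is at most $q^{d^2}$ times a small factor; the exceptional and sporadic groups are treated in the same spirit from their known minimal degrees. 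In every infinite family the inequality $|\Aut(T_0)|<2^m$ holds once $m\geqs 25$, and the finitely many configurations in which it fails — essentially the $4$-transitive actions of $M_{11},M_{12},M_{23}$ and $M_{24}$ (the last of degree $24$, which makes the threshold $m\geqs 25$ sharp) together with a handful of other low-degree groups — all have $m<25$ and hence do not occur. I expect the main obstacle to be exactly this case: the Classification is unavoidable, the minimal-degree-versus-order inequality must be checked family by family, and one must dispose by hand of the small exceptional configurations (low-rank classical groups subject to exceptional isomorphisms, the group $M_{24}$, and so on). This is the content of \cite[Theorem 1.1]{M_prim_order}, from which the lemma (which is \cite[Corollary 1.2]{M_prim_order}) is immediate.
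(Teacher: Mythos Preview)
Your proposal is correct and in fact goes well beyond what the paper does: the paper gives no proof at all, simply recording the lemma as \cite[Corollary 1.2]{M_prim_order}. Your sketch of the O'Nan--Scott case analysis is essentially the strategy of \cite{M_prim_order}, and you correctly identify the citation at the end.
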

	
	The following records elementary bounds on factorials, where $e$ is the exponential constant.
	
	\begin{lem}
		\label{l:max_factorial}
		For any integer $m$, we have
		\begin{equation*}
		\frac{m^m}{e^{m-1}}\leqs m!\leqs \frac{m^{m+1}}{e^{m-1}}.
		\end{equation*}
	\end{lem}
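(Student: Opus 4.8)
The plan is to argue by induction on $m$ (taking $m\geqs 1$; the statement fails for $m=0$), using only the elementary two-sided estimate
\[
\left(1+\frac1m\right)^m < e < \left(1+\frac1m\right)^{m+1}\qquad\text{for all integers }m\geqs 1,
\]
which follows from the standard facts that $(1+1/m)^m$ increases to $e$ while $(1+1/m)^{m+1}$ decreases to $e$.

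For the base case $m=1$ all three quantities $m^m/e^{m-1}$, $m!$ and $m^{m+1}/e^{m-1}$ equal $1$, so the claim holds. For the inductive step, assume $m^m/e^{m-1}\leqs m!\leqs m^{m+1}/e^{m-1}$. Multiplying through by the positive quantity $m+1$ gives
\[
(m+1)\,\frac{m^m}{e^{m-1}}\ \leqs\ (m+1)!\ \leqs\ (m+1)\,\frac{m^{m+1}}{e^{m-1}},
\]
so it suffices to check that the left-hand side is at least $(m+1)^{m+1}/e^{m}$ and the right-hand side is at most $(m+1)^{m+2}/e^{m}$. After cancelling a factor of $m+1$ and clearing $e^m$, the first inequality reads $e\geqs (1+1/m)^m$ and the second reads $e\leqs (1+1/m)^{m+1}$, which are precisely the two halves of the displayed estimate above. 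This closes the induction.

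There is no genuine obstacle here: the only subtlety is that one really needs the sharp two-sided bound on $e$ rather than merely $(1+1/m)^m<e$, because the lower and upper halves of the lemma push in opposite directions. (Alternatively, the lower bound $m!\geqs m^m/e^{m-1}$ can be obtained by comparing $\ln m!=\sum_{k=2}^m\ln k$ with $\int_1^m\ln x\,dx=m\ln m-m+1$ via monotonicity of $\ln$, and the upper bound by a similar integral comparison, but the inductive argument disposes of both inequalities uniformly and more quickly.)
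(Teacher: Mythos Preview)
Your proof is correct. The paper itself does not supply a proof of this lemma: it is stated as an elementary fact (``The following records elementary bounds on factorials'') and used without justification. Your inductive argument, resting on the standard two-sided estimate $(1+1/m)^m < e < (1+1/m)^{m+1}$, is the natural way to verify it, and your observation that one must take $m\geqs 1$ is a sensible caveat that the paper's statement omits.
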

	
	Using this bound, we are able to bound on the sizes of imprimitive groups.
	
	\begin{lem}
		\label{l:max_size_imprim}
		Suppose $n\geqs 25$ and let $X$ be a maximal subgroup of $G$ which acts imprimitively on $[n]$. Then
		\begin{equation}
		\label{e:max_imprim_m}
		2^n\leqs |X|\leqs 2\cdot \left(\left\lceil\frac{n}{2}\right\rceil!\right)^2,
		\end{equation}
		with the second equality holds if and only if $X = S_{n/2}\wr S_2$. Moreover, if $X$ does not fix a partition of $[n]$ into $2$ parts, then
		\begin{equation}\label{e:max_imprim_3_parts}
		|X| \leqs 6\cdot \left(\left\lceil\frac{n}{3}\right\rceil!\right)^3.
		\end{equation}
	\end{lem}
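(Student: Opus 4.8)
The plan is to reduce all three assertions to estimates on the single quantity $(a!)^b\,b!$. Since $X$ is maximal in $G$ and imprimitive on $[n]$, it is the stabiliser in $G$ of a non-trivial block system, necessarily with $b$ blocks of size $a$ for some $a,b\geqs 2$ with $ab=n$; hence $X=(S_a\wr S_b)\cap G$, and as $S_a\wr S_b$ is not contained in $A_n$ we get $\tfrac{1}{2}(a!)^b\,b!\leqs|X|\leqs(a!)^b\,b!$, with the upper value attained exactly when $G=S_n$.

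For the lower bound in \eqref{e:max_imprim_m}, I would note that $ab=n\geqs 25$ with $a,b\geqs 2$ forces $\max(a,b)\geqs 5$, and use the elementary estimates $m!\geqs 2^{m-1}$ (all $m\geqs 1$) and $m!\geqs 2^{m+1}$ (all $m\geqs 5$), each proved by a one-line induction. If $a\geqs 5$ then
\[
(a!)^b\,b!\;\geqs\;2^{(a+1)b}\cdot 2^{b-1}\;=\;2^{ab+2b-1}\;\geqs\;2^{ab+1},
\]
and if instead $b\geqs 5$ then $(a!)^b\,b!\geqs 2^{(a-1)b}\cdot 2^{b+1}=2^{ab+1}$; either way $|X|\geqs\tfrac{1}{2}\cdot 2^{n+1}=2^n$.

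For the upper bounds I would first dispose of $b=2$: then $n$ is even and $|X|\leqs 2((n/2)!)^2=2(\lceil n/2\rceil!)^2$, with equality precisely when $X=S_{n/2}\wr S_2$ (so $G=S_n$). It then suffices to prove the single inequality $(a!)^b\,b!\leqs 6(\lceil n/3\rceil!)^3$ for all $b\geqs 3$, together with the comparison $6(\lceil n/3\rceil!)^3<2(\lceil n/2\rceil!)^2$ valid for $n\geqs 25$ (a routine application of Lemma \ref{l:max_factorial}): granting these, \eqref{e:max_imprim_3_parts} is the first inequality, the upper bound of \eqref{e:max_imprim_m} for $b\geqs 3$ follows by transitivity, and strictness forces the maximum in \eqref{e:max_imprim_m} to occur only at $b=2$. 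To prove the key inequality, put $M=\lceil n/3\rceil$, so $3M\geqs n=ab$; when $b=3$ one has $M=a$ and equality holds, so assume $b\geqs 4$. I would then use the elementary identity $k\sum_{i=1}^{L}\log i=\sum_{j=1}^{kL}\log\lceil j/k\rceil$ (with $k=3$ and $k=b$) and the pointwise bound $\lceil j/3\rceil\geqs\lceil j/b\rceil$ to obtain
\[
3\log(M!)\;\geqs\;\sum_{j=1}^{n}\log\lceil j/3\rceil\;\geqs\;\sum_{j=1}^{ab}\log\lceil j/b\rceil\;=\;b\log(a!),
\]
i.e.\ $(M!)^3\geqs(a!)^b$; a finer accounting of these two sums — isolating, for each $k\geqs 1$, the block of indices $j\in((k-1)b,kb]$, where the ratio $\lceil j/3\rceil/\lceil j/b\rceil$ is of order $b/3$ — should produce enough extra room to absorb the leftover factor $b!$, after feeding in the bound $b!\leqs b^{b+1}/e^{b-1}$ of Lemma \ref{l:max_factorial}.

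The hard part will be exactly this last absorption. The estimates in Lemma \ref{l:max_factorial} carry a multiplicative slack of order $\sqrt{m}$ per factorial, which is comfortably soaked up whenever $a$ or $b$ is at all large, but becomes delicate for the smallest configurations — $a=2$, or $b\in\{3,4,5\}$ with $a$ minimal subject to $ab\geqs 25$ — where the decisive saving must instead come from the fact that $\lceil n/3\rceil$ can exceed $n/3$ by almost $1$. I expect these low cases will each need a short but careful direct check.
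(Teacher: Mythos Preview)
Your lower-bound argument is correct and tidier than the paper's. The paper feeds $m!\geqs m^m/e^{m-1}$ into $\tfrac12(a!)^b b!$, reduces to $\frac{n}{2e}>(n/a)^{1-1/a}$, observes this holds for $a>2e$, and then checks $a\in\{2,3,4\}$ by hand. Your two inequalities $m!\geqs 2^{m-1}$ and $m!\geqs 2^{m+1}$ ($m\geqs 5$), together with the observation $\max(a,b)\geqs 5$ forced by $ab\geqs 25$, do the job in two lines with no residual cases.

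For the upper bounds your route is genuinely different but left incomplete at the decisive step. The paper does not use your summation identity at all: it applies $m!\leqs m^{m+1}/e^{m-1}$ to $(a!)^b b!$ and $m!\geqs m^m/e^{m-1}$ to the target, reducing both \eqref{e:max_imprim_m} and \eqref{e:max_imprim_3_parts} to a single inequality of the shape $f(b):=b^{n-1}/n^b\geqs C$ on an interval $[b_0,n/2]$; one derivative shows $f$ is unimodal there, so only the two endpoints need checking, and no case-splitting on small $(a,b)$ arises (for \eqref{e:max_imprim_3_parts} the paper first disposes of $25\leqs n\leqs 44$ directly, then runs this argument for $n\geqs 45$). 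Your identity $(\lceil n/3\rceil!)^3\geqs(a!)^b$ via $k\log(L!)=\sum_{j\leqs kL}\log\lceil j/k\rceil$ is correct and elegant, and your heuristic that the slack $\sum_j\log(\lceil j/3\rceil/\lceil j/b\rceil)\sim n\log(b/3)$ dominates $\log(b!)\sim b\log b$ once $a\log(b/3)\gtrsim\log b$ is sound---but you have not turned it into a proof. The boundary configurations you flag ($a=2$, or $b\in\{4,5\}$ with $a$ small) are exactly where the asymptotic margin is thinnest, and each would need an explicit estimate rather than an appeal to ``enough extra room''. If you complete this, expect the finer accounting to be about as much work as the paper's calculus-and-endpoints argument; the advantage of the paper's method is that the endpoint check is uniform in $(a,b)$ and involves only one free variable.
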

	
	\begin{proof}
		First note that
		\begin{equation*}
		\frac{1}{2}\cdot (a!)^b\cdot b! = \frac{1}{2}\cdot|S_a\wr S_b|\leqs |X|\leqs |S_a\wr S_b| = (a!)^b\cdot b!,
		\end{equation*}
		where $ab = n$.
		
		Now we consider the first inequality of \eqref{e:max_imprim_m}. By Lemma \ref{l:max_factorial}, we have
		\begin{equation*}
		\frac{1}{2}\cdot (a!)^b\cdot b! \geqs \frac{1}{2}\cdot\left(\frac{a^a}{e^{a-1}}\right)^b\cdot \frac{b^b}{e^{b-1}} = \frac{1}{2}\cdot \frac{n^n}{e^{n-1}}\cdot b^{b-n}.
		\end{equation*}
		Hence, to prove that $|X|\geqs 2^n$, it suffices to show that
		\begin{equation*}
		\left(\frac{n}{2e}\right)^n > b^{n-b},
		\end{equation*}
		which is equivalent to
		\begin{equation}
		\label{e:max_imprim_m_1}
		\frac{n}{2e} > \left(\frac{n}{a}\right)^{1-1/a}
		\end{equation}
		since $ab = n$.	Note that this clearly holds for $a > 2e > 4$, and so we only need to consider the cases where $a \in\{2,3,4,5\}$. One can check that \eqref{e:max_imprim_m_1} holds for each case in turn.
		
		Now we establish the second inequality in \eqref{e:max_imprim_m}. Using the bounds in Lemma \ref{l:max_factorial}, we see that
		\begin{equation*}
		(a!)^b\cdot b!\leqs \left(\frac{a^{a+1}}{e^{a-1}}\right)^b\cdot\frac{b^{b+1}}{e^{b-1}} = \frac{a^n\cdot a^b\cdot b^b\cdot b}{e^{n-1}} = \frac{n^n}{e^{n-1}}\cdot \frac{n^b}{b^{n-1}}
		\end{equation*}
		and
		\begin{equation*}
		2\cdot\left(\left\lceil\frac{n}{2}\right\rceil!\right)^2 \geqs 2\cdot\left(\frac{(n/2)^{n/2}}{e^{n/2-1}}\right)^2 = \frac{n^n}{e^{n-2}}\cdot \frac{1}{2^{n-1}}.
		\end{equation*}
		Thus, we only need to show that
		\begin{equation}
		\label{e:max_imprim_m_2}
		2^{n-1}\cdot e^{-1} \leqs \frac{b^{n-1}}{n^b}=:f(b).
		\end{equation}
		for $3\leqs b\leqs n/2$. To see this, note that
		\begin{equation*}
		f'(b) = \frac{b^{n-2}((n-1)-b\ln n)}{n^b},
		\end{equation*}
		which implies that $f(b)\geqs \min\{f(3),f(n/2)\}$. Now it is easy to show that \eqref{e:max_imprim_m_2} holds for $n\geqs 25$.
		
		To complete the proof, assume $X$ does not fix a partition of $[n]$ into $2$ parts, and we show that \eqref{e:max_imprim_3_parts} holds. We first note that \eqref{e:max_imprim_3_parts} holds for $n\leqs 44$, so we may assume $n\geqs 45$. Once again, we have $|X|\leqs (a!)^b\cdot b!$	with $ab = n$ and $b\geqs 3$. By arguing as above, it suffices to show that
		\begin{equation*}
		f(b)\geqs \frac{3^n}{6e^2}
		\end{equation*}
		for $4\leqs b\leqs n/2$. This is easy to obtain for all $n\geqs 45$ by noting that $f(b)\geqs\min\{f(4),f(n/2)\}$ with the same reason as above.
	\end{proof}
	
	Now we are able to verify the statement of Theorem \ref{thm:alt} when $H$ is intransitive on $[n]$.
	
	\begin{prop}
		\label{p:max_An_K_intrans}
		Suppose $n\geqs 25$ and $H$ is a setwise stabiliser of a $k$-subset of $[n]$ with $1\leqs k < n/2$. Then for any maximal subgroup $K$ of $G$, $K$ is not a large fixer of $G$ if $K\not\cong H$.
	\end{prop}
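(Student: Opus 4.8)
The plan is to argue by contradiction: suppose $K$ is a maximal subgroup of $G$ with $K\not\cong H$ that is a large fixer, so in particular $|K|\geqs|H|$ and $K$ is a fixer of $G$ on $\Omega=[G:H]$. Here $H$ is the setwise stabiliser of a $k$-subset of $[n]$ with $1\leqs k<n/2$, so $|H|\in\{k!(n-k)!,\,k!(n-k)!/2\}$. By Lemma \ref{l:pre_iff}, to reach a contradiction it is enough to produce some $g\in K$ that, as an element of $\Sym(n)$, stabilises no $k$-subset of $[n]$ (such a $g$ lies in no $G$-conjugate of $H$). The first step is to rule out the possibility that $K$ is primitive on $[n]$: since $K$ is a proper maximal subgroup, $K\notin\{A_n,S_n\}$, so Lemma \ref{l:max_size_prim} gives $|K|<2^n$, while a routine estimate from Lemma \ref{l:max_factorial} shows $k!(n-k)!>2^n$ for all $1\leqs k<n/2$ when $n\geqs25$; hence $|K|<|H|$, a contradiction. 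So $K$ is intransitive or imprimitive on $[n]$.

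Suppose $K$ is intransitive, hence the stabiliser of an $l$-subset $\Lambda$ of $[n]$ with $1\leqs l<n/2$ (maximality excludes $l=n/2$). The function $m\mapsto m!(n-m)!$ is strictly decreasing on $\{1,\dots,\lceil n/2\rceil-1\}$, so $|K|\geqs|H|$ forces $l\leqs k$, and $l=k$ would give $K\cong H$; therefore $l<k$. I would then take $g\in K$ acting as an $l$-cycle on $\Lambda$ and an $(n-l)$-cycle on its complement. The only $g$-invariant subsets of $[n]$ are $\emptyset,\Lambda,[n]\setminus\Lambda,[n]$, of sizes $0,l,n-l,n$, and $0<l<k<n/2<n-l$ shows that none of these equals $k$; since $\mathrm{sgn}(g)=(-1)^n$, this $g$ lies in $G$ whenever $G=S_n$ or $n$ is even. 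For $G=A_n$ with $n$ odd one replaces $g$ by an element of suitably corrected cycle type, e.g.\ $(l-1,1,n-l)$ when $l\geqs2$, whose invariant subsets have sizes in $\{0,1,l-1,l,n-l,n-l+1,n-1,n\}$, still missing $k$ because $l<k<n/2$; and for $l=1$ (so $K$ is a point stabiliser $A_{n-1}$) one takes an element of $A_{n-1}$ of cycle type $(a,n-1-a)$ on $[n-1]$ with $a$ odd and $a\notin\{k-1,k,n-1-k,n-k\}$, which exists as $n\geqs25$ and which fixes no point of $[n-1]$ and has no invariant $(k-1)$- or $k$-subset of $[n-1]$. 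In every case $g\in K$ stabilises no $k$-subset of $[n]$, so $K$ is not a fixer.

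Suppose instead $K$ is imprimitive, stabilising a partition of $[n]$ into $b\geqs2$ blocks of size $a\geqs2$. If $K$ does not fix a partition of $[n]$ into two parts, then \eqref{e:max_imprim_3_parts} of Lemma \ref{l:max_size_imprim} gives $|K|\leqs 6(\lceil n/3\rceil!)^3$, and combining this with Lemma \ref{l:max_factorial} one checks that $6(\lceil n/3\rceil!)^3<k!(n-k)!/2\leqs|H|$ for all $1\leqs k<n/2$ and $n\geqs25$, contradicting $|K|\geqs|H|$. Hence $n$ is even and $K=(S_{n/2}\wr S_2)\cap G$. Choosing $c$ with $1\leqs c\leqs n/2-1$ and $2c\notin\{k,n-k\}$ (possible, since at most two values are excluded among the at least $n/2-1\geqs 12$ available), I would take $g\in K$ swapping the two blocks with cycle type $(n-2c,2c)$, each cycle meeting each block in half its points. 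Then every $g$-invariant subset of $[n]$ has size in $\{0,2c,n-2c,n\}$, which misses $k$, and $\mathrm{sgn}(g)=(-1)^{n-2}=1$, so $g\in A_n\cap K$; thus $g\in K$ stabilises no $k$-subset and $K$ is not a fixer, completing the proof.

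The main obstacle I anticipate is the parity bookkeeping when $G=A_n$: the naturally chosen element of a prescribed cycle type frequently has the wrong sign, so for each configuration one must exhibit a parity-corrected cycle type whose multiset of orbit lengths still admits no sub-sum equal to $k$, and then dispose separately of the genuinely degenerate small cases (notably $l=1$, and small block sizes $a$). A secondary point requiring care is that $|S_{n/2}\wr S_2|$ and $k!(n-k)!$ with $k$ near $n/2$ are of comparable magnitude, so the reduction in the imprimitive case genuinely needs the sharp bound \eqref{e:max_imprim_3_parts} rather than a crude factorial estimate.
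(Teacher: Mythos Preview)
Your approach is essentially the same as the paper's: rule out primitive $K$ by the size bound of Lemma~\ref{l:max_size_prim}, rule out imprimitive $K$ with at least three blocks by \eqref{e:max_imprim_3_parts}, dispose of $S_{n/2}\wr S_2$ with an explicit element having no invariant $k$-subset, and for intransitive $K$ stabilising an $\ell$-set with $\ell<k$ exhibit an explicit element whose orbit-length multiset admits no sub-sum equal to $k$; the paper differs only in its choice of witnesses (it uses cycle type $[n/2,n/2]$ for the wreath product and $(1,2)^{\delta}(\ell+1,\ldots,n)$ for the intransitive case), and is less explicit than you are about the parity bookkeeping for $G=A_n$. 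One small slip: when $G=S_n$ the subgroup $A_n$ \emph{is} a proper maximal subgroup, so your assertion ``$K\notin\{A_n,S_n\}$'' does not follow from maximality alone; but $A_n$ is transitive on $\Omega$ and hence not a fixer by Lemma~\ref{l:pre_fix}(iii), so the gap is immediately closed.
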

	
	\begin{proof}
		For convenience, we assume $G = S_n$, as the proof for $G = A_n$ is the same. First note that
		\begin{equation*}
		|H|\geqs k!(n-k)!\geqs  \left\lfloor\frac{n}{2}\right\rfloor!\cdot \left\lceil \frac{n}{2}\right\rceil!.
		\end{equation*}
		Now we use the bounds given in Lemma \ref{l:max_factorial}, so
		\begin{equation*}
		\left\lfloor\frac{n}{2}\right\rfloor!\cdot \left\lceil \frac{n}{2}\right\rceil! \geqs \left(\frac{(n/2)^{n/2}}{e^{n/2-1}}\right)^2 = \frac{(n/2)^n}{e^{n-2}}
		\end{equation*}
		and
		\begin{equation*}
		6\cdot \left(\left\lceil \frac{n}{3}\right\rceil!\right)^3\leqs 6n^3\cdot\left(\frac{(n/3)^{n/3+1}}{e^{n/3-1}}\right)^3 = 6n^3\cdot \frac{(n/3)^{n+3}}{e^{n-3}}.
		\end{equation*}
		With these bounds, it is routine to check that
		\begin{equation*}
		\left\lfloor\frac{n}{2}\right\rfloor!\cdot \left\lceil \frac{n}{2}\right\rceil! > 6\cdot \left(\left\lceil \frac{n}{3}\right\rceil!\right)^3
		\end{equation*}
		for all $n\geqs 60$ by arguing as in the proof of Lemma \ref{l:max_size_imprim}, so if $K$ is transitive on $[n]$ then $|K|\geqs |H|$ only if $K = S_{n/2}\wr S_2$ (one can check that this also holds if $25\leqs n\leqs 59$). Now if $K = S_{n/2}\wr S_2$ then there exists an element of $K$ of cycle type $[n/2,n/2]$, which does not fix any $k$-subset. Hence, $K = S_{n/2}\wr S_2$ is not a fixer of $G$.
		
		This implies that $K$ is intransitive on $[n]$, and thus $K$ fixes an $\ell$-subset for some $1\leqs \ell < n/2$. Note that
		\begin{equation*}
		|K| = \ell!(n-\ell)! \geqs k!(n-k)! = |H|
		\end{equation*}
		if and only if $\ell \leqs k$. Since $K\cong H$ if $\ell = k$, we may assume $\ell < k$ and
		\begin{equation*}
		K = \mathrm{Sym}(\{1,\dots,\ell\})\times \mathrm{Sym}(\{\ell+1,\dots,n\}).
		\end{equation*}
		If $\ell\geqs 2$, then $k > 2$ and  $x = (1,2)^\delta(\ell+1,\dots,n)$ does not fix a $k$-subset of $[n]$, where $\delta = 1$ if $n-\ell$ is even, and $\delta = 0$ otherwise (so $x$ is an even permutation), and thus $K$ is not a fixer of $G$.
		
		Finally, assume $\ell = 1$. If $k \geqs 3$ then either $(2,\dots,n)$ or $(3,\dots,n)$ is an even permutation, and we see that each of these elements does not fix a $k$-subset. Now assume $k = 2$. If $n$ is odd, then we take $x = (2,3,4)(5,\dots,n)\in K$, while if $n$ is even, then take $x = (2,3,4)(5,6,7,8)(9,\dots,n)\in K$. In either case, $x$ is an even permutation that does not fix any $2$-subset. This completes the proof.
	\end{proof}

	To complete the proof of Theorem \ref{thm:alt}, we turn to the case where $H$ is imprimitive on $[n]$.
	
	\begin{lem}
		\label{l:max_An_K_imprim_H_intrans}
		Suppose $H$ is imprimitive on $[n]$, and let $K$ be a maximal subgroup of $G$ which is intransitive on $[n]$. Then $K$ is not a fixer of $G$.
	\end{lem}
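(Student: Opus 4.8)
The plan is, in each case, to exhibit an element $x\in K$ lying in no $G$-conjugate of $H$; by Lemma~\ref{l:pre_iff} this shows $K$ is not a fixer of $G$. Since $H$ is maximal and imprimitive it is the stabiliser of a partition of $[n]$ into $b$ blocks of size $a$ with $ab=n$ and $2\leqs a,b\leqs n/2$; since $K$ is maximal and intransitive it is the stabiliser of an $\ell$-subset $\Lambda$ of $[n]$ with $1\leqs\ell<n/2$, so $K$ has two orbits on $[n]$, of sizes $\ell$ and $n-\ell>n/2$. By Proposition~\ref{p:max_An_24} we may assume $n\geqs25$. As $g$ ranges over $G$ the partitions $\mathcal{P}^g$ exhaust all partitions of $[n]$ into $b$ blocks of size $a$ (this holds already for $G=A_n$, since $S_a\wr S_b\not\leqs A_n$), so $y\in\Sym([n])$ lies in some $G$-conjugate of $H$ exactly when $y$ preserves such a partition; hence it suffices to find $x\in K$ preserving no partition of $[n]$ into $b$ blocks of size $a$.

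The main tool is the following elementary observation. If $y$ preserves a partition $\{B_1,\dots,B_b\}$ into blocks of size $a$, and the orbits of $y$ on $\{B_1,\dots,B_b\}$ have lengths $k_1,\dots,k_m$ (so $\sum_j k_j=b$), then the multiset of cycle lengths of $y$ partitions into groups $G_1,\dots,G_m$, where $G_j$ collects the cycles inside the ($y$-invariant) union of the blocks in the $j$-th block-orbit; each length in $G_j$ is a multiple of $k_j$ (a cycle returns to its starting block only after a multiple of $k_j$ steps), and the lengths in $G_j$ sum to $ak_j$. Two consequences: \emph{(i)} a permutation consisting of a single $c$-cycle together with fixed points, where $a<c$ and $a\nmid c$, preserves no such partition, since the group containing the part $c$ forces $k_j=1$ (hence $c\leqs a$) if it also contains a fixed point, and forces $c=ak_j$, i.e.\ $a\mid c$, otherwise; \emph{(ii)} when $b=2$, a permutation whose cycle type contains a part $c>a$ and at least one odd part preserves no such partition, because $\sum_j k_j=2$ leaves only $k_1=2$ (all cycle lengths even, contradicting the odd part) or $k_1=k_2=1$ (the cycle lengths split into two sub-multisets of sum $a$, impossible as $c>a$).

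I will first seek an odd integer $c$ with $a<c\leqs n-\ell$ and $a\nmid c$, and take $x$ to be a $c$-cycle supported inside the $K$-orbit of size $n-\ell$; then $x$ is an even permutation, so $x\in K$ whether $G=A_n$ or $S_n$, and $x$ preserves no suitable partition by~(i). The interval $(a,n-\ell]$ is non-empty, as $n-\ell>n/2\geqs a$. If it contains at least two integers, then $a+1,a+2\in(a,n-\ell]$, and we may take $c=a+1$ if $a$ is even and $c=a+2$ if $a$ is odd; in either case $c$ is odd and $a\nmid c$ (using $a\nmid1$, respectively $a\nmid2$, the latter valid since then $a\geqs3$). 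The interval contains exactly one integer only when $n-\ell=a+1$, which with $\ell<n/2$ gives $a>n/2-1$; together with $a\leqs n/2$ and $a\mid n$ this forces $n$ even, $a=n/2$ and $b=2$, and then $c=a+1$ still works unless $a$ is odd.

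It remains to treat the case $b=2$, $n=2a$ with $a$ odd (hence $a\geqs13$), and $\ell=a-1$, so $K=(S_{a-1}\times S_{a+1})\cap G$. Here I take $x$ of cycle type $(a+1,a-3,1,1)$, realised as an $(a+1)$-cycle on the $K$-orbit of size $a+1$ and an $(a-3)$-cycle on the $K$-orbit of size $a-1$ (fixing two further points there), which needs only $a\geqs5$. Then $x\in K$, and $x$ is even since it is a product of two cycles of even length; its cycle type contains the part $a+1>a$ together with odd parts, so $x$ preserves no suitable partition by~(ii). Thus in every case $K$ contains an element outside $\bigcup_{g\in G}H^g$, so $K$ is not a fixer of $G$. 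The only delicate point is precisely this last case: when $b=2$ and $\ell$ is as large as possible, the interval $(a,n-\ell]$ may be a single even integer, so a single odd cycle is unavailable and $x$ must instead be assembled from two cycles of even length — which is exactly what consequence~(ii) is designed to handle.
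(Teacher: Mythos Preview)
Your proof is correct and follows the same overall strategy as the paper's: exhibit an element of $K$ whose cycle structure cannot be realised by any permutation preserving a partition of $[n]$ into $b$ blocks of size $a$. In the generic step the paper argues slightly differently, choosing an odd $m\in\{n-\ell,\,n-\ell-1\}$, deducing $a\mid m$ from the fixer hypothesis, and then (for $a\leqs n/3$) repeating with an $(m-2)$-cycle to force $a\mid\gcd(m,m-2)=1$; your direct choice of $c\in\{a+1,a+2\}$ is equivalent in spirit and a little more transparent. In the residual case $b=2$, $a=n/2$ odd, $\ell=a-1$, the paper proposes the cycle type $[a-1,\,a+1]$, but this is actually erroneous: since $a$ is odd both parts are even, and such an element \emph{does} preserve a partition into two blocks of size $a$ (for instance, with $a=13$ the permutation $(1,\ldots,12)(13,\ldots,26)$ interchanges the blocks $\{1,3,\ldots,25\}$ and $\{2,4,\ldots,26\}$). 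Your element of type $[a+1,\,a-3,\,1,\,1]$ genuinely works, because the fixed points (parts of odd length~$1$) rule out the block-swap while the part $a+1>a$ rules out both blocks being fixed setwise; so your argument in fact repairs a gap in the paper's treatment of this edge case.
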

	
	\begin{proof}
		Suppose $H\cong (S_a\wr S_b)\cap G$ and $K\cong (S_k\times S_{n-k})\cap G$ is a fixer of $G$, where $a,b\geqs 2$ and $k< n/2$. Let $m\in\{n-k,n-k-1\}$ be an odd number, noting that $K$ has an $m$-cycle and $a\leqs m$. Then $H$ has an $m$-cycle since $K$ is a fixer of $G$, so we have $a\mid m$. If $a\leqs n/3$, then $a$ divides $m-2$ as $K$ has an $(m-2)$-cycle (note that $a \leqs m-2$), which yields $a = 1$ since $(m,m-2) = 1$, a contradiction. Now if $a = n/2$, then $m =n/2$, $k = n/2-1$ and $n\equiv 2\pmod 4$. In this setting, $K$ has an element of cycle type $[n/2-1,n/2+1]$, which is not $G$-conjugate to any element in $H$.
	\end{proof}

	\begin{lem}
		\label{l:max_An_K_imprim_H_imprim}
		Suppose $H$ is imprimitive on $[n]$, and let $K$ be a maximal subgroup of $G$ which is imprimitive on $[n]$. If $K\not\cong H$, then $K$ is not a fixer of $G$.
	\end{lem}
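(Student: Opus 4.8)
The plan is to translate the problem into one about cycle types and then produce, in every case, an explicit even permutation of $K$ whose cycle type is not realised inside $H$. Write $H$ as (the intersection with $G$ of) the stabiliser of a partition of $[n]$ into $b$ blocks of size $a$, and $K$ as the stabiliser of a partition into $d$ blocks of size $c$, so that $ab=cd=n$ and $a,b,c,d\geqs 2$. If $a=c$ then $b=d$ and $H$, $K$ are conjugate in $G$; so we may assume $a<c$, hence $b>d$. By Lemma \ref{l:pre_iff}, $K$ fails to be a fixer as soon as some $x\in K$ is not $G$-conjugate into $H$, and since in each case $x$ can be chosen to be an even permutation whose $S_n$-class avoids $S_a\wr S_b\supseteq H$, the same element settles both $G=S_n$ and $G=A_n$ simultaneously.

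The main tool is the standard description of the cycle types occurring in $S_a\wr S_b$ acting on $b$ blocks of size $a$: an element induces a permutation $\sigma$ of the blocks, and each cycle of $\sigma$ of length $\ell$ contributes, on the $\ell a$ points it moves among, a permutation of cycle type $\ell\lambda$ for some partition $\lambda$ of $a$ (each part of $\lambda$ multiplied by $\ell$); conversely every such choice is realised. From this I would extract two elementary facts. First, for $m>a$ the cycle type $[m,1^{n-m}]$ occurs in $S_a\wr S_b$ if and only if $a\mid m$: the part $m$ cannot lie in a group of parts containing a fixed point (there $\ell=1$, forcing that group to sum to $a<m$), so its group is $\{m\}$ and $m=\ell a$. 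Second, for $a<c$ with $2c\leqs n$, the cycle type $[c,c,1^{n-2c}]$ occurs in $S_a\wr S_b$ if and only if $a\mid 2c$, by the same reasoning applied to a group consisting of one or two copies of $c$.

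Now the argument splits into cases on the divisibility and parity of $a$ and $c$. If $a\mid c$ then $c\geqs 2a$, so Bertrand's postulate gives a prime $p$ with $a<p<2a\leqs c$; the element of $K$ that is a single $p$-cycle inside one block has type $[p,1^{n-p}]$, is even as $p\geqs 3$, and is not realised in $S_a\wr S_b$ by the first fact since $a\nmid p$. If $a\nmid c$ and $c$ is odd, the single $c$-cycle in one block has type $[c,1^{n-c}]$, is even, and is excluded by the first fact. If $a\nmid c$, $c$ even and $a$ odd, then $a\nmid 2c$, so two $c$-cycles in two blocks give type $[c,c,1^{n-2c}]$, even, excluded by the second fact. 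Finally if $a\nmid c$ with $c$ and $a$ both even, then $c\geqs a+2$, so $c-1>a$ is odd and the single $(c-1)$-cycle in one block has type $[c-1,1^{n-c+1}]$, is even, and is excluded by the first fact because the even number $a$ does not divide $c-1$. In every case $x\in K$ is not $G$-conjugate into $H$, so $K$ is not a fixer of $G$.

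I expect the only genuine work to be the careful verification that these four cases are exhaustive, that each displayed element really lies in $K$ (each of its cycles fitting in a single block of size $c$, and $2c\leqs n$ where needed) and is an even permutation, together with the clean deduction of the two divisibility facts from the wreath-product cycle structure. None of this is deep; the main risk of a slip lies in the bookkeeping over parities and in the invocation of Bertrand's postulate, so I would double-check the boundary instances (small $a$, and $c$ only slightly larger than $a$) explicitly.
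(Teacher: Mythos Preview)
Your argument for the case $a<c$ is correct and in some ways cleaner than the paper's treatment of that direction, but your reduction to this case is not valid. The relation ``$K$ is a fixer of $G$ on $[G:H]$'' is \emph{not} symmetric in $H$ and $K$: it asserts that every element of $K$ is $G$-conjugate into $H$, and there is no reason this should be equivalent to every element of $H$ being $G$-conjugate into $K$. So you cannot assume $a<c$ without loss of generality, and your proof leaves the case $a>c$ entirely unaddressed.

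The paper handles the two directions separately. For $a<a'$ (your $a<c$) it uses short cycles of length $a+1$ or $a+2$, or a double $a'$-cycle, much as you do. For $a>a'$ it needs a genuinely different idea: one cannot simply place a short cycle in a single $K$-block, because now the $K$-blocks are smaller than the $H$-blocks, and any cycle fitting in one $K$-block also fits in one $H$-block. Instead the paper uses long cycles of length $n-a'$ or $n-2a'$ (spanning all but one or two $K$-blocks) when $a'$ is odd, and an element of type $[n-a',2,1^{a'-2}]$ when $a'$ is even, and then argues via divisibility that these cannot occur in $S_a\wr S_b$. You will need to supply an argument of this kind to complete the proof.
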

	
	\begin{proof}
		Suppose $H\cong (S_a\wr S_b)\cap G$ and $K\cong (S_{a'}\wr S_{b'})\cap G$, where $n = ab = a'b'$ and $a\ne a'$. We will prove that $K$ is not a fixer of $G$, and we divide the proof into several cases.
		
		\vs
		
		\noindent \textit{Case 1. $a<a'$ and $a$ is even.}
		
		\vs
		
		In this case, $K$ has an $(a+1)$-cycle. By arguing as in the proof of Lemma \ref{l:max_An_K_imprim_H_intrans}, we see that $a\mid a+1$, which is impossible.
		
		\vs
		
		\noindent \textit{Case 2. $a+1<a'$ and $a$ is odd.}
		
		\vs
		
		Note that $K$ has an $(a+2)$-cycle. Thus, with the same reason as above, we have $a\mid a+2$. But this yields $a = 2$, which is incompatible with our assumption.
		
		\vs
		
		\noindent \textit{Case 3. $a+1=a'$ and $a$ is odd.}
		
		\vs
		
		Here we see that $K$ has an element of cycle type $[(a')^2,1^{n-2a'}]$, which is not $G$-conjugate to any element in $H$.
		
		\vs
		
		\noindent \textit{Case 4. $a > a'$ and $a'$ is odd.}
		
		\vs
		
		We have $3\leqs a' < a\leqs n/2$ in this setting. First assume $n-a'$ is odd. Then we have $a\mid n-a'$ as $K$ contains an $(n-a')$-cycle, which yields $a\mid a'$ as $a\mid n$. However, this is incompatible to our assumption $a > a'$. Now assume $n-a'$ is even, so both $n$ and $n-2a'$ are odd. Note that $K$ has an $(n-2a')$-cycle and $n-2a' > a$, and thus $a\mid n-2a'$. But this forces $a\mid 2a'$ and hence $a = 2a'$ is the only possibility, which gives a contradiction because $n$ is odd and $a$ is even.
		
		\vs
		
		\noindent \textit{Case 5. $a > a'$ and $a'$ is even.}
		
		\vs
		
		In this final case, we note that $K$ has an element of cycle type $[(n-a')^1,2^1, 1^{a-2}]$, and it is routine to check that this element is not $G$-conjugate to any element in $H$.
	\end{proof}
	
	\begin{prop}
		\label{p:max_An_K_imprim}
		Suppose $H$ is imprimitive on $[n]$. Then for any maximal subgroup $K$ of $G$ with $K\not\cong H$, $K$ is not a large fixer of $G$.
	\end{prop}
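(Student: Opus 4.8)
The plan is to split on how $K$ acts on $[n]$ and reduce everything to bounds already available. First I would invoke Proposition \ref{p:max_An_24} to assume $n\geqs 25$. Any subgroup of $S_n$ acts on $[n]$ intransitively, imprimitively, or primitively, so there are three cases to treat. If $K$ is intransitive on $[n]$, then Lemma \ref{l:max_An_K_imprim_H_intrans} already shows $K$ is not a fixer of $G$; if $K$ is imprimitive on $[n]$, then (using the hypothesis $K\not\cong H$) Lemma \ref{l:max_An_K_imprim_H_imprim} gives the same conclusion. Thus the only remaining case is $K$ primitive on $[n]$, and the rest of the argument concentrates there.

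In the primitive case I would first peel off the one subgroup of large order, namely $K = A_n$, which can occur only when $G = S_n$. Writing $H = (S_a\wr S_b)\cap G = S_a\wr S_b$ with $a,b\geqs 2$, the group $H$ contains a transposition swapping two points of a common block (using $a\geqs 2$), so $H\not\leqs A_n$; hence $HK = HA_n = S_n = G$, contradicting Lemma \ref{l:pre_fix}(iv). Therefore $K$ is not a fixer of $G$ when $K = A_n$, and we may assume $K\notin\{A_n,S_n\}$, the possibility $K = S_n$ being excluded since $K$ is a proper subgroup of $G$.

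Finally, for $K$ primitive of degree $n$ with $K\notin\{A_n,S_n\}$, Lemma \ref{l:max_size_prim} gives $|K| < 2^n$, while Lemma \ref{l:max_size_imprim} gives $2^n\leqs |H|$ because $H$ is an imprimitive maximal subgroup of $G$. Hence $|K| < |H|$, so $K$ is not large. Combining the three cases — intransitive, imprimitive, and primitive $K$ — establishes the proposition. I do not expect a genuine obstacle here: the substantive work (the cycle-type computations ruling out intransitive and imprimitive $K$, and the order estimates for primitive and imprimitive maximal subgroups) has already been carried out, and the only subtlety is that $A_n$ escapes the order bound of Lemma \ref{l:max_size_prim}; this is handled cleanly via the factorisation $HA_n = S_n$ rather than through a cycle-type argument.
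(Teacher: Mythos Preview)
Your proof is correct and follows essentially the same three-case split as the paper: use Lemmas \ref{l:max_An_K_imprim_H_intrans} and \ref{l:max_An_K_imprim_H_imprim} for intransitive and imprimitive $K$, and the order comparison from Lemmas \ref{l:max_size_prim} and \ref{l:max_size_imprim} for primitive $K$. The paper's proof is a one-liner invoking exactly these four lemmas.

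The one point where you are more careful than the paper is the exceptional case $K=A_n$ when $G=S_n$, which escapes the bound of Lemma \ref{l:max_size_prim}. The paper's proof does not single this out, presumably because in the ambient Theorem \ref{thm:alt} the subgroup $K$ is assumed core-free, so $K=A_n$ never arises; but the proposition as stated does not carry that hypothesis, so your treatment via $HA_n=S_n$ and Lemma \ref{l:pre_fix}(iv) is a genuine (and clean) completion of the argument. Either way the approaches coincide in substance.
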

	
	\begin{proof}
		Note by Lemmas \ref{l:max_size_prim} and \ref{l:max_size_imprim} that $|K| < |H|$ if $K$ is primitive on $[n]$. Now combine Lemmas \ref{l:max_An_K_imprim_H_intrans} and \ref{l:max_An_K_imprim_H_imprim}.
	\end{proof}

	The proof of Theorem \ref{thm:alt} is complete by Propositions \ref{p:max_An_24}, \ref{p:max_An_K_intrans} and \ref{p:max_An_K_imprim}.

	Finally, we consider the cases where $H$ is primitive on $[n]$.
	
	\begin{lem}
		\label{l:max_H_prim_K_imprim}
		Suppose $H$ is a primitive on $[n]$ and $K$ is a maximal subgroup of $G$ which is not primitive on $[n]$. Then $K$ is not a fixer of $G$.
	\end{lem}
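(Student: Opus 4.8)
The plan is to derive the statement from Corollary~\ref{c:pre_min_deg} by comparing the minimal degrees of $K$ and $H$ on $[n]$. Since $n\geqs 25$, the almost simple group $G$ is $A_n$ or $S_n$, so $G\leqs\Sym([n])$; and since $K$ is a maximal subgroup of $G$ that is not primitive on $[n]$, it is either an intransitive set stabiliser $K=(S_k\times S_{n-k})\cap G$ with $1\leqs k<n/2$, or an imprimitive partition stabiliser $K=(S_a\wr S_b)\cap G$ with $ab=n$ and $a,b\geqs 2$. By Corollary~\ref{c:pre_min_deg}, applied with the natural action of $G$ on $\Delta=[n]$, it suffices to prove that $\mu_{[n]}(K)<\mu_{[n]}(H)$.

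First I would dispose of the degenerate possibility $A_n\leqs H$: here $H=A_n$ and $G=S_n$, so $[G:H]$ has size $2$, and any odd permutation contained in $K$ acts on $[G:H]$ as the non-trivial point swap, hence as a derangement; such an element exists because the intransitive and imprimitive maximal subgroups of $S_n$ are not contained in $A_n$. Thus in this case $K$ is not a fixer, and from now on I may assume $H$ is a primitive subgroup of $G$ with $A_n\not\leqs H$.

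Next I would bound $\mu_{[n]}(K)\leqs 4$ by exhibiting, in each case, an even permutation lying in $K$ and moving at most $4$ of the points $1,\dots,n$. If $K$ is intransitive, then the part of size $n-k\geqs 13$ supports a $3$-cycle; if $K$ is imprimitive with $a\geqs 3$, then a single block supports a $3$-cycle; in both cases this $3$-cycle is even, hence lies in $A_n\cap(\,\cdot\,)=K$, so $\mu_{[n]}(K)\leqs 3$. If $K$ is imprimitive with $a=2$, then the product of the two transpositions swapping the points within two distinct blocks is an even element of $K$ moving exactly $4$ points, so $\mu_{[n]}(K)\leqs 4$. On the other hand, since $H$ is primitive of degree $n\geqs 25$ with $A_n\not\leqs H$, the classical theorems of Jordan on primitive groups of small minimal degree — a primitive group containing a transposition is $S_n$, one containing a $3$-cycle contains $A_n$, and, more generally, a primitive group of degree $n$ admitting a non-trivial element with support of size at most $4$ contains $A_n$ once $n$ exceeds a small explicit bound (equivalently, the classification of primitive groups of minimal degree at most $4$) — yield $\mu_{[n]}(H)\geqs 5$. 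Combining the two estimates gives $\mu_{[n]}(K)\leqs 4<5\leqs\mu_{[n]}(H)$, so by Corollary~\ref{c:pre_min_deg} the subgroup $K$ cannot be a fixer of $G$.

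The main obstacle is the lower bound $\mu_{[n]}(H)\geqs 5$, and in particular excluding, among primitive groups of degree $n\geqs 25$ other than $A_n$ and $S_n$, any group possessing an element of cycle type $2^{2}\,1^{n-4}$ or $4\,1^{n-4}$; this is precisely the input needed to handle an imprimitive $K$ with blocks of size $2$ when $G=A_n$, and it rests on Jordan's classification of primitive groups of minimal degree at most $4$. A secondary point requiring care is the separate treatment of the degenerate case $H=A_n$ above, which must be handled before the minimal-degree comparison is invoked.
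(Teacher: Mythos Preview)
Your proof is correct and rests on the same two classical inputs as the paper's proof—Jordan's theorem on $3$-cycles and Manning's theorem on double transpositions—so the approaches are essentially the same. The paper's argument is the direct two-line version: $K$ contains a $3$-cycle or a product of two disjoint transpositions, and by Jordan and Manning no primitive $H\not\geqs A_n$ contains such an element, so that element of $K$ has no $G$-conjugate in $H$. Your route through Corollary~\ref{c:pre_min_deg} is a valid repackaging but requires the marginally stronger statement $\mu_{[n]}(H)\geqs 5$, which forces you to also exclude $4$-cycles in $H$; you handle this correctly by squaring. Your separate treatment of $H=A_n$ is unnecessary, since in the paper's setup $H$ is the point stabiliser of a faithful action and hence core-free.
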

	
	\begin{proof}
		Note that $K$ contains a $3$-cycle or a product of two disjoint transpositions. Then apply the classical theorem of Jordan \cite{J_cycle} and a theorem of Manning \cite{M_transpo}.
	\end{proof}
	
	\begin{cor}
		\label{c:rho_1_An}
		The conclusion to Corollary \ref{cor:rho_1_An} holds when $\soc(G) = A_n$.
	\end{cor}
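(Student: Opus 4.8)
The plan is to deduce this from Theorem \ref{thm:alt}, Lemma \ref{l:max_H_prim_K_imprim} and the order estimates of Lemmas \ref{l:max_size_prim}, \ref{l:max_size_imprim} and \ref{l:max_factorial}. Recall that $|H|\sqrt{|\Omega|}=\sqrt{|G|\,|H|}$, so $\rho_1(G)$ is the maximum of $|K|/\sqrt{|G|\,|H|}$ taken over the maximal subgroups $K$ of $G$ that are fixers; note that $H$ is itself such a subgroup, realising the value $1/\sqrt{|\Omega|}$, and that any fixer $K$ with $|K|\leqs|H|$ contributes at most $1/\sqrt{|\Omega|}$. For $5\leqs n\leqs 24$ I would verify the statement (including the equality case) directly with {\sc Magma}, running through the groups $G$ with $\soc(G)=A_n$, all their faithful primitive actions, and all maximal subgroups; so assume $n\geqs 25$ from now on. Then $G\in\{A_n,S_n\}$, every core-free maximal subgroup of $G$ has index at least $n$, so $|\Omega|\geqs n\geqs 25$ and $1/\sqrt{|\Omega|}\leqs 1/5<\sqrt{2/5}$.

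It then remains to rule out a maximal fixer $K$ of $G$ with $|K|>|H|$; such a $K$ is a strictly large fixer, and in particular $K\not\cong H$. First, if $H$ is intransitive or imprimitive on $[n]$, then Theorem \ref{thm:alt} forces $(G,H,K)=(A_5,S_3,A_4)$, which is incompatible with $n\geqs 25$. Second, if $H$ is primitive on $[n]$ but $K$ is not, then $K$ is not a fixer of $G$ by Lemma \ref{l:max_H_prim_K_imprim}. Finally, suppose both $H$ and $K$ are primitive on $[n]$. Here $K$ is a proper core-free maximal subgroup with $K\notin\{A_n,S_n\}$ (these are transitive on $\Omega$, hence not fixers by Lemma \ref{l:pre_fix}(iii); note $H\ne A_n$ since $|\Omega|\geqs 25$), so Lemma \ref{l:max_size_prim} gives $|K|<2^n$, while $|G|\geqs n!/2$ and $|H|\geqs n$ as $H$ is transitive. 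Using $n!\geqs(n/e)^n$ from Lemma \ref{l:max_factorial} we obtain
\[
\frac{|K|^2}{|G|\,|H|}<\frac{2^{2n+1}}{n\cdot n!}\leqs\frac{2}{n}\Bigl(\frac{4e}{n}\Bigr)^{n},
\]
which is far smaller than $2/5$ for every $n\geqs 25$. Hence no strictly large maximal fixer exists when $n\geqs 25$, so $\rho_1(G)=1/\sqrt{|\Omega|}<\sqrt{2/5}$ in that range; combined with the computation for $5\leqs n\leqs 24$, this gives $\rho_1(G)\leqs\sqrt{2/5}$ with equality exactly for the action of $A_5$ on the cosets of $S_3$ (attained by the fixer $K=A_4$). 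Together with Proposition \ref{p:spo} this completes Corollary \ref{cor:rho_1_An}.

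The main obstacle is organisational rather than conceptual: one must make sure the {\sc Magma} check for $5\leqs n\leqs 24$ is exhaustive — in particular it must include the ``extra'' almost simple groups with socle $A_6$ together with all of their primitive actions — and one must be confident that the primitive--primitive case is genuinely covered by the crude estimate $|K|<2^n$ together with $|H|\geqs n$. The factorial growth of $|G|$ makes the latter immediate, so once Theorem \ref{thm:alt}, Lemma \ref{l:max_H_prim_K_imprim} and Lemma \ref{l:max_size_prim} are in hand there is essentially nothing delicate left to do.
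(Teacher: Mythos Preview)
Your argument is correct and follows essentially the same route as the paper: reduce to the case where $H$ is primitive on $[n]$ via Theorem \ref{thm:alt}, dispose of non-primitive $K$ with Lemma \ref{l:max_H_prim_K_imprim}, and for both primitive use $|K|<2^n$ from Lemma \ref{l:max_size_prim} against $|G|\geqs n!/2$ and $|H|\geqs n$. The paper compresses the final step slightly differently, observing $|K|^2<4^n<n!/2\leqs|G|$ directly so that $|K|^2/(|G|\,|H|)<1/|H|\leqs 1/n<2/5$, but this is the same idea as your $(4e/n)^n$ estimate; your write-up is simply more explicit about the small-$n$ check and the bound $1/\sqrt{|\Omega|}\leqs 1/5$.
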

	
	\begin{proof}
		In view of Theorem \ref{thm:alt}, we may assume $H$ is primitive on $[n]$, and by Lemma \ref{l:max_H_prim_K_imprim}, we only need to consider the case where $K$ is primitive. The groups with $n < 24$ can be handled easily, and we see that the required bound holds for $n\geqs 25$ by combining the inequalities $|H|\geqs n > 5/2$ and $|K|^2 < 4^n  < n!/2\leqs G$.
	\end{proof}

Therefore, the proof of Corollary \ref{cor:rho_1_An} is complete by combining Proposition \ref{p:spo} and Corollary \ref{c:rho_1_An}.
	
	In view of Lemma \ref{l:max_H_prim_K_imprim}, we only need to consider the cases where both $H$ and $K$ are primitive on $[n]$ to study the maximal subgroups which occur as large fixers of $G$. Note that $H$ is of one of the following types:
	\begin{itemize}\addtolength{\itemsep}{0.2\baselineskip}
		\item[{\rm HA:}] $H = \mathrm{AGL}_d(r)\cap G$, where $n = r^d$ for some prime $r$.
		\item[{\rm PA:}] $H = (S_a\wr S_b)\cap G$, where $n = a^b$ with $a\geqs 5$ and $b\geqs 2$.
		\item[{\rm SD:}] $H = T^k.(\Out(T)\times S_k)$ for some non-abelian simple group $T$, where $n = |T|^{k-1}$.
		\item[{\rm AS:}] $H$ is an almost simple primitive group on $[n]$.
	\end{itemize}
	
	We will prove that if $K$ is of type HA or PA and $H\not\cong K$, then $K$ is not a large fixer of $G$ (see Lemmas \ref{l:max_K=HA} and \ref{l:max_K=PA} below). To do this, we will repeatedly use Corollary \ref{c:pre_min_deg} on the minimal degrees of $K$ and $H$ on $[n]$, and we first record \cite[Theorem 4]{BG_min_deg} on the study of minimal degrees of primitive groups.
	
	\begin{thm}
		\label{t:BG_min_deg}
		Suppose $X\leqs \mathrm{Sym}(\Delta)$ is a primitive permutation group of degree $n$ with point stabiliser $Y$. Then either $\mu_\Delta(X)\geqs 2n/3$, or up to isomorphism, one of the following holds:
		\begin{itemize}\addtolength{\itemsep}{0.2\baselineskip}
			\item[{\rm (i)}] $X = S_m$ or $A_m$ acting on $\ell$-subsets of $[m]$ with $1\leqs \ell < m/2$ and $n = {m\choose \ell}$.
			\item[{\rm (ii)}] $X = S_m$, $Y = S_{m/2}\wr S_2$, $n = \frac{m!}{2(m/2)!^2}$ and
			\begin{equation*}
			\mu_\Delta(X) = \frac{1}{2}\left(1+\frac{1}{m-1}\right)n.
			\end{equation*}
			\item[{\rm (iii)}] $X = \Ma_{22}.2$, $Y = \PSigmaL_3(4)$, $n = 22$ and $\mu_\Delta(X) = 14$.
			\item[{\rm (iv)}] $(X,Y,\mu_\Delta(X))$ is one of the cases listed in \cite[Table 2]{BG_min_deg}, where $X$ is almost simple.
			\item[{\rm (v)}] $X = V{:}Y$ is an affine group with $\Delta = V = \mathbb{F}_2^d$, $Y\leqs \GL_d(2)$ contains a transvection and $\mu_\Delta(X) = 2^{d-1} = n/2$.
			\item[{\rm (vi)}] $X \leqs L\wr S_k$ with $\Delta = \Sigma^k$, where $k\geqs 2$ and $L\leqs\mathrm{Sym}(\Sigma)$ is one of the almost simple primitive groups in parts (i)--(iv).
		\end{itemize}
	\end{thm}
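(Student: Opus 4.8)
The plan is to restate the theorem in terms of fixed point ratios: since $\mu_\Delta(X) = n\bigl(1-\max_{1\ne x\in X}\fpr_\Delta(x)\bigr)$, the assertion is a classification of the primitive groups $(X,\Delta)$ with $\max_{1\ne x\in X}\fpr_\Delta(x) > 1/3$, and so the first step is to bound $\fpr_\Delta(x)$ for every nontrivial $x\in X$, working through the O'Nan--Scott types of $X$: affine (HA), almost simple (AS), simple diagonal (SD), product action (PA), twisted wreath (TW), and the compound types assembled from these.

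First I would dispose of the types other than AS and PA. In the affine case $X = V{:}Y$ with $V = \mathbb{F}_r^d$ and $Y\leqs\GL_d(r)$ irreducible, every nontrivial translation is a derangement, while any element $vy$ with $y\in Y$ fixes either no point of $V$ or exactly $r^{d-\rk(y-1)}$ of them, so that $\fpr_\Delta(vy)\leqs r^{-\rk(y-1)}$; hence $\fpr_\Delta(x) > 1/3$ forces $r = 2$ and $\rk(y-1) = 1$, i.e.\ $y$ is a transvection, landing us in case (v). In the SD, TW and compound-diagonal cases the point stabiliser is close to a diagonal subgroup, and a direct count shows that any nontrivial $x$ fixes at most $|C_T(t)|$ points for a suitable element $t$ of a socle factor $T$ (with the usual adjustments for field, graph and permutation automorphisms), so $\fpr_\Delta(x)\leqs 1/|t^T| < 1/3$ once $|T|\geqs 60$; these types therefore never arise. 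For PA, with $\Delta = \Sigma^k$ and $X\leqs L\wr S_k$ where $L$ is almost simple and primitive on $\Sigma$, a count over the $\sigma$-cycles of an element $g = (l_1,\dots,l_k)\sigma$ shows that $\fpr_\Delta(g)$ is a product of factors, each at most $1/|\Sigma|$ for a cycle of length at least $2$ and at most $\fpr_\Sigma(l_i)$ for a fixed coordinate; it follows that $\max_{1\ne g\in X}\fpr_\Delta(g) = \max_{1\ne l\in L}\fpr_\Sigma(l)$ whenever the latter exceeds $1/3$, so $\mu_\Delta(X) = |\Sigma|^{k-1}\mu_\Sigma(L)$ and we land in case (vi).

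The bulk of the work, and the main obstacle, is the almost simple case $\soc(X) = T$. Here I would first treat the standard actions of $T = A_m$ or $S_m$ on $\ell$-subsets and on partitions into equal parts: the fixed point ratio of a transposition, or of a $3$-cycle, is a quotient of binomial coefficients that can be computed exactly, and comparing the result with $1/3$ isolates precisely cases (i) and (ii), while the sporadic action $\Ma_{22}.2$ on $22$ points in (iii) and the groups of small degree drop out by inspection. For every remaining almost simple action --- $T$ of Lie type in a non-subspace action, $T$ a sporadic group, or $T = A_m$ in a non-standard action --- the strategy is to invoke the strong fixed point ratio estimates in the literature (for $T$ of Lie type, bounds of the shape $\fpr_\Delta(x)\leqs c q^{-1}$ due to Liebeck--Saxl, Guralnick--Kantor, and Burness and his collaborators), which already force $\fpr_\Delta(x)\leqs 1/3$ unless both the rank of $T$ and the field size $q$ are small; those finitely many residual groups of Lie type, together with the sporadic groups and the non-standard actions of alternating groups, are then settled by explicit (computer-assisted) computation, yielding exactly the list in case (iv) and \cite[Table 2]{BG_min_deg}.

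The hard part will be this last step: extracting the sharp constant $2/3$ and, above all, the \emph{complete} list of exceptions requires the full strength of the fixed point ratio machinery for groups of Lie type together with a careful treatment of the many small-rank, small-field groups and of the sporadic groups. The delicate points are the field and graph automorphisms, whose centralisers can be comparatively large, and the actions of classical groups on subspaces of the natural module, where the generic $cq^{-1}$ bound is too weak and one must argue directly to decide which of these actions genuinely have $\mu_\Delta(X) < 2n/3$.
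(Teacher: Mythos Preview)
The paper does not prove this theorem at all: it is simply quoted from the literature as \cite[Theorem 4]{BG_min_deg} (Burness--Guralnick), introduced with the words ``we first record \cite[Theorem 4]{BG_min_deg} on the study of minimal degrees of primitive groups'', and then used as a black box in the subsequent lemmas. So there is no proof in the paper for your proposal to be compared against.

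Your sketch is a reasonable outline of how the Burness--Guralnick argument actually goes in \cite{BG_min_deg} --- reduction via O'Nan--Scott, then fixed point ratio bounds for almost simple groups --- but reproducing that here is unnecessary and well beyond what the present paper requires. For the purposes of this paper you should simply cite the result.
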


	\begin{lem}
		\label{l:max_K=HA}
		Suppose $H$ is a primitive group on $[n]$ and $K\not\cong H$ is a maximal subgroup of $G$ of type HA. Then $K$ is not a large fixer of $G$.
	\end{lem}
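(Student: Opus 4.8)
Write $n = r^d \geqs 25$; since $K$ is maximal of type HA in $G \in \{A_n, S_n\}$, we have $K = \mathrm{AGL}_d(r) \cap G$. If $|K| < |H|$ then $K$ is not large and we are done, so the plan is to assume $|K| \geqs |H|$ and that $K$ is a fixer, and to derive a contradiction. First I would dispose of the wrong socle types for $H$: since $|T|$ is divisible by at least three primes for every non-abelian simple group $T$ (Burnside), the integer $n = r^d$ is not of the form $|T|^{k-1}$, so $H$ is not of type SD; and if $H$ is of type HA then $H = \mathrm{AGL}_d(r)\cap G = K$, against hypothesis. Hence $H$ is of type PA or AS, and in all cases $H \notin \{A_n, S_n\}$ may be assumed (if $H = A_n$ with $G = S_n$ then $K \leqs A_n = H$ is stable, not large).

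Next I would compute $\mu_{[n]}(K)$. A transvection $t \in \mathrm{GL}_d(r) \leqs K$ fixes a hyperplane pointwise and nothing else, so fixes exactly $n/r$ points; since the fixed-point set of any nonidentity element $v \mapsto Av+b$ of $\mathrm{AGL}_d(r)$ is a coset of $\ker(A-I)$, of dimension at most $d-1$, it follows that $\mu_{[n]}(K) = n(1-1/r)$ (one checks, using $n\geqs 25$, that such a $t$ lies in $G$ even when $G = A_n$, the only case needing care being $r=2$, where $8 \mid n$). By Corollary \ref{c:pre_min_deg}, $K$ being a fixer forces $\mu_{[n]}(H) \leqs n(1-1/r)$. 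When $r = 2$ this reads $\mu_{[n]}(H) \leqs n/2 < 2n/3$, so Theorem \ref{t:BG_min_deg} places $H$ in one of the exceptional families: case (iii) has $n = 22 < 25$; case (ii) has $\mu_{[n]}(H) > n/2$, a contradiction; in case (v) the only maximal possibility is $H = \mathrm{AGL}_d(2) = K$, excluded; and in case (i) we would need $\binom{m}{\ell} = 2^d$, which for $2 \leqs \ell < m/2$ is impossible because the product of the $\ell$ consecutive integers $m, \dots, m-\ell+1$, all exceeding $\ell$, has a prime divisor $> \ell$ by the Sylvester--Erd\H{o}s theorem, and this prime then divides $\binom{m}{\ell}$, while $\ell = 1$ gives $H \in \{A_n, S_n\}$. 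This leaves cases (iv) and (vi), a short finite list once $n = 2^d \geqs 32$ is imposed, handled as in the next paragraph.

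For the remaining configurations --- $r \geqs 3$, or $r = 2$ with $H$ in case (iv) or (vi) --- I would argue with spectra. For each $e \leqs d$ the subgroup $\mathrm{GL}_e(r) \leqs \mathrm{GL}_d(r) \leqs K$ contains a Singer cycle, so $r^e - 1 \in \Spec(K)$, and hence so does every prime-power divisor of $r^e - 1$. By Zsigmondy's theorem $r^e - 1$ has a primitive prime divisor outside a short list of exceptions, and choosing $e$ suitably (typically $e = d$) one exhibits a prime-power element order of $K$ that cannot occur in $H$: if $H = (S_a \wr S_b) \cap G$ is of type PA with $a = r^c$ and $d = cb$, the $p$-part of any element order of $S_a \wr S_b$ is at most $p^{\lfloor \log_p a\rfloor + \lfloor \log_p b\rfloor} \leqs ab$, which is beaten once $b$ is not too small; and if $H$ is almost simple on $r^d$ points, the element orders of $\soc(H)$ are available from the literature and one checks directly that the Singer elements of $K$ have no counterpart. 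In every case Lemma \ref{l:pre_fix}(i) then gives $\Spec(K) \not\subseteq \Spec(H)$, so $K$ is not a fixer. The finitely many residual pairs --- the Zsigmondy exceptions, the small type-PA cases, and the low-dimensional almost simple actions of prime-power degree, together with the degenerate case $d = 1$ --- would be settled by a direct computation in {\sc Magma}.

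I expect the main obstacle to be the almost simple case: one must invoke the classification of primitive almost simple groups of prime-power degree --- controlled via the minimal-degree bounds of Theorem \ref{t:BG_min_deg}, the order bound of Lemma \ref{l:max_size_prim}, and, for the standard actions on subsets or subspaces, number-theoretic constraints forcing the degree to be a prime power --- and then, for the short list that survives, verify non-containment of spectra. That final verification, rather than any single conceptual point, is where essentially all the bookkeeping lies.
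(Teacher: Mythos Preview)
Your approach diverges substantially from the paper's, and the divergence matters. The paper's proof hinges on a single observation: $K$ contains an element with at most three cycles on $[n]$ --- namely, a generator of the index-two subgroup of a Singer cycle in $\GL_d(r)$, which has cycle shape $[(r^d-1)/2,(r^d-1)/2,1]$ when $r$ is odd (and the Singer cycle itself, of shape $[r^d-1,1]$, when $r=2$). Since $K$ is a fixer, $H$ must contain a $G$-conjugate of this element, and now the theorem of Guest--Morris--Praeger--Spiga \cite{GMPS_cycle} on primitive groups containing a permutation with at most four cycles bites: $H$ is forced to be of type HA (whence $H\cong K$) or almost simple from a very short explicit list. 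The two surviving AS families ($\soc(H)=\PSL_2(p)$ on $p+1$ points and $\soc(H)=\PSL_m(2)$ on $2^m-1$ points) are then disposed of by a minimal-degree comparison and Catalan's conjecture, respectively. No spectra, no PA case-work, no {\sc Magma}.

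Your plan instead tries to kill the PA case by comparing element \emph{orders}, and this is where it leaks. The bound you quote --- that the $p$-part of any element order in $S_a\wr S_b$ is at most $p^{\lfloor\log_p a\rfloor+\lfloor\log_p b\rfloor}$ --- is correct, but there is no reason the prime-power constituents of $r^d-1$ should exceed it: a Zsigmondy prime for $r^d-1$ is only guaranteed to exceed $d$, not $a=r^c$. You could try to rescue this by using $|K|\geqs|H|$ to force $b$ large relative to $a$, but that argument is not in your sketch, and even then the Singer element of order $r^d-1$ does not lie in $A_n$ when $r$ is odd, so you must drop to order $(r^d-1)/2$ and the arithmetic becomes delicate (for instance, with $r=3$, $d=4$, $a=9$, $b=2$ one has $40\in\Spec(S_9\wr S_2)$, so order alone does not separate). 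The point is that what distinguishes the Singer element is its \emph{cycle type}, not its order, and once you track cycle types you are led straight to \cite{GMPS_cycle}. Your treatment of SD type and of the $r=2$ minimal-degree reduction is fine, but the heart of the lemma is the PA/AS dichotomy, and for that you are missing the right tool.
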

	
	\begin{proof}
		Suppose $K = \mathrm{AGL}_d(r)\cap G$ is a large fixer of $G$. First note that $K$ has an element $g$ of cycle type
		\begin{equation*}
		[(r^d-1)/2,(r^d-1)/2,1],\
		\end{equation*}
		which is $G$-conjugate to an element in $H$. This implies that $H$ is of type HA or AS by \cite[Theorem 1.1]{GMPS_cycle}. If $H$ is of type HA, then we have $K\cong H$.
		
		It suffices to consider the case where $H$ is of type AS. Note that $H$ contains a $G$-conjugate of $g$. Thus, by inspecting the groups listed in \cite[Theorem 1.1]{GMPS_cycle}, which classifies the finite primitive permutation groups containing a permutation having at most four cycles, we see that either $\soc(H) = \PSL_2(p)$ and $n = p+1$ for some prime $p$, or $\soc(H) = \PSL_m(2)$ and $n = 2^m-1$.
		
		In the former case,	$n = 2^d$ is even, so by Theorem \ref{t:BG_min_deg}, we have
		\begin{equation*}
		\mu_{[n]}(K) = n/2 < 2n/3\leqs\mu_{[n]}(H),
		\end{equation*}
		which is incompatible with Corollary \ref{c:pre_min_deg}. Finally, in the latter case, we have $2^m - 1 = r^d$. Now Catalan's conjecture, which is proved true in \cite{M_Catalan}, implies that $d = 1$. However,
		\begin{equation*}
		|K|\leqs r(r-1) = (2^m-1)(2^m-2) < |\PSL_m(2)|/2 \leqs |H|,
		\end{equation*}
		a contradiction to the assumption that $K$ is a large fixer.
	\end{proof}
	
	\begin{lem}
		\label{l:max_K=PA}
		Suppose $H$ is a primitive group on $[n]$ and $K\not\cong H$ is a maximal subgroup of $G$ of type PA. Then $K$ is not a large fixer of $G$.
	\end{lem}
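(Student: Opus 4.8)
The plan is to adapt the strategy of Lemma~\ref{l:max_K=HA}, but since a group of type PA has element orders far smaller than its degree (so no element with boundedly many cycles), the classification of primitive groups with few cycles used there is replaced by the bound on minimal degrees in Theorem~\ref{t:BG_min_deg}. Write $K=(S_a\wr S_b)\cap G$ in its product action on $[n]=[a]^b$, with $a\geqs 5$ and $b\geqs 2$, and suppose for a contradiction that $K$ is a large fixer of $G$; by Proposition~\ref{p:max_An_24} we may assume $n\geqs 25$, so $G\in\{A_n,S_n\}$. The key point is that for $w\in S_a$ the element $(w,1,\dots,1;1)$ of $K$ moves only the first coordinate of a tuple, so its cycle type on $[n]$ is that of $w$ on $[a]$ with every multiplicity multiplied by $a^{b-1}$ (and one may spread $w$ over two coordinates to fix the parity when $G=A_n$). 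Taking $w$ a $3$-cycle gives $\mu_{[n]}(K)\leqs 3a^{b-1}=3n/a\leqs 3n/5<2n/3$; taking $w$ an $a$-cycle produces an element $t\in K$ of cycle type $[a^{n/a}]$ on $[n]$; and $K$ realises the scaled cycle type $\lambda^{(a^{b-1})}$ of every partition $\lambda$ of $a$.

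By Corollary~\ref{c:pre_min_deg} applied to the action of $G$ on $[n]$, $\mu_{[n]}(H)\leqs\mu_{[n]}(K)<2n/3$, so $H$ falls into one of cases (i)--(vi) of Theorem~\ref{t:BG_min_deg}; as $H$ is a maximal primitive subgroup of $G$ it is also of O'Nan--Scott type HA, PA, SD or AS. I would then eliminate these possibilities one by one. If $H=(S_{a'}\wr S_{b'})\cap G$ is of product type with $(a')^{b'}=n$ (case (vi)), then $\mu_{[n]}(H)\leqs\mu_{[n]}(K)$ forces $a\leqs a'$; if $a<a'$ then $a'$ is a proper power of $a$ and a Stirling estimate via Lemma~\ref{l:max_factorial} gives $|H|>|K|$, a contradiction, while $a=a'$ gives $H\cong K$, which is excluded. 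Case (v) together with maximality forces $H=\mathrm{AGL}_d(2)\cap G$, $n=2^d$, so $a=2^s$ with $s\geqs 3$; here the product of $n/a$ disjoint $3$-cycles in $K$ is not $G$-conjugate into $H$, because an order-$3$ element of $\mathrm{AGL}_d(2)$ has $2^{d-2t}$ fixed points for some $t\geqs 1$ and $2^d-2^{d-2t}=3n/a$ has no solution with $s\geqs 3$. Cases (iii), (iv) and the Lie type instances of (i) leave only finitely many degrees $n$ together with a few one-parameter families (e.g.\ $\PSL_d(2)$ on $\mathrm{PG}(d-1,2)$, where $n=2^d-1$), and in every case the requirement $n=a^b$ with $a\geqs 5$, $b\geqs 2$ is eliminated by elementary number theory together with Catalan's theorem \cite{M_Catalan}, exactly as in Lemma~\ref{l:max_K=HA}.

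The remaining, and genuinely delicate, case is $H=A_m$ or $S_m$ acting on the $\ell$-subsets of $[m]$, with $2\leqs\ell<m/2$ and $n=\binom{m}{\ell}$ (case (i) with $\soc(H)=A_m$, and the analogous case (ii)). Here $|H|$ and $|K|$ are comparable, so an order comparison is useless and I would argue with cycle types via $K\subseteq\bigcup_{g\in G}H^g$, using the witness $t$ of cycle type $[a^{n/a}]$: the aim is to show no element of $A_m$ or $S_m$ acts on $\ell$-subsets with all cycles of length exactly $a$. Such an element $g$ has order $a$; if $a$ is even, an $\ell$-subset built from an antipodal pair of a length-$a$ cycle of $g$ on $[m]$ (completed with enough points of $g$-period dividing $a/2$) has period at most $a/2$, a contradiction; if $a$ is odd, $g$ must fix at most $\ell-1$ points of $[m]$, which forces its cycle type on $\ell$-subsets to be $[a^{n/a}]$ only when $n=a^2$, and then the Diophantine condition $\binom{m}{\ell}=a^b$ has no solution compatible with $g$ having at most $\ell-1$ fixed points, for $a\geqs 5$. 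The residual small degrees ($n\leqs 24$, plus any degree from Theorem~\ref{t:BG_min_deg}(iii)--(iv) that survives the earlier steps) are checked with {\sc Magma}. I expect the principal difficulty to lie exactly here: making the cycle-type argument uniform in $\ell$ (for even $a$ and $\ell\geqs 3$ the antipodal construction needs $g$ to have enough length-$a$ cycles, and a fixed point when $\ell$ is odd; for $a$ a prime power one must verify directly that $\binom{m}{\ell}$ is never the required perfect power). A safe fallback in awkward sub-cases is to replace $t$ by the minimal-degree witness of $K$ --- a product of $n/a$ disjoint $3$-cycles --- and count instead the number of $3$-cycles that $H$ can realise on $\ell$-subsets.
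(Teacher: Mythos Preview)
Your overall strategy---bounding $\mu_{[n]}(K)\leqs 3n/a$ via a $3$-cycle in one coordinate, then invoking Theorem~\ref{t:BG_min_deg} to pin down $H$---matches the paper exactly, and your treatment of cases (ii)--(v) is essentially the paper's (the paper disposes of case~(v) even more quickly: $\mu_{[n]}(H)=n/2>3n/8\geqs 3n/a\geqs\mu_{[n]}(K)$ since $a$ is a $2$-power $\geqs 8$). There are, however, two genuine gaps.

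\textbf{Case (vi) (both $H$ and $K$ of type PA).} Your claim that $a<a'$ forces $|H|>|K|$ via a Stirling estimate is false once $b$ is large enough. With $a=5$, $a'=25$, $b'=b/2$, one has $\log|K|-\log|H|\approx -24b+\tfrac{b}{2}\ln b$, which is positive for $b>e^{48}$; so the order comparison cannot carry the whole case. (Your side-claim that $a'$ must be a power of $a$ is also wrong: take $n=2^{12}$, $a=8$, $a'=16$.) The paper writes $a=c^\ell$, $a'=c^m$ with a common base $c$, dispatches $m<\ell$ by the minimal-degree inequality, and for $m>\ell$ splits into four sub-cases: when $\ell\nmid m$ it uses a Zsigmondy prime dividing $c^\ell-1$ but not $c^m-1$ to build an element of $K$ with a unique fixed point and no conjugate in $H$; when $\ell\mid m$ and $b\geqs 2a$ it uses a Bertrand prime $r\in(b/2,b)$ and a fixed-point count; only when $\ell\mid m$ and $b<2a$ does it prove $|H|>|K|$ by an explicit inequality. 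The Zsigmondy exception $(\ell,c)=(2,c)$ with $c+1$ a $2$-power needs a separate ad hoc argument.

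\textbf{Case (i) ($H$ acting on $\ell$-subsets of $[m]$).} Your proposed cycle-type argument via the element of shape $[a^{n/a}]$ is, as you yourself flag, delicate and incomplete; making it uniform in $\ell$ and in the parity of $a$ looks genuinely hard. The paper avoids this entirely by quoting Gy\H{o}ry's Diophantine theorem \cite{G_dio}: the equation $\binom{m}{\ell}=a^b$ with $b,\ell\geqs 2$ has, apart from the family $b=\ell=2$, only the sporadic solution $(a,b,m,\ell)=(140,2,50,3)$. The sporadic case dies because $53\in\pi(K)\setminus\pi(H)$, and the family $\binom{m}{2}=a^2$ is handled by a direct fixed-point-ratio comparison for elements of order~$3$ (using \cite[Proposition~3.4]{BG_min_deg}). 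This is both shorter and complete.
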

	
	\begin{proof}
		We may write
		\begin{equation*}
		[n] = \Delta^b = \Delta_1\times\cdots\times \Delta_b
		\end{equation*}
		with $|\Delta_i| = a$, and
		\begin{equation*}
		K = ((\mathrm{Sym}(\Delta_1)\times\cdots\times\mathrm{Sym}(\Delta_b)){:}\mathrm{Sym}([b]))\cap G.
		\end{equation*}
		The action of $K$ on $[n]$ is given by
		\begin{equation}\label{e:prod}
		\left(\delta_1, \dots, \delta_b\right)^{\left(x_1, \dots, x_b\right) \sigma}=\left(\delta_{1^{\sigma^{-1}}}^{x_{1^{\sigma{-1}}}}, \dots, \delta_{b^{\sigma^{-1}}}^{x_{b^{\sigma-1}}}\right)
		\end{equation}
		for any $\delta_i\in\Delta_i$, $x_i\in \mathrm{Sym}(\Delta_i)$ and $\sigma\in \mathrm{Sym}([b])$.
		
		Let $x = (x_1,1,\dots,1)$, where $x_1$ is a $3$-cycle in $\mathrm{Sym}(\Delta_1)$. Obviously, $x$ is an even permutation in $\mathrm{Sym}([n])$, so $x\in K$. Note that
		\begin{equation*}
		\left(\delta_1, \dots, \delta_b\right)^x= (\delta_1^{x_1},\delta_2,\dots,\delta_b)
		\end{equation*}
		for any $(\delta_1,\dots,\delta_b)\in [n]$. This implies that
		\begin{equation*}
		\fpr_{[n]}(x) = \frac{(a-3)a^{b-1}}{a^b} = \frac{a-3}{a}
		\end{equation*}
		and so $\mu_{[n]}(K)\leqs 3a^{b-1} = 3n/a$ (in fact, if $G = S_n$ then $\mu_{[n]}(K) = 2n/a$ as noted in \cite[p. 130]{GM_min_deg}). In particular, we have $\mu_{[n]}(K) < 2n/3$, and by Corollary \ref{c:pre_min_deg} we have $\mu_{[n]}(K)\geqs \mu_{[n]}(H)$. Therefore, $H$ is one of the primitive groups listed in Theorem \ref{t:BG_min_deg}.
		
		First assume case (ii) of Theorem \ref{t:BG_min_deg}, so $n = \frac{m!}{2(m/2)!^2}$ and
		\begin{equation*}
		\mu_{[n]}(H) = \frac{1}{2}\left(1+\frac{1}{m-1}\right)n.
		\end{equation*}
		Thus, $\mu_{[n]}(K)\geqs\mu_{[n]}(H)$ only if $a = 5$. However, $2n = {m\choose m/2}$ is clearly not a $5$-power, so no case with $a = 5$ arises. We do not need to consider case (iii) of Theorem \ref{t:BG_min_deg} since $n\geqs 25$ is assumed. By inspecting \cite[Table 2]{BG_min_deg} and using Catalan's conjecture \cite{M_Catalan}, one can see that either $n = a^b$ does not have an integer solution with $a\geqs 5$ and $b\geqs 2$, or $\mu_{[n]}(H) > 3n/5$, so case (iv) of Theorem \ref{t:BG_min_deg} does not arise. For example, if $\soc(H) = \PSL_d(2)$ with its action on $1$-subspaces of $\mathbb{F}_d^2$, then $n = 2^d-1$, and as noted above, there is no integer solution to $2^d-a^b = 1$ with $b\geqs 2$. If $H$ is of case (v) of Theorem \ref{t:BG_min_deg}, then $\mu_{[n]}(H) = n/2$ and $a$ is a $2$-power, which implies that $a\geqs 8$ and $\mu_{[n]}(K)\leqs 3n/8$, so this is incompatible to Corollary \ref{c:pre_min_deg}.
		
		Now let us consider case (i) of Theorem \ref{t:BG_min_deg}. Here we have $\soc(H) = A_m$ and
		\begin{equation}
		\label{e:int_sol}
		a^b = {m\choose \ell},
		\end{equation}
		where $2\leqs\ell\leqs m/2$. By \cite[Theorem 2]{G_dio}, we see that apart from the case $b = \ell = 2$, the only integer solution to \eqref{e:int_sol} is $(a,b,m,\ell) = (140,2,50,3)$. For the latter case, we have $53\in\pi(K)\setminus\pi(H)$, which is incompatible with Lemma \ref{l:pre_fix}(ii). Thus, we may now assume $b = \ell = 2$, so $a^2 = {m\choose 2}$. Recall that $x = (x_1,1,\dots,1)$, where $x_1$ is a $3$-cycle in $\mathrm{Sym}(\Delta_1)$. As noted above, we have $\fpr_{[n]}(x) = 1-3/a$. Moreover, \cite[Proposition 3.4]{BG_min_deg} shows that
		\begin{equation*}
		\fpr_{[n]}(y)\leqs 1-\frac{6(m-2)}{m(m-1)}
		\end{equation*}
		for any element $y\in H$ of order $3$.
		In view of Lemma \ref{l:pre_fpr}, it suffices to show that
		\begin{equation*}
		\frac{3}{a} < \frac{6(m-2)}{m(m-1)}.
		\end{equation*}
		This is equivalent to $m-2 > a$, which is obvious since $a^2 = m(m-1)/2$.
		
		For the remained of the proof, let us turn to case (vi) of Theorem \ref{t:BG_min_deg}, where $H$ is also a maximal subgroup of $G$ of type PA, which requires a more detailed analysis. First note that there exists an integer $c$ such that $a = c^\ell$ and $H = (S_{c^m}\wr S_{d/m})\cap G$ for some integers $\ell$ and $m$ with $\ell\ne m$. To be precise, we may identify
		\begin{equation*}
		[n] = \Gamma^{d/m} = \Gamma_1\times \cdots\times\Gamma_{d/m},
		\end{equation*}
		where $|\Gamma_j| = c^m$, and write
		\begin{equation*}
		H = ((\mathrm{Sym}(\Gamma_1)\times\cdots\times\mathrm{Sym}(\Gamma_{d/m})){:}\mathrm{Sym}([d/m]))\cap G,
		\end{equation*}
		with the similar componentwise action as given in \eqref{e:prod}. We divide the proof into five cases.
		
		\vs
		
		\noindent \emph{Case 1. $m<\ell$}
		
		\vs
		
		In this case, we have
		\begin{equation*}
		\mu_{[n]}(H)\geqs 2n/c^m > 3n/c^\ell \geqs \mu_{[n]}(K)
		\end{equation*}
		as discussed above, which is incompatible with Corollary \ref{c:pre_min_deg}.
		
		\vs
		
		\noindent \emph{Case 2. $\ell = 2$, $m\geqs 3$ is odd and $c+1$ is a $2$-power}
		
		\vs
		
		First assume $c\neq 3$, noting that $8$ divides $c+1.$ Let $k=(\frac{d}{2})_2$ be the greatest $2$-power dividing $\frac{d}{2}$. So $(\frac{d}{m})_2 = 2k$ as $m$ is odd. Let $\sigma=\sigma_1\sigma_2\dots \sigma_{\frac{d}{2k}}\in \mathrm{Sym}([d/2])$, where $\sigma_i=(ik-k+1, ik-k+2,\dots, ik)$ is a $k$-cycle, and let $\tau\in \Sym(\Delta)$ be a product of $c-1$ disjoint $(c+1)$-cycles.  We claim that
		\begin{equation*}
		x=(\tau, \underbrace{1, \dots, 1}_{k-1}, \dots, \tau, \underbrace{1, \dots, 1}_{k-1})\sigma\in K
		\end{equation*} 
		is not $G$-conjugate to any element in $H$.
		
		To see this, first note that $x$ is the product of $\frac{c^d-1}{k(c+1)}$ disjoint $k(c+1)$-cycles. Suppose $h\in H$ is $G$-conjugate to $x$. Then by \cite[Proposition 2.1]{CH_conju}, we may assume
		\begin{equation*}
		h=(h_1, \underbrace{1, \dots, 1}_{s_1-1},h_2, \underbrace{1,\dots, 1}_{s_2-1}, ,\dots, h_t, \underbrace{1,\dots, 1}_{s_t-1})\sigma' \in H
		\end{equation*}
		for some integers $s_1,\dots,s_t\geqs 1$, where $\sigma' = \sigma_1\sigma_2\cdots\sigma_t$ is such that
		\begin{equation*}
		\sigma_i = (s_1+\cdots+s_{i-1}+1,\dots,s_1+\cdots+s_i).
		\end{equation*}
		As $|h| = |x| = k(c+1)$ is a power of $2$, we see that $|h_i|$ and $|\sigma_i|$ are $2$-powers. Moreover, we note that $|\fix_{[n]}(h)| = |\fix_{[n]}(x)| = 1$ and
		\begin{equation*}
		|\fix_{[n]}(h)| = |\fix_{\Gamma}(h_1)|^{s_1}\cdots |\fix_{\Gamma}(h_t)|^{s_t}.
		\end{equation*}
		This implies that $|\fix_\Gamma(h_i)| = 1$. 
		One can also observe that there exists $1\leqs i \leqs t$ such that $|\sigma_i|$ divides $2k$, otherwise $4k$ divides $|\sigma_i|$ for each $1\leqs i \leqs t$ and hence $$|\sigma_1|+|\sigma_2|+\dots +|\sigma_t|= \frac{d}{m}$$ is divisible by $4k,$ which is incompatible to $(\frac{d}{m})_2=2k$. 
		Note that $h$ contains a cycle of length $2ok$ for some integer $o$ if $h_i$ contains a cycle of length $o$. 
		Thus, $h_i$ is a product of $\frac{2(c^m-1)}{c+1}$ disjoint $\frac{c+1}{2}$-cycles. This is impossible, since $c^m-1$ is indivisible by $4$.
		
		We next assume $c=3$. It follows from the main theorem of \cite{N_prime} that either $d=6$ or there exists a prime $p$ such that $\frac{d}{m}<p<\frac{d}{2}$. For the former, it is easy to check that $|H|>|K|.$ 
		Now assume the latter case and let $\sigma=(1,2,\dots, p)\in S_{\frac{d}{2}}\leqs K$. We claim that $\sigma$ is not $G$-conjugate to any element of $H.$
		
		To verify the claim, we first observe that $\sigma$ fixes exactly $3^{d-2p+2}$ points of $[n]$.  Assume $\sigma$ is $G$-conjugate to an element $h\in H$. Then $h\in \Sym(\Gamma_1)\times \cdots \times \Sym(\Gamma_{d/m})$ since $p > d/m$. Thus, we may write 
		\begin{equation*}
		h=(h_1,h_2,\dots, h_{d/m}),
		\end{equation*}
		where
		\begin{equation*}
		h_1^p=h_2^p=\dots h_{d/m}^p=1.
		\end{equation*}
		Since $|h| = |\sigma| = p$, there exists $1\leqs j \leqs \frac{d}{m}$ such that $|h_j|=p.$
		Thus, $|\fix_{\Gamma}(h_j)|$ is coprime to $3,$ a contradiction to $|\fix_{\Gamma}(h_j)|$ dividing $|\fix_{[n]}(h)|=3^{d-2p+2}.$
		
		\vs
		
		\noindent \emph{Case 3. $m > \ell$, $\ell$ does not divide $m$, and $(c,m,\ell)$ does not satisfy Case 2}
		
		\vs
		
		By Zsigmondy's theorem \cite{Z_ppd}, there exists a prime divisor $r$ of $c^\ell-1$ that does not divide $c^m-1$. Let $\tau$ be an element in $\mathrm{Sym}(\Delta)$ of order $r$ that fixes a unique point of $\Delta$, and let $x = (\tau,\dots,\tau)\in K$, noting that $x$ fixes a unique point in $[n]$. We claim that $x$ is not $G$-conjugate to any element in $H$.
		
		Suppose $h\in H$ is $G$-conjugate to $x$. Again, by \cite[Proposition 2.1]{CH_conju}, we may assume
		\begin{equation*}
		h = (h_1,\underbrace{1,\dots, 1}_{r-1},h_2,\underbrace{1, \dots, 1}_{r-1},\dots, h_{t'},\underbrace{1, \dots, 1}_{r-1},h_{t'+1},h_{t'+2},\dots, h_{t})\sigma\in H
		\end{equation*}
		for some $t$ and $t'$, where $\sigma_i = (ri-r+1,\dots,ri)$ and $\sigma = \sigma_1\cdots\sigma_{t'}$. Since $h^r = 1$, we have
		\begin{equation*}
		h_1 = \cdots = h_{t'} = h_{t'+1}^r = \cdots = h_t^r = 1.
		\end{equation*}
		Hence, $t' = 0$ since $h$ has a unique fixed point in $[n]$, and moreover, $h_i$ fixes a unique point in $\Delta$ if $t'+1\leqs i\leqs t$, which is impossible since $r$ does not divide $c^m-1$.
		
		Now it suffices to consider the case where $\ell$ divides $m$, and thus from now we write $m = \ell t$ for some $t\geqs 2$ (so $c^m = a^t$ and $d/m = b/t$). 
		
		\vs
		
		\noindent \emph{Case 4. $\ell$ divides $m$ and $b\geqs 2a$}
		
		\vs
		
		Let $r$ be a prime such that $b/2 < r < b$ (the existence follows from Bertrand postulate). Note that $r > b/2 \geqs a$. Let $x$ be an $r$-cycle in $\mathrm{Sym}([b])$. Then $x$ fixes exactly $a^{b-r+1}$ points in $[n]$. Suppose $h\in H$ is of order $r$, which fixes $a^{b-r+1}$ points in $[n]$. The assumption $b/2 < r$ yields $h\in S_{a^t}^{b/t}$, so we may write $h = (h_1,\dots,h_{b/t})$ with $h_i\in \mathrm{Sym}(\Gamma_i)$, noting that there exists $j$ such that $|h_j| = r$. Now $|\fix_{\Gamma_j}(h_j)| = a^t-kr$ for some positive integer $k$. However, $|\fix_{\Gamma_j}(h_j)|$ divides $|\fix_{[n]}(x)| = a^{b-r+1}$, whereas
		\begin{equation*}
		(a^t,a^t-kr) = (a^t,r) = 1
		\end{equation*}
		implies that $|\fix_{\Gamma_j}(h_j)|$ is coprime to $a$. This gives a contradiction, so $x$ is not $G$-conjugate to any element in $H$.
		
		\vs
		
		\noindent \emph{Case 5. $\ell$ divides $m$ and $b < 2a$}
		
		\vs
		
		In this final case, we will prove that $|H| > |K|$. To do this, first note that
		\begin{equation}\label{e:div}
		\frac{|H|}{|K|} = \frac{(a^t)!^{b/t}(\frac{b}{t})!}{(a!)^b\cdot b!} =\frac{\prod_{j=a+1}^{a^t}j^{b/t}}{(a!)^{b-b/t}\cdot \prod_{k=b/t+1}^{b} k} > \frac{\prod_{j=a+1}^{a^t}j^{b/t-1}\cdot \prod_{i=a+1}^{a^t-b+b/t}i}{(a!)^{b-b/t}}
		\end{equation}
		since $b< 2a <a^t$. Observe that the numerator of the last term of \eqref{e:div} is a product of $(a^t-a)b/t+b/t-b$ integers strictly larger than $a$, so it suffices to show that
		\begin{equation*}
		(a^t-a)b/t+b/t-b \geqs a(b-b/t).
		\end{equation*}
		This is given by the inequality $a^t+1\geqs t(a+1)$.
		
		We conclude the proof since all cases have been discussed.
	\end{proof}

	\section{The tables}
	
	\label{s:tab}
	
	Here we present Table \ref{tab:PSL2} and \ref{tab:sporadic}, which arise in the statements of Theorems \ref{thm:PSL2} and \ref{thm:spo}, respectively.
	
	{\small
		\begin{table}
			\[
			\begin{array}{lll} \hline
			\mbox{Type of $H$}&L_0&\mbox{Conditions}\\\hline
			\GL_2(q_0) & C_p^f{:}C_{\frac{q_0-1}{(2,q-1)}} & \mbox{$q = q_0^r$, $r$ odd, $r\ne p$, and $L = L^{\rm I}$ is as defined in \eqref{e:LI}}\\
			& & \mbox{$q = q_0^p$, $p\ne 2$, $q \ne 27$, $(p,|K:K_0|) = 1$, and $L = L^{\rm II}$ is as defined in \eqref{e:LII}}\\
			& C_p^{f-f/p}{:}C_{\frac{q_0-1}{2}} & \mbox{$q = q_0^p$, $p > 3$, $p$ divides $|K:K_0|$, and $L = L^{\rm III}$ is as defined in \eqref{e:LIII}}\\
			& C_2^f{:}C_{2^{f/2}+1} & \mbox{$p = 2$, $q = q_0^2$ and $|G:G_0|$ odd}\\
			\GL_1(q)\wr S_2 & C_2^f{:}C_{2^f-1} & \mbox{$p = 2$ and $|L:L_0|$ odd}\\
			& \SL_2(q^{1/2}) & \mbox{$p = 2$ and $f$ even}\\
			& A_4 & \mbox{$q = 13$ or $G = \PGL_2(7)$}\\
			& S_4 & \mbox{$q = 25$ and $G\leqs\PSigmaL_2(q)$}\\
			& A_5 & \mbox{$q = 31$, or $q\in\{16,61\}$ and $G = G_0$}\\
			\GL_1(q^2) & A_4 & \mbox{$q = 11$ or $G = \PSL_2(5)$}\\
			& S_4 & G = \PSL_2(23)\\
			& A_5 & \mbox{$q = 29$ or $G = \PSL_2(59)$}\\
			2^{1+2}_-.\mathrm{O}_2^-(2) & S_4 & \mbox{$G = G_0$ and $q = p\equiv \pm 1\pmod 8$}\\
			A_5 & A_5 & \mbox{$G = G_0$ and $q = p\equiv \pm 1\pmod{10}$}\\
			& & \mbox{$G = G_0$, $q = p^2$ and $3\ne p\equiv \pm 3\pmod{10}$}\\
			\hline
			\end{array}
			\]
			\caption{Maximal non-stable fixers of $G$ with $\soc(G) = \PSL_2(q)$}
			\label{tab:PSL2}
	\end{table}}
	
	\begin{rem}
		\label{r:tab:PSL2}
		Let us record some additional comments on Table \ref{tab:PSL2}.
		\begin{itemize}\addtolength{\itemsep}{0.2\baselineskip}
			\item[{\rm (i)}] The \textit{type} of $H$, recorded in the first column, gives an  approximate description of the structure of $H$, which is consistent with usage in \cite{KL_classical}.
			\item[{\rm (ii)}] We write $K_0$ and $L_0$ for the groups $K\cap G_0$ and $L\cap G_0$, respectively, where $G_0 = \soc(G)$.
			\item[{\rm (iii)}] In the second column, we record $L_0$ up to isomorphism. Note that there might be more than one conjugacy classes of subgroups $L$ satisfying the condition described in the table. But we remark that apart from the first two and the last three rows, if $K\leqs G$ is such that $K_0\cong L_0$ and $K/K_0\cong L/L_0$, then $K$ is a fixer of $G$. For example, see Lemma \ref{l:PSL2_d_III_rem} for an argument on the third row.
			\item[{\rm (iv)}] Let us turn to the first and second rows of Table \ref{tab:PSL2}. Then $H = \GL_2(q_0)$, and we have $L = L^{\rm I}$ defined in \eqref{e:LI} in the former case, while $L = L^{\rm II}$ as described in \eqref{e:LII} for the latter. It is worth noting that not every subgroup $K$ with $K_0\cong L_0$ and $K/K_0\cong L/L_0$ is isomorphic to $L$ (we refer the reader to Remark \ref{r:LI_LII} for more details).
			\item[{\rm (v)}] In the last three rows where $H$ is of type $2^{1+2}_-.\mathrm{O}_2^-(2)$ or $A_5$, we have $L = H^\delta$, where $\delta$ is a diagonal automorphism of $G_0$.
		\end{itemize}
	\end{rem}
	
	{\small
		\begin{table}[h!]
			\begin{tabular}{@{}lll@{}}
				\toprule
				$G$ & $H$ & $K$ \\ \midrule
				$\Ma_{11}$&$\GL_2(3)$&$\Ma_9{:}2$\\
				$\Ma_{22}$&$2^4.S_5$&$2^4.A_6$\\
				&$A_7$&$A_7$\\
				$\Ma_{23}$&$\PSigmaL_3(4)$&$2^4{:}A_7$\\
				&$2^4{:}A_7$&$\PSigmaL_3(4)$\\
				$\mathrm{Co}_3$&$\PSL_3(4).D_{12}$&$2^4.A_8$\\
				$\mathrm{J}_1$&$D_6\times D_{10}$&$2\times A_5$\\
				&$7{:}6$&$2^3{:}7{:}3$\\
				$\mathrm{J}_3$&$(3\times A_6){:}2$&$2^4{:}(3\times A_5)$\\
				&$\PSL_2(19)$&$\PSL_2(19)$\\
				$\mathrm{HS}$&$\PSU_3(5){:}2$&$\PSU_3(5){:}2$\\
				$\mathrm{Suz}$&$\PSL_3(3){:}2$&$\PSL_3(3){:}2$\\
				$\mathrm{He}$&$2^6{:}3.S_6$&$2^6{:}3.S_6$\\
				\bottomrule
			\end{tabular}
			\begin{tabular}{@{}lll@{}}
				\toprule
				$G$ & $H$ & $K$ \\ \midrule
				$\mathrm{McL}$&$\Ma_{22}$&$\Ma_{22}$\\
				&$2^4.A_7$&$2^4.A_7$\\
				&$2^4.A_7$&$\PSigmaL_3(4)$\\
				&$\PSigmaL_3(4)$&$2^4.A_7$\\
				$\mathrm{Ru}$&$2^{3+8}{:}\PSL_3(2)$&$2^6.\PSU_3(3).2$\\
				$\mathrm{Fi}_{22}$&$S_{10}$&$S_{10}$\\
				&$\Omega_7(3)$&$\Omega_7(3)$\\
				$\mathrm{HN}$&$\Ma_{12}{:}2$&$\Ma_{12}{:}2$\\
				$\mathrm{Fi}_{24}'$&$\PSU_3(3){:}2$&$\PSU_3(3){:}2$\\
				&$\PGL_2(13)$&$\PGL_2(13)$\\
				$\operatorname{O'N}$&$A_7$&$A_7$\\
				&$(3^2{:}4\times A_6).2$&$3^4{:}2^{1+4}.D_{10}$\\
				$\operatorname{O'N}.2$&$(3^2{:}4\times A_6).2.2$&$3^4{:}2^{1+4}.D_{10}.2$\\
				\bottomrule
			\end{tabular}
			\caption{Maximal subgroups of almost simple sporadic groups which are large fixers} \label{tab:sporadic}
		\end{table}
	}


\begin{thebibliography}{999}
		
		\bibitem{AM_EKR}
		B. Ahmadi and K. Meagher, \textit{The Erdős-Ko-Rado property for some permutation groups}, Australas. J. Combin. \textbf{61} (2015), 23–41.
		
		
		\bibitem{Magma}
		W. Bosma, J. Cannon and C. Playoust, \textit{The Magma algebra system. I. The user language}, J. Symb.
		Comput. \textbf{24} (1997), 235–265.
		
		\bibitem{BHR_classical}
		J.N. Bray, D.F. Holt and C.M. Roney-Dougal, \textit{The maximal subgroups of the low-dimensional finite classical groups}, London Math. Soc. Lecture Note Series, Vol. 407, Cambridge University Press, Cambridge, 2013.
		
		\bibitem{B_GAPCTL}
		T. Breuer, \emph{The \textsf{GAP} {C}haracter {T}able {L}ibrary, Version 1.3.1}, \textsf{GAP} package, \\ \texttt{http://www.math.rwth-aachen.de/\~{}Thomas.Breuer/ctbllib}, 2020.
		
		\bibitem{B_fpr_survey}
		T.C. Burness, \textit{Simple groups, fixed point ratios and applications},  in Local representation theory and
		simple groups, 267–322, EMS Ser. Lect. Math., Eur. Math. Soc., Zürich, 2018.
		
		\bibitem{BG_classical}
		T.C. Burness and M. Giudici, \textit{Classical groups, derangements and primes}, Australian Mathematical
		Society Lecture Series, Vol. 25, Cambridge University Press, Cambridge, 2016.
		
		\bibitem{BG_min_deg}
		T.C. Burness and R.M. Guralnick, \textit{Fixed point ratios for finite primitive groups and applications}, Adv. Math. \textbf{411} (2022), Paper No. 108778, 90 pp.
		
		\bibitem{CC_der}
		P.J. Cameron and A.M. Cohen, \textit{On the number of fixed point free elements in a permutation group}, Discrete Math. \textbf{106/107} (1992), 135–138.
		
		\bibitem{CH_conju}
		J.J. Cannon and D.F. Holt, \textit{Computing conjugacy class representatives in permutation groups}, J. Algebra \textbf{300} (2006), 213–222.
		
		\bibitem{CJ_cyclic}
		M. Costantini and E. Jabara, \textit{On finite groups in which cyclic subgroups of the same order are conjugate}, Comm. Algebra \textbf{37} (2009), 3966–3990.
		
		\bibitem{DLP_M}
		H. Dietrich, M. Lee and T. Popiel, \textit{The maximal subgroups of the Monster}, Adv. Math. \textbf{469} (2025), Paper No. 110214, 33 pp.
		
		\bibitem{EFP_EKR}
		D. Ellis, E. Friedgut and H. Pilpel, \textit{Intersecting families of permutations}, J. Amer. Math. Soc. \textbf{24} (2011), 649–682.
		
		\bibitem{EKR_EKR}
		P. Erdős, C. Ko and R. Rado, \textit{Intersection theorems for systems of finite sets}, Quart. J. Math. Oxford Ser. \textbf{12} (1961), 313–320.
		
		\bibitem{FD_EKR}
		P. Frankl and M. Deza, \textit{On the maximum number of permutations with given maximal or minimal distance}, J. Combinatorial Theory Ser. A \textbf{22} (1977), 352–360.
		
		\bibitem{FG_der_18}
		J. Fulman and R.M. Guralnick, \textit{Derangements in finite classical groups for actions related to extension field and imprimitive subgroups and the solution of the Boston-Shalev conjecture}, Trans. Amer. Math.
		Soc. \textbf{370} (2018), 4601–4622.
		
		\bibitem{FPS_der}
		M. Fusari, A. Previtali and P. Spiga, \textit{Cliques in derangement graphs for innately transitive groups}, J. Group Theory \textbf{27} (2024),  929–965.
		
		\bibitem{GM_EKR}
		C. Godsil and K. Meagher, \textit{Erdős-Ko-Rado theorems: algebraic approaches}, Cambridge Stud. Adv. Math., Vol. 149, Cambridge University Press, Cambridge, 2016.
		
		\bibitem{GLS_CFSG3}
		D. Gorenstein, R. Lyons and R. Solomon, \textit{The Classification of the Finite Simple Groups. Number 3}, Math. Surveys Monogr., Vol. 40, American Mathematical Society, Providence, RI., 1998.
		
		\bibitem{GMPS_cycle}
		S. Guest, J. Morris, C.E. Praeger and P. Spiga, \textit{Finite primitive permutation groups containing a permutation having at most four cycles}, J. Algebra \textbf{454} (2016), 233–251.
		
		\bibitem{GIP_der}
		R.M. Guralnick, I.M. Issacs and P. Spiga, \textit{On a relation between the rank and the proportion of derangements in finite transitive permutation groups}, J. Combin. Theory Ser. A \textbf{136} (2015), 198–200.
		
		\bibitem{GM_min_deg}
		R.M. Guralnick and K. Magaard, \textit{On the minimal degree of a primitive permutation group}, J. Algebra \textbf{207} (1998), 127–145.
		
		\bibitem{G_dio}
		K. Győry, \textit{On the Diophantine equation ${n\choose k} = x^l$}, Acta Arith. \textbf{80} (1997), 289–295.
		
		\bibitem{H_alg}
		I.N. Herstein, \textit{Topics in algebra} (Second edition), John Wiley \& Sons, New York, 1975.
		
		\bibitem{H_book}
		B. Huppert, \textit{Endliche Gruppen. I}, Die Grundlehren der mathematischen Wissenschaften, Band 134,
		Springer-Verlag, Berlin-New York, 1967.
		
		\bibitem{J_number}
		W. Jehne, \textit{Kronecker classes of algebraic number fields}, J. Number Theory \textbf{9} (1977), 279–320.
		
		\bibitem{J_cycle}
		C. Jordan, \textit{Sur la limite de transitivité des groupes non alternés}, Bull. Soc. Math. France \textbf{1} (1872/73), 40–71.
		
		\bibitem{J_der}
		C. Jordan, \textit{Recherches sur les substitutions}, J. Math. Pures Appl. (Liouville) \textbf{17} (1872), 351–367.
		
		\bibitem{KL_classical}
		P.B. Kleidman and M.W. Liebeck, \textit{The subgroup structure of the finite classical groups}, London Math. Soc. Lecture Note Series, Vol. 129, Cambridge University Press, Cambridge, 1990.
		
		\bibitem{K_arith}
		N. Klingen, \textit{Arithmetical similarities}, in Prime decomposition and finite group theory, The Clarendon Press, Oxford University Press, New York, 1998.
		
		\bibitem{LPSX_intersecting}
		C.H. Li, V.R.T. Pantangi, S.J. Song and Y.L. Xie, \textit{Intersecting subsets in finite permutation groups}, submitted (2024), arXiv:2403.17783.
		
		\bibitem{LPSX_EKR}
		L. Long, R. Plaza, P. Sin and Q. Xiang, \textit{Characterization of intersecting families of maximum size in $\PSL(2,q)$}, J. Combin. Theory Ser. A \textbf{157} (2018), 461–499.
		
		\bibitem{LPS_O'Nan-Scott}
		M.W. Liebeck, C.E. Praeger and J. Saxl, \textit{On the O'Nan-Scott theorem for finite primitive permutation groups}, J. Austral. Math. Soc. Ser. A \textbf{44} (1988), 389–396.
		
		\bibitem{M_transpo}
		W.A. Manning, \textit{The primitive groups of class $2p$ which contain a substitution of order $p$ and degree $2p$}, Trans. Amer. Math. Soc. \textbf{4} (1903), 351–357.
		
		\bibitem{M_prim_order}
		A. Maróti, \textit{On the orders of primitive groups}, J. Algebra \textbf{258} (2002), 631–640.
		
		\bibitem{MR_EKR}
		K. Meagher and A.S. Razafimahatratra, \textit{Some Erdős-Ko-Rado results for linear and affine groups of degree two}, Art Discrete Appl. Math. \textbf{6} (2023), Paper No. 1.05, 30 pp.
		
		\bibitem{MST_2-trans}
		K. Meagher, P. Spiga and P.H. Tiep, \textit{An Erdős-Ko-Rado theorem for finite $2$-transitive groups}, European J. Combin. \textbf{55} (2016), 100–118.
		
		\bibitem{M_Catalan}
		P. Mihăilescu, \textit{Primary cyclotomic units and a proof of Catalan's conjecture}, J. Reine Angew. Math. \textbf{572} (2004), 167–195.
		
		\bibitem{N_prime}
		J. Nagura, \textit{On the interval containing at least one prime number}, Proc. Japan Acad. \textbf{28} (1952), 177–181.
		
		
		\bibitem{P_Kro}
		C.E. Praeger, \textit{Covering subgroups of groups and Kronecker classes of fields}, J. Algebra \textbf{118} (1988), 455–463.
		
		\bibitem{PK_max_cyclic}
		V.V. Pylaev and N.F. Kuzennyi, \textit{Finite groups with a cyclic maximal subgroup}. Ukrainian Math. J. \textbf{28} (1976), 500–506.
		
		\bibitem{Sh_conj_AGammaL}
		H. Shahrtash, \textit{Conjugacy class sizes in affine semi-linear groups}, Adv. Group Theory Appl. \textbf{10} (2020), 67–81.
		
		\bibitem{S_06}
		P. Spiga, \textit{Permutation characters and fixed-point-free elements in permutation groups}, J. Algebra \textbf{299} (2006), 1–7.
		
		\bibitem{S_20}
		P. Spiga, \textit{On a conjecture on the permutation characters of finite primitive groups}, Bull. Aust. Math. Soc. \textbf{102} (2020), 77–90.
		
		\bibitem{W_WebAt}
		R.A. Wilson et al., \emph{A {W}orld-{W}ide-{W}eb {A}tlas of finite group representations},  \\ {\texttt{http://brauer.maths.qmul.ac.uk/Atlas/v3/}}
		
		\bibitem{Z_ppd}
		K. Zsigmondy, \textit{Zur Theorie der Potenzreste}, Monatsh. Math. Phys. \textbf{3} (1892), 265–284.
		
	\end{thebibliography}
\end{document}